\def\ind{\mathbbm{1}}   
\newcommand{\bbR}{\mathbb{R}} 
\newcommand{\bbN}{\mathbb{N}} 
\newcommand{\bbE}{\mathbb{E}} 
\newcommand{\bbS}{\mathbb{S}} 
\newcommand{\bbP}{\mathbb{P}} 
\newcommand{\bbQ}{\mathbb{Q}} 
\newcommand{\bI}{\mathbf{I}} 
\newcommand{\bP}{\mathbf{P}} 
\newcommand{\bK}{\mathbf{K}} 
\newcommand{\cT}{\mathcal{T}}
\newcommand{\GAP}{\mathrm{Gap}} 
\newcommand{\hit}{\mathrm{hit}}
\newcommand{\cev}[1]{\reflectbox{\ensuremath{\vec{\reflectbox{\ensuremath{#1}}}}}}
\def\EDGE{\mathrm{Edge}}
\def\din{d_{\mathrm{in}}}              
\def\dout{d_{\mathrm{out}}}            
\def\post{\pi_n} 			 
\def\Pa{\mathrm{Pa}} 
\def\Ch{\mathrm{Ch}}  
\def\Cpen{C_{\mathrm{pen}}} 
\def\Cerr{C_{\mathrm{err}}} 
\newcommand{\cE}{\mathcal{E}}  
\newcommand{\cA}{\mathcal{A}}
\newcommand{\lmin}{\lambda_{\mathrm{min}}}
\newcommand{\lmax}{\lambda_{\mathrm{max}}}
\newcommand{\smin}{\sigma_{\mathrm{min}}}
\def\vmin{\underline{\nu}}
\def\vmax{\overline{\nu}}
\newcommand{\cN}{\mathcal{N}}  
\newcommand{\proj}{\Phi}  
\newcommand{\oproj}{\Phi^{\perp}}  
\DeclareMathOperator\argmax{arg\,max}
\DeclareMathOperator\diag{diag}
\DeclarePairedDelimiter{\norm}{\lVert}{\rVert}
\DeclarePairedDelimiter{\OPnorm}{\lVert}{\rVert_{\mathrm{op}}}
\DeclarePairedDelimiter{\TV}{\lVert}{\rVert_{\mathrm{TV}}}
\def\sX{\mathsf{X}} 
\def\dag{\mathcal{G}_p}
\def\cpdag{\mathcal{C}_p}
\def\model{\mathcal{M}_p}
\def\Vert{V} 
\def\Edge{E}
\def\chol{\mathcal{D}_p}
\newcommand{\EG}[1]{[#1]} 
\def\CI{\mathcal{CI}}
\newcommand{\indep}{\mathrel{\text{\scalebox{1.07}{$\perp\mkern-10mu\perp$}}}}
\newcommand{\notindep}{\not\!\perp\!\!\!\!\perp}
\def\Z{X}  
\def\X{X} 
\def\ty{y_0}
\def\ses{\omega^*}
\def\bmin{\beta_{\rm{min}}}
\def\betamin{\beta_{\rm{min}}} 
\def\Cbeta{C_\beta}
\def\HD{\mathrm{Hd}}
\def\DS{\Delta^*} 
\def\NM{N}   
\def\Tmix{T_{\mathrm{mix}}}  
\def\tG{\tilde{G}}
\def\cS{\mathcal{S}} 
\def\RWGES{RW-GES} 
\def\cZ{\mathcal{Z}} 
\def\bbV{\mathbb{V}} 
\def\adds{\cN_{\mathrm{add}}}
\def\dels{\cN_{\mathrm{del}}}
\def\swaps{\cN_{\mathrm{swap}}}
\def\revs{\cN_{\mathrm{rev}}} 
\def\Cb{C_\beta} 
\def\bmin{\underline{b}}
\def\cNc{\mathcal{N}_{\mathcal{C}}}
\def\tcE{\tilde{\cE}}
\def\cO{\mathcal{O}}
\def\nb{\mathcal{N}_{\mathrm{ads}}}
\def\score{\psi}
\def\scorej{\psi_j}
\newcommand{\EGG}[1]{\mathrm{EG}(#1)}
\def\adj{\mathrm{Ad}}
\def\und{\mathrm{Un}}
\def\ges{\mathcal{O}_{\mathrm{ges}}}
\def\tG{\tilde{G}}
\def\nbg{\mathcal{N}_{\rm{ads}}}
\def\hSigma{\hat{\Sigma}} 
\theoremstyle{plain}
\newtheorem{lemma}{Lemma}
\newtheorem{corollary}{Corollary}
\newtheorem{theorem}{Theorem}
\theoremstyle{remark}
\newtheorem{definition}{Definition}
\newtheorem{example}{Example}
\newtheorem{assumption}{Assumption}[section]
\newtheorem{condition}{Condition}
\newtheorem{remark}{Remark}
\newtheorem{alg}{Algorithm}[section]
\title{Complexity analysis of Bayesian learning of high-dimensional DAG models and their equivalence classes}
\author{Quan Zhou and Hyunwoong Chang  \medskip \\   Department of Statistics, Texas A\&M University}
\date{}
\begin{document}

\maketitle  



\begin{abstract}
Structure learning via MCMC sampling is known to be very challenging because of the enormous search space and the existence of Markov equivalent DAGs. 
Theoretical results on the mixing behavior are lacking. 
In this work, we prove the rapid mixing of a random walk Metropolis-Hastings algorithm, which reveals that the complexity of Bayesian learning of sparse equivalence classes grows only polynomially in $n$ and $p$, under some high-dimensional assumptions. 
A series of high-dimensional consistency results is obtained,  including the strong selection consistency of an empirical Bayes model for structure learning. 
Our proof is based on two new results. 
First, we derive a general mixing time bound on finite state spaces, which can be applied to  local MCMC schemes for other model selection problems. 
Second, we construct high-probability search paths on  the space of equivalence classes with node degree constraints by proving a combinatorial property of DAG comparisons. 
Simulation studies on the proposed MCMC sampler are conducted to illustrate the main theoretical findings. \medskip   \\ 
\textit{Keywords:} finite Markov chains; greedy equivalence search (GES); locally informed proposals; Poincar\'{e} inequality; random walk Metropolis-Hastings; rapid mixing; strong selection consistency. 
\end{abstract}


\section{Introduction}\label{sec:intro}

\subsection{Gaussian DAG models and equivalence classes} \label{sec:setup}
A directed acyclic graph (DAG) encodes a set of conditional independence (CI) relations among node variables, which can be read off using  the ``d-separation'' criterion~\citep{pearl1988probabilistic}. 
Structure learning of DAG models from observational data plays a fundamental role in causal inference and has found many applications in machine learning and statistical data analysis~\citep{koller2009probabilistic}. 
In genomics, for example, DAG is a convenient device for conducting pathway analysis and inferring interactions among genes or proteins~\citep{maathuis2010predicting, gao2015learning}. 

Two  DAGs with different edge sets can encode  the same set of CI relations, in which case we say both belong to the same (Markov) equivalence class. For example, the DAGs $i \rightarrow j \rightarrow k$ and $i \leftarrow j \rightarrow k$ are Markov equivalent: both encode only one CI relation $i \indep k \mid j$ (i.e., $i, k$ are independent given $j$). But they are not Markov equivalent to  $i \rightarrow j \leftarrow k$, since the latter encodes only one CI relation $i \indep k$.   
A Gaussian DAG model represents a set of multivariate normal distributions that satisfy the CI constraints encoded by the DAG. 
Due to normality, Markov equivalence further implies distributional equivalence~\citep{geiger2002parameter}, and thus observational data alone cannot distinguish between Markov equivalent DAGs; this is a main challenge in devising efficient structure learning algorithms~\citep{chickering2002learning}.  

This paper is chiefly concerned with the following problem: given $n$ i.i.d. observations from a $p$-variate DAG-perfect normal distribution, estimate the equivalence class of the underlying DAG model.  
This is a model selection problem  where the model space is a collection of $p$-vertex  equivalence classes. 
We are most interested in high-dimensional settings where $p$ grows much faster than $n$ and  the true DAG model is sparse. 

The structure learning problem can be greatly simplified if the topological ordering of the variables is known. 
By ordering, we mean a permutation $\sigma \in \bbS^p$, where  $\bbS^p$ denotes the symmetric group on $\{1, \dots, p\}$, such that for any $i < j$,  an edge connecting $ \sigma(i)$  and $\sigma(j) $ is always directed as $\sigma(i) \rightarrow \sigma(j)$.  
Such a total ordering always exists, but may not be unique, for a DAG due to acyclicity.  
For example, for   $1 \rightarrow 3 \leftarrow 2$,  the ordering can be either $(1, 2, 3)$ or $(2, 1, 3)$. 
Any two different DAGs that share a same ordering cannot be Markov equivalent. This can be proved by contradiction: if the two DAGs are Markov equivalent, they must have the same skeleton~\citep{verma1991equivalence}, but the ordering uniquely determines the directions of all edges implying that the two DAGs must be the same. 
Henceforth, we refer to the problem as DAG selection when the ordering is known and reserve the term ``structure learning'' for learning equivalence classes when the ordering is unknown; the latter is the focus of this paper. 
 
\subsection{Algorithms for Bayesian structure learning}\label{sec:algo} 
Most Bayesian structure learning methods aim to produce a posterior distribution of the DAG model or its equivalence class, which can be further used for making inference on quantities of interest via model averaging.   
To numerically approximate the posterior distribution, Markov chain Monte Carlo (MCMC) sampling is often invoked, and existing MCMC methods differ from each other mainly in three respects: the state space, the set of local operators, and the proposal scheme.  
The local operators decide which states the sampler may move to in the next iteration (i.e., they define the ``neighborhood'' of each state). 
The proposal scheme refers to how the proposal probabilities of these neighboring states are assigned. 
Most existing algorithms use either random walk Metropolis-Hastings (MH) or Gibbs schemes, but we note that informed proposal schemes recently proposed in~\citet{zanella2020informed} and~\citet{zhou2021dimension} can be applied as well. 

There are three popular choices of the state space: states can be DAGs, equivalence classes or orderings.
The famous ``structure MCMC'' sampler is a random walk MH algorithm that searches the DAG space using addition, deletion and reversal of single edges~\citep{madigan1995bayesian, giudici2003improving}. It is straightforward to implement (one only needs to check acyclicity when proposing local moves) but may not be efficient since the sampler can spend a lot of time traversing large equivalence classes. Various methods have been proposed to improve the performance by using more complicated local operators~\citep{grzegorczyk2008improving, su2016improving} or blocked Gibbs schemes~\citep{goudie2016gibbs}. 
Directly searching the space of equivalence classes seems more efficient, but a major challenge  is to construct  a proper set of local operators~\citep{andersson1997characterization, perlman2001graphical, munteanu2001eq, chickering2002learning, pena2007approximate}; 
see~\citet{madigan1996bayesian} and~\citet{castelletti2018learning} for MCMC samplers defined on the space of equivalence classes.  
Order MCMC methods~\citep{friedman2003being, ellis2008learning, agrawal2018minimal} target a posterior   distribution on the order space $\bbS^p$. They are motivated by the observation that given ordering,  the conditional posterior distribution of DAGs can be evaluated relatively easily.  
More sophisticated MCMC schemes can be built by using partial orderings~\citep{niinimaki2011partial, kuipers2017partition}.  
It should be noted that the choice of the prior distribution typically depends on the state space, which results in essentially different posterior distributions on the three state spaces (see Section~\ref{sec:mix.dag.order}).   
We will focus on the space of equivalence classes.

In principle, by treating the logarithm of the posterior probability as a scoring criterion,   deterministic score-based search algorithms can also be used to find the structure that maximizes the score (i.e.,  the maximum a posteriori estimate). 
This approach appears less popular in the  Bayesian structure learning literature, probably because it cannot quantify the uncertainty in estimation. 
One of the most important score-based algorithms is the greedy equivalence search (GES) proposed by~\citet{meek1997graphical} and~\citet{chickering2002optimal}, a two-stage greedy search algorithm defined on the space of equivalence classes.  
\citet{nandy2018high} were the first to prove the high-dimensional consistency of GES (i.e., the search returns the true equivalence class with high probability for sufficiently large $n$) using an assumption called strong faithfulness. Though it is known that strong faithfulness is very restrictive~\citep{uhler2013geometry}, such conditions appear to be necessary for proving high-dimensional consistency results for many search methods~\citep{kalisch2007estimating}. 
We refer readers to~\citet{drton2017structure} and~\citet{scutari2019learns} for other scored-based structure learning methods. 

\subsection{Overview of the paper}\label{sec:contribution}
While many MCMC methods for structure learning have been proposed, to our knowledge, no theoretical result on the mixing time is available. 
This is probably because the structure of the state space is highly complicated.  
The primary goal of this paper is to fill the gap by deriving non-asymptotic mixing time bounds. 
We find that structure MCMC and order MCMC methods are, unfortunately, hard to analyze due to the technical difficulty in bounding sizes of  equivalence classes. The equivalence class sampler of~\citet{castelletti2018learning} uses six graph operators to move between equivalence classes~\citep{he2013reversible}, but we can explicitly construct  slow mixing examples for this sampler with fixed $p$.  
This motivates us to propose our own equivalence class sampler,  \RWGES{},  which uses a random walk proposal scheme that mimics and generalizes the local moves employed by GES.  We prove a high-dimensional rapid mixing result for the \RWGES{} sampler, which essentially says that, under some conditions, the number of iterations needed to find the true equivalence class grows only polynomially in $n$ and $p$ with high probability. 
The proof consists of three steps, which we now explain separately. 

In Section~\ref{sec:mix}, we first develop a general theory on the complexity of local MCMC algorithms for model selection. This section is self-contained and of considerable independent interest.  
We build a weighted path argument and use a Poincar\'{e}-type inequality~\citep{kahale1997semidefinite} to obtain a novel, generally applicable mixing time bound under a unimodal assumption; see Condition~\ref{cond:unimodal} and Theorem~\ref{th:path.better}.  This result can be applied to other model selection problems such as variable selection and stochastic block models. It sharpens the existing mixing time bounds for random walk MH algorithms in the literature~\citep{yang2016computational, zhuo2021mixing} and can be utilized to derive theoretical guarantees for locally informed MH algorithms~\citep{zanella2020informed}.    
Our theory also reveals a link between optimization and sampling: if for some model selection problem there is a greedy local search with consistency guarantee, it is hopeful that, with some modifications, we may convert the greedy algorithm to a local MH sampler that has provable rapid mixing property.  
In general, rapid mixing is more difficult to prove and more informative than the consistency of a greedy search, since the former characterizes the overall complexity of the algorithm and requires an analysis of the local posterior landscape in the whole state space.

The \RWGES{} sampler for structure learning is formally introduced in Section~\ref{sec:main}. To impose sparsity, we define the state space to be the set of all equivalence classes that satisfy some node degree constraints.    
We do not explicitly define the target posterior distribution in this section (which will be done in Section~\ref{sec:high}); instead, we assume the posterior has some consistency property that typically holds for sufficiently large sample sizes.  
To use Theorem~\ref{th:path.better}, we need to verify its assumption for the structure learning problem, which requires us to bound the neighborhood size  (see Lemma~\ref{lm:bound.nb}) and construct ``canonical paths'' for  the \RWGES{} sampler. 
We show by examples (see Examples~\ref{ex:why.swap} and~\ref{ex:local.mode.dag}) that  a major and unique challenge in the path construction is to verify that the equivalence classes located on the ``boundary'' of the restricted search space cannot be local modes. 
To overcome this, we introduce a ``swap'' proposal move to \RWGES{} and prove a key combinatorial property of DAGs in Lemma~\ref{lm:pigeonhole}. 
Combining it with the well-known Chickering algorithm~\citep{chickering2002optimal}, we obtain the canonical paths of  \RWGES{}.

In Section~\ref{sec:high}, we propose an empirical Bayes model for structure learning and prove that it has the desired high-dimensional consistency property assumed in Section~\ref{sec:main}. Our model generalizes the DAG selection model of~\citet{lee2019minimax}, and we show that it yields the same marginal fractional likelihood for Markov equivalent DAGs. 
The main result in this section is Theorem~\ref{th:cpdag.consist}, which gives the strong selection consistency of our structure learning model. 
For Bayesian methods, such consistency results have only been established lately for the DAG selection problem with known ordering~\citep{cao2019posterior, lee2019minimax}.  
Roughly speaking, in our consistency result, the maximum degree of searched DAGs is allowed to grow at rate $\sqrt{\log p}$; see Remark~\ref{rmk:order.d}.
The analogous high-dimensional consistency results for both variable selection and DAG selection are obtained as intermediate steps of our proof of Theorem~\ref{th:cpdag.consist}.  
 
The rapid mixing of \RWGES{} now follows from the mixing time bound given in Theorem~\ref{th:path.better} and the results of Sections~\ref{sec:main} and~\ref{sec:high}. It is formally stated in Section~\ref{sec:mcmc}. 
For comparison, we provide two slow mixing examples in the same section. The first one (see Example~\ref{ex:slow1}) shows why it is difficult to relax a key assumption used in our analysis, which is called the ``strong beta-min condition'' and is similar to the strong faithfulness assumption.  
The second (see Example~\ref{ex:slow2}) illustrates that the equivalence class sampler of~\citet{castelletti2018learning} may mix slowly when $p$ is small and $n$ is large.  
We conduct simulation studies in Section~\ref{sec:sim} to show that our theoretical results hold ``approximately'' for moderately large sample sizes and provide useful guidance on the use of \RWGES{} in practice.  
Section~\ref{sec:disc} concludes the paper with discussions on why the structure MCMC is difficult to analyze and potential extensions of \RWGES{}. All proofs are relegated to the supplementary material~\cite{zhousupplementary}.  
For readers' convenience, a notation table is given in Supplement A.

\section{Mixing time bounds for model selection problems}\label{sec:mix}

\subsection{A general setup} \label{sec:local.search} 
In this section, we use $\Theta = \Theta_p$ to denote a finite model space for a general model selection problem with $p$ variables; for example, for the structure learning problem, each $\theta \in \Theta$ can be a unique equivalence class.  
Let $\cN \colon \Theta \rightarrow 2^\Theta$ be given such that $\theta \notin \cN(\theta)$ for each $\theta \in \Theta$; $\cN$ is called a neighborhood function. We say $\theta'$ is a neighbor of $\theta$ if and only if $\theta' \in \cN(\theta)$.  
We say $\cN$ is ``symmetric'' if $\theta \in \cN(\theta')$ always implies $\theta' \in \cN(\theta)$.  
When we need to emphasize $\Theta$ is equipped with $\cN$, we denote the space by $(\Theta, \cN)$.  
Let $\pi$ denote a posterior distribution on $\Theta$ for a Bayesian procedure; assume it is known up to a normalizing constant and $\pi(\theta) > 0$ for each $\theta$.  
Given a function $h \colon  (0, \infty) \rightarrow (0, \infty)$,  define  a Markov chain $\bK^h$ on $\Theta$  by 
\begin{equation}\label{eq:general.proposal}
    \bK^h(\theta,  \theta') =   \frac{ h \left( \pi(\theta') / \pi(\theta) \right) }{   
    \sum_{\tilde{\theta} \in \cN(\theta)} h  ( \pi(\tilde{\theta}) / \pi(\theta)  )
    } \ind_{ \cN(\theta) }(\theta')
\end{equation}
where $\ind$ is the indicator function. That is, given current state  $\theta$, $\bK^h$ moves to some $\theta' \in \cN(\theta)$ with probability $\propto h(\pi(\theta') / \pi(\theta))$. 
Given $\bK^h$, define another Markov chain $\bP^h$ by 
\begin{equation}\label{eq:general.MH}
    \bP^h(\theta, \theta') = \left\{\begin{array}{cc}
    \bK^h(\theta, \theta')  \min \left\{1,  \frac{ \pi(\theta') \bK^h(\theta', \theta) }{ \pi(\theta) \bK^h(\theta, \theta')}   \right\}, \quad  & \text{ if } \theta' \neq \theta, \\
    1 -  \sum_{\tilde{\theta} \neq \theta} \bP^h(\theta, \tilde{\theta})     &  \text{ if } \theta' = \theta. 
    \end{array}
    \right.
\end{equation}
If $\bP^h$ is irreducible, then $\pi$ is the unique stationary distribution of $\bP^h$. To avoid periodicity, we will often work with the lazy version  $\bP_{\rm{lazy}}^h = (\bP^h + \mathbf{I}) / 2$, where $\mathbf{I}$ is the identity matrix. 

\begin{definition}[local Metropolis-Hastings algorithms]\label{def:two.MH.algorithms} 
We say $\bP^h$ defined by~\eqref{eq:general.MH} is a  local  MH algorithm with  local  proposal $\bK^h$.  
If $h \equiv 1$,  we say $\bP^h$ is the random walk MH algorithm. 
If $h$ is non-constant and non-decreasing, we say $\bK^h$ is a (locally) informed proposal and $\bP^h$ is a (locally) informed MH algorithm. 
\end{definition}
 
The locally informed MH algorithm was proposed by~\citet{zanella2020informed}. The main idea is to assign larger proposal probabilities to those neighboring states with larger posterior so that the chain can quickly move to high-posterior regions.  
Let $h$ in~\eqref{eq:general.proposal} be $h(u) = u^a$ for some $a \geq 0$. 
Observe that when $a = 0$, $\bK^h$ is reduced to the random walk proposal, and when $a \rightarrow \infty$ we obtain the greedy search (see definition below).   
So informed proposals are generally more aggressive than random walk but less aggressive than greedy search.  

\begin{definition}[greedy local search]\label{def:greedy} 
A greedy (local) search on $(\Theta, \cN)$ with initial state $\theta^{(0)}$ generates $\theta^{(1)}, \theta^{(2)}, \dots$, sequentially by letting  $\theta^{(i)} = \argmax_{  \theta' \in \cN(\theta^{(i-1)}) \cup \{\theta^{(i-1)} \} }  \pi( \theta')$ for each $i \geq 1$. 
The search stops and returns $\theta^{(j)}$ if $\theta^{(j)} = \theta^{(j - 1)}$. 
\end{definition}
   
The efficiency of both greedy search and MH algorithms largely depends on the choice of $\cN$. For model selection problems,  $\cN(\cdot)$ is usually much smaller than $\Theta$ so that the algorithm is computationally affordable. 
But $\cN$ should also provide enough connectivity so that the algorithm cannot get trapped at sub-optimal local modes ($\theta$ is a local mode if $\pi(\theta) > \pi(\theta')$ for any $\theta' \in \cN(\theta)$). 
We measure the convergence rate of MH algorithms using mixing time.  

\begin{definition}[mixing time]  \label{def:mix}
Let $\bP$ be an irreducible and aperiodic transition matrix defined on a finite state space $\Theta$, with stationary distribution $\pi$. Define its mixing time by 
\begin{align*}
\Tmix (\bP) = \max_{\theta \in \Theta} \, \min \{t  \geq 0 \colon \TV{ \bP^t(\theta, \cdot) - \pi(\cdot)} \leq 1/4\}, 
\end{align*}
where $\TV{ \cdot }$ denotes the total variation distance which takes value in $[0, 1]$.
\end{definition}
 
\begin{remark}\label{rmk:rapid}
We say an MCMC algorithm is rapidly mixing  if its mixing time grows at most polynomially in the complexity parameters $n$ (sample size) and $p$ (number of variables). 
For most high-dimensional model selection problems, the size of $\Theta$ grows at least super-polynomially with $p$. 
For variable selection, which is probably the best-studied problem,  \citet{yang2016computational} proved the rapid mixing of a random walk MH algorithm, and \citet{zhou2021dimension} showed that an informed MH algorithm can converge much faster and obtain a mixing time independent of $p$. 
\end{remark}

\subsection{A multi-purpose path method}\label{sec:mix.path}  
We propose a general method for bounding the mixing time of $\bP^h$ defined in~\eqref{eq:general.MH} and proving consistency properties of the posterior distribution $\pi$. 
The bounds to be derived in this section are non-asymptotic, and $p$ is treated as a fixed constant.
We begin by assuming that the triple $(\Theta, \cN, \pi)$ satisfies the following condition, where $|\cdot|$ denotes the cardinality of a set. 

\begin{condition}\label{cond:unimodal}
$|\Theta| < \infty$, $\cN$ is symmetric, and $\pi > 0$.  
There exists a function $g \colon \Theta \rightarrow \Theta$, a state $\theta^* \in \Theta$ and constants $t_1, t_2 > 0, p > 1$ such that (i) $|\cN(\theta)| \leq p^{t_1}$ for each $\theta \in \Theta$, and (ii) $g (\theta) \in \cN(\theta)$ and $\pi(g(\theta)) / \pi(\theta) \geq p^{t_2}$ for each $\theta \neq \theta^*$.  
\end{condition}

\begin{remark}\label{rmk:condition.uni}
Part (ii)  is equivalent to either of the following statements. 
\begin{enumerate}[label=(\alph*)]
    \item For any $\theta \neq \theta^*$,  $\max_{\theta' \in \cN(\theta)} \pi(\theta') \geq p^{t_2} \pi(\theta)$. 
    \item For any $\theta \neq \theta^*$, there exists $k < \infty$ and a sequence $(\theta_0 = \theta, \theta_1, \theta_2, \dots, \theta_k = \theta^*)$ such that $\theta_i \in \cN(\theta_{i-1})$ and $\pi(\theta_i)/\pi(\theta_{i-1}) \geq p^{t_2}$ for each $i = 1, \dots, k$. 
\end{enumerate}
We introduce the function $g$ because, for model selection problems, one often verifies Condition~\ref{cond:unimodal} by explicitly identifying some $g(\theta)$ for each $\theta$. There may exist many choices of $g$ so that Condition~\ref{cond:unimodal} holds. 
Without loss of generality, we always define $g(\theta^*) = \theta^*$. Then, part (ii) implies that for any $\theta$ there exists $k \leq |\Theta|$ such that $g^k(\theta) = \theta^*$, and $\theta^*$ is the only attracting fixed point of $g$. 
We will call $g$ a canonical transition function and a sequence of the form $(\theta, g(\theta), \dots, g^k(\theta) = \theta^*)$ a canonical path. 
We can think of a canonical path as a candidate ``greedy search path'' since the posterior keeps increasing along the path, but note that a greedy search does not necessarily follow a canonical path since $g(\theta)$ may not be the maximizer of $\pi$ in $\cN(\theta)$.  
\end{remark}
  
Roughly speaking, in the model selection context, $\theta^*$ can be thought of as the ``true'' data-generating model, and Condition~\ref{cond:unimodal} can be interpreted as an algorithmic consistency property since it implies that $\theta^*$ is the unique mode of   $\pi$ and the greedy search always returns $\theta^*$; see part (i) of Theorem~\ref{th:path}. 
For variable selection, \citet{yang2016computational} proved that Condition~\ref{cond:unimodal} holds with high probability under some mild high-dimensional assumptions  and then used the canonical path method of~\citet{sinclair1992improved} to bound the mixing time of the random walk MH algorithm.  
We generalize their result to our setup. 
  
\begin{theorem}\label{th:path}
Let $\Theta, \cN, \pi,  g, \theta^*, t_1, t_2, p$ be as given in Condition~\ref{cond:unimodal}.  
Let $\bP^h$ be given by~\eqref{eq:general.MH} and $\bP_{\rm{lazy}}^h = (\bP^h + \mathbf{I}) / 2$ be its  lazy  version.  
The following statements hold. 
\begin{enumerate}[label=(\roman*)]
    \item  The greedy search always returns $\theta^*$ regardless of the initial state. 
    \item  If $t_2 > t_1$, then $\pi(\theta^*) \geq 1- p^{-(t_2 - t_1)}$. 
    \item  If $t_2 > t_1 $, then 
        \begin{align*}
         \Tmix ( \bP_{\rm{lazy}}^h )\leq \frac{ 4 \ell_{\rm{max}}   }{ \left\{ 1 - p^{-(t_2 - t_1)} \right\}  \min_{\theta \neq \theta^*} \bP^h(\theta, g(\theta)) }  \log  \left( \frac{4}{ \pi_{\rm{min}}   }  \right), 
        \end{align*}
    where $\ell_{\rm{max}} = \max_{ \theta \neq \theta^*} \min\{ k \geq 1 \colon g^k(\theta) = \theta^* \}$ and $\pi_{\rm{min}} = \min_{\theta \in \Theta} \pi(\theta) $. 
    \item If $t_2 > t_1$ and $h \equiv 1$, then $\bP^h(\theta, g(\theta)) \geq p^{- t_1}$. 
\end{enumerate}
\end{theorem}
\begin{proof}
See Supplement B.4. 
\end{proof}

\begin{remark}\label{rmk:lazy}  
Part (ii) of Theorem~\ref{th:path} shows that $\pi$ concentrates on $\theta^*$, which can be further used to show the strong selection consistency of a Bayesian model selection procedure (see Section~\ref{sec:node.sel}). This is very useful  since   we only require polynomial (in $p$) bounds for the ratio $\pi(g(\theta)) / \pi(\theta)$ in Condition~\ref{cond:unimodal}, while $| \Theta |$ may be (super-)exponential in $p$. 
For a  random walk MH algorithm, by parts (iii) and (iv), the order of mixing time is given by $p^{t_1} \ell_{\max} \log \pi_{\rm{min} }^{-1}$. 
For the greedy search, note that   $\ell_{\max}$ is an upper bound for the steps needed to find $\theta^*$, and in each step the search needs to evaluate $\pi$ for at most $p^{t_1}$ states. Hence, the greedy search and random walk MH algorithm have very similar complexity. 
\end{remark}

We now show that the mixing time bound in Theorem~\ref{th:path} can be improved. The new bound given in Theorem~\ref{th:path.better} has two major advantages. First, it does not involve $\ell_{\rm{max}}$, which can be large.  
Second, it replaces $\min_{\theta \neq \theta^*} \bP^h(\theta, g(\theta)) $ in Theorem~\ref{th:path} with 
$ \min_{\theta \neq \theta^*} \bP^h(\theta, \cN^*(\theta) )$, 
where $\cN^*(\theta)$ is the set of  all ``desirable moves'' for $\bP^h$ at $\theta$ including $g(\theta)$. 
This is key to bounding the mixing times of  informed MH algorithms. 
To prove Theorem~\ref{th:path.better}, we use a novel path argument that may be of independent interest. For each $\theta \neq \theta^*$, we construct a set of paths from $\theta$ to $\theta^*$ using all desirable moves. By properly weighting these paths, we are able to bound the mixing time using a Poincar\'{e}-type inequality~\citep{kahale1997semidefinite}, which significantly generalizes the canonical path method.    
See Supplement B for details.  

\begin{theorem}\label{th:path.better}
Let $\Theta, \cN, \pi,  g, \theta^*, t_1, t_2, p$ be as given in Condition~\ref{cond:unimodal}, and 
$\bP^h$ be given by~\eqref{eq:general.MH}.  
For each $\theta \neq \theta^*$, define $\cN^*(\theta) = \{ \theta' \in \cN(\theta) \colon \pi(\theta') \geq p^{t_2} \pi(\theta)  \}$.
Let $\pi_{\rm{min}} = \min_{\theta \in \Theta} \pi(\theta)$. 
If $t_2 > t_1$, then,
\begin{align*}
    \Tmix (\bP_{\rm{lazy}}^h)\leq \frac{ 2 C(p, t_1, t_2)  \log  \left( \frac{4}{ \pi_{\rm{min}}  }  \right) }{ \min_{\theta \neq \theta^*} \bP^h(\theta, \cN^*(\theta) ) }, 
    \quad \text{ where } C(p, t_1, t_2) = \frac{1 + (1 - p^{t_1 - t_2} )^{-1}}{[1 - p^{(t_1 - t_2)/2} ]^2}. 
\end{align*}
\end{theorem}
\begin{proof}
See Supplement B.6. 
\end{proof}
 
\begin{remark}\label{rmk:flow.mix}
Theorem~\ref{th:path.better} can be used to immediately improve some existing mixing time bounds in the literature. Both~\citet{yang2016computational} and~\citet{zhuo2021mixing} proved the rapid mixing of a random-walk MH algorithm for some high-dimensional discrete-state-space problem by showing Condition~\ref{cond:unimodal} holds for some $g$ and  using the canonical path method underlying Theorem~\ref{th:path}. Theorem~\ref{th:path.better} shows that $\ell_{\rm{max}}$ can be dropped (in an asymptotic setting where $p \rightarrow \infty$ and $t_1 < t_2$ are fixed, $C(p, t_1, t_2) \rightarrow 2$). 
\end{remark}

\begin{remark}\label{rmk:flow.informed}
Another important application of Theorem~\ref{th:path.better} is the mixing time analysis of informed MH algorithms.  
Define $\cN^t(\theta) = \{ \theta' \in \cN(\theta) \colon \pi(\theta') \geq p^{t} \pi(\theta)  \}$. 
If $t_2$ is sufficiently large and the function $h$ in~\eqref{eq:general.proposal} is chosen properly,  it is often possible to show that $\min_{\theta \neq \theta^*} \bP^h (\theta, \cN^t(\theta) ) \geq c$  for some $t > t_1$ and fixed constant $c > 0$.  
Indeed, for the LIT-MH algorithm for variable selection considered in~\citet{zhou2021dimension}, one can follow their calculations to verify that this holds for $c = 1/4$, and then by Theorem~\ref{th:path.better},  the order of the mixing time is only $\log \pi_{\rm{min}}^{-1}$. 
This cannot be achieved by using Theorem~\ref{th:path}, since $\bP^h(\theta, g(\theta))$ can be as small as $O(p^{-t_1})$ (e.g., when all neighboring states have the same posterior probabilities).  
\end{remark} 

The theory developed in this section relies on Condition~\ref{cond:unimodal}, which is a property of the triple $(\Theta, \cN, \pi)$.  
If Condition~\ref{cond:unimodal} holds, one can use Theorem~\ref{th:path.better} to study the mixing times of  any local MH algorithm.  
For simplicity, for the structure learning problem to be studied in the rest of this paper,  we will only consider the random walk proposal, and our main task is to construct a triple  $(\Theta, \cN, \pi)$ that satisfies Condition~\ref{cond:unimodal}.  
We will often define $\cN$ on $\Theta$ and then use $\cN$ to refer to a neighborhood relation on a restricted space $\Theta_0 \subset \Theta$; this  means that the neighborhood of $\theta \in \Theta_0$ is given by $\cN(\theta) \cap \Theta_0$.  
Note that even if $(\Theta, \cN, \pi)$  satisfies Condition~\ref{cond:unimodal}, $(\Theta_0, \cN, \pi)$ may not, which is one challenge in the sparse structure learning problem to be considered. 

\section{The \RWGES{} sampler and its canonical paths} \label{sec:main} 
\subsection{Notation and terminology} \label{sec:dag.notation}
We set up the notation to be used for the structure learning problem. 
Let $[p] = \{1, \dots, p\}$ and $|\cdot|$ denote the cardinality of a set.  
A subset of $[p]$ is typically denoted by $S$. 
The Hamming distance between two sets $S, S'$ is denoted by  $\HD(S, S') = |S \setminus S'| + |S' \setminus S|$. 

A DAG $G$ is a pair $(\Vert, \Edge)$ where $\Vert$ is the vertex set and $\Edge \subset \Vert \times \Vert $ is the set of directed edges.  
Throughout the paper, we assume  $V = [p]$ for DAG models, representing random variables $\sX_1, \dots, \sX_p$. 
Note that $(i, i) \notin \Edge$ for any $i  \in [p]$. 
Let $|G|$ denote the number of edges in the DAG $G$; thus, $|G| = |E|$.  
We use the notation $i \rightarrow j \in G$ to mean that $(i, j) \in \Edge$ and $(j, i) \notin \Edge$. 
The notation $i \rightarrow j \notin G$ means that $(i, j) \notin \Edge$. 
For two DAGs $G = (\Vert, \Edge)$ and $G' = (\Vert, \Edge')$, we write $G' = G \cup \{ i \rightarrow j \}$ if $\Edge' = \Edge \cup (i, j)$, and $G' = G \setminus \{ i \rightarrow j \}$ if $\Edge' = \Edge \setminus (i, j)$.  
We write $G = G'$ if and only if $G$ and $G'$ have  the same  vertex set and edge set. 
Given a DAG $G$, we say node $i$ is a parent of node $j$ (and node $j$ is a child of node $i$) if $i \rightarrow j \in G$.
Let $\Pa_j(G) = \{ i \in [p] \colon \;  i \rightarrow j \in G \}$  denote the set of parents of node $j$; the in-degree of  node $j$ is  $| \Pa_j(G)|$. 
The maximum in-degree of $G$ is $\max_j |\Pa_j(G)|$. 
Similarly, let $\Ch_j(G) =  \{ i \in [p] \colon \; j \rightarrow i \in G \}$, and $|\Ch_j(G)|$ is called the out-degree of node $j$. 
The degree of a node is the sum of its in-degree and out-degree, and the maximum degree of $G$ is $\max_j |\Pa_j(G) \cup \Ch_j(G)|$. 
We may simply write $\Pa_j$ if we are not referring to a specific DAG or the underlying DAG is clear from context. 
The Hamming distance between two DAGs $G, G'$ is defined by $\HD(G, G') = \sum_{j \in [p]} \HD( \Pa_j(G), \, \Pa_j(G') ).$

An equivalence class of DAGs is typically denoted by $\cE$. 
We always interpret $\cE$ as a set of DAGs, and use $|\cE|$ to denote the number of member DAGs in $\cE$.  
The equivalence class of a DAG $G$ is also denoted by $\EG{G}$; thus, $\cE = \EG{G}$ if and only if $G \in \cE$. 
The set of CI relations encoded by a DAG $G$ or an equivalence class $\cE$  is denoted by $\CI(G)$ or $\CI(\cE)$, respectively. Note that we always have $\CI(G) = \CI(\EG{G})$.

We say a $p$-variate  distribution $\mu$ is Markovian w.r.t. a $p$-vertex DAG $G$
and $G$ is an independence map (I-map) of $\mu$ if all CI relations encoded by $G$ hold for $\mu$. 
If the converse is also true, we say $\mu$ is faithful or perfectly Markovian w.r.t. $G$, and $G$ is a perfect map of $\mu$~\citep{spirtes2000causation, studeny2006probabilistic}.   
We say  $\mu$ is DAG-perfect if there exists some DAG that is a perfect map of $\mu$.  
A DAG $G$ is an I-map of a DAG $G'$ and its equivalence class $\EG{G'}$ if $\CI(G) \subseteq \CI(G')$,  and  $G$ is a minimal I-map (of $G'$) if any sub-DAG of $G$ (different from $G$) is not an I-map of $G'$.  
Given the set $\CI(G)$, a minimal I-map of $G$ with ordering $\sigma$, which we denote by $G_\sigma$, can be uniquely defined as follows: for any $i < j$, $\sigma(i) \rightarrow \sigma(j) \in G_\sigma$ if and only if nodes $\sigma(i), \sigma(j)$ are not conditionally independent given nodes $\{\sigma(1), \dots, \sigma(j - 1) \} \setminus \{ \sigma(i) \}$~\citep{solus2017consistency}.  
An example for $p = 3$ is given below.  
If $\mu$ is a $p$-variate positive measure, a unique minimal I-map of $\CI(\mu)$ with ordering $\sigma$ can be constructed in an analogous manner~\citep[Chapter 3.4]{koller2009probabilistic}.

\begin{example}\label{ex:minimal.imap}
Let $p = 3$ and  $G$ be the DAG $1 \rightarrow 3 \leftarrow 2$. 
Let $G_\sigma$ denote the minimal I-map of $G$ with ordering $\sigma$. 
If $\sigma = (1, 2, 3)$ or $(2, 1, 3)$, then  $G_\sigma = G$ since $\sigma$ is an ordering of $G$. If $\sigma$ is any other ordering, then $G_\sigma$ is the complete DAG (i.e., a DAG without missing edge). For example, if $\sigma = (1, 3, 2)$, $G_\sigma$ has three edges $1 \rightarrow 3$, $1 \rightarrow 2$ and $3 \rightarrow 2$; in particular, the edge $1 \rightarrow 2$ is included since $1 \notindep 2 \mid 3$ in $G$. 
\end{example}

\subsection{Search spaces and posterior distributions} \label{sec:search.space} 
To apply the general theory developed in Section~\ref{sec:mix}, it suffices to construct a triple $(\Theta, \cN, \pi)$ that satisfies Condition~\ref{cond:unimodal}. We will do this for both high-dimensional DAG selection and structure learning.  
Recall that for DAG selection, our goal is to estimate an underlying DAG model from the data when we know it has some ordering $\sigma$, and for structure learning, our goal is to estimate the equivalence class of the DAG model. 
We first define the search spaces (i.e., model space) for the two problems. 
Let $\dag$ denote the space of all $p$-vertex DAGs, which grows super-exponentially in $p$. 
We consider two sparsity constraints for DAGs, one for the maximum in-degree and the other for the maximum out-degree. 
For $\din, \dout \in [p]$,  define 
\begin{equation*}
\dag(\din, \dout) = \{  G \in \dag \colon  \max_j | \Pa_j(G) |  \leq \din, \text{ and } \max_j | \Ch_j(G)  | \leq \dout  \}. 
\end{equation*}
Since all Markov equivalent DAGs have the same skeleton, the two constraints ensure that the degree of any DAG $G' \in \EG{G}$ for some $G \in \dag(\din, \dout)$ is at most $\din + \dout$.  
One may also use a single constraint for the maximum degree, but for the theoretical analysis to be carried out in this paper, it is more convenient to specify $\din, \dout$ separately.  
This setup is appealing to practitioners, since a DAG model with bounded degree is easier to visualize and interpret.   
Let $\cpdag(\din, \dout)$ denote the space of ``sparse equivalence classes''  defined by
\begin{equation*}
\cpdag (\din, \dout) =   \left\{ \EG{G}  \colon  G \in \dag(\din,  \dout)  \right\}. 
\end{equation*}
Hence, $\cpdag (\din, \dout) $ is the set of all equivalence classes that contain at least one member in $\dag(\din, \dout)$.   
We will use  $\cpdag (\din, \dout) $ as the model space for the sparse structure learning problem. 
The unrestricted space is denoted by $\cpdag = \cpdag(p, p)$. 
 
Recall that $\bbS^p$ is the space of all permutations of $[p]$. For each $\sigma \in \bbS^p$, let 
\begin{align*}
\dag^\sigma =\;&  \{ G \in \dag  \colon  \sigma \text{ is a topological ordering of } G   \}  \\
=\;& \{ G \in \dag  \colon     \sigma(j) \rightarrow \sigma(i)   \notin G \text{ for any } i < j  \}. 
\end{align*} 
Note a DAG may have multiple orderings; in particular, the empty DAG belongs to $\dag^\sigma$ for any $\sigma \in \bbS^p$.   
Let $\dag^\sigma(\din, \dout) = \dag^\sigma \cap \dag(\din, \dout)$ denote the space of sparse DAG models with ordering $\sigma$, which is the space we consider for the sparse DAG selection problem.  

For our target posterior probability distributions, we assume they can be expressed by using a Bayesian scoring criterion $\score \colon \dag \to \bbR$ such that $\score(G) = \score(G')$ for any Markov equivalent $G$ and $G'$, a property known as ``score equivalence''~\citep{chickering2002optimal}. 
For an equivalence class $\cE$, define $\score(\cE) = \score(G)$ using any $G \in \cE$. 
Let the un-normalized posterior probability of a DAG $G$ be given by $e^{\score(G)}$, and that of an equivalence class $\cE$ be given by  $e^{\score(\cE)}$ (see Section~\ref{sec:cpdag.model} for more details).  
We further assume that $\score$ is decomposable:  for each $G$, 
\begin{equation*}\label{eq:decomposable.score}
    \score(G) = \sum_{j \in [p]} \scorej(\Pa_j(G)),
\end{equation*}
where for each $j$, $\scorej \colon 2^{[p]} \rightarrow \bbR$ gives the local score at node $j$. 

\subsection{Neighborhood functions and the \RWGES{} sampler} \label{sec:rwges} 
We  define our neighborhood function on $\cpdag(\din, \dout)$  by considering operations on all member DAGs of each equivalence class. 
To this end, we first define three neighborhoods on the unrestricted space $\dag$ for each DAG $G$, which correspond to three types of edge modification: addition, deletion and swap.  
\begin{align*}
\adds(G) =\;&  \left\{ G' \in \dag  \colon  G' = G \cup \{ i \rightarrow j \} \text{ for some }  i\rightarrow j \notin G   \right\}, \\
\dels(G) =\;&  \left\{ G' \in \dag  \colon  G' = G \setminus \{ i \rightarrow j \} \text{ for some } i\rightarrow j \in G  \right\}, \\ 
\swaps(G) =\;&  \left\{ G' \in \dag \colon   G' = ( G \cup \{ k \rightarrow j \} ) \setminus \{ \ell \rightarrow j \} \text{ for some }  k \rightarrow j \notin G, \,  \ell \rightarrow j \in G \right\}. 
\end{align*}
Note that a swap move consists of adding an incoming edge and deleting one at the same node, which is a straightforward extension of the swap proposal used in variable selection problems. Define the ``add-delete-swap neighborhood'' of $G$ by 
\begin{equation}\label{eq:def.nb}
    \nb(G) =  \adds (G) \cup \dels (G) \cup \swaps (G). 
\end{equation}  
For each equivalence class $\cE \in \cpdag$,  define  
\begin{equation}  
\nbg(\cE) = \left\{ \EG{G'} \colon G' \in \nb(G)  \text{ for some } G \in \cE \right\},  \label{eq:def.nbg}
\end{equation}
and define the sets $\adds(\cE), \dels(\cE)$ and $\swaps(\cE)$ analogously; for example, $\cE' \in \adds(\cE)$  if and only if there exist $G \in \cE$ and $G' \in \cE'$ such that $G' \in \adds(G)$. (The neighborhood notation is overloaded here, but the meaning should be clear from the argument.) 
Clearly, $\nbg(\cE) = \adds(\cE) \cup \dels(\cE) \cup \swaps (\cE)$, and  $\nbg$ is symmetric on both $\dag$ and $\cpdag$.  
The following lemma gives a bound on the size of $\nbg(\cE)$, which is needed later when we verify part (i) of Condition~\ref{cond:unimodal}. 

\begin{lemma}\label{lm:bound.nb}
For any $\cE \in \cpdag(\din, \dout)$,  
$$  | \nbg(\cE) \cap \cpdag(\din, \dout) | \leq 3 p(p - 1) (\din + \dout) 2^{\din + \dout}.$$
\end{lemma} 
\begin{proof}
See Supplement D.1. 
\end{proof}

As explained in Section~\ref{sec:local.search}, we can construct a random walk MH algorithm on the restricted space $\cpdag(\din, \dout)$ using $\nbg$. 
The proposal distribution is given by $\bK(\cE, \cE') = 1 / | \nbg(\cE) |$ for each $\cE' \in \nbg(\cE)$, where $\nbg(\cE)$ is still defined on the unrestricted space (if we propose $\cE' \notin \cpdag(\din, \dout)$, we simply reject the proposal). 
It should be noted that, in practice, there is no need to calculate the size of $\nbg(\cE)$ or enumerate member DAGs in $\cE$.  States in $\nbg(\cE)$ can be proposed very efficiently by using some local graph operators, which is explained in detail in Supplement H.1.   
We call this sampler random walk GES (\RWGES{}), since it uses a neighborhood function similar to that of the GES algorithm~\citep{chickering2002optimal}, which is a two-stage greedy search on the space $\cpdag$ that uses $\adds$ in the first stage and $\dels$ in the second. 
Swap moves are not used in GES, and we will use $\cN_{\rm{ges}}(\cdot) = \adds(\cdot) \cup \dels(\cdot)$ to denote the neighborhood relation used by GES.

\subsection{Motivating examples} \label{sec:N.examples} 
Assume  the data-generating distribution is perfectly Markovian w.r.t. some DAG $G^*$ (which henceforth is called the ``true DAG'') and let $\cE^* = \EG{G^*}$ be the true equivalence class. 
In the classical asymptotic regime where $p$ is fixed and sample size $n$ tends to infinity,  \citet{chickering2002optimal} proved that for a large class of Bayesian scoring criteria, GES and the greedy search on $(\cpdag, \cN_{\rm{ges}})$ are consistent.  
According to our discussion following Condition~\ref{cond:unimodal}, if we fix $p$ and let $n \rightarrow \infty$, we can mimic the consistency proof of GES and use Theorem~\ref{th:path} to bound the mixing time of  the random walk MH algorithm on $(\cpdag, \cN_{\rm{ges}})$. 
The purpose of this subsection is to use examples to illustrate the technical challenges we encounter as we try to extend this argument to the space $\cpdag(\din, \dout)$. 

To simplify the discussion, we assume the score $\score$ (i.e., log-posterior) satisfies the following condition, known as local consistency~\citep{chickering2002optimal}  (which is only used for making heuristic arguments in this section).  
It essentially says that all CI relations encoded by $G^*$ can be correctly identified, which we expect to happen when $n = \infty$.

\begin{condition}\label{cond:local.consist} 
If distinct DAGs $G, G'$ satisfy $G' = G \cup \{i \rightarrow j\}$,  then (i) $\score(G) > \score(G') $ if $i \indep j \mid \Pa_j(G)$ in $G^*$, and  
(ii) $\score(G') > \score(G) $ if $i \notindep j \mid \Pa_j(G)$ in $G^*$.   
\end{condition}   
Under Condition~\ref{cond:local.consist},  GES is consistent~\citep{chickering2002optimal} and no equivalence class other than $\EG{G^*}$  can be a local mode on $(\cpdag,  \cN_{\rm{ges}})$ (the reason will become clear in the next subsection). 
However, once we introduce the  degree constraint (which is necessary for proving high-dimensional consistency results),  local modes can arise on the boundary of the restricted space. 
To illustrate this, we construct two examples below.  
Example~\ref{ex:why.swap} explains  why swap moves are useful and why in the consistency proof of GES 
we only consider edge removals when the current equivalence class is an I-map of $\cE^*$.   
Example~\ref{ex:local.mode.dag} shows that for the sparse DAG selection problem with degree constraints,  local modes can also arise unexpectedly. 
 
\begin{example}\label{ex:why.swap}
Let $p = 3$ and  DAGs $G^*$, $G$ be given by 
\begin{align*}
G^* \colon 1 \rightarrow 2 \rightarrow 3, \quad G \colon 2 \leftarrow 1 \rightarrow 3. 
\end{align*}  
Consider how to increase the score of $G$ by single-edge addition or deletion under Condition~\ref{cond:local.consist}. 
Since $1 \notindep 2$ and $1 \notindep 3$ in $G^*$,   both edges cannot be removed. 
However, since $2 \notindep 3 \mid 1$ in $G^*$, we can add the edge $2 \rightarrow 3$ to $G$ to increase the score. 
The complete DAG $G \cup\{2 \rightarrow 3\}$ is an I-map of $G^*$, from which we should be able to remove the edge $1 \rightarrow 3$ since $1 \indep 3 \mid 2$. 
One can apply the same argument to any other DAG in $\cE = \EG{G}$ and conclude that $\score(\cE) > \score(\cE')$ for any $\cE' \in \dels(\cE)$. 
In particular, we cannot remove the edge between nodes $1, 3$ from any $G \in \cE$, though the two nodes are not connected in $G^*$.  
 
Next, we impose the constraint $\din = 1$.  
Since $G$ has two edges, we have $\adds(\cE) = \{\tcE \}$, where $\tcE$ is the equivalence class of all complete DAGs. But any complete DAG has maximum in-degree $2$, which means that moving from $\cE$ to $\tcE$ is forbidden and $\cE$ is a local mode on $( \cpdag(\din = 1,  \dout = p),  \cN_{\rm{ges}} )$. However, a swap move allows us to directly move from $G$ to $G^*$ by removing $1 \rightarrow 3$ and adding $2 \rightarrow 3$ simultaneously; that is, $\cE$ is not a local mode on $( \cpdag(1,  p),  \nbg )$ where $\nbg$ is given by~\eqref{eq:def.nbg}. 
\end{example}
 
\begin{example}\label{ex:local.mode.dag}
Consider the DAG selection problem with $p = 5$ and $\sigma = (1, 2, 3, 4, 5)$.  Let $G^*, G$ be DAGs in $\dag^\sigma$ with edge sets 
\begin{align*}
G^* \colon \{  (1, 2), (1, 3), (2, 4), (2, 5) \}, \quad  G \colon \{  (1, 2), (1, 4), (2, 3), (2, 5) \}. 
\end{align*}
Under Condition~\ref{cond:local.consist}, we can increase the score of $G$ by adding $1 \rightarrow 3$ or $2 \rightarrow 4$, but deleting $1 \rightarrow 4$ or $2 \rightarrow 3$ will lower the score since $1 \notindep 4$ and $2 \notindep 3$ in $G^*$. 
Now let  $\din = \dout = 2$. Though $G^*, G \in \dag^\sigma(2, 2)$,   $G$ is a local mode on $(\dag^\sigma(2, 2), \nbg)$ because adding either $1 \rightarrow 3$  or $2 \rightarrow 4$ violates the out-degree constraint (note swap moves may not be helpful either).  
\end{example}

\subsection{Overview of the canonical path construction} \label{sec:path.summary} 
Let the true DAG model $G^* \in \dag(\din, \dout)$ and let $\cE^* = \EG{G^*}$.   
To verify Condition~\ref{cond:unimodal} for the triple $(\cpdag(\din, \dout), \nb, e^\score)$, we need to show that for any $\cE \neq \cE^*$, we can identify some $g(\cE) \in \nb(\cE)$ such that $\score(g(\cE)) > \score (\cE)$. 
By Remark~\ref{rmk:condition.uni}, this is equivalent to constructing a canonical path from any $\cE$ to $\cE^*$.  
We briefly discuss the main idea behind our construction in this subsection.  
It will be  helpful to think of the space  $\cpdag(\din, \dout)$ as the union of $\{ \dag^\sigma(\din, \dout) \colon \sigma \in \bbS^p \}$ and think of structure learning as simultaneous DAG selection for all $p!$ orderings. 

Suppose \RWGES{} starts at some $\cE$ which contains a member DAG $G \in \dag^\sigma(\din, \dout)$ for an arbitrary $\sigma \in \bbS^p$. We will first construct a canonical path on  $\dag^\sigma(\din, \dout)$, denoted by $(G_0 = G, G_1, G_2, \dots, G_k)$,  where the terminal state $G_k$ (if possible) is given by 
\begin{equation}\label{eq:def.hatG}
    \hat{G}(\sigma) = \argmax_{G \in \dag^\sigma(\din, \dout)} \score (G).
\end{equation} 
If Condition~\ref{cond:local.consist} holds and $G^*_\sigma \in \dag^\sigma(\din, \dout)$,   we claim $\hat{G}(\sigma) = G^*_\sigma$, where we recall $G^*_\sigma$ is the minimal I-map of $G^*$ with ordering $\sigma$.    
To show this, without loss of generality, assume $\sigma = (1, 2, \dots, p)$, and note that the following CI relations hold in $G^*$ for each $j \in [p]$ by the definition of minimal I-maps (see Section~\ref{sec:dag.notation}):  
\begin{equation}\label{eq:imap.CI}
j \indep [j - 1] \setminus \Pa_j(G^*_\sigma) \mid  \Pa_j(G^*_\sigma),   \text{ and } j \notindep i \mid  [j-1] \setminus \{i\} \text{ fo each } i \in \Pa_j(G^*_\sigma). 
\end{equation}
Under Condition~\ref{cond:local.consist}, the first property in~\eqref{eq:imap.CI} implies that if $G$ is a DAG such that $\Pa_j(G^*_\sigma) \subsetneq \Pa_j(G)$, we can increase the score of $G$ by removing some edge $\ell \rightarrow j$,  and the second implies that if $\Pa_j(G^*_\sigma) \not\subseteq \Pa_j(G) $, we can add some edge $k \rightarrow j$  or perform a swap. 
This shows $\hat{G}(\sigma) = G^*_\sigma$ and  suggests how we can construct the path from $G$ to $G^*_\sigma$. 
However, as discussed in the previous subsection, the main challenge is to deal with the degree constraints. 

Now suppose that \RWGES{} can move from $\cE$ to $\EG{ G^*_\sigma}$ following the path $(\cE, \cE_1, \cE_2, \dots, \cE_k)$ where $\cE_{i} = \EG{G_i}$. If $\EG{G^*_\sigma} = \cE^*$ (i.e., $G^*_\sigma = G^*$ or $G^*_\sigma$ is Markov equivalent to $G^*$), we have obtained the path from $\cE$ from $\cE^*$. 
If $\EG{G^*_\sigma} \neq \cE^*$, then one can use the famous Chickering algorithm~\citep{meek1997graphical, chickering2002optimal} to construct a path from $\EG{G^*_\sigma}$ to $\cE^*$ (see Lemma D3 in Supplement D.4).  Intuitively, since $G^*_\sigma$ is an I-map of $G^*$, the skeleton of $G^*$ must be a subset of the skeleton of $G^*_\sigma$ (see Lemma C3), and we can remove edges from some other member DAG of  $\EG{G^*_\sigma}$.  
 
Unfortunately, to rigorously prove that $\hat{G}(\sigma) = G^*_\sigma$ for all $\sigma \in \bbS^p$  in high-dimensional settings, one often needs to impose restrictive assumptions on the true data-generating mechanism, such as strong faithfulness~\citep{nandy2018high}. 
To our knowledge, there is no fully satisfactory solution to this issue, and we will make a similar assumption in our theoretical analysis in Section~\ref{sec:high} and assume $\hat{G}(\sigma) = G^*_\sigma$ in this section. 
Nevertheless, we will construct canonical paths of \RWGES{} using a flexible and finer argument, which, in some cases, can be used to show the rapid mixing of \RWGES{} under weaker assumptions; see Supplement I. 
 
\subsection{Canonical add-delete-swap paths of \RWGES{}} \label{sec:path.cpdag} 
The discussion above suggests that we can construct the canonical paths of \RWGES{} by first constructing the canonical paths for all DAG selection problems. 
To this end, fix an arbitrary $\sigma \in \bbS^p$ first, and consider the sparse DAG selection problem with state space $\dag^\sigma(\din, \dout)$, neighborhood function $\nb$ and posterior $e^\score$.   We treat $G^*_\sigma$ as the true model, and we need to construct a candidate canonical transition function for this problem, $g^\sigma \colon \dag^\sigma(\din, \dout) \rightarrow \dag^\sigma(\din, \dout)$,  such that for any $G \in \dag^\sigma(\din, \dout)$, 
\begin{align*}
    g^\sigma(G) \in \nb(G),  \text{ and }  (g^\sigma)^k(G) = G^*_\sigma \text{ for some } k < \infty. 
\end{align*}
For Condition~\ref{cond:unimodal} to hold we also need $\score(g^\sigma (G)) > \score(G)$.  To overcome the out-degree constraint issue illustrated by Example~\ref{ex:local.mode.dag}, we will construct  $g^\sigma(G)$ by first analyzing each node separately.  
Observe that if there is no out-degree constraint, the DAG selection problem  is equivalent to $p$ variable selection problems: for each $j$, we need to estimate the  set $\Pa_j$ which takes value in the space $\model^\sigma(j,  \, \din)$  defined by 
\begin{equation}\label{eq:def.model}
\model^\sigma(j,  \, \din) = \left\{  S \subseteq  \cA_p^\sigma(j) \colon |S| \leq \din    \right\}, \quad 
\cA_p^\sigma(j) = \left\{ k \in [p]\colon \sigma^{-1}(k)   < \sigma^{-1}(j) \right\}, 
\end{equation}
where $\cA_p^\sigma(j)$ is the set of variables that precede $\sX_j$ in the ordering $\sigma$.   
Motivated by the discussion following~\eqref{eq:imap.CI}, we construct a transition function on the space $\model^\sigma(j,  \, \din)$ in Definition~\ref{def:gj}, which gives the ``optimal'' add-delete-swap move for $\Pa_j$. 
Recall that we assume $\score(G) = \sum_j \scorej ( \Pa_j(G))$ for each $G$.

\begin{figure}[!t]
\begin{center}
\includegraphics[width=0.75\linewidth]{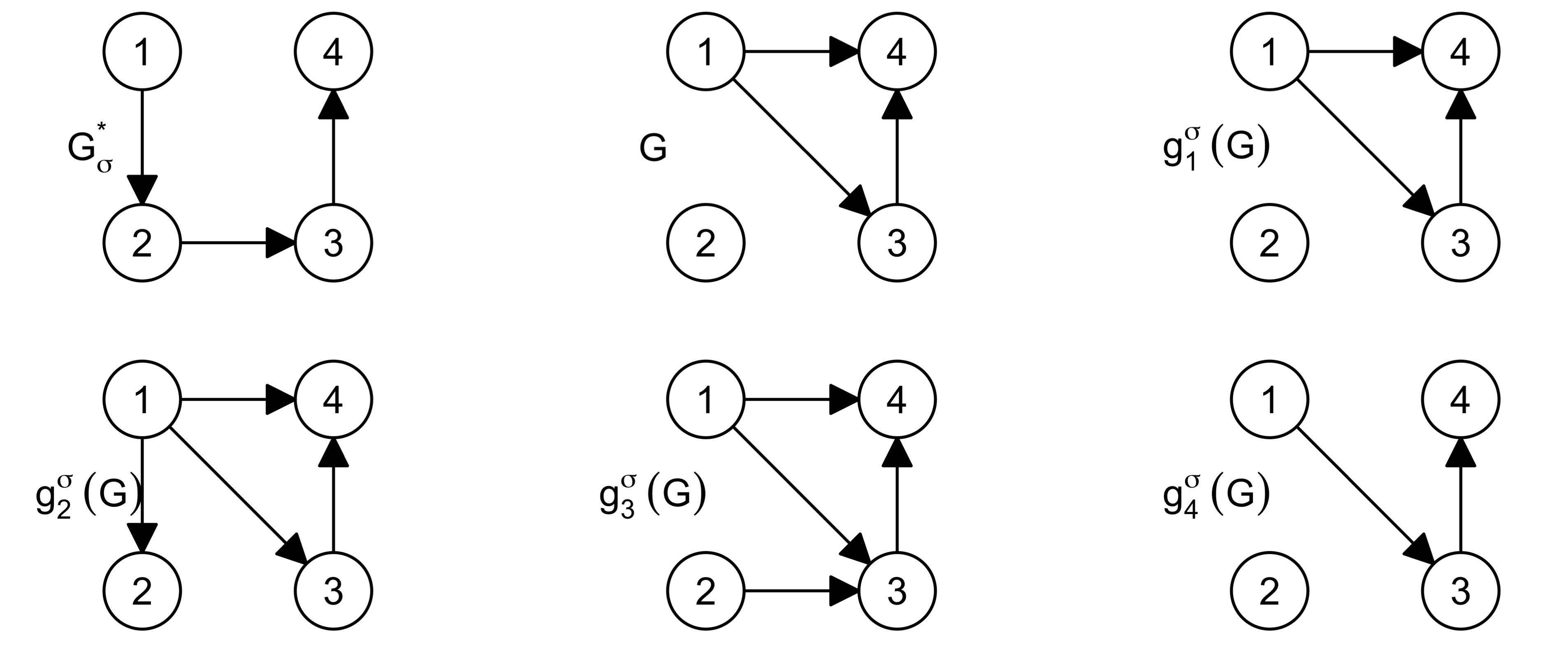} \\
\caption{An example for the operator $g_j^\sigma$. We consider four nodes with ordering $\sigma = (1, 2, 3, 4)$; assume $\din = 3$. 
 $G^*_\sigma$ has three edges, $1 \rightarrow 2$, $2 \rightarrow 3$ and $3 \rightarrow 4$. 
Consider another DAG $G$ with edges $1 \rightarrow 3$, $1 \rightarrow 4$ and $3 \rightarrow 4$. 
The DAGs $g_1^\sigma (G), g_2^\sigma (G), g_3^\sigma (G), g_4^\sigma (G)$ are shown above. 
For example, since  $\Pa_4(G^*_\sigma) = \{3\} \subset \Pa_4(G) = \{1, 3\}$, node $4$ is overfitted in $G$, and by part (ii) of Definition~\ref{def:gj},     $g_4^\sigma(G)$ is obtained by removing the edge $1 \rightarrow 4$ from $G$.  
}\label{fig1}
\end{center}
\end{figure}

\begin{definition}\label{def:gj}
Assume $G^*_\sigma \in \dag^\sigma(\din, \dout)$ and let $S^*_{\sigma, j} = \Pa_j(G^*_\sigma)$. 
For each $j$, we construct $g_j^\sigma \colon \model^\sigma(j,  \, \din) \rightarrow  \model^\sigma(j,  \, \din)$ as follows. 
Fix an arbitrary $S \in \model^\sigma(j,  \, \din)$, and let $T = S^*_{\sigma, j} \setminus S$ and $R = S \setminus S^*_{\sigma, j}$. 
\begin{enumerate}[label=(\roman*)]
    \item  If $S = S^*_{\sigma, j}$,  let $g_j^\sigma(S) = S^*_{\sigma, j}$.
    \item  If $S^*_{\sigma, j}\subset S$,  let $g_j^\sigma(S) = S \setminus \{ \tilde{\ell} \}$ where $\tilde{\ell}  = \argmax_{ \ell \in R } \scorej ( S \setminus \{ \ell \} )$. 
    \item  If $S^*_{\sigma, j} \not\subseteq S$ and $|S| < \din$,  let $g_j^\sigma(S) = S \cup \{ \tilde{k} \}$ where  $\tilde{k} = \argmax_{ k \in T } \scorej ( S \cup \{ k \} )$. 
    \item  If $S^*_{\sigma, j} \not\subseteq S$ and $|S| = \din$,  let $g_j^\sigma(S) = ( S \cup \{ \tilde{k} \} ) \setminus \{ \tilde{\ell} \}$ where $(\tilde{k}, \tilde{\ell}) = \argmax_{ (k, \ell) \in T \times R }$    $\scorej(  ( S \cup \{ k \} ) \setminus \{ \ell \} )$. 
\end{enumerate}
In case (ii), we say node $j$ is (strictly) overfitted; in cases (iii) and (iv), we say it is underfitted. 
We use $g_j^\sigma(G)$  to denote the DAG obtained by replacing the parent set of $j$ in $G$ with $g_j^\sigma( \Pa_j(G) )$; that is, 
$\Pa_j( g_j^\sigma( G ) ) = g_j^\sigma( \Pa_j(G) )$, and for any $i \neq j$, $\Pa_i( g_j^\sigma( G ) ) = \Pa_i(G)$.  
\end{definition}

\begin{remark}\label{rmk:gj.dist}
It is clear from definition that $\HD( g_j^\sigma(S), S^*_{\sigma, j}) < \HD(S, S^*_{\sigma, j})$ if $S \neq S^*_{\sigma, j}$.  
Further, $g_j^\sigma(G) \in \nb(G)$ and $\HD( g_j^\sigma(G), G^*_\sigma) < \HD(G, G^*_\sigma)$ if $\Pa_j(G) \neq \Pa_j(G^*_\sigma)$. 
In words, if node $j$ is overfitted in $G$, $g_j^\sigma(G)$ is obtained by removing an  incoming edge of node $j$. 
If node $j$ is underfitted, $g_j^\sigma(G)$ is obtained by adding an incoming edge of node $j$  (if the in-degree constraint is violated, remove another incoming edge of node $j$).
An example is provided in Figure~\ref{fig1}.  Note that this rationale  is similar to that for GES and forward-backward stepwise regression. We always first transform an underfitted model to overfitted and then remove redundant variables or edges (recall Example~\ref{ex:why.swap}).  
\end{remark}

\begin{remark}\label{rmk:gj.sel}
Consider the variable selection problem  with model space $\model^\sigma(j,  \, \din)$ and true model  $S^*_{\sigma, j}$. 
\citet{yang2016computational} proved that, under  very mild high-dimensional assumptions, $g_j^\sigma$ satisfies Condition~\ref{cond:unimodal} with high probability; that is, 
\begin{equation}\label{eq:conj.scorej}
\scorej ( g_j^\sigma( S ) ) - \scorej ( S ) \geq t \log p, \quad \quad \forall \,  S \in  \model^\sigma(j,  \, \din) \setminus \{ \Pa_j(G^*_\sigma) \}, 
\end{equation} 
for some $t > 0$ (in their conclusion $t$ is a universal constant). 
\end{remark}

Suppose that~\eqref{eq:conj.scorej} holds for each $j$. Then, to show that the triple $(\dag^\sigma(\din, \dout), \nb, e^{\score})$  satisfies part (ii) of Condition~\ref{cond:unimodal}, we only need to use  the operators $\{g_j^\sigma\colon j \in [p] \}$ to construct a path from any $G \in \dag^\sigma(\din, \dout)$ to $G^*_\sigma$.  At first glance, this seems trivial since we can use $g_j^\sigma$ repeatedly to convert any $\Pa_j(G)$ to $\Pa_j(G^*_\sigma)$. 
However,  the definition of $g_j^\sigma$ only guarantees that $g_j^\sigma(G) \in \dag^\sigma(\din, p)$, but the maximum out-degree of $g_j^\sigma(G) $ can be larger than that of $G$.  Indeed,  Example~\ref{ex:local.mode.dag} in Section~\ref{sec:N.examples} shows that,  in extreme cases,  none of the operators $g_1^\sigma, \dots, g_p^\sigma$ yields a DAG that is different from $G$  and belongs to  $\dag^\sigma(\din, \dout)$.  
Fortunately, we are able to prove that, as long as $\dout$ is chosen sufficiently large, there always exists some $j$ such that $g_j^\sigma$ yields a different DAG in $\dag^\sigma(\din, \dout)$. 
We define 
\begin{equation}\label{eq:def.dstar}
    d^*_\sigma = \max_{j \in [p]} |\Pa_j (G^*_\sigma) \cup \Ch_j(G^*_\sigma)|, \quad d^* = \max_{\sigma \in \bbS^p} d^*_\sigma,
\end{equation}
where $d^*$ will be used later in Theorem~\ref{th:ges.path}. 

\begin{lemma}\label{lm:pigeonhole}
Assume  $d^*_\sigma \leq \din$  and $\min\{ d^*_\sigma \din + 1, p \} \leq \dout$.  
For any $G \in \dag^\sigma(\din, \dout)$ such that $G \neq G^*_\sigma$,   there exists some $j \in [p]$ such that $g_j^\sigma(G) \in \dag^\sigma(\din, \dout)$ and $g_j^\sigma(G) \neq G$. 
\end{lemma}
\begin{proof}
The key idea of the proof is to use the pigeonhole principle multiple times to derive the contradiction. 
See Supplement D.2. 
\end{proof}

\begin{corollary}\label{coro:path.dag}
Let $\sigma \in \bbS^p$.  Assume that $d^*_\sigma \leq \din$  and $\min\{ d^*_\sigma \din + 1, p \} \leq \dout$.  
\begin{enumerate}[label=(\roman*)]
\item There exists a function $g^\sigma \colon \dag^\sigma(\din, \dout) \rightarrow \dag^\sigma(\din, \dout)$  such that for any $G \neq G^*_\sigma$,  
$g^\sigma(G) = g^\sigma_j(G) \neq G$ for some $j \in [p]$ and $(g^\sigma)^k(G) = G^*_\sigma$  for some $k \leq (d^*_\sigma + \din) p$. 
\item If~\eqref{eq:conj.scorej} holds for each $j \in [p]$, Condition~\ref{cond:unimodal} holds for the triple $(\dag^\sigma(\din, \dout), \nb, e^{\score})$ with $t_1 = 3$ and $t_2 = t$.   
\end{enumerate}  
\end{corollary}

\begin{proof} 
See Supplement D.3. 
\end{proof} 

We are now ready to construct a canonical transition function $g \colon \cpdag(\din, \dout) \rightarrow \cpdag(\din, \dout)$ for the structure learning problem using operators $\{g_j^\sigma\colon j \in [p] , \sigma \in \bbS^p \}$.
If $\cE$ contains a minimal I-map of $G^*$, we define $g(\cE)$ using 
Chickering algorithm~\citep{chickering2002optimal}; see Lemma D3 in the supplement.  
If not, by the definition of $\cpdag(\din, \dout)$, there exists $G \in \cE \cap \dag^\sigma(\din, \dout)$ for some $\sigma \in \bbS^p$, and we can define $g(\cE)$ using the function $g^\sigma$ constructed for the DAG selection problem. 
But note that we need to fix the DAG representation of each $\cE$ so that $g(\cE)$ can be defined uniquely. 
We give an explicit construction of $g$ in the proof of Theorem~\ref{th:ges.path},  the main result for this section.    

\begin{theorem}\label{th:ges.path}
Assume that $d^* \leq \din$ and $\min\{ d^* \din + 1, p \} \leq \dout$. Then,    $G^*_\sigma \in \dag^\sigma(\din, \dout)$ for each $\sigma \in \bbS^p$. 
Further, there exists a function $g \colon \cpdag(\din, \dout) \rightarrow \cpdag(\din, \dout)$  such that $g(\cE^*) = \cE^*$ and  the following  hold for any $\cE \in \cpdag(\din, \dout) \setminus \{\cE^*\}$.
\begin{enumerate}[label=(\roman*)]
    \item $g(\cE) = \EG{ g_j^\sigma( G ) }$ for some $j \in [p]$, $\sigma \in \bbS^p$ and  $G \in \cE \cap \dag^\sigma(\din, \dout)$ such that $g_j^\sigma(G) \neq G$. 
    \item There exist  $k \leq (d^* + \din) p$ and $k \leq \ell \leq (2d^* + \din) p$ such that $g^k(\cE) = G^*_\sigma$ for some $\sigma \in \bbS^p$ and $g^\ell(\cE) = \cE^*$.  
\end{enumerate}
\end{theorem}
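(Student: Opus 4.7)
The plan is to build $g$ explicitly via a two-phase construction, with the potential function $\DS$ from~\eqref{eq:def.dist} serving as the book-keeping device that both certifies strict progress and yields the length bounds. Write $\DS(\cE) = \HD(G, G^*_\sigma) + (|G^*_\sigma| - |G^*|)$ for the minimizing pair $(\sigma, G)$; Phase~1 will drive the first summand to zero (so that the current $\cE$ comes to contain $G^*_\sigma$ for some $\sigma$), and Phase~2 will then drive the second summand to zero (so that $\cE = \cE^*$).

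\textbf{Phase~1.} For $\cE$ whose $\DS$-minimizer $(\sigma, G)$ satisfies $G \neq G^*_\sigma$, Lemma~\ref{lm:exist} supplies a node $j$ with $g_j^\sigma(G) \in \dag^\sigma(\din,\dout)$ and $g_j^\sigma(G) \neq G$. I set $g(\cE) = \EG{g_j^\sigma(G)}$, using a fixed tie-breaking rule on $(\sigma, G, j)$ that, when possible, preserves $\sigma$ from one iterate to the next. Remark~\ref{rmk:gj.dist} gives $\HD(g_j^\sigma(G), G^*_\sigma) < \HD(G, G^*_\sigma)$, so $\DS(g(\cE)) < \DS(\cE)$. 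With consistent $\sigma$, Phase~1 traces an $\nbg$-path that stays inside $\dag^\sigma(\din,\dout)$ and terminates at $G^*_\sigma$, so the number of Phase~1 steps is at most $\HD(G_0, G^*_\sigma) \leq |G_0| + |G^*_\sigma| \leq (\din + d^*) p$, matching the claimed bound $k \leq (\din + d^*) p$.

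\textbf{Phase~2 and the main obstacle.} Once the current $\cE$ contains some $G^*_\sigma$ but $\cE \neq \cE^*$, every $G' \in \cE$ is an I-map of $G^*$, and DAG-perfectness ensures that for any topological ordering $\tau'$ of $G'$ and any $j'$, $S^*_{\tau', j'} \subseteq \Pa_{j'}(G')$ by uniqueness of the minimal Markov blanket. I appeal to Chickering's constructive proof of Meek's conjecture~\citep{chickering2002optimal}: covered edge reversals (which leave $\cE$ unchanged) produce some $G' \in \cE$, ordering $\tau'$, and node $j'$ with $S^*_{\tau', j'} \subsetneq \Pa_{j'}(G')$. Case~(ii) of Definition~\ref{def:gj} then deletes some edge in the set difference, which preserves the I-map property and steps strictly toward $\cE^*$. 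I set $g(\cE) = \EG{g_{j'}^{\tau'}(G')}$ for a tie-broken choice of $(G', \tau', j')$. Phase~2 thus contributes at most $|G^*_\sigma| - |G^*| \leq d^* p$ additional iterations, giving $\ell \leq (\din + 2d^*) p$. The main obstacle is establishing the existence of such a triple $(G', \tau', j')$: the specific DAG $G^*_\sigma$ handed off by Phase~1 admits no $g_j^\sigma$ deletion (being a minimal I-map for $\sigma$), so one must genuinely leverage the Markov-equivalence structure by exploiting that \emph{some} other representative of $\cE$ supports a valid $g_{j'}^{\tau'}$ move; Chickering's covered-edge-reversal argument is precisely what furnishes this. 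The argmax inside Definition~\ref{def:gj}(ii) is harmless, since \emph{every} edge in $\Pa_{j'}(G') \setminus S^*_{\tau', j'}$ is removable without breaking the I-map property.
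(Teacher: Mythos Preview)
Your two-phase plan is exactly the paper's approach: pick the $\DS$-minimizing pair $(\bar\sigma(\cE),\bar G(\cE))$, apply Lemma~\ref{lm:exist} when $\bar G(\cE)\neq G^*_{\bar\sigma(\cE)}$, and otherwise invoke Chickering's argument (packaged in the paper as Lemma~\ref{lm:imap.map}) to find $G'\in\cE$, $\tau'$, $j'$ with $S^*_{\tau',j'}\subsetneq \Pa_{j'}(G')$ and delete an edge. Your identification of the ``main obstacle'' in Phase~2 and its resolution via covered edge reversals is precisely what Lemma~\ref{lm:imap.map} does.

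There is one slip. You want the tie-breaking rule to ``preserve $\sigma$ from one iterate to the next,'' and you use this to argue that Phase~1 stays inside a single $\dag^\sigma(\din,\dout)$, giving $k\leq\HD(G_0,G^*_\sigma)$. But $g$ is a function of $\cE$ alone; it cannot see the previous iterate. Tie-breaking only helps when there are multiple $\DS$-minimizers, yet after a Phase~1 step the unique $\DS$-minimizer of $g(\cE)$ may well live in some $\dag^{\sigma'}$ with $\sigma'\neq\sigma$ (because $|G^*_{\sigma'}|$ can be much smaller than $|G^*_\sigma|$). So the ``consistent $\sigma$'' claim is unjustified. The paper sidesteps this entirely: it never asserts that $\sigma$ is stable, and derives the length bound purely from the strict decrease $\DS(g(\cE))<\DS(\cE)$ together with $\DS(\cE)\leq(2d^*+\din)p$. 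Dropping your $\sigma$-consistency claim and relying on $\DS$-monotonicity alone recovers the paper's argument verbatim.
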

\begin{proof}
See Supplement D.4. 
\end{proof}

We conclude this section with the following corollary, which shows that to establish part (ii) of Condition~\ref{cond:unimodal} for the sparse structure learning problem, it only remains to prove that~\eqref{eq:conj.scorej} holds for all $j$ and $\sigma$ simultaneously. This will be done rigorously in the next section.  
 
\begin{corollary}\label{coro:path.cpdag}
Assume $d^* \leq \din$, $\min\{ d^* \din + 1, p \} \leq \dout$ and $\score$ is score equivalent so that we can define $\score(\cE) = \score(G)$ using any $G \in \cE$. 
If~\eqref{eq:conj.scorej} holds for each $\sigma \in \bbS^p$ and each $j \in [p]$, part (ii) of Condition~\ref{cond:unimodal} holds for the triple $(\cpdag(\din, \dout), \nb, e^{\score})$ with $t_2 = t$.    
\end{corollary}

\begin{proof} 
See Supplement D.5. 
\end{proof} 

\section{High-dimensional consistency of an empirical Bayes model for structure learning} \label{sec:high} 

\subsection{Model, prior and posterior distributions}\label{sec:cpdag.model}   
Let $\X$ be an $n \times p$ data matrix where each row is an i.i.d. copy of a normal random vector $\sX = (\sX_1, \dots, \sX_p)$.  (The font for the random vector $\sX$ and that for the data matrix $\X$ are different.)   
Assume that, given a  DAG $G$, the distribution of $\sX$ can be described by the structural equation model (SEM), 
\begin{equation}\label{eq:sem}
\sX = B^\top \sX + \mathsf{e}, \quad \mathsf{e} \sim \NM_p(0, \Omega), 
\end{equation}
for some $(B, \Omega) \in \chol(G )$, where 
\begin{equation}\label{eq:space.chol}
\begin{aligned}
\chol(G ) =  \; & \{ (B, \Omega) \colon B \in \bbR^{p \times p} , \,  B_{i j} = 0 \text{ if }  i \rightarrow j \notin G, \text{ for any } i, j \in [p]; \\ 
 &  \Omega = \diag(\omega_1, \dots, \omega_p), \, \omega_i > 0 \text{ for any } i \in [p]\}. 
\end{aligned}
\end{equation}
That is, each $\sX_j$ follows a linear regression model where explanatory variables with nonzero regression coefficients must be parents of node $j$ in $G$. The matrix $B$ is often called the weighted adjacency matrix. 
We can equivalently express~\eqref{eq:sem} as 
\begin{equation}\label{eq:mod.chol}
\sX  \sim \NM_p(0, \Sigma(B, \Omega)), \text{ where } \Sigma(B, \Omega) = (I  - B^\top)^{-1} \Omega  (I - B)^{-1}
\end{equation}
is called the modified Cholesky decomposition ($I$ denotes the identity matrix). 
The SEM representation of the Gaussian DAG model is used frequently in the literature~\citep{drton2011global, van2013ell, aragam2019globally}.

Let $\pi_0(B, \Omega \mid G)$ denote the conditional prior distribution with support $\chol(G)$.   
It suffices to specify it for $\{ (\beta_j(G), \omega_j) \colon j = 1, \dots, p \}$, where $\beta_j(G) $ is the subvector of the $j$-th column of $B$ with entries indexed by $\Pa_j(G)$, and $\omega_j$ is the $j$-th diagonal element of $\Omega$.  
We use the empirical prior proposed by~\citet{lee2019minimax}, which is an extension of the empirical variable selection model of~\citet{martin2017empirical}. 
Our prior assumes that, given $G$, $(\beta_1(G), \omega_1), \dots, (\beta_p(G), \omega_p)$ are independently distributed according to 
\begin{align*}
\pi_0(\omega_j \mid G)  \propto \;&  \omega_j^{- \kappa /2 - 1},   \\
\beta_j(G)  \mid \Pa_j(G) = S_j, \, \omega_j   \sim \;&  \NM_{|S_j|} \left(  (\X_{S_j}^\top \X_{S_j})^{-1} \X_{S_j}^\top X_j , \, \frac{\omega_j}{\gamma} (\X_{S_j}^\top \X_{S_j})^{-1}  \right),   
\end{align*}
where $\gamma > 0, \kappa \geq 0$ are hyperparameters, $\X_j$ denotes the $j$-th column of the data matrix $\X$, and $\X_S$ is the submatrix containing columns indexed by $S$.
Next, we  compute the marginal likelihood of $G$ by integrating out $(B, \Omega)$ and using a fractional exponent $\alpha \in (0, 1)$ to offset the overuse of data caused by the empirical prior. 
The resulting fractional marginal likelihood  is given by $f_\alpha(G) = \prod_{j = 1}^p f_{\alpha, j}( \Pa_j(G) )$, where 
\begin{equation*}   
f_{\alpha, j}( S ) =   \left( 1 +  \alpha \gamma^{-1}   \right)^{- |S| / 2 }    \left\{ X_j^\top   (I - \X_S (\X_S^\top \X_S)^{-1} \X_S^\top) X_j \right\}^{ -(\alpha n + \kappa)/2}. 
\end{equation*} 
More details about this empirical prior are given in Supplement F.1. 

For sparse DAG selection  with ordering $\sigma$,  the state space is $\dag^\sigma(\din, \dout)$. For each $G$ on this space, we specify its prior probability by 
\begin{equation}\label{eq:prior.dag}
\pi_0^\sigma (G) \propto \left( c_1 p^{c_2} \right)^{- |G|}, 
\end{equation}
where $ c_1 > 0, c_2 \geq 0$ are hyperparameters.  We can then calculate the posterior distribution by $\post^\sigma(G) \propto   \pi_0^\sigma( G) f_\alpha(G)$. Using the fractional marginal likelihood $f_\alpha$, we get   
\begin{align}
  \post^\sigma(G) \propto \;&  e^{\score(G)}  \ind_{ \dag^\sigma(\din, \dout)}(G),  \text{ where }  \label{eq:dag.sigma}  \\
 \score(G) =\;&  \sum_{j=1}^p  \scorej ( \Pa_j(G) ), \text{ and }  e^{\scorej(S)} = (c_1 p^{c_2} )^{- |S| } f_{\alpha, j}(S).  \label{eq:post.modular} 
\end{align} 

For the sparse structure learning problem,  we use the  prior
\begin{equation}\label{eq:prior.equivalence} 
\pi_0(\cE, G) \propto   \left( c_1 p^{c_2} \right)^{- |G|}   \pi_0(G \mid \cE)  \ind_{ \cE }(G ), 
\end{equation}
where $\pi_0(G \mid \cE)$ satisfies  $\sum_{G \in \cE } \pi_0(G \mid \cE) = 1$. 
Denote the corresponding posterior distribution by $\post$. 
Marginalizing out $G$ from $\pi_n(\cE, G)$, we get  
\begin{align*}
\pi_n(\cE) \propto    \sum_{G \in \cE} \pi_0(G \mid \cE)  e^{\score(G)}. 
\end{align*}
In Lemma~\ref{lm:markov.equiv} below, we prove that $\psi$ yields the same value for any Markov equivalent DAGs. 
Hence, we can define $\score(\cE) = \score(G)$ using any $G \in \cE$, and $\pi_n(\cE)$ can be expressed by 
\begin{equation}\label{eq:post.cpdag}
\pi_n(\cE) \propto  e^{\score(\cE)} \ind_{\cpdag(\din, \dout)}(\cE).
\end{equation}
The indicator function in~\eqref{eq:post.cpdag} serves to remind us of the restricted search space. 
We do not consider estimating the DAG or ordering from $\post$. Indeed, for $G \in \cE$, $\post(G)$ depends on the conditional prior probability $\pi_0(G \mid \cE)$, which we leave unspecified.

\begin{lemma}\label{lm:markov.equiv}
The  function $\score$ defined by~\eqref{eq:post.modular} satisfies that $\psi(G) = \psi(G')$ whenever $G$ and $G'$ are Markov equivalent DAGs. 
\end{lemma}
\begin{proof}
See Supplement F.2. 
\end{proof} 

We will refer to $\scorej( \Pa_j ), \score(G), \score(\cE)$ as the scores of $\Pa_j, G$ and $\cE$, respectively.  
Note that a scoring criterion derived from a nodewise  normal-inverse-gamma prior  for $(B, \Omega) \mid G$ does not necessarily have the property given in Lemma~\ref{lm:markov.equiv}. For non-empirical prior distributions, see~\citet{geiger2002parameter} and~\citet{peluso2020compatible} for related results.    
 
\subsection{High-dimensional setup} \label{sec:high.setup}
Let $G^*$ denote the true DAG model and $\cE^* =\EG{G^*}$ be the true equivalence class that we want to recover from the data. 
Assume that each row of $\X$ is drawn independently from $\NM_p(0, \Sigma^*)$, a normal distribution perfectly Markovian w.r.t. $G^*$.   
We will show  $\post$ defined in~\eqref{eq:post.cpdag} concentrates on $\EG{G^*}$ by first proving that for each $\sigma$, $\post^\sigma$ defined in~\eqref{eq:dag.sigma} concentrates on the minimal I-map $G^*_\sigma$.  
Due to normality, $G^*_\sigma$ can be equivalently defined by using the modified Cholesky decomposition.   

\begin{definition}\label{def:imap}
Let $\Sigma^*$ be  positive definite  and $\NM_p(0, \Sigma^*)$ be perfectly Markovian w.r.t. some DAG $G^*$. 
For each $\sigma \in \bbS^p$,  let $\chol(\sigma) = \cup_{G \in \dag^\sigma} \chol(G)$.  By Lemma C6, we can define $(B_\sigma^*, \Omega_\sigma^*)$ to be the unique pair in $\chol(\sigma)$  such that    
$$(I - (B_\sigma^*)^\top)^{-1} \Omega_\sigma^* (I - B_\sigma^*)^{-1} = \Sigma^*.$$ 
Define $G_\sigma^*$ to be the DAG such that $i \rightarrow j \in G_\sigma^*$ if and only if $(B_\sigma^*)_{ij} \neq 0$,  which, by Lemma C5, is the minimal I-map of $G^*$ with ordering $\sigma$.  
\end{definition}

Consider a high-dimensional setting with  $p = p(n)$ tending to infinity. 
The true DAG model $G^*$, true covariance matrix $\Sigma^*$ and prior parameters $c_1, c_2, \alpha, \gamma, \din, \dout$ are all implicitly indexed by $n$.  
We say a constant is universal if it does not depend on $n$. 
To derive our consistency results, we need to make a few assumptions on the parameters and $\Sigma^*$. 

\begin{enumerate}[label=(A\arabic*), ref=(A\arabic*)]
\item There exist $\vmin = \vmin(n), \vmax = \vmax(n) > 0$  and a universal constant $\delta_0 > 0$ such that 
\begin{align*}
0 < \frac{\vmin}{(1 - \delta_0)^2} \leq \lmin(\Sigma^*) \leq \lmax(\Sigma^*) \leq  \frac{\vmax}{(1 + \delta_0)^2}, 
\end{align*} 
where $\lmin, \lmax$ denote the smallest and largest eigenvalues, respectively. \label{A:eigen} 
\item The sparsity parameter $\din$ and $n,p$ satisfy that $\din \log p = o(n)$. \label{A:np} 
\item Prior parameters satisfy that  $\kappa \leq n$, $1 \leq c_1 \sqrt{ 1 + \alpha / \gamma} \leq p$,  and 
$$c_2 \geq (\alpha + 1 )(4 \din + 6) + t$$ for some universal constant $t > 0$.  \label{A:prior}
\item Assumption on the maximum in-degree of $G^*_\sigma$. 
\begin{enumerate}[label=(A4.\arabic*), ref=(A4.\arabic*)]
\item Let $\nu_0 = 4  \vmax^2  \vmin^{-4} (\vmax - \vmin)^2$. 
For some $\sigma \in \bbS^p$, $ (\nu_0 + 1)   \max_{j \in [p]} |\Pa_j(G^*_\sigma)|   \leq  \din$. \label{A:size2} 
\item Assumption~\ref{A:size2} holds for every $\sigma \in \bbS^p$.\label{A:size} 
\end{enumerate}
\item Assumption on $B^*_\sigma, B^*$. 
\begin{enumerate}[label=(A5.\arabic*), ref=(A5.\arabic*)]
\item There exists a universal constant  $\Cb > 0$ such that  for some $\sigma \in \bbS^p$, 
\begin{equation}\label{eq:beta.min} 
\min \left\{  | (B_\sigma^*)_{ij} |^2 \colon  (B_\sigma^*)_{ij} \neq 0   \right\} 
    \geq    5 (  \Cb + 4 c_2  )     \frac{  \vmax^2 \log p }{\alpha \vmin^2 n}, 
\end{equation}
where $B_\sigma^*$ is given by Definition~\ref{def:imap}.  \label{A:beta2}
\item There exists a universal constant $\Cb > 0$ such that~\eqref{eq:beta.min} holds for every $\sigma \in \bbS^p$.  \label{A:beta}
\end{enumerate}
\end{enumerate}

The first three assumptions are standard and commonly used in high-dimensional statistical theory. Assumption~\ref{A:eigen} is the standard restricted eigenvalue condition~\citep{bickel2008regularized}. 
Assumption~\ref{A:np} controls the growth rates of $p$ and $\din$ (which determines the maximum model size for nodewise variable selection), and, together with Assumption~\ref{A:prior}, ensures that  we cannot overfit the data; recall from~\eqref{eq:prior.equivalence} that the hyperparameter $c_2$ controls the penalty on the model size, so it plays the same role as the tuning parameter in the penalized likelihood methods. 
Such assumptions (especially a condition similar to $\din \log p = o(n)$) are required for most high-dimensional problems including variable selection~\citep{wainwright2009sharp, yang2015minimax, yang2016computational, jeong2021posterior},  stochastic block model~\citep{gao2020general}, 
covariance matrix estimation~\citep{lam2009sparsistency, sun2013sparse, pati2014posterior}, undirected Gaussian graphical models~\citep{raskutti2008model, banerjee2015bayesian, liu2017support} and DAG selection~\citep{cao2019posterior, lee2019minimax}; see~\citet{banerjee2021bayesian} for a recent review. 
Note that the numerical constants in our assumptions are very conservative.  
For example,  Assumption~\ref{A:prior} suggests that $c_2$ should grow linearly with $\din$, but in practice, one can use  some $c_2$ much smaller than $4 \din$, which we will illustrate using a simulation study in Section~\ref{sec:third.sim.c2}.   

Assumption~\ref{A:size2} requires that the maximum in-degree of the ``true model'' for DAG selection with ordering $\sigma$ is sufficiently small compared with $\din$.  
It is similar to Assumption D of~\citet{yang2016computational} and is technically needed to show that an MH sampler using add-delete-swap moves cannot get stuck at DAG models with maximum in-degree equal to $\din$.  
But unlike their setup, we assume both lower and upper restricted eigenvalues are available, which enables us to avoid imposing an irrepresentability condition as in~\citet[Assumption D]{yang2016computational}. 
Assumption~\ref{A:size}  restricts the maximum in-degree of all minimal I-maps of $G^*$, which is allowed to   have the same order as $\din$, if $\vmax, \vmin$ defined in Assumption~\ref{A:eigen} can be bounded by universal constants. 

Assumption~\ref{A:beta2} is the well-known beta-min condition for DAG selection with ordering $\sigma$~\citep{cao2019posterior, lee2019minimax}. 
According to Definition~\ref{def:imap}, the SEM representation~\eqref{eq:sem} holds for $(B, \Omega) = (B^*_\sigma, \Omega^*_\sigma)$. Hence, Assumption~\ref{A:beta2} just means that all nonzero regression coefficients (i.e., signal sizes) of the true SEM  with ordering $\sigma$ are sufficiently large.   
Assumption~\ref{A:beta} is for structure learning and assumes the beta-min condition holds uniformly over all $\sigma \in \bbS^p$; this is often known as the strong beta-min or permutation beta-min condition~\citep{uhler2013geometry} and was used in~\citet{van2013ell} and~\citet{aragam2019globally}. 
If $p$ and $\Sigma^*$ are fixed, which implies $B^*_\sigma$ is fixed for all $\sigma \in \bbS^p$, then  Assumption~\ref{A:beta} can always be satisfied by choosing some large $n$.  
We need the strong beta-min condition (or some similar assumption)  since we want to first establish that with high probability, for every  $\sigma \in \bbS^p$, the minimal I-map $G^*_\sigma$ has the highest score among all DAGs in $\dag^\sigma(\din, \dout)$, which is needed for proving consistency results for structure learning. 
For methods based on CI tests, a similar assumption, known as ``strong faithfulness'', is commonly used~\citep{nandy2018high} (strong beta-min condition essentially replaces partial correlations in strong faithfulness with partial regression coefficients).  
\citet{uhler2013geometry} showed that the volume of normal distributions that are strongly faithful is very small. 
Though strong faithfulness and strong beta-min condition are not directly comparable, both seem to be fairly restrictive~\citep{van2013ell}. 
Unfortunately, without them, we cannot preclude the possibility that GES or local MH algorithms get trapped at local modes; see Example~\ref{ex:slow1} in Section~\ref{sec:rapid.rwges}. A discussion on how to overcome such limitations is given in Supplement I. 
We end this subsection with one more remark on Assumption~\ref{A:eigen}.    

\begin{remark}\label{rmk:beta.min}
The restricted eigenvalue condition can be used to obtain some useful bounds related to $B^*_\sigma$ and $\Omega^*_\sigma$. Write $\Omega^*_\sigma = \diag (\omega^*_{\sigma, 1}, \dots, \omega^*_{\sigma, p})$. 
The decomposition~\eqref{eq:mod.chol} implies that $\omega^*_{\sigma, k} \in (\vmin, \vmax)$ for any $\sigma \in \bbS^p$ and $k \in [p]$ since the diagonal elements of $\Sigma^*$ and $(\Sigma^*)^{-1}$ can be bounded by the extreme eigenvalues of $\Sigma^*$. 
Further, we can bound the $\ell^2$-norm of the true regression coefficients for node $j$ by $\sum_{i \in [p]} (B^*_\sigma)^2_{ij} \leq \omega^*_{\sigma, j} / \vmin - 1$, using the fact that the operator norm is no less than the $\ell^2$-norm of any column. 
\end{remark}
  
\subsection{Strong selection consistency results} \label{sec:node.sel}
For a general model selection problem, we say a Bayesian procedure has strong selection consistency if the posterior probability of the true model converges to $1$ in probability with respect to the true data-generating probability measure~\citep{johnson2012bayesian, narisetty2014bayesian, cao2019posterior}. 
By part (ii) of Theorem~\ref{th:path},  to prove the strong selection consistency, we only need to show that Condition~\ref{cond:unimodal} is satisfied for some universal $t_2 > t_1$.   

We begin with the strong selection consistency for  nodewise variable selection and DAG selection problems. 
It turns out that we only need~\eqref{eq:conj.scorej} holds for any $j \in [p]$ and $\sigma \in \bbS^p$. 
By Corollary~\ref{coro:path.cpdag}, this consistency property of $\{ g_j^\sigma \colon j \in [p], \sigma \in \bbS^p \}$  is also key to the verification of Condition~\ref{cond:unimodal} for structure learning. 
The complete proof for Theorem~\ref{th:sel0} is highly technical, and the most involved step  is to establish an analogous consistency result for a single variable selection problem using our empirical prior, which is treated in detail in Supplement E and may be of independent interest.  

\begin{theorem}\label{th:sel0}
Let  $\X \in \bbR^{n \times p}$ have i.i.d. rows drawn from  $\NM_p(0, \Sigma^*)$, which is  perfectly Markovian w.r.t. $G^*$.  
Suppose Assumptions~\ref{A:eigen},~\ref{A:np},~\ref{A:prior},~\ref{A:size} and~\ref{A:beta} hold. 
Let $t > 0$ be the universal constant given in Assumption~\ref{A:prior} and assume $\Cb  \geq 8t / 3$. 
For sufficiently large $n$, with probability at least $1 - 3p^{-1}$,  the following statements hold. 
\begin{enumerate}[label=(\roman*)]
    \item  Consistency  of the operators $\{ g_j^\sigma \colon j \in [p], \sigma \in \bbS^p \}$ given in Definition~\ref{def:gj}: 
    \begin{equation*} 
    \min \left\{      \scorej( g_j^\sigma(S)) - \scorej(S) 
    \colon \sigma\in \bbS^p,  j \in [p], \, S \in \model^\sigma(j,  \, \din) \setminus \{  S^*_{\sigma, j} \}   \right\} \geq t \log p,
    \end{equation*}
    where   $\scorej$ is given in~\eqref{eq:post.modular} and $S^*_{\sigma, j} = \Pa_j(G^*_\sigma)$.  
    \item  If $t > 2$,  we have the strong selection consistency of nodewise variable selection, 
    \begin{align*}
       \min_{\sigma \in \bbS^p} \min_{j \in [p]} \frac{  \exp (\scorej(  S^*_{\sigma, j} )) }{   \sum_{S \in \model^\sigma(j,  \, \din)}  \exp (\scorej(  S  ))   }  \geq 1 - p^{-(t - 2)},
    \end{align*}
    where $ \model^\sigma(j,  \, \din)$ is defined in~\eqref{eq:def.model}. 
    \item  If $t > 3$, we have the strong selection consistency of sparse DAG selection, 
    \begin{align*}
    \min_{\sigma \in \bbS^p}  \frac{\exp (\score (  G^*_\sigma )) }{ \sum_{G \in \dag^\sigma(\din, \dout)} \exp (\score(G)) } \geq 1 - p^{-(t - 3)}, 
    \end{align*}
    where $\score(G)$ is defined in~\eqref{eq:post.modular}. 
\end{enumerate}

\end{theorem}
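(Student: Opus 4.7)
The plan is to reduce the theorem to its operator-level statement, part (i), from which both part (ii) and part (iii) follow by plugging into Theorem~\ref{th:path}(ii). Because $\scorej$ in~\eqref{eq:post.modular} depends only on $\X_j$ and $\X_S$, the difference $\scorej(g_j^\sigma(S)) - \scorej(S)$ is entirely a feature of a single linear regression of $\X_j$ on subsets of $\cA_p^\sigma(j)$, with ``true'' support $S^*_{\sigma,j}$. So part (i) is nothing but a uniform (over $\sigma$ and $j$) consistency statement for Bayesian nodewise variable selection with the empirical prior of~\citet{lee2019minimax}, which is where the main technical work lies.

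The proof of part (i) would proceed on a high-probability event on which the sample Gram matrices $n^{-1}\X_S^\top \X_S$ for $|S| \leq 2\din$ are spectrally controlled by $\vmin, \vmax$ (Assumption~\ref{A:eigen}) and the empirical noise inner products concentrate at rate $\sqrt{\log p / n}$. Crucially, every such quantity depends on $\sigma$ only through the underlying set $S \subseteq [p]$ of size $\leq 2\din$, so a union bound over $\binom{p}{2\din} \leq p^{2\din}$ subsets, combined with Assumption~\ref{A:np}, yields failure probability $\leq 3p^{-1}$ \emph{uniformly} in $\sigma$, avoiding any $p!$ factor. On this event I would handle the three nontrivial cases of Definition~\ref{def:gj} separately: in the overfitted case (ii), removing any redundant variable changes the RSS by at most a factor $1 + O(\log p / n)$ while the size penalty in~\eqref{eq:post.modular} gains a factor $c_1 p^{c_2} \sqrt{1 + \alpha/\gamma}$, and the choice $c_2 \geq (\alpha+1)(4\din+6) + t$ in Assumption~\ref{A:prior} absorbs the log-likelihood slippage with $t \log p$ to spare; in the underfitted cases (iii)/(iv), the beta-min bound~\eqref{eq:beta.min} is calibrated so that the RSS reduction from adding the optimal true parent exceeds the size penalty by at least $t \log p$, with the constant $\Cb \geq 8t/3$ providing the required margin.

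Parts (ii) and (iii) then fall out of Theorem~\ref{th:path}(ii). For part (ii), apply the theorem on each $\model^\sigma(j, \din)$ with canonical transition function $g_j^\sigma$: condition~(2) holds with $t_2 = t$ by part (i), and since any preimage of $S$ under $g_j^\sigma$ arises from a single add, delete, or swap, condition~(1) holds with $t_1 = 2$, yielding the error bound $p^{-(t-2)}$. For part (iii), define $g$ on $\dag^\sigma(\din, \dout)$ by setting $g(G) = g_{j(G)}^\sigma(G)$ where $j(G)$ is, say, the smallest node index at which $\Pa_j(G) \neq \Pa_j(G^*_\sigma)$; the preimage of any $G$ now also carries the choice of node, so $|g^{-1}(G)| \leq p \cdot p^2 = p^3$, i.e.\ $t_1 = 3$, and the error bound becomes $p^{-(t-3)}$.

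The main obstacle is the swap case (iv), where the locally optimal pair $(\tilde k, \tilde\ell)$ entangles the signal of the added true parent with the distortion caused by the removed false parent, so that a naive ``add then delete'' bound is too loose. Extracting a clean $t \log p$ lower bound requires an irrepresentability-type argument that separates the true-parent signal from the false-parent noise via the Schur-complement structure of the regression, and this is precisely where the upper restricted eigenvalue $\vmax$ — unusual for pure variable selection — enters through the factor $\nu_0$ of Assumption~\ref{A:size}. A similar but simpler calculation gives the uniform $1 + O(\log p / n)$ RSS control used in case (ii).
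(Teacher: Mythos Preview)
Your reduction of part~(i) to nodewise variable selection, and your derivation of part~(ii) from part~(i) via Theorem~\ref{th:path}(ii) with $t_1=2$, match the paper's argument exactly. However, there are two places where your proposal is incomplete or diverges.

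\textbf{The union bound in part~(i).} Your claim that ``every such quantity depends on $\sigma$ only through the underlying set $S\subseteq[p]$'' is correct for the Gram matrices $\X_S^\top\X_S$ but \emph{not} for the noise inner products: the error vector $\varepsilon_{\sigma,j}=X_j-\X_{S^*_{\sigma,j}}\beta^*_{\sigma,j}$ genuinely depends on $\sigma$, so the quantities $\varepsilon_{\sigma,j}^\top\oproj_S\varepsilon_{\sigma,j}$ and $\varepsilon_{\sigma,j}^\top(\proj_{S\cup\{k\}}-\proj_S)\varepsilon_{\sigma,j}$ are indexed by both $S$ and the pair $(\sigma,j)$. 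The paper closes this by observing that $\varepsilon_{\sigma,j}$ depends on $\sigma$ only through $S^*_{\sigma,j}$, so the set $\cZ$ of distinct standardized errors has cardinality at most $p^{d^*+1}$; the union bound is then over $\model(\din)\times\cZ$, still polynomial. Without this observation your high-probability event is not established uniformly in $\sigma$.

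\textbf{Part~(iii).} Here your route differs from the paper's and has a gap. The paper does \emph{not} invoke Theorem~\ref{th:path} on $\dag^\sigma(\din,\dout)$; instead it relaxes to $\dout=p$ (the denominator only grows), uses the product decomposition
\[
\sum_{G\in\dag^\sigma(\din,p)}e^{\score(G)}=\prod_{j=1}^p\sum_{S\in\model^\sigma(j,\din)}e^{\scorej(S)},
\]
applies part~(ii) to each factor, and collects $(1-p^{-(t-2)})^{-p}\leq(1-p^{-(t-3)})^{-1}$. Your canonical-path approach, taking $g(G)=g^\sigma_{j(G)}(G)$ for the smallest misfit node $j(G)$, would give the same bound, but as stated it need not map $\dag^\sigma(\din,\dout)$ into itself: when $\dout<p$, adding an edge $a\rightarrow j$ can violate the out-degree constraint at $a$ (this is exactly the obstruction that Lemma~\ref{lm:exist} is designed to handle, and that lemma requires the extra hypothesis $\dout\geq d^*_\sigma\din+1$, which is \emph{not} assumed in Theorem~\ref{th:sel0}). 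Your argument is salvaged by first relaxing to $\dout=p$ as the paper does, but then the product decomposition is simpler and the canonical path is superfluous.
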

\begin{proof}
See Supplement F.3. 
\end{proof}

\begin{remark}\label{rmk:t}
The universal constant $t$ can be chosen arbitrarily large.  Given any $t > 0$, in order that Theorem~\ref{th:sel0} holds, we can always choose some $c_2$ that has  same order as $\din$ and assume that the universal constant $\Cb$ in Assumption~\ref{A:beta} is sufficiently large.   
\end{remark}

As a corollary, the strong selection consistency for a single DAG selection problem with ordering $\sigma$ can be obtained by replacing Assumptions~\ref{A:size} and~\ref{A:beta} with Assumptions~\ref{A:size2} and~\ref{A:beta2}.  
This result was also proved in~\citet{lee2019minimax} under similar assumptions, but the method we use is different  (the primary goal of~\citet{lee2019minimax} was to derive minimax posterior convergence rates for the weighted adjacency matrix). 
Note that if $\sigma$ is an ordering of $G^*$, then $G^*_\sigma = G^*$. 

\begin{corollary}\label{coro:dag}
Let  $\X \in \bbR^{n \times p}$ have i.i.d. rows drawn from the distribution $\NM_p(0, \Sigma^*)$, which is  perfectly Markovian w.r.t. $G^*$.  
Suppose Assumptions~\ref{A:eigen},~\ref{A:np},~\ref{A:prior},~\ref{A:size2} and~\ref{A:beta2} hold  for some  $t > 3$ and $\Cb \geq 8t / 3$. 
Let $\sigma$ be as given in Assumptions~\ref{A:size2} and~\ref{A:beta2}. 
For sufficiently large $n$, with probability at least $1 - 3p^{-1}$, 
\begin{align*}
   \frac{ \exp (\score (  G^*_\sigma ) )}{ \sum_{G \in \dag^\sigma(\din, \dout)} \exp (\score(G)) } \geq 1 - p^{-(t - 3)}
\end{align*}
\end{corollary}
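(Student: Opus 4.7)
The plan is to specialize the argument for Theorem~\ref{th:sel0}(iii) to a single ordering. The key observation is that in Theorem~\ref{th:sel0}, the uniform (permutation) Assumptions~\ref{A:size} and~\ref{A:beta} were needed only to make part (i) hold uniformly over all $\sigma \in \bbS^p$; the subsequent path argument on $\dag^\sigma(\din, \dout)$ uses only the consistency of the operators $\{g_j^\sigma : j \in [p]\}$ for the single ordering that indexes the search space. Therefore, if I can prove the single-ordering analogue of part (i) under Assumptions~\ref{A:size2} and~\ref{A:beta2}, the remainder of the argument transfers directly.

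First, I would derive the pointwise version of Theorem~\ref{th:sel0}(i): for the fixed $\sigma$ in Assumptions~\ref{A:size2} and~\ref{A:beta2}, and for every $j \in [p]$ and $S \in \model^\sigma(j, \din) \setminus \{S^*_{\sigma, j}\}$,
$$\scorej(g_j^\sigma(S)) - \scorej(S) \geq t \log p$$
with probability at least $1 - O(p^{-1})$. This step invokes the nodewise variable selection consistency of the empirical prior developed in the supplement, applied to each of the $p$ regression problems with predictors in $\cA_p^\sigma(j)$. Because each nodewise problem depends only on local properties of $B^*_\sigma$ and on $\Sigma^*$ restricted to $\cA_p^\sigma(j) \cup \{j\}$, the pointwise sparsity and beta-min hypotheses suffice. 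A union bound over $j \in [p]$ gives the stated probability.

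Second, I would build a canonical transition function $g$ on $(\dag^\sigma(\din, \dout), \nb)$ with fixed point $G^*_\sigma$. By Lemma~\ref{lm:exist}, whose hypotheses hold under Assumption~\ref{A:size2} (together with the implicit condition on $\dout$ built into the restricted search space), for every $G \in \dag^\sigma(\din, \dout) \setminus \{G^*_\sigma\}$ there exists some $j$ with $g_j^\sigma(G) \in \dag^\sigma(\din, \dout) \setminus \{G\}$; a deterministic tie-breaking rule selects one such $j(G)$ and I set $g(G) = g_{j(G)}^\sigma(G)$. By decomposability of $\score$ and step one, on the high-probability event,
$$\score(g(G)) - \score(G) = \scorej(g_{j(G)}^\sigma(\Pa_{j(G)}(G))) - \scorej(\Pa_{j(G)}(G)) \geq t \log p,$$
so $\pi(g(G))/\pi(G) \geq p^t$ where $\pi$ is the posterior restricted to $\dag^\sigma(\din, \dout)$.

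Third, I would apply Theorem~\ref{th:path}(ii). The preimage size $|g^{-1}(G)|$ is bounded by $|\nb(G)|$, which is at most a cubic polynomial in $p$ (at most $p$ choices of the modified node and at most $p^2$ ways of specifying the reversed add/delete/swap), so $t_1 = 3$ is admissible. Taking $t_2 = t$ then yields
$$\frac{\exp(\score(G^*_\sigma))}{\sum_{G \in \dag^\sigma(\din, \dout)} \exp(\score(G))} \geq 1 - p^{-(t - 3)},$$
exactly the asserted bound. The main obstacle is the careful bookkeeping in step one: I must verify that the variable selection consistency lemma from the supplement requires only Assumptions~\ref{A:size2} and~\ref{A:beta2} at the particular $\sigma$, rather than their permutation-uniform strengthenings used inside the proof of Theorem~\ref{th:sel0}. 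This should follow because the regression problem at node $j$ under ordering $\sigma$ is fully determined by $(B^*_\sigma, \Omega^*_\sigma)$ and the submatrix of $\Sigma^*$ on the ancestors of $j$, none of which involve other orderings.
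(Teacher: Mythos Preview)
Your proposal is essentially the same strategy the paper intends: specialize the proof of Theorem~\ref{th:sel0} to a single ordering, using that Assumptions~\ref{A:size2} and~\ref{A:beta2} suffice for the nodewise consistency at the fixed $\sigma$. There is one small but real gap, and one stylistic difference from the paper worth noting.

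\medskip
\textbf{The gap.} In your second step you invoke Lemma~\ref{lm:exist} to construct $g$ on $\dag^\sigma(\din,\dout)$. That lemma requires $\min\{d^*_\sigma \din + 1, p\} \leq \dout$, which is \emph{not} among the hypotheses of the corollary; your parenthetical about an ``implicit condition on $\dout$'' does not supply it. The clean fix is to observe that since $G^*_\sigma \in \dag^\sigma(\din,\dout)$, the ratio you want is monotone in $\dout$:
\[
\frac{\exp(\score(G^*_\sigma))}{\sum_{G \in \dag^\sigma(\din,\dout)} \exp(\score(G))} \;\geq\; \frac{\exp(\score(G^*_\sigma))}{\sum_{G \in \dag^\sigma(\din,p)} \exp(\score(G))},
\]
so you may take $\dout = p$ without loss of generality, and then Lemma~\ref{lm:exist} holds trivially.

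\medskip
\textbf{The stylistic difference.} Once $\dout = p$, the paper's own route (the proof of Theorem~\ref{th:sel0}(iii)) avoids Lemma~\ref{lm:exist} entirely: it first applies Theorem~\ref{th:path}(ii) nodewise on each $\model^\sigma(j,\din)$ with $t_1 = 2$ to get $\exp(\scorej(S^*_{\sigma,j})) / \sum_S \exp(\scorej(S)) \geq 1 - p^{-(t-2)}$, and then uses the exact factorization $\sum_{G \in \dag^\sigma(\din,p)} e^{\score(G)} = \prod_j \sum_S e^{\scorej(S)}$ together with $(1-p^{-(t-2)})^p \geq 1 - p^{-(t-3)}$. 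Your route instead builds $g$ on the full DAG space and applies Theorem~\ref{th:path}(ii) once with $t_1 = 3$. Both yield the same bound $1 - p^{-(t-3)}$; the paper's factorization argument is slightly shorter and sidesteps Lemma~\ref{lm:exist} altogether.
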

\begin{proof}
The proof is wholly analogous to that for Theorem~\ref{th:sel0}. 
\end{proof}

In order to show Condition~\ref{cond:unimodal} holds and use Theorem~\ref{th:path} to prove the strong selection consistency of sparse structure learning,   it only remains to invoke Lemma~\ref{lm:bound.nb} to bound the size of  $\nbg(\cdot)$ and then apply Corollary~\ref{coro:path.cpdag}. 
Recall the definition of $d^*_\sigma$ and $d^*$ given  in~\eqref{eq:def.dstar}. 
  
\begin{theorem}\label{th:cpdag.consist}
Let  $\X \in \bbR^{n \times p}$ have i.i.d. rows drawn from  $\NM_p(0, \Sigma^*)$, which is  perfectly Markovian w.r.t. $G^*$.  
Suppose $d^* \leq \din$, $d^* \din + 1  \leq \dout$ and $\din  + \dout \leq t_0 \log_2 p$ for some universal constant $t_0 > 0$, and   Assumptions~\ref{A:eigen},~\ref{A:np},~\ref{A:prior},~\ref{A:size} and~\ref{A:beta} hold with $\Cb  \geq 8t / 3$ and $t > t_0 + 3$. 
For sufficiently large $n$, with probability at least $1 - 3p^{-1}$,  
\begin{align*}
   \frac{  \exp( \score (  \cE^* )) }{ \sum_{\cE \in \cpdag(\din, \dout)} \exp ( \score(\cE) )} \geq 1 - p^{-(t - t_0 - 3)}, 
\end{align*}
where $\score(\cE) = \score(G)$ for any $G \in \cE$ and $\score(G)$ is defined in~\eqref{eq:post.modular}.  
Further, the greedy search on $( \cpdag(\din, \dout), \nbg, e^\score)$   returns $\cE^*$ regardless of the initial state. 
\end{theorem}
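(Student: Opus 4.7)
The plan is to invoke Theorem~\ref{th:path} applied to the canonical transition function $g$ constructed in Theorem~\ref{th:ges.path} on the restricted space $(\cpdag(\din,\dout), \nbg)$, with $\pi = \post$ and $\theta^* = \cE^*$. The hypotheses of Theorem~\ref{th:ges.path} are immediate from those of the present theorem: Assumption~\ref{A:size}P gives $d^* \leq (\nu_0+1) d^* \leq \din$, while the standing assumption $d^*\din + 1 \leq \dout$ supplies the out-degree requirement. This produces, on the deterministic side, a canonical $\nbg$-path from every $\cE \in \cpdag(\din,\dout)$ to $\cE^*$ whose steps are each realized by applying some local operator $g_j^\sigma$ to a carefully chosen member DAG $G \in \cE \cap \dag^\sigma(\din,\dout)$.

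Next I would verify the three conditions of Theorem~\ref{th:path}. For condition~(1), Lemma~\ref{lm:bound.nb} combined with the hypothesis $\din+\dout \leq t_0 \log_2 p$ gives, for $p$ large enough, $|\nbg(\cE)| \leq 3p(p-1)(\din+\dout)p^{t_0} \leq p^{t_0+3}$, so I can take $t_1 = t_0 + 3$. For condition~(2), Theorem~\ref{th:ges.path}(i) expresses $g(\cE) = \EG{g_j^\sigma(G)}$ for some $j, \sigma$ and $G \in \cE \cap \dag^\sigma(\din,\dout)$ with $g_j^\sigma(G) \neq G$, which in particular means $\Pa_j(G) \neq \Pa_j(G^*_\sigma)$. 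Because $\score$ is Markov-equivalence invariant (Lemma~\ref{lm:markov.equiv}) and decomposes across nodes, the posterior log-ratio along this canonical step collapses to the single-node increment
\begin{equation*}
\score(g(\cE)) - \score(\cE) = \scorej\bigl(g_j^\sigma(\Pa_j(G))\bigr) - \scorej(\Pa_j(G)),
\end{equation*}
and Theorem~\ref{th:sel0}(i) bounds this uniformly from below by $t\log p$ on an event of probability at least $1 - 3p^{-1}$. Hence condition~(2) holds with $t_2 = t$. Since the assumption $t > t_0+3$ gives $t_2 > t_1$, Theorem~\ref{th:path}(ii) yields the posterior bound $\post(\cE^*) \geq 1 - p^{-(t - t_0 - 3)}$, and Theorem~\ref{th:path}(i)—which only requires $t_2 > 0$—delivers greedy consistency from any initial state on the same event, since no $\cE \neq \cE^*$ can be a local mode when $g(\cE) \in \nbg(\cE)$ has strictly larger posterior.

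The proof is really an assembly argument, so the principal point of care is not a single hard step but the book-keeping that ties the general machinery to the equivalence-class setting. Two items deserve attention: first, that the posterior ratio at each canonical step reduces to a single-node score change (this is where Lemma~\ref{lm:markov.equiv} and the explicit form of $g$ from Theorem~\ref{th:ges.path}(i) are both indispensable, since without Markov-equivalence invariance the score change would not match any $g_j^\sigma$ increment); and second, that the polynomial factor $(\din+\dout)$ appearing in $|\nbg(\cE)|$ is absorbed into $p^{t_0+3}$, which pins down the exponent $t - t_0 - 3$ in the conclusion. Condition~(3) of Theorem~\ref{th:path} is not needed, as no mixing time is being bounded here.
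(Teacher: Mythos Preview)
Your proposal is correct and follows essentially the same route as the paper: the paper's proof simply defers to the verification done in the proof of Theorem~\ref{th:main.rapid}, which applies Theorem~\ref{th:path} with $t_1 = t_0+3$ (via Lemma~\ref{lm:bound.nb}) and $t_2 = t$ (via Theorem~\ref{th:sel0}(i) combined with Theorem~\ref{th:ges.path}(i) and Lemma~\ref{lm:markov.equiv}). You have spelled out the book-keeping more explicitly than the paper does, including the check that the hypotheses of Theorem~\ref{th:ges.path} are met and the observation that condition~(3) is unnecessary here.
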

\begin{proof}
See Supplement F.4. 
\end{proof}

\begin{remark}\label{rmk:order.d}
The assumption $\din + \dout = O(\log p)$  is mild, since  the total number of edges in the DAG may have order $p$ even if $\din + \dout = O(1)$. In light of Assumption~\ref{A:size}, we may assume $d^*, \din$ have approximately the same order. Thus, roughly speaking, the assumptions of Theorem~\ref{th:cpdag.consist} imply that $d^*, \din$ cannot grow faster than $\sqrt{\log p}$. 
\end{remark} 

\subsection{Consistency results for sub-Gaussian random matrices} \label{sec:subgauss}
The normality assumption on the true distribution of $\sX$ can be relaxed. We can extend the consistency result obtained in Theorem~\ref{th:sel0}  to the case where $\X$ is a sub-Gaussian random matrix (we still consider the posterior distributions defined in Section~\ref{sec:cpdag.model}). 
Let each row of $\X$ be an i.i.d. copy of a random vector $\sX$ which has mean zero, covariance matrix $\Sigma^*$ and distribution $\mu$.  
Assume that $\mu$  is sub-Gaussian with sub-Gaussian parameter bounded by a universal constant, and $\NM_p(0, \Sigma^*)$ is perfectly Markovian w.r.t. a DAG $G^*$ (that is, $\mu$ is not necessarily  perfectly Markovian w.r.t. $G^*$). 
This includes the case where some node variables are Gaussian and some are discrete and bounded~\citep{lauritzen1992propagation}. 
Then, under  a set of similar assumptions, we can prove a consistency result analogous to Theorem~\ref{th:sel0}(i); see Theorem F1 in Supplement F.5. 
By Corollary~\ref{coro:path.cpdag}, this proves part (ii) of  Condition~\ref{cond:unimodal}, and other strong selection consistency results in Theorem~\ref{th:sel0} follow.  

The main idea of the proof of Theorem F1 is similar to the Gaussian case.  We first generalize the variable selection results of~\citet{yang2016computational} to random matrices, which is performed in Supplement E.3. 
However,  the proof techniques are very different from the Gaussian case in that we need to use random matrix theory~\citep{vershynin2010introduction} and error propagation results to show that all minimal I-maps of $G^*$ can be recovered from the empirical covariance matrix.  
The key distinction between the two scenarios is that in the sub-Gaussian case uncorrelatedness does not imply independence. Consequently, some calculations are more involved, and we need to require a slightly stronger assumption on $c_2$: in the sub-Gaussian case, we require $  \din \vmax^4 / \vmin^6 = O(c_2)$, while in the Gaussian case we only need $\din = O(c_2)$.

\section{Mixing time results for Bayesian structure learning} \label{sec:mcmc}

\subsection{Rapid mixing of the \RWGES{} sampler}\label{sec:rapid.rwges}  
Recall that \RWGES{} is simply the random walk MH algorithm defined by~\eqref{eq:general.MH} with $h \equiv 1$ and the triple $(\cpdag (\din, \dout), \nb, \post)$ where $\post$ is given by~\eqref{eq:post.cpdag}. 
 In the proof of Theorem~\ref{th:cpdag.consist}, we have verified that  Condition~\ref{cond:unimodal} holds, and thus we can apply the mixing time bounds in Section~\ref{sec:mix.path} to obtain the main result of this work, rapid mixing of  \RWGES{}.

\begin{theorem}\label{th:main.rapid}
Consider the setting of Theorem~\ref{th:cpdag.consist}, and let  $\pi_{\rm{min}} = \min_{\cE \in \cpdag(\din, \dout)} \post(\cE) $. 
Let $\bP$ denote the transition matrix of the \RWGES{} sampler  and $\bP_{\rm{lazy}}$ denote its lazy version. 
For sufficiently large $n$, with probability at least $1 - 3p^{-1}$, we have  
\begin{align*}
    \Tmix (\bP_{\rm{lazy}}) \leq C  t_0  p^{t_0 + 2} ( \log p )   \log  \left(  \frac{4}{ \pi_{\rm{min}} } \right),   
\end{align*}
for some universal constant $C$, where $t_0$ is as given in Theorem~\ref{th:cpdag.consist}.  
\end{theorem}

\begin{proof}
See Supplement G.1. 
\end{proof}
 
\begin{corollary}\label{lm:pi.min}
Suppose Assumptions~\ref{A:eigen} and~\ref{A:np} hold. We have 
 \begin{align*}
        \min_{\cE \in \cpdag(\din, \dout)}  \frac{ \post(\cE ) }{ \post (\cE^* ) }        \geq   \left( c_1 p^{c_2} \sqrt{1 + \alpha/\gamma}   \right)^{- p (\din + d^*) } \left( \frac{2\vmax}{\vmin} \right)^{ - p (\alpha n + \kappa) / 2}. 
 \end{align*}
Hence, under the setting of Theorem~\ref{th:main.rapid}, the mixing time of  the \RWGES{} sampler can be bounded by a polynomial of $n$ and $p$. 
\end{corollary}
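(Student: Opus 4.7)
The plan is to first establish the multiplicative lower bound on $\post(\cE)/\post(\cE^*)$ by exploiting the decomposability of the posterior score, and then combine it with Theorem~\ref{th:main.rapid} to conclude polynomial mixing. Fix any $\cE \in \cpdag(\din, \dout)$ and choose $G \in \cE$; since $\score$ is constant on equivalence classes by Lemma~\ref{lm:markov.equiv}, we can write
\begin{equation*}
\frac{\post(\cE)}{\post(\cE^*)} \;=\; \prod_{j=1}^{p} \frac{\exp(\scorej(\Pa_j(G)))}{\exp(\scorej(\Pa_j(G^*)))}.
\end{equation*}
Each factor splits, via~\eqref{eq:post.modular}, into a \emph{prior penalty ratio} equal to $\{c_1 p^{c_2}\sqrt{1+\alpha/\gamma}\}^{-(|\Pa_j(G)|-|\Pa_j(G^*)|)}$ and a \emph{residual ratio} equal to $\{X_j^\top(I-P_{\Pa_j(G)})X_j / X_j^\top(I-P_{\Pa_j(G^*)})X_j\}^{-(\alpha n+\kappa)/2}$, where $P_S = \X_S(\X_S^\top \X_S)^{-1}\X_S^\top$.

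For the prior penalty, Assumption~\ref{A:prior} ensures $c_1 p^{c_2}\sqrt{1+\alpha/\gamma}\geq 1$, so each factor is minimized when $|\Pa_j(G)|-|\Pa_j(G^*)|$ is as large as possible. Using $|\Pa_j(G)|\leq \din$ and $|\Pa_j(G^*)|\leq d^*$ (the latter because the topological ordering of $G^*$ makes $G^*$ its own minimal I-map, whose max degree is bounded by $d^*$), the product of prior penalties over $j\in[p]$ is at least $(c_1 p^{c_2}\sqrt{1+\alpha/\gamma})^{-p(\din+d^*)}$, which matches the first factor in the claimed bound. The residual ratio is the harder part and will be the main obstacle. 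For the numerator I use the trivial deterministic bound $X_j^\top(I-P_{\Pa_j(G)})X_j \leq X_j^\top X_j$. For the denominator I exploit the true SEM decomposition: since $\Pa_j(G^*)$ precedes $j$ topologically, writing $X_j = \X_{\Pa_j(G^*)} B^*_{\cdot,j}+\varepsilon_j$ yields $X_j^\top(I-P_{\Pa_j(G^*)})X_j = \varepsilon_j^\top(I-P_{\Pa_j(G^*)})\varepsilon_j$, whose distribution conditional on $\X_{\Pa_j(G^*)}$ is $\omega^*_j\,\chi^2_{n-|\Pa_j(G^*)|}$. Combining the chi-square concentration with $\omega^*_j\in(\vmin,\vmax)$ from Remark~\ref{rmk:beta.min}, and with a sub-exponential concentration bound on $X_j^\top X_j\leq (1+o(1))n\vmax$ (which Assumptions~\ref{A:eigen} and~\ref{A:np} make uniform in $j$ with high probability), the residual ratio for each node is at most $2\vmax/\vmin$ for $n$ large enough, yielding the second factor $(2\vmax/\vmin)^{-p(\alpha n+\kappa)/2}$ upon multiplying over $p$ nodes.

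For the mixing time claim, I simply take the logarithm of the lower bound on $\pi_{\rm{min}}$ just established, giving
\begin{equation*}
\log(1/\pi_{\rm{min}}) \;\leq\; p(\din+d^*)\log(c_1 p^{c_2}\sqrt{1+\alpha/\gamma}) + \tfrac{1}{2} p(\alpha n+\kappa)\log(2\vmax/\vmin),
\end{equation*}
which under Assumptions~\ref{A:np} and~\ref{A:prior} and the growth conditions of Theorem~\ref{th:cpdag.consist} is $O(\mathrm{poly}(n,p))$. Plugging this into the bound of Theorem~\ref{th:main.rapid}, namely $\Tmix(\bP) \leq C p^{t_0+3}(t_0\log p)^2\log(4/\pi_{\rm{min}})$, shows the mixing time is polynomial in $n$ and $p$. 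The main obstacle throughout is the denominator estimate, because it must hold simultaneously for all $(j,G^*)$ rather than node-by-node; this is where I will invoke uniform concentration across the $p$ regression residuals and explicitly use $\din\log p = o(n)$ from Assumption~\ref{A:np} to control the union bound, relegating routine technical estimates to a short lemma that mirrors standard calculations already used in the proof of Theorem~\ref{th:sel0}.
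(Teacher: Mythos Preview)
Your proposal is correct and takes a somewhat more direct route than the paper. The paper first bounds $\exp(\score(G))/\exp(\score(G^*_\sigma))$ for $G$ sharing an ordering $\sigma$, and then separately bounds $\exp(\score(G^*_\sigma))/\exp(\score(\cE^*))$ by invoking the Chickering sequence (Lemma~\ref{lm:imap.map}), picking up the extra factor $(c_1 p^{c_2}\sqrt{1+\alpha/\gamma})^{-pd^*}$ from that second step. You instead compare $G$ directly to $G^*$ node by node; this works because $X_j^\top(I-P_{\Pa_j(G^*)})X_j$ reduces to a quadratic form in the true error $\varepsilon_j$ regardless of the ordering of $G$, so no intermediate minimal I-map is needed. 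Your route is more elementary and in fact gives a slightly tighter prior-penalty exponent $-p\din$ (the bound $|\Pa_j(G^*)|\leq d^*$ you cite is not actually needed in that direction), though you state the looser $-p(\din+d^*)$ to match the corollary. The paper's two-step argument is natural given the canonical-path construction used elsewhere, but for the sole purpose of bounding $\pi_{\rm min}$ the direct comparison suffices and bypasses Lemma~\ref{lm:imap.map}. One minor point to make explicit: you must choose $G\in\cE\cap\dag(\din,\dout)$ (which exists by definition of $\cpdag(\din,\dout)$), not an arbitrary member of $\cE$, so that $|\Pa_j(G)|\leq\din$ is valid.
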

\begin{proof}
See Supplement G.2. 
\end{proof}

\begin{remark}\label{rmk:rwges}
Corollary~\ref{lm:pi.min} implies that \RWGES{} is rapidly mixing with high probability. 
The term $\log \pi_{\rm{min}}$ in the mixing time bound is only used to handle the worst scenario where the chain starts from the state with minimum posterior probability. 
If the chain starts from some ``good'' estimate, the actual mixing rate of the chain can be much faster; see~\citet[Proposition 1]{sinclair1992improved}. 
\end{remark}

If in the beta-min condition, we only assume that the minimum edge weight of $B^*$ (the weighted adjacency matrix of the true DAG $G^*$) is sufficiently large, the rapid mixing of \RWGES{} does not hold. 
It is not difficult to construct an explicit example where \RWGES{} is slowly mixing. 
In the following example, we let $p = 3$ be fixed and show that the mixing time grows exponentially in $n$.  One can extend our example to the case $p = n$ by adding variables $\sX_4, \dots, \sX_{n}$ such that, for any $j = 4, \dots, n$, the observed vector $X_j$  is exactly orthogonal to all the other column vectors of the data matrix. 

\begin{example}\label{ex:slow1}
Assume $p = 3$ and the true SEM is given by 
\begin{align*}
    X_1 = z_1, \quad  X_2 =  b_1 X_1 + z_2,  \quad X_3 =b_2 X_2 + z_3, 
\end{align*}
where $z_1, z_2, z_3$ are vectors orthogonal to each other and $\norm{z_j}_2^2 = n$ for each $j$.  
Thus, we can let the true DAG  $G^*$  be  $1 \rightarrow 2 \rightarrow 3$.  
Suppose the prior parameters satisfy that $\din = \dout = 2$,  $c_2 = \sqrt{ n }$, $\kappa = 0$, and $c_1, \alpha, \gamma$ are fixed constants such that $c_1 \sqrt{1 + \alpha / \gamma} = 1$.    
Assume the true regression coefficients $b_1, b_2 > 0$ are given by 
\begin{align*}
    b_1^2 = b_2^2 =  \frac{ K c_2 \log p }{\alpha n } = o(1), 
\end{align*}  
where $K$ is some large universal constant.  So, $b_1, b_2$ satisfy the bound in~\eqref{eq:beta.min}. 
Consider the DAG $\tG$ given by $1 \rightarrow 2 \leftarrow 3$, which has $\EG{\tG} = \{ \tG \}$. 
The topological ordering of $\tG$ can be chosen to be $\sigma=(1, 3, 2)$, and the minimal I-map $G^*_\sigma$ is a complete DAG.  
One can show that the edge weight of $1 \rightarrow 3$ in $G^*_\sigma$ is  $b_1 b_2$. 
It is easy to verify that $b_1^2 b_2^2 = o (c_2 n^{-1} \log p )$, so the true model fails to satisfy the strong beta-min condition.  Indeed, we can prove that \RWGES{} is slowly mixing. 
See Supplement G.4. 
\end{example} 

\subsection{Rapid mixing results for sparse DAG selection}\label{sec:rapid.dag}
Suppose the ordering is given and the search is restricted to $\dag^\sigma(\din, \dout)$. 
We can construct a random walk MH sampler using neighborhood function $\nb$ defined in~\eqref{eq:def.nb} and posterior distribution $\post^\sigma$ defined in~\eqref{eq:dag.sigma}, which is just the standard add-delete-swap MH sampler. Denote its transition matrix by $\bP^\sigma$.  
If there is no out-degree constraint, by posterior modularity, one can perform sampling for the parent set of each node separately; thus, there is no need to directly draw DAG samples.  
However, when $\dout < p $, the posterior distributions of $\Pa_1, \dots, \Pa_p$ are not independent, and this add-delete-swap sampler provides a convenient solution. 
Since by Theorem~\ref{th:sel0}(i) and Corollary~\ref{coro:path.dag},  the triple $( \dag^\sigma(\din, \dout), \nb, \post^\sigma)$ satisfies Condition~\ref{cond:unimodal}, the mixing time bound for $\bP^\sigma$ immediately follows from Theorem~\ref{th:path.better}.  

\begin{theorem}\label{th:dag.rapid}
Suppose Assumptions~\ref{A:eigen},~\ref{A:np},~\ref{A:prior},~\ref{A:size2} and~\ref{A:beta2} hold  for some $\sigma \in \bbS^p$,   $t > 3$ and $\Cb \geq 8t / 3$. 
Further, assume that  $\min\{ d^*_\sigma \din + 1, p \} \leq \dout$.  
For sufficiently large $n$, with probability at least $1 - 3p^{-1}$,  we have 
\begin{align*}
    \Tmix (\bP^\sigma_{\rm{lazy}}) \leq C    \din  p^2 \log  \left(  \frac{4}{ \pi^\sigma_{\rm{min}} } \right),   
\end{align*}
for some universal constant $C$, where $ \pi^\sigma_{\rm{min}} = \min_{G \in \dag^\sigma(\din, \dout)}  \post^\sigma(G ) $.
\end{theorem}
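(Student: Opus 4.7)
The plan is to invoke the canonical path machinery of Theorem~\ref{th:path} with $\Theta = \dag^\sigma(\din, \dout)$, $\theta^* = G^*_\sigma$, and $\bP = \bP_\sigma$, which reduces the claim to constructing a canonical transition function $g$ and verifying its three numerical conditions together with the path-length bound $\ell_{\rm{max}}$. The extra hypothesis $\min\{d^* \din + 1, p\} \leq \dout$ is tailored so that Lemma~\ref{lm:exist} applies: for every $G \in \dag^\sigma(\din, \dout) \setminus \{G^*_\sigma\}$ there is some $j \in [p]$ with $g_j^\sigma(G) \in \dag^\sigma(\din, \dout)$ and $g_j^\sigma(G) \neq G$. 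First, I would define $g(G) = g_j^\sigma(G)$ by selecting such a $j$ with a deterministic tie-breaking rule (e.g., smallest index), and set $g(G^*_\sigma) = G^*_\sigma$. Since $g_j^\sigma$ strictly reduces $\HD(\Pa_j(\cdot), S^*_{\sigma, j})$ by Remark~\ref{rmk:gj.dist}, iterating $g$ reaches $G^*_\sigma$ in finitely many steps, so $g$ is a valid canonical transition function on $(\dag^\sigma(\din, \dout), \nb)$.

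The main analytic input is a single-ordering analogue of Theorem~\ref{th:sel0}(i): on a high-probability event, $\scorej(g_j^\sigma(S)) - \scorej(S) \geq t \log p$ for every $j \in [p]$ and every $S \in \model^\sigma(j, \din) \setminus \{S^*_{\sigma, j}\}$. This is precisely the nodewise variable-selection consistency that powers Corollary~\ref{coro:dag}; its proof is wholly analogous to that of Theorem~\ref{th:sel0}(i) but needs only the single-$\sigma$ forms of sparsity and beta-min (Assumptions~\ref{A:size2} and~\ref{A:beta2}), without a union bound over the $p!$ orderings. Since $\score$ decomposes additively over nodes and $g_j^\sigma$ changes only $\Pa_j$, this yields $\post^\sigma(g(G))/\post^\sigma(G) \geq p^t$ for every $G \neq G^*_\sigma$, which is condition~(2) of Theorem~\ref{th:path} with $t_2 = t$. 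For the remaining conditions, an elementary count bounds the add-delete-swap neighborhood of any $G \in \dag^\sigma(\din, \dout)$ by $p(p-1)/2 + \din p + \din p^2 = O(\din p^2)$. Hence $|g^{-1}(G)| \leq |\nb(G) \cap \dag^\sigma(\din, \dout)| = O(\din p^2)$, verifying condition~(1) with $t_1$ just above $2$, and $\bK_\sigma(G, g(G)) \geq c/(\din p^2)$. Because the posterior ratio $p^t$ dominates any polynomial proposal-ratio correction $|\nb(G)|/|\nb(g(G))|$, the Metropolis acceptance probability equals $1$, so $\bP_\sigma(G, g(G)) \geq c/(\din p^2)$ and condition~(3) holds with $p^{t_3} = C \din p^2$.

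Each canonical move strictly decreases the nonnegative integer $\sum_i \HD(\Pa_i(\cdot), \Pa_i(G^*_\sigma)) \leq 2 \din p$, so $\ell_{\rm{max}} \leq 2\din p$. Substituting $\ell_{\rm{max}} = O(\din p)$, $p^{t_3} = O(\din p^2)$, and $1 - p^{-(t_2 - t_1)} \geq 1/2$ (which holds for $p$ large since $t > 3 > t_1$) into Theorem~\ref{th:path}(iii) produces precisely the claimed bound $C \din^2 p^3 \log(4/\pi^\sigma_{\rm{min}})$; positivity of the spectrum can be arranged by passing to the lazy chain $(\bP_\sigma + \mathbf{I})/2$ as in Remark~\ref{rmk:lazy}. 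The one nontrivial obstacle is the single-ordering consistency statement used above: I need to check that the concentration arguments in Supplement~\ref{sec:supp.sel} still deliver the $1 - 3p^{-1}$ probability bound for a fixed $\sigma$ without the ordering-wise union bound (which should be immediate, as removing the union bound only improves probability), after which the rest is a direct plug-in to Theorem~\ref{th:path}.
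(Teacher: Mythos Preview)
Your proposal is correct and follows essentially the same route as the paper's proof: construct the canonical transition function via Lemma~\ref{lm:exist}, verify condition~(2) of Theorem~\ref{th:path} using the nodewise score bound, bound $|\nb(G)| = O(\din p^2)$ and $\ell_{\rm{max}} = O(\din p)$, and plug into Theorem~\ref{th:path}(iii). You are in fact slightly more careful than the paper's write-up in noting that only the single-ordering analogue of Theorem~\ref{th:sel0}(i) is required (under Assumptions~\ref{A:size2} and~\ref{A:beta2}), which is exactly the content underlying Corollary~\ref{coro:dag}.
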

\begin{proof}
See Supplement G.3. 
\end{proof}

\begin{remark}
The assumptions are much weaker than those used in Theorem~\ref{th:main.rapid}.
In particular, we can allow a much larger model size for each nodewise variable selection problem. This is mainly because  for any $G \in \dag^\sigma(\din, \dout)$, we have $| \nb (G)  | = O(\din p^2)$. 
But for an equivalence class $\cE \in \cpdag(\din, \dout)$, the size of $\nbg(\cE)$ may grow exponentially in $\din + \dout$. 
\end{remark}

\subsection{Slow mixing examples for a CPDAG sampler} \label{sec:cpdag.slow}
The neighborhood $\nbg(\cE)$ used in \RWGES{} can be very large for some $\cE$, which seems to be undesirable. However, other choices of the neighborhood relation on $\cpdag$ (which may seem very reasonable) can cause the search algorithm to be trapped in sub-optimal local modes.  

A popular approach to constructing sampling algorithms on $\cpdag$ is to use the CPDAG (completed partially directed acyclic graph) representations of equivalence classes. Any equivalence class $\cE$ can be uniquely represented by a CPDAG, a partially directed acyclic graph that satisfies two conditions: (i) it has the same skeleton as any $G \in \cE$; (ii) an edge is directed if and only if the edge is directed in the same orientation in every $G \in \cE$. 
A CPDAG is also called an essential graph~\citep{andersson1997characterization}.  
One can define local proposal moves on $\cpdag$ by modifying CPDAGs. 
However, one can easily end up with a CPDAG sampler that is slowly mixing even when $p$ is fixed and $n$ goes to infinity.  

\begin{example}\label{ex:slow2}
Let $p = 3$ and  the true data-generating DAG $G^*$ be $1 \rightarrow 3 \leftarrow 2$. Since $G^*$ is the only member in $\cE^* = \EG{G^*}$, the CPDAG of $\cE^*$  is the same as $G^*$. 
Let $\tcE$ be the equivalence class that contains all complete DAGs. It is easy to verify that the CPDAG of $\tcE$ is a complete undirected graph. 
If we define the neighborhood of $\tcE$ as all the CPDAGs that can be obtained by adding or removing a directed or undirected edge from $\tcE$, then the only CPDAGs we can move to from $\tcE$ are $1 - 2 - 3$, $1 - 3 - 2$ and $ 2 - 1 - 3$. 
However, given sufficiently large sample size, all these three CPDAGs should have much smaller score than $\tcE$. For example, the CPDAG $1 - 3 - 2$ encodes the CI relation $1 \indep 2 \mid 3$, which does not exist in $G^*$, and thus connecting nodes $1$ and $2$ should increase the score. 
See Supplement G.5 for an explicit construction of this example and another $5$-node example, where we further prove that the CPDAG sampler proposed by~\citet{castelletti2018learning} is slowly mixing. 
\end{example}

\section{Simulation studies on the \RWGES{} sampler} \label{sec:sim}

\subsection{A rapid mixing example}\label{sec:first.sim.rapid}
In this section, we present three simulation studies which illustrate the theoretical results we have proved. 
We first construct a rapid mixing example for $p = 100$ and $n = 800$. 
In order to approximately satisfy the strong beta-min condition, we randomly generate the true DAG $G^*$ such that its maximum node degree is $2$ and its largest connected sub-DAG only has $10$ nodes, and then for each edge $(i, j)$ in $G^*$, we sample $B^*_{i j}$ from the uniform distribution on $(0.5, 1.5) \cup (-1.5, -0.5)$. The DAG $G^*$ we obtain has $66$ edges, among which $24$ are directed in the CPDAG representation of $\EG{G^*}$; see Supplement H.2 for the visualization.
Each row of the data matrix $\X$ is drawn independently from $\NM_p(0, \Sigma^*)$ where $\Sigma^* = (I - (B^*)^\top)^{-1}  (I - B^*)^{-1}$. 
We use $\alpha = 0.99, \gamma = 0.01, \kappa = 0, c_1 = 1, c_2 = 2$ and 
run 20 \RWGES{} chains, all initialized at the null model, for $5\times 10^4$ iterations. All 20 runs are able to find the true equivalence class in about $10^5$ iterations, which indicates a fast mixing rate; see the left panel of Figure~\ref{fig:rapid_mixing}. 
This example illustrates that though the strong beta-min condition is restrictive, if the true DAG is sufficiently sparse and has a ``simple'' structure,  \RWGES{} can be rapidly mixing  for a moderately large sample size 
(in Supplement H.2, we use this idea to explicitly construct toy examples with $p \gg n$ that satisfy all assumptions of Theorem~\ref{th:main.rapid}). 
For comparison, we repeat the analysis by only using the first $200$ observations, and we find that $11$ chains fail to sample $\EG{G^*}$. The right panel of Figure~\ref{fig:rapid_mixing}  suggests that these 11 chains get stuck at different local modes. Notice that $\EG{G^*}$ still seems to have the largest posterior probability in this case, which implies that the beta-min condition at least holds for the true ordering (i.e., if the true ordering is known, we can recover the true DAG). 
However, since $n$ is small, the strong beta-min condition is significantly violated, which makes the posterior distribution on the space of equivalence classes highly multimodal.  

\begin{figure}[!htp]
    \centering
    \includegraphics[width=0.45\linewidth]{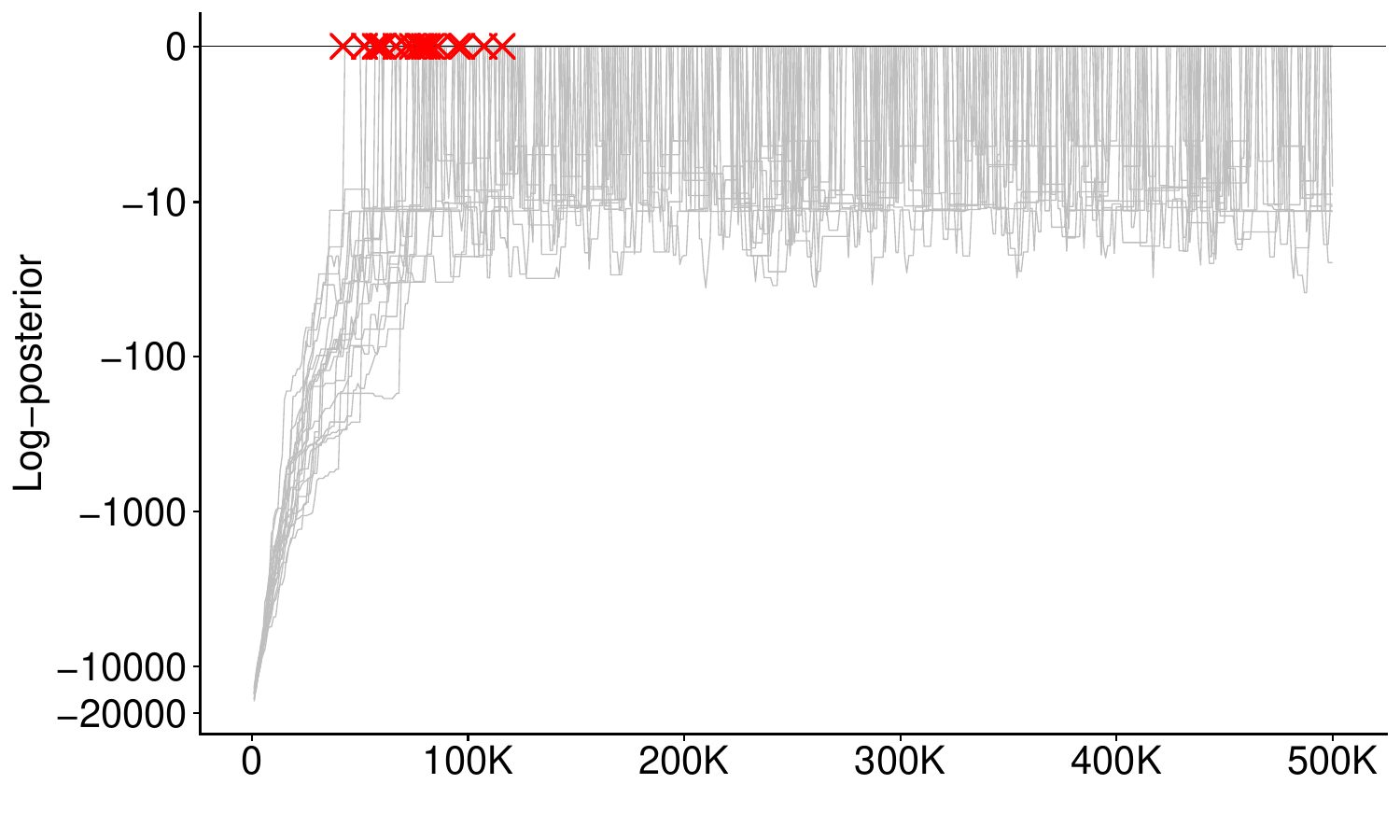} \hspace{0.5cm}
    \includegraphics[width=0.45\linewidth]{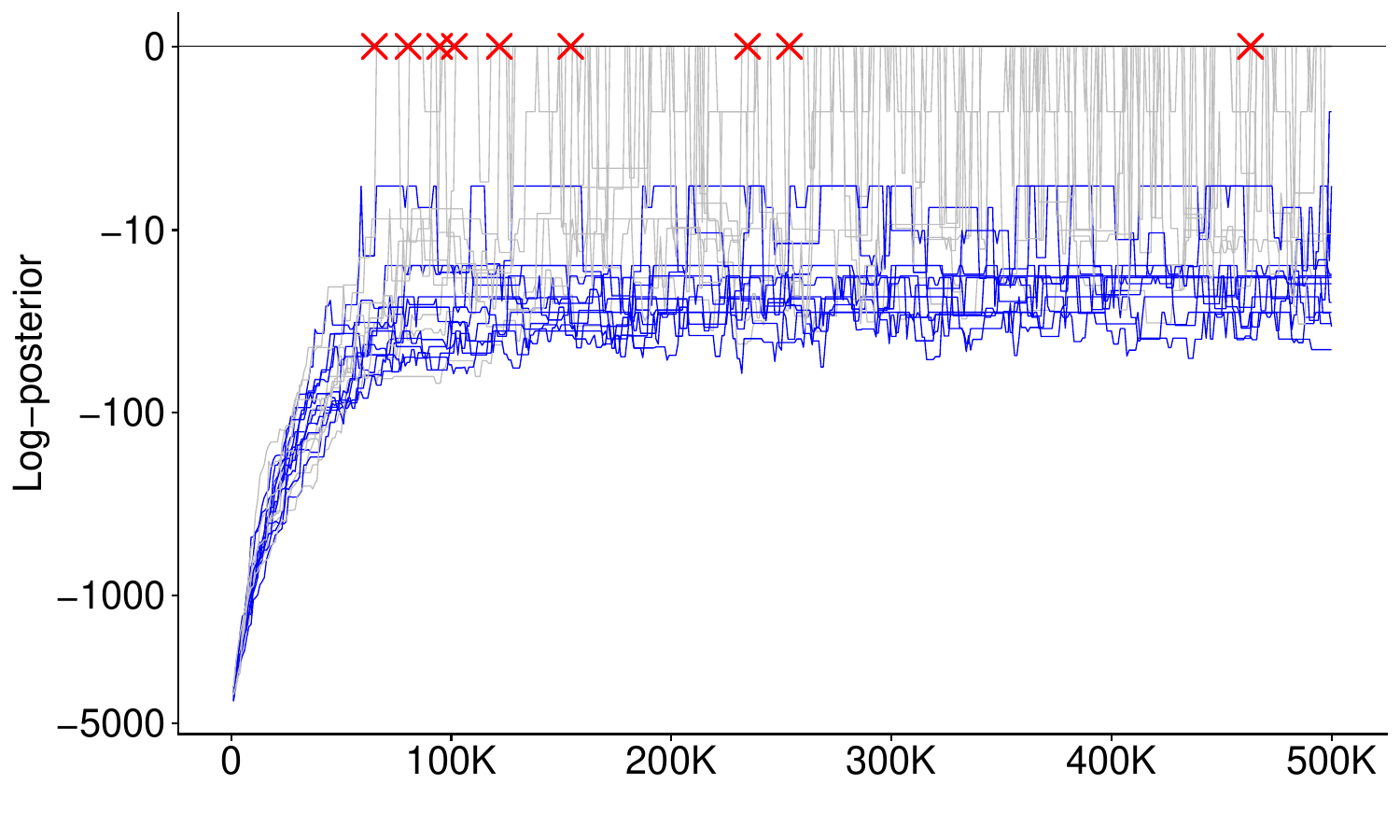}
    \caption{Trajectories of 20 independent \RWGES{} runs for a simulated data set with $p = 100$. Left: $n = 800$; right: $n = 200$. The posterior probabilities are un-normalized and the log-posterior of $\EG{G^*}$ is set to zero. Red crosses mark the times that \RWGES{} first collects $\EG{G^*}$. Runs that never sample $\EG{G^*}$ are shown in blue.}
    \label{fig:rapid_mixing}
\end{figure}

\subsection{Performance in a high-dimensional scenario}\label{sec:second.sim.high.dim} 
The complexity of the structure learning problem largely depends on $p$ and the sparsity level of the true DAG $G^*$. 
We can roughly measure the sparsity using the maximum degree of $G^*$, denoted by $\mathrm{deg}(G^*)$. 
 Assumption~\ref{A:np} and Remark~\ref{rmk:order.d} suggest that 
we consider $\mathrm{deg}(G^*)  \log p = O(n)$ and  $ \mathrm{deg}(G^*) = O(\sqrt{\log p})$.  
In the second simulation study, we examine these asymptotic orders by using $7$ simulation settings where $n$ grows linearly and $p$ grows exponentially. Given $p$, we generate $G^*$ by first sampling a random ordering and then including each edge with probability $D / (p - 1)$, where the parameter $D$ gives the expected number of neighbors of each node. We let $D$ grow at rate $\sqrt{n}$ (so we actually let $\mathrm{deg}(G^*)  \log p$ grow slightly faster than $n$). 
We generate $\X$ using the normal SEM associated with $G^*$ and choose the hyperpamareters in the same way as in Section~\ref{sec:first.sim.rapid}. 
The number of \RWGES{} iterations is set to grow polynomially with $p$ but slightly slower than $p^2$. We always initiate the sampler at the null model and  discard the first $80\%$ iterations as burn-in.  
For each setting, we generate 20 replicates ($G^*$ and $\X$ are re-sampled each time), and the results are shown in Table~\ref{table:high_dim} (see Supplement H.2 for the definition of true/false positive rates). 
Observe that the true positive rate (for both skeleton and CPDAG estimation) becomes stable as $p$ grows, while the false positive rate even decreases. 
Though for most real-world problems, the strong beta-min condition is unlikely to be satisfied and $\EG{G^*}$ may not be correctly identified, this study shows that the theoretical insights on the MCMC complexity is useful. In particular, the performance of \RWGES{} seems stable under the asymptotic regime $ \mathrm{deg}(G^*) \log p = O(n)$.  

\begin{table}[!htp]
    \centering
    \begin{tabular}{|cccc|ccc|}
    \hline 
    $p$ & $n$ & $D$ & $N_{\rm{mcmc}}/1000$ & TPR (skeleton) & TPR & FPR \\
    \hline 
7 & 60 & 1.549 & 3  & 0.854 (0.03) & 0.721 (0.06) & 0.047  (0.02)\\
14 & 90 & 1.897 & $10$ & 0.89 (0.02) & 0.668 (0.06) & 0.03 (0.006)\\
28 & 120 & 2.191 & $30$ & 0.91  (0.01) & 0.73 (0.03) & 0.017 (0.003)\\
56 & 150 & 2.449 & $100$ & 0.871 (0.01) & 0.629 (0.03) & 0.015 (0.002)\\
112 & 180 & 2.683 & $300$ & 0.866 (0.01) & 0.634 (0.02) & 0.0091  ($0.0005$)\\
224 & 210 & 2.898 & $1000$& 0.86 (0.008) & 0.634 (0.01) & 0.0049  ($0.0002$)\\
448 & 240 & 3.098 & $3000$ & 0.869 (0.004) & 0.648 (0.008) & 0.0027 ($0.00007$)\\    
\hline 
    \end{tabular}
    \caption{Performance of \RWGES{} in $7$ settings.   For the $k$-th setting, $p =7 \cdot 2^{k - 1}, n = 30(k+1), D = 0.2 \sqrt{n}$ and  the number of \RWGES{} iterations $N_{\rm{mcmc}} \approx 300\cdot (4p / 7)^{1.66}$.  
    TPR (skeleton): true positive rate with edge directions ignored; TPR: true positive rate (edge directions determined by the CPDAG); FPR: false positive rate. Results are averaged over $20$ replicates, and the number in parentheses is the standard error. 
  }
    \label{table:high_dim}
\end{table}

\subsection{On the choice of $c_2$} \label{sec:third.sim.c2}
The third simulation study aims to investigate the optimal choice of $c_2$, which is the most important prior hyperparameter of our model since it determines the order of the penalty on the graph size. We fix $p = 20$ and $n = 100$ and generate 50 true DAGs and data sets using the method described in Section~\ref{sec:second.sim.high.dim} with $D = 4$ (recall this gives the expected degree of a single node). 
When implementing \RWGES{}, we impose the maximum degree constraint, denoted by $d$ (i.e., the sampler only searches equivalence classes with maximum degree bounded by $d$);  see Supplement H.1 for details. 
 \RWGES{} is run for $40,000$ iterations for each simulated data set.
We first fix $c_2 = 1.3$ and try $d = 4, 5, \dots, 9$. The results are shown in the left column of Table~\ref{table:c2}. True positive rates increase with $d$, since some nodes in the true DAG may have large degrees and their incoming edges cannot all be detected if $d$ is small. However, the false positive rate also increases because the search space quickly grows with $d$. 
Next, we repeat the experiment by setting $c_2 = 1.1 + 0.1 d$, which, according to our tests, appears to yield close-to-optimal performance in this simulation setting. As can be seen from the right column of Table~\ref{table:c2}, the false positive rate remains roughly a constant and the true positive rates are comparable or even better than those for $c_2 = 1.3$. 
Recall that to prove posterior consistency, we assume $c_2$ is greater than $4(\alpha + 1)  \din$ plus some constant  in Assumption~\ref{A:prior}. This simulation study shows that, though the coefficient in Assumption~\ref{A:prior} is quite pessimistic, the linear growth rate (w.r.t. the maximum degree constraint) is a useful rule of thumb for tuning $c_2$ in practice.  

\begin{table}[!htp]
    \centering
    \begin{tabular}{|c|ccc|ccc|}
    \hline 
     & \multicolumn{3}{|c|}{$c_2 = 1.3$} & \multicolumn{3}{|c|}{$c_2 = 1.1 + 0.1 d$} \\
    \hline 
    $d$  & TPR (skeleton) & TPR & FPR &   TPR (skeleton) & TPR & FPR \\
    \hline 
 4  & 0.542 (0.02) & 0.307 (0.02) & 0.0824 (0.004) & 0.539 (0.01) & 0.316 (0.02) & 0.0788 (0.004)\\
5  & 0.61 (0.01) & 0.339 (0.02) & 0.101 (0.004) & 0.601 (0.01) & 0.325 (0.02) & 0.0977 (0.004)\\
6  & 0.665 (0.01) & 0.383 (0.02) & 0.115 (0.005) & 0.657 (0.01) & 0.393 (0.02) & 0.101 (0.005)\\
7  & 0.706 (0.01) & 0.412 (0.02) & 0.123 (0.006) & 0.699 (0.01) & 0.419 (0.02) & 0.096 (0.005)\\
8  & 0.72 (0.01) & 0.413 (0.02) & 0.132 (0.006) & 0.694 (0.01) & 0.421 (0.02) & 0.0993 (0.006)\\
9  & 0.718 (0.01) & 0.401 (0.02) & 0.138 (0.007) & 0.695 (0.01) & 0.437 (0.02) & 0.0932 (0.005)\\   
  \hline 
  \end{tabular}
   \caption{Simulation study with $p = 20$, $n = 100$ and expected node degree $D = 4$. Results are averaged over 50 data sets. 
   }
    \label{table:c2}
\end{table}

\section{Discussion}\label{sec:disc}

\subsection{Mixing of structure MCMC and order MCMC methods}\label{sec:mix.dag.order}
In this work, we have only analyzed the mixing times of  MCMC algorithms defined on the space of equivalence classes, but the same strategy can be pursued to study samplers defined on the DAG space and order space. 
Observe that the canonical paths we constructed in Section~\ref{sec:path.cpdag} for the \RWGES{} sampler can also be thought of as paths on the DAG space. Given an equivalence class $\cE$, we   first pick arbitrarily some $G \in \cE$. If $G$ has ordering $\sigma$, we move from $G$ to the minimal I-map $G^*_\sigma$ by only add-delete-swap modifications of the DAG. 
To move from $G^*_\sigma$ to $G^*$, we have to change the ordering. For \RWGES{}, the neighborhood function defined in~\eqref{eq:def.nbg} allows us to ``switch'' from $G^*_\sigma$ to a Markov equivalent DAG $\tilde{G}$, 
which is still an I-map of $G^*$ but no longer minimal, and then we can remove edges from $\tilde{G}$ (the existence of such $\tilde{G}$ is guaranteed by Chickering algorithm). 
Repeating this procedure, we obtain a path from $G^*_\sigma$ to $G^*$.  

Consider the classical structure MCMC sampler, a random walk MH algorithms defined on $\dag$ that use single-edge addition, deletion and reversal to propose local moves~\citep{madigan1995bayesian, castelo2003inclusion}.  Since any two Markov equivalent DAGs $G, G'$ are connected by a sequence of covered edge reversals (see Supplement C.1),  structure MCMC is  able to traverse equivalence classes and move from $G^*_\sigma$ to any other Markov equivalent DAG. Therefore, the canonical paths of \RWGES{} are also paths of structure MCMC (introduce swap moves if a restricted space is considered). The same argument can be applied to order MCMC samplers, since a covered edge reversal can be seen as an adjacent transposition on $\bbS^p$~\citep{solus2017consistency}.    
Unfortunately, the size of an equivalence class can easily be very large, and it is unclear  whether structure MCMC can always quickly leave any equivalence class even if the maximum degree is bounded.  
In Supplement G.6, we construct an interesting example where $G_0$ is Markov equivalent to $G^* \cup \{2 \rightarrow 1\}$ but it is quite difficult for structure MCMC to remove the edge between nodes $1$ and $2$ from $G_0$. Indeed, we show that on average it takes structure MCMC $O(p^4)$ iterations to move from $G_0$ to $G^*$, while it only takes \RWGES{} $O(p^2)$ iterations to move from $\EG{G_0}$ to $\EG{G^*}$. 
Nevertheless, we conjecture that  structure MCMC is still rapidly mixing under the assumptions we used in Section~\ref{sec:high} (recall ``rapid mixing'' only requires the mixing time to be polynomial in $n$ and $p$), though the proof would probably require a skillful analysis of how the size of an equivalence class changes with single-edge modifications of its member DAGs.  
  
One caveat is that the target posterior distributions on DAG and order spaces are typically  different from our target $\pi_n$ defined in~\eqref{eq:post.cpdag}. For example, for DAG MCMC methods, it is convenient to use the prior $\pi_0^{\rm{dag}}(G) \propto (c_1 p^{c_2})^{-|G|}$ for $G \in \dag(\din, \dout)$, which yields the posterior $\pi_n^{\rm{dag}}(G) \propto e^{\score(G)} \ind_{\dag(\din, \dout)}(G)$.   
Comparing them with~\eqref{eq:prior.equivalence} and~\eqref{eq:post.cpdag}, we see that $\pi_0^{\rm{dag}}(\cE)   = \sum_{G \in \cE} \pi_0^{\rm{dag}}(G)   \propto  |\cE| \pi_0(\cE)$ and  $\pi_n^{\rm{dag}}(\cE)   \propto  |\cE| \pi_n(\cE)$.   Note that we do not use $\pi_0^{\rm{dag}}$ for equivalence class samplers~\citep{castelletti2018learning} since calculating the size of $\cE$ can be extremely time-consuming. On the order space, the situation is more subtle since one DAG can be compatible with multiple orderings~\citep{eaton2012bayesian, ellis2008learning}.  
If rapid mixing of structure MCMC can be established, we expect that the same argument can be used to show the strong selection consistency of $\pi_n^{\rm{dag}}$.

\subsection{Advantages and extensions of \RWGES{}}\label{sec:rwges.ges} 
One  advantage of \RWGES{} over GES  is that \RWGES{} considers a restricted search space and is equipped with the swap proposal. This is particularly important to theoretical analysis.  
Non-sparse models can easily overfit the data (e.g. if node $j$ has more than $n$ parents, then $X_j$ can be perfectly explained leading to an infinite score), which is why the sparsity constraint is necessary for proving high-dimensional consistency results.  
For GES, even if the maximum degree of $G^*$ is bounded, there is still a possibility  that GES visits non-sparse equivalence classes along its search path and then its behavior becomes completely unpredictable. In the proof of~\citet{nandy2018high} on the high-dimensional consistency of GES, the authors directly assumed that the output of the first stage is not too large; see Assumption (A5) therein.     

The main methodological difference between the two algorithms is that  GES is essentially an optimization algorithm, while \RWGES{} is used for sampling. The general theory on the relation between optimization and sampling suggests that each has its own unique advantages~\citep{talwar2019computational}. In particular, when the sample size is not large, the posterior tends to be multimodal and MCMC sampling (if it converges) can yield better estimates via model averaging~\citep{hoeting1999bayesian}. One can also use the output of GES as the initial state for \RWGES{}, which, to some extent, may achieve the benefits of both methods. 
In our theoretical analysis, we choose to focus on \RWGES{} just for its simplicity. One can generalize it in many ways to improve its performance in practice, for example, by using an informed proposal scheme or combining it with tempering techniques (i.e., running multiple \RWGES{} samplers at different temperatures). 
One simple modification that may significantly improve the sampler's performance is to first estimate a large conditional independence graph~\citep{meinshausen2006high, raskutti2008model} and then use it to tune the proposal probabilities. This can be seen as a randomized extension of the method of~\citet{nandy2018high}.  
The canonical paths we construct in Section~\ref{sec:main} can always be applied as long as the sampler proposes states from $\nb(\cdot)$ (or a superset of it). 
But one important takeaway from our theory is that using a neighborhood smaller than $\nb(\cdot)$ may lead to slow mixing even when the sample size is sufficiently large.   
A detailed investigation into more sophisticated local MCMC schemes using $\nb(\cdot)$ is left to future research.


\section*{Acknowledgements}
 The authors would like to thank all anonymous reviewers  whose comments have helped improve the quality of the paper. 
 
\section*{Supplementary material}
 Part A: a notation table. 
Part B: more results for mixing times of finite Markov chains and proofs for Section~\ref{sec:mix}. 
Part C:  preliminaries for graphical models. 
Part D: proofs for Section~\ref{sec:main}. 
Part E: auxiliary results for high-dimensional empirical variable selection. 
Part F: proofs for Section~\ref{sec:high}.  
Part G: proofs and  examples for Sections~\ref{sec:mcmc} and~\ref{sec:disc}.  
Part H: further details about \RWGES{} implementation and simulation studies. 
Part I: discussion on the case where the strong beta-min or faithfulness condition fails.

\renewcommand{\thesection}{\Alph{section}}
\setcounter{section}{0}
\renewcommand{\thetheorem}{\thesection\arabic{theorem}}
\setcounter{theorem}{0}
\renewcommand{\thelemma}{\thesection\arabic{lemma}}
\setcounter{lemma}{0}
\renewcommand{\thecorollary}{\thesection\arabic{corollary}}
\setcounter{corollary}{0}
\renewcommand{\thedefinition}{\thesection\arabic{definition}}
\setcounter{definition}{0}
\renewcommand{\theexample}{\thesection\arabic{example}}
\setcounter{example}{0}
\renewcommand{\thefigure}{\thesection\arabic{figure}}
\setcounter{figure}{0}
\renewcommand{\thetable}{\thesection\arabic{table}}
\setcounter{table}{0}
\renewcommand{\theremark}{\thesection\arabic{remark}}
\setcounter{remark}{0}
\renewcommand{\thealg}{\thesection\arabic{alg}}
\setcounter{alg}{0}

\def\setallcounters{
\setcounter{theorem}{0}
\setcounter{lemma}{0}
\setcounter{corollary}{0}
\setcounter{definition}{0}
\setcounter{example}{0}
\setcounter{figure}{0}
\setcounter{table}{0}
\setcounter{remark}{0}
\setcounter{alg}{0}
}

\numberwithin{equation}{section}

\newpage


\section{Notation used in the main text}\label{sec:notation}
\setallcounters
In the table below,  we list the notation that is used frequently in Sections~\ref{sec:main} to~\ref{sec:disc}. 
 
\begin{center}
\begin{tabular}{|l|l|}
\hline
\textbf{Notation}   & \textbf{Description} \\ \hline
$[p]$ & $\{1,2, \dots, p \} $ \\ 
$\bbS^p$ &  set of all permutations of $[p]$ \\  
$|S|$ &  cardinality of a set $S$  \\ 
$\NM_p(\mu, \Sigma)$ & $p$-variate normal distribution with covariance matrix $\Sigma$ \\
$\sX$ &   a random vector with components $\sX_1, \dots, \sX_p$ \\ 
$\X, X_j, \X_S$ &  data matrix, column vector, submatrix with columns index by $S$ \\
$|G|$ &  number of edges in the DAG $G$ \\ 
$\HD(S, S'), \HD(G, G')$ & Hamming distance between two sets or DAGs \\
$\Pa_j(G), \Ch_j(G)$ & set of parents/children of node $j$ in the DAG $G$  \\ 
$\EG{G}$ &  the equivalence class that contains the DAG $G$   \\
$\CI(G), \CI(\cE)$ & set of CI relations encoded by a DAG $G$ or an equivalence class $\cE$ \\ 
$\dag, \dag^\sigma $ &  set of $p$-vertex DAGs, set of $p$-vertex DAGs with ordering $\sigma$   \\
$\dag(\din, \dout), \dag^\sigma(\din, \dout)$ & sets of DAGs that satisfy the in-degree and out-degree constraints \\
$\cpdag, \cpdag(\din, \dout)$ &   sets of  equivalence classes \\ 
$\cA_p^\sigma(j)$ & set of nodes that precede $\sX_j$ in the ordering $\sigma$ \\ 
$\model^\sigma(j,  \, \din)$ & set of possible values of $\Pa_j(G)$ for $G \in \dag^\sigma(\din, p)$;  see~\eqref{eq:def.model} \\
$g_j^\sigma(S), g_j^\sigma(G)$ & canonical transition functions on $\model^\sigma(j, \din)$ and $\dag^\sigma(\din, \dout)$ \\
$ \nbg(\cE)$ & add-delete-swap neighborhood of an equivalence class $\cE$; see~\eqref{eq:def.nbg} \\
$\adds, \dels, \swaps $ & addition/deletion/swap neighborhood of $G$ or $\cE$ \\ 
$\Sigma(B, \Omega)$ & $\Sigma$ with a modified Cholesky decomposition given by $(B, \Omega)$ \\ 
$\chol(G) $ & set of pairs $(B, \Omega)$ such that $\Sigma(B, \Omega)$ is Markovian w.r.t. $G$; see~\eqref{eq:space.chol} \\
$\chol(\sigma)$ & set of pairs $(B, \Omega)$ compatible with ordering $\sigma$; see~\eqref{eq:def.chol.sigma} \\  
$\pi_0, \post$ & prior and posterior distributions or  density functions$^\dagger$ \\  
$\din, \dout$ & maximum  in-degree/out-degree \\ 
$c_1, c_2, \kappa, \gamma$ & hyperparameters for $\pi_0(\cE)$ and $\pi_0(B, \Omega \mid G)$ \\
$\alpha$ & exponent for the fractional likelihood function \\ 
$\scorej, \score$ & posterior scores; see~\eqref{eq:post.modular} \\ 
$\Sigma^*, G^*, \cE^*$ & true covariance matrix, DAG model and equivalence class \\
$\bbP^*$ & probability measure corresponding to the true model   \\ 
$G^*_\sigma$ & minimal I-map of $G^*$ with ordering $\sigma$ \\
$(B^*_\sigma, \Omega^*_\sigma)$ & modified Cholesky decomposition of $\Sigma^*$ in $\chol(\sigma)$ \\ 
$S^*_{\sigma, j}$ & parent set of node $j$ in $G^*_\sigma$ \\ 
$d^*_\sigma, d^*$ & maximum degree of the minimal I-map(s); see~\eqref{eq:def.dstar}  \\
$\bK, \bP$ & proposal and transition matrices of MH algorithms \\
\hline 
\end{tabular}
\end{center}
\noindent $^\dagger$: when $\pi_0$ or $\post$ denotes a density function, its dominating measure depends on the context.

\newpage 
\section{Path methods and mixing times of Markov chains}\label{sec:mc} 
\setallcounters
 
\subsection{On the equivalence between mixing time and hitting time} \label{sec:supp.mix}

Loosely speaking, the mixing time $\Tmix(\bP)$ gives the worst estimate for how many iterations it takes for a Markov chain to ``enter  stationarity''.
Theorem~\ref{th:path} shows that if Condition~\ref{cond:unimodal} holds with $t_2 > t_1$, $\pi$ concentrates on a single state $\theta^*$. 
In this case, it turns out that entering stationarity essentially means to hit $\theta^*$. 
Formally, we can prove that $\Tmix$ is equivalent to the expected hitting time of $\theta^*$, up to constant factors, using the result of~\citet{peres2015mixing}; see Theorem~\ref{th:hit.mix} below. 
For an intuitive explanation, observe that if $\pi(\theta^*) \approx 1$, then $\bP^t(\theta, \theta^*)$ needs to be sufficiently large so that $\TV{\bP^t (\theta, \cdot) - \pi(\cdot )} $ is small, which suggests that hitting $\theta^*$ is necessary for the chain to ``enter stationarity.''  
On the other hand, the chain regenerates each time it hits $\theta^*$, and thus between two successive visits to $\theta^*$, the chain has completed an independent cycle. So the length of each cycle gives an estimate for the mixing time. 
 
Let $\Theta$ be finite and $\bP$ be the transition matrix of an irreducible Markov chain $(Y_t)_{t \in \bbN}$ such that $\bP$ is reversible with respect to $\pi$.  
Let $\bbQ_\theta$ denote the probability measure for $(Y_t)_{t \in \bbN}$ with initial value $Y_0 = \theta$, and let $\bbE_\theta$ be the corresponding expectation. 
For $t \in \bbN$, let $\bP^t(\theta, \cdot) = \bbQ_\theta(  Y_t \in \cdot  )$ denote the $t$-step transition matrix. 
For any set $A \subseteq \Theta$, define the hitting time of $A$ by $\hit(A) = \min \{t \in \bbN \colon  Y_t \in  A \} $.  
Let $\Tmix^{\rm{L}}$ be the mixing time of the lazy chain with transition matrix $(\bP + \bI) / 2$. We have the following results. 

\begin{theorem}[\citet{peres2015mixing}]\label{th:peres}
For some $a < 1/2$, define 
$$T_{\rm{H}}^a = \max \left\{ \bbE_\theta [  \hit (A)  ]  \colon  \theta \in \Theta, A \subseteq \Theta, \pi(A) \geq a \right\}.$$
Then, $\Tmix^{\rm{L}}$ and $T_{\rm{H}}^a$ are equivalent up to constant factors. 
\end{theorem} 
\begin{remark}
This result was first proved by~\citet{aldous1982some} for continuous-time Markov chains. 
 \citet{griffiths2014tight} showed that the equivalence between $\Tmix^{\rm{L}}$ and $T_{\rm{H}}^a$ also holds for   $a= 1/2$. 
``Up to constant factors'' means that there exist constants $c_a, C_a > 0$ (which do not depend on $\bP$) such that $c_a T_{\rm{H}}^a \leq  \Tmix^{\rm{L}}  \leq C_a T_{\rm{H}}^a$. 
\end{remark}

\begin{theorem}\label{th:hit.mix}
If  there exists some state $\theta^*$ such that $\pi(\theta^*) > 1/2$,  then $ \Tmix^{\rm{L}}$ is equivalent, up to constant factors, to $T^*  = \max_{\theta \in \Theta} \bbE_\theta[ \hit( \{\theta^*\} ) ]$. 
\end{theorem}
\begin{proof}
Choose any $a \in (1 - \pi(\theta^*), 1/2)$.  
For any $A$ with $\pi(A) \geq a$, we have $\theta^* \in A$ and thus $\hit (A) \leq \hit ( \{\theta^*\} )$. 
Hence, we have $T_{\rm{H}}^a = \max \left\{ \bbE_\theta [  \hit(A)  ]  \colon  \theta \in \Theta, A = \{\theta^*\} \right\}$. 
The result then follows from Theorem~\ref{th:peres}. 
\end{proof}

Theorem~\ref{th:peres} also suggests that rapid mixing is impossible if the chain can get stuck at some state with small stationary probability for exponentially many steps. This can be proved by an elementary calculation. 

\begin{theorem}\label{th:slow}
Consider an asymptotic setting where $\Theta, \bP, \pi$ are implicitly indexed by $n$. 
For each $n$, assume there exists $\theta_0 \in \Theta$ such that $\pi(\theta_0) \leq 1/2$ and $\bP(\theta_0, \theta_0) \geq 1 - e^{-c n}$, where $c > 0$ is a universal constant. 
Then $\Tmix(\bP)$ cannot be bounded from above by any polynomial in $n$. 
\end{theorem}

\begin{proof} 
Let $A_n$ = $\Theta \setminus \{\theta_0\}$.
By the property of total variation distance, $ \TV{ \bP^t(\theta_0, \cdot) - \pi(\cdot)} \geq | \bP^t(\theta_0, A_n) - \pi(A_n)|$. 
It then follows from Definition~\ref{def:mix} that 
\begin{align*}
\Tmix (\bP) \geq \;& \min\left\{t \in \bbN \colon  | \bP^t(\theta_0, A_n) - \pi(A_n)|  \leq  1/4 \right\}  \\
\geq  \;& \min\left\{t \in \bbN \colon   \bP^t(\theta_0, A_n) \geq  1/4 \right\}, 
\end{align*} 
since $\pi(A_n) \geq 1/2$. 
Observe that $\bP^t(\theta_0, \theta_0) \geq (1 - e^{- c n})^t \geq 1 - t e^{-c n}$ for any $t \geq 1$.
Hence, $\bP^t(\theta_0, A_n) \leq te^{-cn}$, which yields the result. 
\end{proof}

\subsection{Path methods for bounding mixing times}\label{supp:subsec.path.mix}
Let $\Theta$ be finite and $\cN$ be a symmetric neighborhood function. 
We set up some notation and definitions for describing edges and paths on the neighborhood graph $(\Theta, \cN)$. 
Let $$\EDGE(\cN)  = \{ (\theta, \eta) \in \Theta^2 \colon  \theta \in \cN(\eta) \}$$ 
denote the set of all directed edges in  $(\Theta, \cN)$; in particular, $(\theta, \eta)$ and $(\eta, \theta)$ are treated as different edges for any $\theta \neq \eta$. 
Below is the definition of ``paths'' on $(\Theta, \cN)$. Note that we allow a path to contain repeated vertices (but not repeated edges), which is often known as a ``trail'' in the graph theory. 
 
\begin{definition}\label{def:path}
We say a finite sequence $\gamma = (\theta_0, \theta_1,  \dots,  \theta_{k-1}, \theta_k)$ is an $\cN$-path (or simply path) from $\theta$ to $\theta'$ with length $k$ if (i) $\theta_0 = \theta$, $\theta_k = \theta'$, (ii) for each $i = 1, \dots, k$, the edge $(\theta_{i -1 }, \theta_i) \in \EDGE(\cN)$, and (iii) $\gamma$ has no repeated edges.  
We will also denote such a path by $\gamma = (e_1, \dots, e_k)$ where $e_i = (\theta_{i-1}, \theta_i)$, and we write   $e \in \gamma$  to mean that the path $\gamma$ traverses the  edge $e$. 
A path is assumed to contain at least one edge. 
\end{definition} 

Throughout Supplement~\ref{sec:mc},  the letter $e$ is reserved for denoting edges and $\gamma$ for denoting paths, while elements of $\Theta$ are typically denoted by $\theta, \eta, z, w$.\footnote{Unfortunately, we have to abuse some notation: $\gamma$ is later used to denote a hyperparameter in our Bayesian structure learning model. 
But it should be clear that all notation used in Supplement~\ref{sec:mc} is not related to structure learning or DAG models, as we are considering a general state space $\Theta$ here.}  
Given a path $\gamma = (\theta_0, \dots, \theta_k)$, its reversal is denoted by $\cev{\gamma} = (\theta_k, \theta_{k-1}, \dots, \theta_0)$. 
Given $\gamma_1 = (\theta_0, \dots, \theta_k)$ and $\gamma_2 = (\eta_0, \dots, \eta_l)$ such that $\theta_k = \eta_0$, the concatenation of the two paths is denoted by $\gamma_1 \gamma_2 = (\theta_0, \dots, \theta_k, \eta_1, \dots, \eta_l)$.  
Let $\Gamma(\cN)$ denote the set of all $\cN$-paths.  For any $\theta \neq \eta$, let $\Gamma_{\theta \eta}(\cN)$ denote the set of all paths in $\Gamma(\cN)$ that start at $\theta$ and end at $\eta$; sometimes we will also use the notation $\Gamma(\theta, \eta; \cN)$ as an alternative to $\Gamma_{\theta \eta}(\cN)$. 
We say $(\Theta, \cN)$ is connected if $\Gamma_{\theta \eta}(\cN)$ is non-empty for  any $\theta \neq \eta$.

We will prove the mixing time bounds in Theorems~\ref{th:path} and~\ref{th:path.better} for a larger class of Markov chains. All we need is the following assumption on $\bP$. 

\begin{assumption}\label{ass:general.P}
Let $\bP$ be a Markov chain on $\Theta$ such that (i) $\bP$ is reversible with respect to some distribution $\pi > 0$,  (ii) $\{\eta \neq \theta \colon \bP(\theta, \eta ) > 0 \} = \cN(\theta)$ for any $\theta \in \Theta$, and (iii) 
all eigenvalues of $\bP$ are non-negative. 
\end{assumption}

Note that if $(\Theta, \cN)$ is connected, then $\bP$ in Assumption~\ref{ass:general.P} is also irreducible. 
Our proofs of Theorems~\ref{th:path} and~\ref{th:path.better} rely on the following Poincar\'{e}-type inequality. 

\begin{theorem} \label{th:flow}
Let $(\Theta, \cN)$ be connected and $\bP$ be given by Assumption~\ref{ass:general.P}.  
For any $e = (\theta, \eta) \in \EDGE(\cN)$, define 
$\rho(e) = \pi(\theta) \bP(\theta, \eta) = \pi(\eta) \bP(\eta, \theta).$  
Let $\phi \colon \Gamma(\cN) \rightarrow [0, \infty)$ be   such that  
$$\sum_{\gamma \in \Gamma_{\theta \eta}(\cN)}\phi( \gamma ) = \pi(\theta) \pi(\eta), \quad \text{ for any }  \theta \neq \eta.$$ 
Then,  for any function $\ell \colon \EDGE(\cN) \rightarrow (0, \infty)$,  we have 
\begin{align*}
\GAP(\bP)^{-1} \leq  \max_{e \in \EDGE(\cN) }  \left\{  \frac{1}{ \rho(e) \ell(e)}  \sum_{\theta \neq \eta} \sum_{\gamma \in \Gamma_{ \theta \eta }(\cN) \colon e \in \gamma}      \phi(\gamma) |\gamma|_\ell  \right\},  
\end{align*}
where  $ |\gamma|_\ell = \sum_{e \in \gamma} \ell(  e)$ and  $\GAP(\bP)$ denotes the spectral gap of $\bP$, 
\end{theorem}
\begin{proof}
See~\citet[Theorem 3.2.9]{saloff1997lectures} and~\citet{kahale1997semidefinite}. 
\end{proof}

\begin{remark} 
The function $\ell$ can be seen a generalized  ``length'' function which assigns a weight to each $e \in \EDGE(\cN)$~\citep{kahale1997semidefinite}. If we let $\ell(e) = 1$ for each $e$, then $|\gamma|_\ell$ is just the length of $\gamma$ and Theorem~\ref{th:flow}  reduces to the ``multicommodity flow'' method of~\citet{sinclair1992improved} (the function $\phi$ is called a ``flow'').  
To bound the mixing time, we can apply~\citet[Proposition 1]{sinclair1992improved} to get  
\begin{equation}\label{eq:sinclair2}
 \Tmix(\bP)  \leq \frac{ -\log[\min_{\theta \in \Theta} \pi(\theta)] + \log 4}{\GAP(\bP)}, 
\end{equation} 
Note that the assumption that $\bP$ has non-negative spectrum is not needed for Theorem~\ref{th:flow} but is necessary for~\eqref{eq:sinclair2}. 
\end{remark}

Roughly speaking, to obtain good bounds on $\GAP(\bP)$ using Theorem~\ref{th:flow}, we want to construct a flow $\phi$ by identifying at least one ``high-probability'' path between any $\theta \neq \eta$, 
where ``high-probability'' means that $\rho(e)$ is not too small for every edge $e$ of the path. 
To prove Theorem~\ref{th:path}, we only need to identify one such path between any $\theta \neq \eta$, which can be naturally constructed by using the function $g$ in Condition~\ref{cond:unimodal}; see Sections~\ref{sec:supp.canonical.path} and~\ref{sec:mc.proof}. 
To prove Theorem~\ref{th:path.better}, we need a much finer construction of the flow $\phi$ which takes into account all ``high-probability'' moves at each state; see Sections~\ref{sec:supp.flow} and~\ref{sec:supp.proof.path.better}.

\subsection{A general method for constructing canonical paths}\label{sec:supp.canonical.path}
Before we prove Theorem~\ref{th:path},  we first develop some general results for  constructing ``canonical path ensembles'' using ``canonical transition functions.'' 

\begin{definition}\label{def:cse}
A canonical path ensemble on $(\Theta, \cN)$ is a set of $\cN$-paths, one (and only one) for each ordered pair of two distinct states in $\Theta$. 
\end{definition} 

\begin{definition}\label{def:g}
We say $g \colon \Theta \rightarrow \Theta$ is a canonical transition function on $(\Theta, \cN)$ with a unique fixed point $\theta^*$ if  (i) $g(\theta^*) = \theta^*$; (ii) for any $\theta \neq \theta^*$, $g(\theta) \in \cN(\theta)$ and there exists some finite $k$ such that $g^k(\theta) = \theta^*$. 
\end{definition}

\begin{lemma}\label{lm:path}
Suppose $(\Theta, \cN)$ is connected, and fix some $\theta^* \in \Theta$. There exists a canonical transition function $g$ on $(\Theta, \cN)$ with fixed point $\theta^*$.   
Further, $g$ induces a canonical path ensemble on $(\Theta, \cN)$ such that each canonical path is an $\cN_g$-path, where $\cN_g(\theta) =\{ \theta' \in \Theta \colon g(\theta') = \theta, \text{ or } g(\theta) = \theta' \}$. 
\end{lemma}

\begin{proof}
First, we show that such a function $g$ exists. Since $(\Theta, \cN)$ is connected, for any $\theta \neq \theta^*$, there exists a shortest  $\cN$-path from $\theta$ to $\theta^*$, which we denote by $(\theta_0 = \theta, \theta_1, \dots, \theta_k = \theta^*)$. Define $\tilde{g}(\theta)$ to be the state $\theta_1$ on this path.  
Clearly, $\tilde{g}$ is a canonical transition function. 

Next, we explicitly construct a canonical path ensemble $\cT$ using an arbitrary canonical transition function $g$. The path from $\theta$ to $\eta$ in $\cT$ will be denoted by $\gamma_{\cT}(\theta, \eta)$, which is unique.  
Let 
\begin{equation*}\label{def:eq.natural.numbers}
    \bbN = \{0, 1, \dots \}, \quad \bbN^+ = \{1, 2,   \dots\},  \quad 
     k(\theta) = \min\{ i \in \bbN \colon g^i(\theta) = \theta^* \} < \infty. 
\end{equation*}
For $\theta \neq \theta^*$, define 
$\gamma_\cT(\theta, \theta^*) =  (\theta, g(\theta), \dots, g^{k(\theta)}(\theta) = \theta^*)$ (note that it cannot contain any duplicate state since otherwise $k(\theta)$ does not exist). 
Since $\cN$ is symmetric, we have $\theta \in \cN(g (\theta))$ for each $\theta \neq \theta^*$, and thus we can define   $\gamma_\cT(\theta^*, \theta) = \cev{\gamma}_\cT(\theta, \eta)$. 
The construction of $\gamma_\cT( \theta, \eta)$ for $\eta \neq \theta^*$ is  divided into three cases. 
 \begin{enumerate}[label={Case} \arabic*.]
     \item $\eta= g^j(\theta)$ for some $j \in \bbN^+$. 
     \item  $\theta = g^i(\eta)$ for some $i \in \bbN^+$. 
     \item  Neither Case 1 nor Case 2 holds. 
 \end{enumerate}
For Case 1, since $\gamma_\cT(\theta, \theta^*) = (\theta, g(\theta), \dots,g^{j-1}(\theta), \eta, g^{j+1}(\theta), \dots, g^k(\theta) = \theta^*)$, we can simply define $\gamma_\cT(\theta, \eta) = (\theta, g(\theta), \dots, g^{j-1}(\theta), \eta)$,  which is a sub-path of $\gamma_\cT(\theta, \theta^*)$. 
Case 2 can be handled similarly.  
For  Case 3, we define  
\begin{equation*}
\gamma_\cT( \theta, \eta) = \gamma_\cT(\theta, \theta^*) \gamma_\cT(\theta^*, \eta). 
\end{equation*}
To prove $\gamma_\cT( \theta, \eta )$ has no duplicate edges, it suffices to show that paths $\gamma_\cT( \theta, \theta^*)$ and $\gamma_\cT( \theta^*, \eta)$ do not share any states except $\theta^*$. 
We prove it by contradiction.  Suppose $w \neq \theta^*$ exists in both paths.  Then $w = g^s(\theta) = g^t(\eta)$ for some $s, t \in \bbN^+$. Without loss of generality,  assume $s  > t$. But this implies  that $\eta = g^{s - t}(\theta)$, which yields the contradiction. 
\end{proof}

\subsection{Proof  of Theorem~\ref{th:path}}\label{sec:mc.proof}

\begin{proof}[Proof of Theorem~\ref{th:path}(i)]  
We say $\theta$ is a local maximum if $\pi(\theta) \geq \pi(\theta')$ for any $\theta' \in \cN(\theta)$. 
Part (i) follows upon observing that any $\theta \neq \theta^*$ cannot be a local maximum, and thus $\theta^*$ is the only local and also global maximum. 
\end{proof}

\begin{proof}[Proof of Theorem~\ref{th:path}(ii)]  
For each $\theta \in \Theta$ and $k \in \bbN$, let 
$$g^{-k}(\theta) = \{ \theta' \in \Theta \colon g^k(\theta') = \theta, \; g^{k-1}(\theta') \neq \theta \}.$$ 
Since $\cN$ is symmetric, we have $|g^{-1}(\theta)| \leq |\cN(\theta)|$. 
Further, note that $\Theta = \bigcup_{k \geq 0} g^{-k}(\theta^*)$, and for each $k \geq 1$,   we can write 
\begin{align*}
    g^{-k}(\theta^*) = \{ g^{-1}(\theta') \colon \theta' \in g^{-(k-1)}(\theta^*) \}. 
\end{align*}
A recursive calculation using Condition~\ref{cond:unimodal} then shows that  $\pi(\theta) / \pi(\theta^*) \leq p^{ -k t_2}$ for any $\theta \in g^{-k}(\theta^*)$, and $|g^{-k}(\theta^*)| \leq p^{k t_1}$.  
Hence, 
\begin{align*}
    \frac{ \sum_{\theta \in \Theta} \pi(\theta) }{ \pi(\theta^*)} \leq \sum_{k=0}^\infty \frac{ \pi( g^{-k}(\theta^*) ) }{ \pi( \theta^* )} 
    \leq \sum_{k=0}^\infty  p^{-k(t_2 - t_1)} = \frac{1}{1 - p^{-(t_2 - t_1)}}, 
\end{align*}
from which the result follows. 
\end{proof}

\begin{proof}[Proof of Theorem~\ref{th:path}(iii)] 
We prove the claim for any Markov chain $\bP$ that satisfies Assumption~\ref{ass:general.P}, which  includes $\bP^h_{\rm{lazy}}$ as a special case (note that $\bP^h(\theta, g(\theta)) = 2 \bP^h_{\rm{lazy}} (\theta, g(\theta))$ for $\theta \neq \theta^*$). 
The existence of $g$ implies that $(\Theta, \cN)$ is connected and $\bP$ is irreducible. 

Let  $\cT = \{ \gamma_\cT (\theta, \eta) \colon  \theta, \eta \in \Theta, \text{ and } \theta \neq \eta \}$ be the canonical path ensemble induced by $g$, as constructed in the proof of Lemma~\ref{lm:path}.  
It is clear from construction that  for any $\theta \neq \theta'$, we have $| \gamma_\cT(\theta, \theta') | \leq | \gamma_\cT(\theta, \theta^*) |  + | \gamma_\cT(\theta', \theta^*) | \leq 2\ell_{\rm{max}}$,  where $\ell_{\rm{max}}$ is as  defined in Theorem~\ref{th:path}. 
By~\citet[Corollary 6]{sinclair1992improved}, which is a special case of Theorem~\ref{th:flow}, we have  
\begin{equation}\label{eq:sinclair1}
  \frac{1}{ \GAP(\bP) } \leq    \max_{ e \in \EDGE(\cN) } \frac{2\ell_{\rm{max}}}{ \rho(e) } \sum_{(\theta, \eta) \colon  e \in \gamma_\cT(\theta,\eta)  } \pi(\theta) \pi(\eta). 
\end{equation}   

Consider an arbitrary $e = (z, w) \in \EDGE(\cN)$. 
Observe that for $\cT$ constructed in Lemma~\ref{lm:path},  
\begin{align*}
\left\{ (\theta, \eta) \colon  e \in \gamma_\cT(\theta,\eta)   \right\} \neq \emptyset  \text{ only if } w = g(z) \text{ or } z = g(w). 
\end{align*} 
So, to bound the maximum term in~\eqref{eq:sinclair1}, we can assume $e = (z, w)$ for  $w = g(z)$, which implies that $z \neq \theta^*$ (the  case $e = (w, z)$ can be analyzed similarly).   
Define 
$$\Lambda(z) =  \{\theta \in \Theta \colon  z = g^k(\theta), \,  k \in \bbN\}$$ 
as the ancestor set of $z$ w.r.t. the transition function $g$, where we define  $g^0(z) = z$.  
If $(z, w) \in \gamma_\cT(\theta,\eta )$ for some $\theta \neq \eta$, according to our construction of $\cT$, it is straightforward to verify that $\theta \in \Lambda(z)$. 
Therefore, $\{(\theta, \eta) \colon  (z, w) \in \gamma_\cT(\theta, \eta ) \} \subseteq  \Lambda(z) \times \Theta$.
It follows that 
\begin{equation}\label{eq:rhoT}
\begin{aligned}
\frac{1}{ \rho(e) } \sum_{(\theta, \eta) \colon  e \in \gamma_\cT(\theta,\eta)  } \pi(\theta) \pi(\eta) & \leq  \frac{1}{ \rho(e) }
\sum_{(\theta, \eta) \in \Lambda(z) \times \Theta} \pi(\theta) \pi(\eta)  \\
& =  \frac{1}{ \rho(e) } 
\sum_{ \theta \in \Lambda(z)  } \pi(\theta)  \sum_{\eta \in \Theta} \pi(\eta)  \\
& =   \frac{ \pi(\Lambda(z)) }{\pi(z)  \bP(z, w)}. 
\end{aligned}
\end{equation}
Analogously to the proof of part (ii), we can write $\Lambda(z) = \bigcup_{k \in \bbN} g^{-k}(z)$, and by Condition~\ref{cond:unimodal}, we have  $|g^{-k}(z)| \leq p^{kt_1}$ and $\pi(\theta) / \pi(z) \leq p^{-k t_2}$ for any $\theta \in g^{-k}(z)$. 
Since $t_2 > t_1$,  
\begin{equation}\label{eq:ancestor}
\frac{\pi(\Lambda(z))}{\pi(z)} = \sum_{k \in \bbN} \frac{\pi(g^{-k}(z))}{\pi(z)} \leq \sum_{k \in \bbN} p^{-k(t_2 -t_1)} = \frac{ 1 }{1 - p^{-(t_2 -t_1)}}. 
\end{equation}
Combining~\eqref{eq:sinclair1},~\eqref{eq:rhoT} and~\eqref{eq:ancestor}, we get
\begin{align*}
      \frac{1}{ \GAP(\bP) } \leq     \frac{2\ell_{\rm{max}}}{ \{ 1 - p^{-(t_2 -t_1)} \} \min_{\theta \neq \theta^*} \bP(\theta, g(\theta)) }. 
\end{align*}
The proof is then completed by invoking~\eqref{eq:sinclair2}.
\end{proof}

\begin{proof}[Proof of Theorem~\ref{th:path}(iv)] 
By definition of $\bP^h$, if $h \equiv 1$, we have $\bK^h(\theta, g(\theta)) = 1 / |\cN(\theta)|$. 
Using Condition~\ref{cond:unimodal}, we find that 
\begin{align*}
    \bP^h(\theta, g(\theta) ) = \;&  \frac{1}{|\cN(\theta)|} \min \left\{ 1, \;  
    \frac{\pi(g(\theta))}{ \pi(\theta)} \frac{ 1 / |\cN( g(\theta) )| }{ 1 / |\cN(\theta)| } \right\} \\
    \geq   \;&  \frac{1}{|\cN(\theta)|} \min \left\{ 1, \;  p^{t_2 - t_1}  \right\}   
  =  \frac{1}{|\cN(\theta)|} \geq p^{-t_1}, 
\end{align*}
which yields the claim.  
\end{proof}

\subsection{A general method for constructing flows}\label{sec:supp.flow}
We propose a general method for constructing a flow $\phi$, which is motivated by Condition~\ref{cond:unimodal}. 
We say  $f  \colon \Gamma(\cN) \rightarrow [0, 1]$ is a unit flow if 
$\sum_{ \gamma \in \Gamma_{\theta \eta}(\cN) } f(\gamma) = 1$ for any $\theta \neq \eta$.  
To construct a flow $\phi$ as described in Theorem~\ref{th:flow}, it suffices to find a unit flow $f$ first and then let $\phi(\gamma) = f(\gamma) \pi(\theta) \pi(\eta)$ for each $\gamma \in \Gamma_{\theta \eta }(\cN)$.  
Let $R > 1$ be a fixed constant and define 
\begin{align*}
     \cN_R(\theta) =\;& \left \{ \theta' \in \cN(\theta) \colon \frac{\pi(\theta')}{\pi(\theta)} \geq R \right\}, \quad \text{ for } \theta \neq \theta^*,  \\ 
    \EDGE(\cN_R) =\;& \left\{ (\theta, \eta) \colon \theta \neq \theta^*, \eta \in \cN_R(\theta) \right\}.  
\end{align*}
Assume that there exists some $\theta^* \in \Theta$  such that $\cN_R(\theta)$ is not empty for any $\theta \neq \theta^*$ (this would be true if Condition~\ref{cond:unimodal} holds); note $\cN_R$ is ``asymmetric''. 
Now we construct a unit flow $f_R$ by only using edges in $\EDGE(\cN_R)$ and their reversals. 

Let $\bP$ be given by Assumption~\ref{ass:general.P}, and 
define another transition matrix $\bP_R$ by letting $\bP_R(\theta^*, \theta^*) = 1$ and 
\begin{equation*}\label{eq:def.Pc}
\begin{aligned}
\bP_R(\theta, \theta')  = 
 \frac{ \bP(\theta, \theta') }{\bP(\theta, \cN_R(\theta) ) }  \ind_{ \cN_R(\theta)}(\theta')   , \quad    \text{  for any } \theta \neq \theta ^*,   
\end{aligned}
\end{equation*}
Clearly, $\bP_R$ has one (and only one) absorbing state, $\theta^*$. 
Thus, for any $\theta \neq \theta^*$, we can construct a unit flow from $\theta$ to $\theta^*$ by running the Markov chain $\bP_R$ with initial state $\theta$; the weight of a path from $\theta$ to $\theta^*$ is simply given by the probability of the chain $\bP_R$ moving along the path. 
Reversing the paths from $\theta$ to $\theta^*$ and keeping the weights unchanged, we obtain a unit flow from $\theta^*$ to $\theta$. 
To construct a unit flow from $\theta$ to $\eta$ for some distinct $\theta, \eta \in \Theta \setminus \{\theta^*\}$, 
we treat $\theta^*$ as the ``hub'' and concatenate each path of $\bP_R$ from $\theta$ to $\theta^*$ and each path (after reversal) of $\bP_R$ from $\eta$ to $\theta^*$; that is, we only consider paths in $\Gamma_{\theta \eta}(\cN)$ that visit $\theta^*$ once. 
A formal definition of $f_R$ is given below, followed by a toy example illustrating the construction of $f_R$. 
We prove   a useful inequality for the ``load'' of an edge in $f_R$ in Lemma~\ref{lm:flow}. 

\begin{definition}\label{def:flow.fR}
For $k \geq 1$ and $\gamma = (\theta_0,  \theta_1, \dots, \theta_k) \in \Gamma(\cN)$, define $f_R(\gamma)$ as follows.  
\begin{enumerate}[label=(\roman*)]
    \item If $\theta^*$ does not occur in $\gamma$ or $\theta^*$ occurs at least twice in $\gamma$, let $f_R(\gamma) = 0$. 
    \item If $\theta_k = \theta^*$, let $f_R (\gamma) = \prod_{i=1}^{k} \bP_R (\theta_{i - 1}, \theta_i)$. 
    \item If $\theta_0 = \theta^*$, let $f_R(\gamma) = f_R(\cev{\gamma})$. 
    \item If $\theta_j = \theta^*$ for some $1 \leq j \leq k - 1$, let $\gamma_1 = (\theta_0, \dots, \theta_{j - 1}, \theta^*)$ and $\gamma_2 = (\theta^*, \theta_{j + 1}, \dots, \theta_k)$ and define $f_R(\gamma) = f_R(\gamma_1)   f_R(\gamma_2)$. 
\end{enumerate}
\end{definition}

\begin{example}
Let $\Theta = \{\theta_1, \dots, \theta_9, \theta^* \}$ and $\bP_R$ be given by Figure~\ref{fig:flow}. Then, from $\theta_1$ to $\theta^*$ there are three paths, and the weights are given by 
\begin{align*}
    f_R(  (\theta_1, \theta_2, \theta_4, \theta^*) ) =   \frac{1}{3}, \quad 
    f_R(  (\theta_1, \theta_2, \theta_5, \theta^*) ) =   \frac{1}{6}, \quad 
    f_R(  (\theta_1, \theta_3,  \theta^*) ) = \frac{1}{2}. 
\end{align*}
From $\theta_3$ to $\theta^*$  there is only one path $(\theta_3,   \theta^*)$. Hence, from $\theta_1$ to $\theta_3$ there are $3 \times 1 = 3$ paths; for example, the weight of the path $\gamma = (\theta_1, \theta_3, \theta^*, \theta_3 )$ is given by 
$$ f_R( \gamma) 
= f_R( (\theta_1, \theta_3,  \theta^*)) f_R( ( \theta^*,   \theta_3) )  = 1/2.  $$
Note that $\gamma$ does not contain repeated edges, though the state $\theta_3$ occur twice.  It is also clear  that $\bP_R^3(\theta, \theta^*) = 1$ for any $\theta \in \Theta$; that is, from any $\theta$ we can move to $\theta^*$ in at most 3 steps. 
\end{example}
 
\begin{figure}
    \centering
    \includegraphics[width=0.6\linewidth]{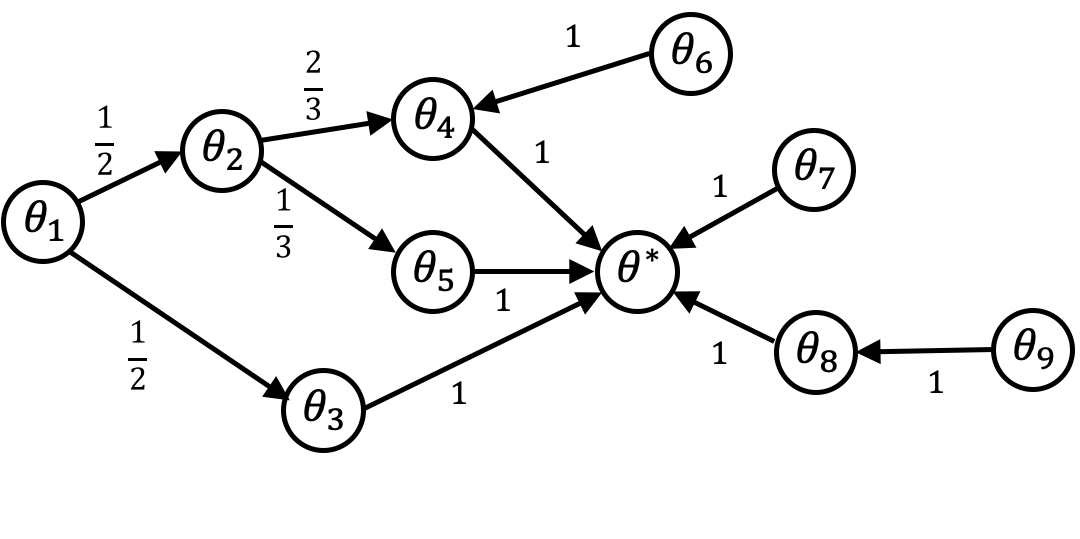}
    \caption{An example of the Markov chain $\bP_R$.   
    If $\bP_R(\theta, \theta') > 0$, then we draw an arrow from $\theta$ to $\theta'$ and denote the value of $\bP_R(\theta, \theta') $ near the arrow. 
    }
    \label{fig:flow}
\end{figure}

\begin{lemma}\label{lm:flow}
Let $\theta^* \in \Theta$ and assume  $\cN_R(\theta)$ is  non-empty for every $\theta \neq \theta^*$. The function $f_R \colon \Gamma(\cN) \rightarrow [0, 1]$ constructed in Definition~\ref{def:flow.fR} is a unit flow. Further, for any  $e = (z, w)$ such that $e \in \EDGE(\cN_R)$ or $\cev{e} \in \EDGE(\cN_R)$,  we have 
\begin{align*}
\sum_{\gamma \in \Gamma_{\theta \eta}(\cN) \colon e \in \gamma } f_R(\gamma) \leq \bP_R(z, w), \quad \forall \, \theta, \eta \in \Theta, \, \theta \neq \eta. 
\end{align*}
\end{lemma}
\begin{proof}
Since the Markov chain $\bP_R$  can only stay at the state $\theta^*$, we can use 
\begin{equation}\label{eq:def.Gamma.NR}
    \Gamma(\cN_R) = \{ \gamma = (\theta_0, \theta_1, \dots, \theta_k) \colon k \geq 1, \; \theta_i \in \cN_R(\theta_{i-1}) \text{ for } i = 1, \dots, k  \} 
\end{equation}
to denote  all possible paths of $\bP_R$. Let $\Gamma(\theta, \eta; \cN_R)$ denote the paths in $\Gamma(\cN_R)$ that start at $\theta$ and end at $\eta$. 
Fix some $e = (z, w) \in \EDGE(\cN_R)$.  
We prove that 
\begin{align*}
    \sum_{ \gamma \in \Gamma_{\theta \eta}(\cN) } f_R(\gamma) = 1, \quad 
    \sum_{\gamma \in \Gamma_{\theta \eta}(\cN) \colon e \in \gamma } f_R(\gamma) \leq \bP_R(z, w)
\end{align*}
by considering three  cases for the choice of $(\theta, \eta)$.  For $\cev{e} = (w, z)$, the second inequality can be proved by a symmetry argument. 

First, let $\theta \neq \theta^*$ and $\eta = \theta^*$.  
It suffices to consider paths in $\Gamma(\theta, \theta^*; \cN_R)$ since any other $\gamma$ has $f_R(\gamma) = 0$.  
Fix an arbitrary integer $j \geq |\Theta|$. 
Observe that $\bP_R^j(\theta, \theta^*) = 1$. 
Hence, by our definition of $f_R$, $\sum_{ \gamma \in \Gamma(\theta, \theta^*; \cN_R) } f_R (\gamma)$ 
is just the sum of the probabilities of all $j$-step paths of the Markov chain $\bP_R$ with initial state $\theta$, which must be one. 
Similarly, $\sum_{\gamma \in \Gamma(\theta, \theta^*; \cN) \colon e \in \gamma } f_R(\gamma)$ is the sum of the probabilities of all $j$-step paths that traverse the edge $e$. Hence, it is less than or equal to $\bP_R(z, w)$, where the equality is attained if all $j$-step paths visit the state $z$. 

Second, let $\theta = \theta^*$ and $\eta \neq \theta^*$. By our definition of $f_R$, we have 
\begin{align*}
    \sum_{ \gamma \in \Gamma( \theta^*, \eta; \cN) } f_R (\gamma) 
    = \sum_{ \gamma \in \Gamma( \theta^*, \eta; \cN) } f_R (\cev{\gamma})
    = \sum_{ \gamma \in \Gamma( \eta, \theta^*; \cN) } f_R (\gamma) = 1. 
\end{align*}
The edge $e$ is not traversed by any $\gamma$ if $\cev{\gamma} \in \Gamma( \eta, \theta^*; \cN)$. 

Third, consider distinct $\theta, \eta \in \Theta \setminus \{\theta^*\}$. 
For any $\gamma \in \Gamma_{\theta \eta}(\cN)$, note that $f_R(\gamma) = 0$ unless we can write $\gamma = \gamma_1 \gamma_2$ for some $\gamma_1 \in \Gamma(\theta, \theta^*; \cN_R)$ and $\gamma_2 \in \Gamma( \theta^*, \eta; \cN_R)$. 
Therefore, 
\begin{align*}
       \sum_{ \gamma \in \Gamma( \theta, \eta; \cN) } f_R (\gamma)   =  \;&       \sum_{ \gamma_1 \gamma_2 \colon \gamma_1 \in \Gamma(\theta, \theta^*; \cN_R), \gamma_2 \in \Gamma( \theta^*, \eta; \cN_R) } f_R (\gamma)  \\
       = \;&  \sum_{  \gamma_1 \in \Gamma(\theta, \theta^*; \cN_R)  }  \sum_{    \gamma_2 \in \Gamma( \theta^*, \eta; \cN_R) } f_R (\gamma_1) f_R(\gamma_2) = 1. 
\end{align*}
The edge $e$ can only be traversed by the $\gamma_1$ segment of $\gamma$, and thus 
$$ \sum_{\gamma \in \Gamma_{\theta \eta}(\cN) \colon e \in \gamma } f_R(\gamma_1) \leq \sum_{\gamma_1 \in  \Gamma(\theta, \theta^*; \cN_R) \colon e \in \gamma } f_R(\gamma) \leq 
\bP_R(z, w), $$ 
by what we have proved for the first case.  
\end{proof}

\subsection{Proof of Theorem~\ref{th:path.better}} \label{sec:supp.proof.path.better}

\begin{proof}
We prove the claim for any Markov chain $\bP$ that satisfies Assumption~\ref{ass:general.P}.  
Let $R = p^{t_2}$. 
By Condition~\ref{cond:unimodal}, $\cN_R(\theta)$ is  non-empty for every $\theta \neq \theta^*$. So, we can use the unit flow $f_R$ constructed in Definition~\ref{def:flow.fR} to define a flow  $\phi$ by letting $\phi(\gamma) = f_R(\gamma) \pi(\theta) \pi(\eta)$ for $\gamma \in \Gamma_{\theta \eta}(\cN)$. 
To apply Theorem~\ref{th:flow},   it only remains to specify the  length function. 
Observe that $\sum_{\gamma \in \Gamma(\cN) \colon e \in \gamma} f_R (\gamma) > 0$ if and only if  $e \in \EDGE(\cN_R)$ or $\cev{e} \in \EDGE(\cN_R)$. 
Define
\begin{align*}
\ell( e ) = \ell (\cev{e} ) = \pi(z)^{-q},  \quad \text{ if } e = (z, w), w \in \cN_R(z), 
\end{align*}
for some $q \in (0, 1)$. 
If $\gamma = (\theta_0 = \theta, \theta_1, \dots, \theta_k = \theta^*) \in \Gamma(\theta, \theta^*; \cN_R)$, then 
\begin{align*}
|\gamma|_\ell = \;&  \sum_{i = 0}^{k - 1} \pi(\theta_i)^{-q}  
\leq    \sum_{i = 0}^{k - 1} \pi(\theta)^{-q}  p^{- i t_2   q}   \leq  \frac{\pi(\theta)^{-q}}{  1 - p^{-t_2 q}}. 
\end{align*} 
For any  $\gamma \in \Gamma_{\theta \eta}(\cN)$ such that $f_R(\gamma) > 0$, we can write $\gamma = \gamma_1 \gamma_2$ where $\gamma_1 \in \Gamma(\theta, \theta^*; \cN_R)$ and $\cev{\gamma}_2 \in \Gamma(\eta, \theta^*; \cN_R)$, which yields
\begin{align*}
|\gamma|_\ell  = |\gamma_1|_\ell + |\gamma_2|_\ell \leq  \frac{\pi(\theta)^{-q} + \pi(\eta)^{-q}}{  1 - p^{-t_2 q}}. 
\end{align*}

Fix some  $e = (z, w) \in \EDGE(\cN_R)$.  (By symmetry, the case $\cev{e} \in \EDGE(\cN_R)$ can be analyzed by the same argument.)
By Lemma~\ref{lm:flow} and the above bound on $|\gamma|_\ell$, we obtain that 
\begin{align*}
\sum_{\gamma \in \Gamma_{\theta \eta }(\cN) \colon e \in \gamma}  \phi(\gamma) |\gamma|_\ell
 = \;&   \pi( \theta ) \pi(\eta ) \sum_{\gamma \in \Gamma_{\theta \eta }(\cN) \colon e \in \gamma}   f_R(\gamma) |\gamma|_\ell    \\
 \leq  \;&    \frac{\pi(\theta)^{1 - q} \pi(\eta) + \pi(\theta) \pi(\eta)^{1 - q}}{  1 - p^{- t_2 q}} \bP_R(z, w). 
\end{align*}

Recall $\Gamma(\theta, \eta; \cN_R)$ defined in~\eqref{eq:def.Gamma.NR}. Let 
\begin{align*}
    \Lambda(z) = \left\{ \theta \in \Theta \colon \Gamma(\theta, z; \cN_R) \neq \emptyset \right\} \cup \{z \}. 
\end{align*} 
Note that if a path $\gamma = (\theta_0 = \theta, \theta_1, \dots, \theta_k) $ with $f_R(\gamma) \geq 0$ traverses the edge $e$, we must have $\theta \in \Lambda(z)$. 
Hence,  
\begin{equation}\label{eq:proof.path.better.eq1}
\begin{aligned}
\sum_{\theta \neq \eta }\sum_{\gamma \in \Gamma_{\theta \eta }(\cN) \colon e \in \gamma}  \phi(\gamma) |\gamma|_\ell
\leq  \;& \sum_{\theta \in \Lambda(z)} \sum_{\eta  \in \Theta } \sum_{\gamma \in \Gamma_{\theta \eta }(\cN) \colon e \in \gamma}   \phi(\gamma) |\gamma|_\ell \\ 
 \leq  \;&    \frac{\bP_R (z, w) }{  1 - p^{- t_2 q}}  \sum_{\theta \in \Lambda(z)} \sum_{\eta \in \Theta }  \left[ \pi(\theta)^{1 - q} \pi(\eta) + \pi(\theta) \pi(\eta)^{1 - q}\right]. 
\end{aligned}
\end{equation}
Since $\cN$ is symmetric, for any $\theta$, 
\begin{align*}
    |\{ \eta \in \Theta \colon  \theta \in \cN_R(\eta) \} |  \leq 
    | \cN(\theta) | \leq p^{t_1}. 
\end{align*}
Applying this argument recursively, we find that for any $z \neq \theta^*$, 
\begin{align*}
 \sum_{\theta \in \Lambda(z) } \pi(\theta)^{c} \leq \sum_{k=0}^\infty \pi(z)^{c} p^{( t_1 - t_2 c) k} \leq \frac{\pi(z)^{c} }{1 - p^{t_1 - t_2 c}}, 
\end{align*}
for any $c > 0$  such that the series converges. Similarly, 
\begin{equation*} 
\sum_{\eta \in \Theta} \pi(\eta)^{c} = \sum_{\eta \in \Lambda(\theta^*)} \pi(\eta)^{c} \leq  \frac{ \pi(\theta^*)^c }{1 - p^{t_1 - t_2 c}} \leq    \frac{ 1 }{1 - p^{t_1 - t_2 c}}. 
\end{equation*}
Plugging these bounds into~\eqref{eq:proof.path.better.eq1}, we obtain that 
\begin{align*}
\sum_{\theta \neq \eta }\sum_{\gamma \in \Gamma_{\theta \eta }(\cN) \colon e \in \gamma}  \phi(\gamma) |\gamma|_\ell
\leq   \;&   \frac{\bP_R (z, w) }{  1 - p^{- t_2 q}} \left\{ \sum_{\theta \in \Lambda(z)} \pi(\theta)^{1 - q} + 
 \sum_{\theta \in \Lambda(z)} \sum_{\eta \in \Theta }  \pi(\theta) \pi(\eta)^{1 - q}\right\} \\
 \leq \;& \frac{\bP_R (z, w) }{  1 - p^{- t_2 q}} \left\{ \frac{\pi(z)^{1 - q} }{1 - p^{t_1 - t_2 (1 - q)}} +    \frac{\pi(z)}{(1 - p^{t_1 - t_2})( 1 - p^{t_1 - t_2(1 - q)} )} \right\} \\
 \leq  \;& \frac{\bP_R (z, w) }{  1 - p^{- t_2 q}}  \frac{  \pi(z)^{1 - q} }{1 - p^{t_1 - t_2 (1 - q)}} \left( 1 + \frac{1}{1 - p^{t_1 - t_2}} \right). 
\end{align*}
Choose $q = (t_2 - t_1) / (2 t_2)$ such that 
\begin{align*}
    t_1 - t_2 (1 - q) = \frac{t_1 - t_2}{2} = -t_2 q. 
\end{align*} 
Using $\bP_R(z, w) = \bP(z, w) / \bP(z, \cN_R(z))$ and $\rho(e) \ell(e) = \pi(z)^{1 - q} \bP(z, w)$, we find that 
\begin{align*}
  \frac{1}{\rho(e) \ell(e)} \sum_{\theta \neq \eta }\sum_{\gamma \in \Gamma_{\theta \eta }(\cN) \colon e \in \gamma}  \phi(\gamma) |\gamma|_\ell
  \leq     \frac{1 + (1 - p^{t_1 - t_2})^{-1}  }{ [1 - p^{ (t_1 - t_2)/2}]^2 }   \bP(z, \cN_R(z))^{-1}. 
\end{align*}
The proof is then completed by applying~\eqref{eq:sinclair2} and Theorem~\ref{th:flow} . 
We have the coefficient $2$ on the numerator of the bound given in the theorem  because we consider the lazy version. 
\end{proof}

\newpage 
\section{Preliminaries for graphical models}\label{sec:prelim}
\setallcounters

\subsection{Markov equivalence and independence maps} \label{sec:prelim.mec}

Below are some useful results for checking whether one DAG is Markov equivalent or an I-map of another DAG.  
The skeleton of a DAG is the unique undirected graph obtained by replacing all edges in the DAG with undirected ones. 
A v-structure is a triple $i \rightarrow j \leftarrow k$ (note that there is no edge between $i$ and $k$). 
By an edge reversal, we mean to change an existing edge $i \rightarrow j$ to $j \rightarrow i$. 
We say the edge $i \rightarrow j$ is covered if and only if $\Pa_i = \Pa_j \setminus \{i\}$. 
Two nodes $i, j$ are said to be adjacent in $G$ if $i \rightarrow j \in G$ or $j \rightarrow i \in G$. 

\begin{lemma}\label{lm:equiv1}
Two DAGs are Markov equivalent if and only if they have the same skeleton and v-structures. 
\end{lemma}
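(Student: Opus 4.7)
The plan is to prove both directions of this classical result (due to Verma and Pearl, 1990) by reasoning directly about the d-separation criterion, which characterizes $\CI(G)$.

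For the ``if'' direction, I would fix two DAGs $G, G'$ sharing a skeleton and v-structures, and show that any undirected path $\pi$ in the common skeleton is blocked given a set $S$ in $G$ if and only if it is blocked given $S$ in $G'$. The key observation is that whether an internal node $j$ on $\pi$ is a collider depends only on the two edges of $\pi$ incident to $j$ together with whether $j$ is a v-structure center for those two edges --- information determined purely by the skeleton and the v-structures. Therefore the set of colliders on $\pi$, and hence the blocking status of $\pi$ given $S$, agrees between $G$ and $G'$. Since d-separation is defined as the blocking of every path, $\CI(G) = \CI(G')$.

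For the ``only if'' direction, assume $G$ and $G'$ encode the same CI relations. First I would show they have the same skeleton: if nodes $i, j$ are adjacent in $G$ but not in $G'$, then $i$ and $j$ cannot be d-separated by any subset of $[p] \setminus \{i, j\}$ in $G$, whereas in $G'$ they can be d-separated by, e.g., the parent set of one of them (or more generally a set separating them in the moralized ancestral graph). This yields a CI relation present in $G'$ but not $G$, contradicting equivalence. Second, I would show they have the same v-structures: suppose $i \to j \leftarrow k$ is a v-structure in $G$ but not in $G'$, where $i, k$ are non-adjacent (forced by the common skeleton). In $G'$, the triple on $\{i, j, k\}$ is then a chain or a fork centered at $j$. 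Choose a conditioning set $S$ that d-separates $i$ from $k$ in $G$ and does not contain $j$ or any descendant of $j$ (such a set exists: take the parents of $i$ together with parents of $k$, minus $j$ and its descendants, and add enough other nodes to block all indirect paths). In $G$ this set makes $i \indep k \mid S$, but in $G'$, the path through the non-collider $j$ remains unblocked since $j \notin S$, so $i, k$ are d-connected given $S$. This contradicts $\CI(G) = \CI(G')$.

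The main obstacle is the second step of the ``only if'' direction, namely producing a conditioning set $S$ that isolates the disagreement at the putative v-structure from all other paths connecting $i$ and $k$. The clean way to handle this is to reduce to a minimal witnessing subgraph: work in the induced subgraph on $\{i, j, k\}$ plus ancestors, and use that $i$ and $k$ are non-adjacent in the common skeleton, so any other $i$-$k$ path must pass through at least one collider that can be closed by keeping that collider and its descendants out of $S$. Once this local-versus-global separation argument is cleanly set up, the rest of the proof reduces to routine verification of the d-separation definition, and I would simply cite Pearl's textbook for the technical details if space is tight.
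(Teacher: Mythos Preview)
The paper does not prove this lemma at all; it simply cites \citet{verma1991equivalence}. Your attempt to give a self-contained argument is therefore more ambitious than what the paper does, but your sketch of the ``if'' direction contains a genuine gap.

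Your claim that the collider status of an internal node $j$ on a path $\pi$ is determined by the skeleton and the v-structures is false when the two neighbors $i, k$ of $j$ on $\pi$ are themselves adjacent. In that case $i \to j \leftarrow k$ is a \emph{shielded} collider, not a v-structure, and two DAGs with identical skeleton and v-structures may disagree on whether $j$ is a collider there. Concretely, take $G$ with edges $1 \to 2$, $1 \to 3$, $2 \to 3$ and $G'$ with edges $1 \to 2$, $1 \to 3$, $3 \to 2$: both have the triangle skeleton and no v-structures, yet on the path $1$--$2$--$3$ node $2$ is a non-collider in $G$ and a collider in $G'$. Given $S = \emptyset$ this path is active in $G$ and blocked in $G'$, so the same path does \emph{not} have the same blocking status in the two graphs.

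The correct ``if'' argument does not show that each fixed path has matching blocking status; it shows that whenever some path is active in $G$ given $S$, some (possibly different) path is active in $G'$ given $S$. The adjacency of $i$ and $k$ at a shielded collider is exactly what allows one to reroute the active path through the edge $i$--$k$ and bypass the troublesome node. Making this rerouting precise requires an induction on path length or an analysis of minimal active paths, which is the substance of the Verma--Pearl proof. Your ``only if'' direction is closer to correct in spirit, though the construction of the separating set $S$ that avoids $j$ and all its descendants is also not as immediate as you suggest and deserves a cleaner justification (the standard choice is $\Pa_i(G)$ or $\Pa_k(G)$, after checking which of $i,k$ is a non-descendant of the other).
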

\begin{proof}
See~\citet{verma1991equivalence}.
\end{proof}

\begin{lemma}\label{lm:equiv2}
Two DAGs $G_1, G_2$ are Markov equivalent if and only if we can transform $G_1$ to $G_2$ by a sequence of covered edge reversals. 
\end{lemma}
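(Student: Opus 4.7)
My plan is to prove the two directions separately, with the ``if'' direction being a short check and the ``only if'' direction being the substantive content (this is the classical theorem of Chickering, 1995).

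For the ``if'' direction, I would show that a single covered edge reversal preserves both the skeleton and the set of v-structures; Markov equivalence then follows by Lemma~\ref{lm:equiv1}. Suppose $i \rightarrow j$ is covered in $G$, so $\Pa_i(G) = \Pa_j(G) \setminus \{i\}$, and let $G'$ be the result of reversing this edge. The skeleton is clearly unchanged. For v-structures: any triple not involving both $i$ and $j$ is untouched, so I only need to check triples of the form $k \rightarrow i \leftarrow \ell$, $k \rightarrow j \leftarrow \ell$, $k \rightarrow i \rightarrow j$, $i \rightarrow j \leftarrow k$, etc. The covered condition is exactly what forces that every parent of $j$ (other than $i$) is also a parent of $i$ and vice versa, so triples centered at $i$ and $j$ match up in $G$ and $G'$; any candidate new v-structure created at $i$ in $G'$ corresponds to an existing v-structure at $j$ in $G$, and vice versa. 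Care is needed for triples $i \rightarrow k$ or $j \rightarrow k$ where $k$ is a common child, but the covered condition again rules out problems. Acyclicity of $G'$ follows because any cycle would have to use the new edge $j \rightarrow i$, but then combined with an $i \rightarrow j$ path (through common parents or via other routes) this contradicts acyclicity of $G$ together with the covered condition.

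For the ``only if'' direction, I would induct on the number $m = m(G_1, G_2)$ of edges oriented oppositely in $G_1$ and $G_2$. The base case $m = 0$ gives $G_1 = G_2$ (they have the same skeleton by Lemma~\ref{lm:equiv1}). For $m \geq 1$, the key claim to prove is: \emph{there exists a covered edge $i \rightarrow j$ in $G_1$ such that $j \rightarrow i \in G_2$}. Once this is established, reversing it produces a new DAG $G_1'$ which by the ``if'' direction is Markov equivalent to $G_1$ and hence to $G_2$, and which satisfies $m(G_1', G_2) = m - 1$, completing the induction.

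The main obstacle is the existence claim, which I would prove as follows. Fix a topological ordering $\tau$ of $G_2$. Call an edge $i \rightarrow j \in G_1$ \emph{inverted} if $\tau^{-1}(i) > \tau^{-1}(j)$, equivalently if $j \rightarrow i \in G_2$. Since $m \geq 1$, at least one inverted edge exists. Among all inverted edges of $G_1$, pick one, say $i \rightarrow j$, that minimizes $\tau^{-1}(j)$; among such edges, pick the one maximizing $\tau^{-1}(i)$. I would then argue $i \rightarrow j$ is covered in $G_1$. Let $A = \Pa_j(G_1) \setminus \{i\}$ and $B = \Pa_i(G_1)$; I need $A = B$. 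Any $k \in A$ satisfies $k \rightarrow j \in G_1$; by minimality of $\tau^{-1}(j)$, this edge cannot be inverted, so $\tau^{-1}(k) < \tau^{-1}(j)$ and hence $k \rightarrow j \in G_2$. Since $j \rightarrow i \in G_2$ and $G_2$ has no v-structure $k \rightarrow j \leftarrow i$ (as $i \rightarrow j$ and $k \rightarrow j$ coexist in $G_1$ without a v-structure only if $i,k$ are adjacent in $G_1$, and conversely Lemma~\ref{lm:equiv1} matches v-structures), one shows $k$ and $i$ must be adjacent in both graphs; a careful orientation argument using the maximality of $\tau^{-1}(i)$ and the absence/presence of v-structures forces $k \rightarrow i$ to lie in $G_1$, giving $A \subseteq B$. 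The reverse inclusion $B \subseteq A$ is similar: for $k \in B$, adjacency with $j$ in $G_1$ follows from checking v-structures at $i$, and the orientation $k \rightarrow j$ in $G_1$ follows because otherwise $j \rightarrow k \rightarrow i$ together with $i \rightarrow j$ would violate acyclicity. This gives $A = B$, so the edge is covered and the induction closes.
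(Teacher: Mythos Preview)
The paper itself does not give a proof here; it simply cites \citet[Theorem 2]{chickering1995transformational}. Your plan reproduces Chickering's argument, and the ``if'' direction is fine. The ``only if'' direction, however, has a genuine error in the selection rule.

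Your rule --- with $\tau$ a topological order of $G_2$, minimize $\tau^{-1}(j)$ and then maximize $\tau^{-1}(i)$ --- can select a non-covered edge. Take $G_1$ to be the complete DAG on $\{1,2,3\}$ with edges $1\to 2$, $2\to 3$, $1\to 3$, and $G_2$ its reversal. With $\tau=(3,2,1)$ every edge of $G_1$ is inverted; your rule picks $1\to 3$, but $\Pa_1(G_1)=\emptyset\neq\{2\}=\Pa_3(G_1)\setminus\{1\}$, so $1\to 3$ is not covered. The faulty step is the claim ``by minimality of $\tau^{-1}(j)$, this edge $k\to j$ cannot be inverted'': the edge $k\to j$ has the \emph{same} sink $j$, so minimality of $\tau^{-1}(j)$ does not exclude it (in the example, $k=2$ and $2\to 3$ is inverted).

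The standard fix is to run the extremal selection with respect to a topological order $\sigma$ of $G_1$ rather than $G_2$: among reversed edges $i\to j\in G_1$, first minimize $\sigma^{-1}(j)$, then maximize $\sigma^{-1}(i)$. Now any $k\in\Pa_j(G_1)\setminus\{i\}$ automatically satisfies $\sigma^{-1}(k)<\sigma^{-1}(j)$; the v-structure argument forces $k$ and $i$ adjacent; and a short case analysis (using that a reversed edge with sink of smaller $\sigma$-index would contradict minimality of $j$, while a reversed edge $k\to j$ with $\sigma^{-1}(k)>\sigma^{-1}(i)$ would contradict maximality of $i$) gives $\sigma^{-1}(k)<\sigma^{-1}(i)$, hence $k\to i\in G_1$. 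The reverse inclusion is handled similarly. With this corrected selection your induction closes.
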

\begin{proof}
See~\citet[Theorem 2]{chickering1995transformational}.
\end{proof}

\begin{lemma}\label{lm:chickering}
Let the DAG $H$ be an I-map of the DAG $G \neq H$. 
We can create a DAG $G'$ from $G$ by at least one of the following two operations such that $H$ is  an I-map of $G'$. 
\begin{enumerate}[label=(\roman*)]
    \item Reverse a covered edge $i \rightarrow j$ in $G$ such that $j \rightarrow i \in H$. 
    \item Add an edge $i \rightarrow j$ to $G$ such that $i, j$ are adjacent in $H$.  
\end{enumerate}
Consequently, there exists a sequence of DAGs $(G_0 = G, G_1,  \dots, G_{m - 1}, G_m = H)$ such that for each $i =  1, \dots, m$,  $H$ is an I-map of $G_i$,  and $G_i$ is obtained from $G_{i - 1}$ by either a covered edge reversal or a single-edge addition. 
\end{lemma}
\begin{proof}
See Theorem 4 and the remark following it in~\citet{chickering2002optimal}. 
\end{proof}

\subsection{Gaussian DAG models} \label{sec:prelim.normal}
We provide some background on the decomposition of $\Sigma(B, \Omega)$ given in~\eqref{eq:mod.chol}.
It is called the modified Cholesky decomposition of $\Sigma$ (up to permutation of rows and columns), and $B$ is the modified Cholesky factor (after scaling). 
Let $\chol(\sigma) = \bigcup_{G \in \dag^\sigma} \chol(G)$ where $\chol(G)$ is as given in~\eqref{eq:space.chol}.  
Observe that, equivalently, $\chol(\sigma)$ can be defined by
\begin{equation} \label{eq:def.chol.sigma}
\begin{aligned}
\chol(\sigma) = & \{ (B, \Omega) \colon B \in \bbR^{p \times p} , \,  B_{i j} = 0 \text{ if }  \sigma^{-1}(i) \geq \sigma^{-1}(j) , \text{ for any } i, j \in [p]; \\ 
 &  \Omega = \diag(\omega_1, \dots, \omega_p), \, \omega_i > 0 \text{ for any } i \in [p]\}. 
\end{aligned}
\end{equation}

\begin{lemma}\label{lm:chol.unique2}
For any positive definite  matrix $\Sigma \in \bbR^{p \times p}$ and $\sigma \in \bbS^p$, the decomposition $\Sigma = (I - B^\top)^{-1} \Omega (I - B)^{-1}$ for $(B, \Omega) \in \chol(\sigma)$ exists and is unique. 
\end{lemma}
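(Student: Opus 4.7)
The plan is to reduce the claim to the existence and uniqueness of the standard $LDL^\top$ decomposition of a positive definite matrix. Let $P$ denote the $p \times p$ permutation matrix with $P_{ij} = 1$ iff $j = \sigma(i)$, and set $\tilde\Sigma = P \Sigma P^\top$, which remains positive definite. For any $(B, \Omega) \in \chol(\sigma)$, define $\tilde B = P B P^\top$ and $\tilde \Omega = P \Omega P^\top$. The constraint in~\eqref{eq:def.chol.sigma} that $B_{ij} = 0$ whenever $\sigma^{-1}(i) \geq \sigma^{-1}(j)$ is exactly the statement that $\tilde B$ is strictly upper triangular, and $\tilde\Omega$ remains diagonal with positive entries. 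A direct computation using $P^\top P = I$ shows $\tilde\Sigma = (I - \tilde B^\top)^{-1} \tilde\Omega (I - \tilde B)^{-1}$, so the problem reduces to the case $\sigma = \mathrm{id}$.

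For existence, I would invoke the standard $LDL^\top$ factorization: since $\tilde\Sigma$ is positive definite, there exist a unit lower triangular matrix $\tilde L$ and a diagonal matrix $\tilde D$ with strictly positive diagonal entries such that $\tilde\Sigma = \tilde L \tilde D \tilde L^\top$ (this is a classical consequence of Gaussian elimination and Sylvester's criterion, proved by induction on $p$). Setting $\tilde B = (I - \tilde L^{-1})^\top$ gives a strictly upper triangular matrix, since $\tilde L^{-1}$ is again unit lower triangular; one then checks $(I - \tilde B^\top)^{-1} = \tilde L$, so that $(I - \tilde B^\top)^{-1} \tilde D (I - \tilde B)^{-1} = \tilde L \tilde D \tilde L^\top = \tilde\Sigma$. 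Pulling back, $B = P^\top \tilde B P$ and $\Omega = P^\top \tilde D P$ lie in $\chol(\sigma)$ and decompose $\Sigma$.

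For uniqueness, suppose $(B, \Omega)$ and $(B', \Omega')$ in $\chol(\sigma)$ both decompose $\Sigma$. After conjugating by $P$, both yield factorizations $\tilde\Sigma = \tilde L \tilde D \tilde L^\top = \tilde L' \tilde D' (\tilde L')^\top$, where $\tilde L = (I - \tilde B^\top)^{-1}$ and $\tilde L' = (I - (\tilde B')^\top)^{-1}$ are unit lower triangular and $\tilde D, \tilde D'$ are diagonal with positive entries. The standard uniqueness of the $LDL^\top$ decomposition of a positive definite matrix forces $\tilde L = \tilde L'$ and $\tilde D = \tilde D'$, hence $\tilde B = \tilde B'$ and $\tilde\Omega = \tilde\Omega'$; conjugating back by $P^\top$ recovers $B = B'$ and $\Omega = \Omega'$. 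The only real work is the bookkeeping that reconciles the strictly-upper-triangular convention used for $B$ with the lower-triangular convention of the standard $LDL^\top$ statement, and the verification that both permutation and matrix inversion preserve the relevant triangular structure; there is no substantive analytic obstacle.
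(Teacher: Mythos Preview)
Your proposal is correct and follows essentially the same approach as the paper: the paper's proof simply says to permute the rows and columns of $\Sigma$ via $\sigma$ so that $B$ becomes upper triangular, then invoke the existence and uniqueness of the Cholesky decomposition for positive definite matrices. You have spelled out the same reduction in more detail, explicitly carrying out the conjugation by the permutation matrix and the bookkeeping between the $LDL^\top$ and $(I-B^\top)^{-1}\Omega(I-B)^{-1}$ conventions.
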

\begin{proof}
First, permute the rows and columns of $\Sigma$ using $\sigma$ so that $B$ becomes upper triangular after permutation. The result then follows from the existence and uniqueness of Cholesky decomposition for positive definite matrices. 
See also~\citet[Lemma 2.1]{aragam2015learning}. 
\end{proof}

\begin{lemma}\label{lm:minimal}
Let $\Sigma = (I - B^\top)^{-1} \Omega (I - B)^{-1}$ for some $(B, \Omega) \in \cup_{\sigma \in \bbS^p} \chol(\sigma)$. 
Let $G$ be a DAG such that $i \rightarrow j \in G$ if and only if $B_{ij} > 0$. 
Then $G$ is a minimal I-map of $\NM_p(0, \Sigma)$; that is, $\NM_p(0, \Sigma)$ is Markovian w.r.t. $G$ but not Markovian w.r.t. any sub-DAG of $G$. 
\end{lemma}
\begin{proof}
See~\citet[Theorem 1]{peters2014identifiability}. 
\end{proof}

\begin{lemma}\label{lm:chol.unique}
Let $\NM_p(0, \Sigma)$ be a non-degenerate multivariate normal distribution Markovian w.r.t. a $p$-vertex DAG $G$. Then the decomposition $\Sigma = (I - B^\top)^{-1} \Omega (I - B)^{-1}$ for $(B, \Omega) \in \chol(G)$ exists and is unique. 
\end{lemma}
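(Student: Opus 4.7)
\textbf{Proof proposal for Lemma~\ref{lm:chol.unique}.}
The plan is to reduce to the ordering-indexed uniqueness result already established in Lemma~\ref{lm:chol.unique2}. Pick any topological ordering $\sigma$ of $G$, meaning that $\sigma^{-1}(i) < \sigma^{-1}(j)$ whenever $i \to j \in G$. I first claim $\chol(G) \subseteq \chol(\sigma)$: if $(B,\Omega) \in \chol(G)$ and $B_{ij}\neq 0$, then by definition $i\to j \in G$, hence $\sigma^{-1}(i)<\sigma^{-1}(j)$, so $(B,\Omega)$ lies in $\chol(\sigma)$ by the equivalent description~\eqref{eq:def.chol.sigma}. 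Since $\Sigma$ is positive definite (non-degeneracy of $\NM_p(0,\Sigma)$), Lemma~\ref{lm:chol.unique2} yields a unique $(B_\sigma,\Omega_\sigma)\in\chol(\sigma)$ satisfying $\Sigma=(I-B_\sigma^\top)^{-1}\Omega_\sigma(I-B_\sigma)^{-1}$; this immediately gives uniqueness in the smaller set $\chol(G)$.

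For existence, I need to verify that the unique pair $(B_\sigma,\Omega_\sigma)\in\chol(\sigma)$ in fact lies in $\chol(G)$, i.e. that $(B_\sigma)_{ij}=0$ whenever $i\to j \notin G$. Let $\tilde G$ be the DAG on $[p]$ with $i\to j \in \tilde G$ iff $(B_\sigma)_{ij}\neq 0$. By construction $\tilde G$ has ordering $\sigma$, and by Lemma~\ref{lm:minimal}, $\NM_p(0,\Sigma)$ is Markovian w.r.t.~$\tilde G$ but not w.r.t.~any proper sub-DAG. Among DAGs with fixed ordering $\sigma$, the minimal I-map is uniquely characterised by the parent sets from the factorisation of the density in order $\sigma$ (see the definition of $G_\sigma$ in Section~\ref{sec:dag.notation}), so $\tilde G$ coincides with that unique minimal I-map with ordering $\sigma$.

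Since the hypothesis is that $\NM_p(0,\Sigma)$ is Markovian w.r.t.~$G$, and $G$ itself has ordering $\sigma$, $G$ is an I-map with ordering $\sigma$, hence contains the minimal one: $\tilde G \subseteq G$. Consequently, if $i\to j \notin G$ then $i\to j \notin \tilde G$, i.e. $(B_\sigma)_{ij}=0$. This shows $(B_\sigma,\Omega_\sigma)\in\chol(G)$, establishing existence and completing the proof.

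The step I expect to be the only nontrivial one is identifying the sparsity pattern of $B_\sigma$ with the edge set of the minimal I-map with ordering $\sigma$; everything else is essentially bookkeeping around Lemma~\ref{lm:chol.unique2}. This step is handled cleanly by invoking Lemma~\ref{lm:minimal} (equivalently, Peters and B\"uhlmann's characterisation used earlier in Definition~\ref{def:imap}), after which the Markov property of $G$ forces $\tilde G\subseteq G$ and the zero pattern in $\chol(G)$ follows.
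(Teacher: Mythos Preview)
Your proof is correct. For uniqueness you argue exactly as the paper does: pick a topological ordering $\sigma$ of $G$, note $\chol(G)\subseteq\chol(\sigma)$, and invoke Lemma~\ref{lm:chol.unique2}. For existence the paper takes a slightly more direct route: it simply observes that the Markov assumption gives a factorization of the density according to $G$, and then uses the fact that Gaussian conditional expectations are linear to read off $(B,\Omega)\in\chol(G)$ node by node. Your route instead starts from the unique $(B_\sigma,\Omega_\sigma)\in\chol(\sigma)$ guaranteed by Lemma~\ref{lm:chol.unique2} and verifies the required zero pattern by invoking Lemma~\ref{lm:minimal} together with the containment of the minimal I-map (with ordering $\sigma$) in any I-map of the same ordering. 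Both arguments are elementary and rest on the same underlying facts; yours is more explicit about \emph{why} the sparsity pattern of $B_\sigma$ matches $G$, at the cost of an extra lemma citation and the minimal-I-map containment step, while the paper's version is terser and leaves that verification to the reader.
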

\begin{proof}
Let $\sX = (\sX_1, \dots, \sX_p) \sim \NM_p(0, \Sigma)$.  
The Markovian assumption implies that the density of $\sX$  factorizes according to $G$. 
Further, for any $S \subseteq [p] \setminus \{j\}$, $\sX_j \mid \sX_S$  follows a normal distribution and the conditional expectation is linear in $\sX_S$, which shows that $(B, \Omega) \in \chol(G)$ exists. 
Let $\sigma$ be a topological ordering of $G$.  Since $\chol(G) \subseteq \chol(\sigma)$, the uniqueness follows from Lemma~\ref{lm:chol.unique2}. 
\end{proof}

\begin{lemma}\label{lm:chol.perfect}
Let $G$ be a $p$-vertex DAG and with edge set $E$. 
Let $B$ be a $p \times p$ matrix such that $B_E = \{ B_{ij}\colon (i, j) \in E\}$ is sampled from an absolutely continuous distribution on $\bbR^{|E|}$ and other entries of $B$ are zeroes. 
Let $\Omega = \diag(\omega_1, \dots, \omega_p)$ be a diagonal matrix where $(\omega_1, \dots, \omega_p)$ is sampled from an absolutely continuous distribution on $(0, \infty)^p$. 
Let $\Sigma = (I - B^\top)^{-1} \Omega (I - B)^{-1}$.
Then $\NM_p(0, \Sigma)$ is perfectly Markovian w.r.t. $G$ almost surely. 
\end{lemma}
\begin{proof}
See~\citet[Theorem 3.2]{spirtes2000causation}. 
\end{proof}

\begin{example}
We give an example for minimal I-maps of Gaussian DAG models. 
Let $Z_1, Z_2, Z_3$ be independent standard normal random variables. Define 
\begin{equation}\label{eq:sem1}
    \sX_1 = Z_1, \quad 
    \sX_2 = b_1 X_1 + Z_2, \quad 
    \sX_3 = b_2 X_2 + Z_3. 
\end{equation} 
The DAG model $G$ corresponding to this SEM is $1 \rightarrow 2 \rightarrow 3$, which only has one ordering $\sigma = c(1, 2, 3)$. 
Denote the weighted adjacency matrix of $G$ by $B$, which has $B_{12} = b_1$ and $B_{23} = b_2$ (and all other entries are zero). 
Normality of $(Z_1, Z_2, Z_3)$ implies that $\sX = (\sX_1, \sX_2, \sX_3) \sim \NM_3(0, \Sigma)$ where 
\begin{align*}
    \Sigma = \begin{bmatrix}
    1 \;&  b_1 \;& b_1 b_2 \\
    b_1 \;& b_1^2 + 1 \;& (b_1^2 +1 )b_2 \\
    b_1 b_2 \;& (b_1^2 +1 )b_2 \;& b_1^2 b_2^2 + b_2^2 + 1 \\ 
    \end{bmatrix} = (I - B^\top)^{-1} (I - B)^{-1}. 
\end{align*}
A routine calculation shows that~\eqref{eq:sem1} can be equivalently written as  
\begin{equation}\label{eq:sem2}
\begin{aligned}
    \sX_1 =\;& Z_1,  \\
    \sX_3 =\;& b_1 b_2 \sX_1 + b_2Z_2 +  Z_3, \\
    \sX_2 =\;& \frac{b_1}{b_2^2 + 1} \sX_1 + \frac{b_2}{b_2^2 + 1} \sX_3 + \frac{1}{b_2^2 + 1} Z_2 - \frac{b_2}{b_2^2 + 1} Z_3. 
\end{aligned}
\end{equation}
This SEM clearly corresponds to the ordering $\tau = (1, 3, 2)$, from which one obtains the modified Cholesky decomposition factor $(B_\sigma, \Omega_\sigma)$; that is, $\Sigma =  (I - B^\top_\sigma)^{-1} \Omega_\sigma (I - B_\sigma)^{-1}$ where
\begin{align*}
    B_\sigma = \begin{bmatrix}
    0 \;&  b_1 /(b_2^2 + 1) \;& b_1 b_2 \\ 
    0 \;& 0 \;& 0 \\
    0 \;&  b_2 / (b_2^2 + 1) \;& 0 \\
    \end{bmatrix}, \quad \quad  \Omega_\sigma = \begin{bmatrix}
    1 \;&  0 \;& 0 \\ 
    0 \;&  1/(b_2^2 +1) \;& 0 \\
    0 \;&  0 \;& b_2^2 + 1 \\
    \end{bmatrix}. 
\end{align*}
In particular, the $i$-th diagonal element of $\Omega_\sigma$ is the error variance of $\sX_i$ in the SEM~\eqref{eq:sem2}. 
\end{example}

\subsection{CPDAGs and CPDAG operators} \label{sec:supp.cpdag}
  
A  partially directed acyclic graph (PDAG) contains directed and/or undirected edges but no directed cycles (a cycle is directed if it has at least one directed edge).  
For a PDAG $H$, we use $\adj_j(H)$ to denote the set of all nodes that are connected to node $j$ by either a directed or undirected edge, in which case we say the two nodes are adjacent, and $\und_j(H) = \adj_j(H) \setminus ( \Pa_j(H) \cup \Ch_j(H) )$ to denote the set of nodes that are connected to $j$ by an undirected edge.  

For an equivalence class $\cE$, its CPDAG or essential graph, denoted by $\EGG{\cE}$, is the unique PDAG with the same skeleton as any $G \in \cE$ such that an edge in $\EGG{\cE}$ is directed if and only if the edge is directed in the same orientation in every $G \in \cE$. 
For convenience, we will also denote $\EGG{\cE}$ by  $\EGG{G}$ for any $G \in \cE$. 
Given a CPDAG $\EGG{\cE}$, a CPDAG operator defines some specific modification of $\EGG{\cE}$, such as adding or deleting edges. 
After we apply such an operator to  $\EGG{\cE}$,  we may get a PDAG that is not a CPDAG, but it may be converted to a CPDAG in a unique way. 

\begin{definition}[consistent extension]\label{def:consist.ext}
A DAG $G$ is a consistent extension of a PDAG $H$ if (i) $G$ and $H$ have the same skeleton and v-structures, and (ii) each directed edge in $H$ has the same orientation as in $G$. 
\end{definition}

\begin{definition}
A CPDAG operator is valid if  the modified graph is a PDAG and has a consistent extension. 
\end{definition}  

Note that~\citet{he2013reversible} used a slight different   definition of ``valid'' operators: they further required that a valid operator must also be ``effective'', which means that all modified edges occur in the resulting CPDAG~\citep[Definition 3]{he2013reversible}. 
By Definition~\ref{def:consist.ext}, all consistent extensions of a given PDAG are Markov equivalent. Hence, a valid CPDAG operator always results in a unique CPDAG (i.e., a unique equivalence class).    

 \citet{he2013reversible} considered the following six types of CPDAG operators: insert/delete an undirected edge, insert/delete a direct edge, and  make/remove a v-structure.  
``Make a v-structure'' means to convert a subgraph $i - j - k$ in the CPDAG to $i \rightarrow j \leftarrow k$; similarly, ``remove a v-structure'' means to convert a v-structure $i \rightarrow j \leftarrow k$ to $i - j - k$.  
However, there exist CPDAGs $\EGG{\cE}$ and $\EGG{\cE'}$ such that  we can get  $\EGG{\cE'}$ from $\EGG{\cE}$ by one of these operators, but not vice versa. 
To overcome this problem, \citet[Definition 9]{he2013reversible} constructed a set of rules defining, for each CPDAG, which of the above six operators are allowed.  
The resulting set of operators is both  ``irreducible'' and ``reversible'' on the unrestricted CPDAG space. 

\citet{chickering2002optimal} introduced two types of CPDAG operators for implementing the GES algorithm, $Insert$ and $Delete$, which can be defined as follows.

\begin{definition}[Insert$(i, j, S)$]\label{def:ges.insert}
Given a CPDAG $H$, ``Insert$(i, j, S)$'' operator is defined for non-adjacent nodes $i, j$ and a set $S \subseteq \und_j(H) \setminus \adj_i(H)$. It modifies $H$ by (i) inserting the edge $i \rightarrow j$, and (ii) for each $k \in S$, directing the edge between $k$ and $j$ as $k \rightarrow j$.  
\end{definition}

\begin{definition}[Delete$(i, j, S)$]\label{def:ges.del}
Given a CPDAG $H$, ``Delete$(i, j, S)$'' operator is defined for adjacent nodes $i, j$ connected as either $i - j$ or $i \rightarrow j$  and a set $S \subseteq \und_j(H) \cap \adj_i(H)$. It modifies $H$ by (i) deleting the edge between $i$ and $j$, and (ii) for each $k \in S$, directing the edge between $k$ and $j$ as $j \rightarrow k$ and any undirected edge between $k$ and $i$ as $i \rightarrow k$. 
\end{definition}

There are easily testable conditions for checking whether some $Insert$ or $Delete$ operator is valid; see~\citet[Theorems 15 and 17]{chickering2002optimal}. 
Given a CPDAG $H$ with $p$ nodes, let $\ges(H)$ denote the set of all  $Insert$ and $Delete$ operators defined for $H$ (which may or may not be valid). 
Let $d$ denote the degree of $H$.  Then,  we always have $ |\ges(H)|  \leq  p^2 2^d$. 
Further, the time complexity of converting the resulting PDAG to CPDAG is $O(  p d^3 )$~\citep{chickering2002optimal}. 
For each $k \in S$, the operator $Insert(i, j, S)$ creates a new v-structure $i \rightarrow j \leftarrow k$.
Similarly, for each $k \in S$, the operator $Delete(i, j, S)$ creates a new v-structure $i \rightarrow k \leftarrow j$.
Hence, given a CPDAG $H$, the only $Insert$ operators that are not distinguishable (i.e., two operators result in the same CPDAG) are   $Insert(i, j, \emptyset)$ and $Insert(j, i, \emptyset)$ when $\Pa_i(H) = \Pa_j(H)$ by~\citet[Theorem 34]{chickering2002learning}, 
and the  only $Delete$ operators that are not distinguishable are $Delete(i, j, S)$ and $Delete(j, i, S)$ when $i, j$ are connected by an undirected edge in $H$. 
This observation reveals that the number of operators in $\ges(H_1)$ that can convert a CPDAG $H_1$ to another CPDAG $H_2$ is always the same as the number of operators in $\ges(H_2)$ that can convert $H_2$ to $H_1$. 
To see this, fix an arbitrary CPDAG $H_2$ which can be obtained by applying $Insert(i, j, S)$ to some CPDAG $H_1$ (which implies $i, j$ are not adjacent in $H_1$). 
If  the edge $i \rightarrow j$ is directed in $H_2$, then there is only one $Delete$ operator that can transform $H_2$ back to $H_1$. 
If $i, j$ are connected by an undirected edge in $H_2$, it implies that $S = \emptyset$ and $\Pa_i(H_1) = \Pa_j (H_1)$ by~\citet[Theorem 34]{chickering2002learning}. 
Thus, both $Insert(i, j, \emptyset)$ and $Insert(j, i, \emptyset)$ can convert $H_1$ to $H_2$, and there are two  $Delete$ operators that can transform $H_2$ back to $H_1$.

\newpage 

\section{Proofs for Section~\ref{sec:main} }\label{sec:supp.proofs}
\setallcounters

\subsection{Proof of Lemma~\ref{lm:bound.nb}} \label{sec:proof.bound.nb}

We need two auxiliary lemmas. 

\begin{lemma}\label{lm:equiv.node1}
Let $G_1$ and $G_2$ be two Markov equivalent DAGs such that $\Pa_j(G_1) = \Pa_j(G_2)$ and $i \notin \Pa_j(G_1)$.  
Suppose that $G'_1 = G_1 \cup \{ i \rightarrow j \}$ and $G'_2  = G_2 \cup \{i \rightarrow j \}$ are both DAGs. 
Then $G'_1, G'_2$ are Markov equivalent. 
\end{lemma}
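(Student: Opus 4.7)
The plan is to invoke Lemma~\ref{lm:equiv1} and verify the two conditions it requires: I will show that $G'_1$ and $G'_2$ have (a) the same skeleton and (b) the same set of v-structures. Throughout, let $\cE = \EG{G_1} = \EG{G_2}$.

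For the skeleton, observe first that since $G'_1 = G_1 \cup \{i \rightarrow j\}$ is assumed to be a DAG and uses the directed-edge notation $i \rightarrow j$, we must have $(j,i) \notin E_1$; combined with $i \notin \Pa_j(G_1)$, this means that in $G_1$ there is no edge in either direction between $i$ and $j$. The same holds for $G_2$. Since $G_1$ and $G_2$ share a skeleton, adding the single undirected edge $\{i,j\}$ to both yields identical skeletons for $G'_1$ and $G'_2$.

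For the v-structures, I split into two cases. Any v-structure of $G'_k$ (for $k \in \{1,2\}$) either (i) already appears in $G_k$ and survives the edge addition, or (ii) uses the new edge $i \rightarrow j$. For case (ii), because the new edge has $i$ as tail and $j$ as head, any new v-structure must take the form $i \rightarrow j \leftarrow \ell$ where $\ell \in \Pa_j(G_k) \setminus \{i\}$ and there is no edge between $i$ and $\ell$ in the skeleton of $G_k$. Since $\Pa_j(G_1) = \Pa_j(G_2)$ by hypothesis and the skeletons of $G_1, G_2$ agree, the set of such new v-structures is identical in $G'_1$ and $G'_2$. For case (i), a v-structure $a \rightarrow b \leftarrow c$ of $G_k$ can fail to be a v-structure of $G'_k$ only if the inserted edge shields it, i.e.\ $\{a,c\} = \{i,j\}$; thus it corresponds to a node $b \in \Ch_i(G_k) \cap \Ch_j(G_k)$ with $i,j$ unadjacent in $G_k$. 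Since $G_1$ and $G_2$ already have the same v-structures, the triples destroyed by adding $i \rightarrow j$ are the same for both, so the surviving old v-structures coincide as well.

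Combining the two cases, $G'_1$ and $G'_2$ have the same skeleton and the same v-structures, so Lemma~\ref{lm:equiv1} gives $\EG{G'_1} = \EG{G'_2}$. The only mildly delicate point, which I would flag as the main bookkeeping obstacle, is the case-(i) observation that the only v-structures of $G_k$ that are destroyed by inserting $i \rightarrow j$ are those of the form $i \rightarrow b \leftarrow j$; once that is noted, the argument is a transparent consequence of the joint hypotheses that $G_1, G_2$ are Markov equivalent and that $\Pa_j$ is common to them.
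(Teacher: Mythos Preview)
Your proof is correct and follows essentially the same approach as the paper: both invoke Lemma~\ref{lm:equiv1}, note that the skeletons trivially agree, and then split the v-structure analysis into (i) new v-structures created at $j$ of the form $i \rightarrow j \leftarrow \ell$ and (ii) old v-structures $i \rightarrow b \leftarrow j$ that get shielded by the inserted edge, arguing in each case that the hypotheses $\Pa_j(G_1)=\Pa_j(G_2)$ and $\EG{G_1}=\EG{G_2}$ force the same outcome in both graphs. The only cosmetic difference is that the paper phrases the case split as ``v-structures formed'' versus ``v-structures destroyed'' rather than your ``new'' versus ``surviving old,'' but the content is identical.
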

\begin{proof}
By Lemma~\ref{lm:equiv1}, $G'_1$ and $G'_2$ have the same skeleton. It suffices to show that $G'_1$ and $G'_2$ share the same v-structures. One can see that a v-structure remains unchanged unless it involves both $i$ and $j$. There are only two cases where adding the edge $i \rightarrow j$ affects some v-structure: 
 \begin{enumerate}
     \item  For some node $k$, a new v-structure $i \rightarrow j \leftarrow k$ is formed. 
     \item  For some node $k$, an existing v-structure $i \rightarrow k \leftarrow j$ is shielded.   
 \end{enumerate}
We show that if Case 1 or Case 2 happens in $G'_1$, it also happens in $G'_2$.
If $i \rightarrow j \leftarrow k$ is a v-structure in $G'_1$ for $k \in \Pa_j(G_1)$, $i, k$ are not adjacent in $G'_1$ (``adjacent'' means two nodes are connected by an edge). Then $i, k$ must be non-adjacent in $G'_2$ as well since $G'_1$ and $G'_2$ have the same skeleton.  By the assumption  $\Pa_j(G_1) = \Pa_j(G_2)$, we find that $k \in \Pa_j(G_2)$  and thus $G'_2$ contains the v-structure $i \rightarrow j \leftarrow k$. 
If $G_1$ has a v-structure $i \rightarrow k \leftarrow j$ for some node $k$, so does $G_2$ by Lemma~\ref{lm:equiv1}. Adding $i \rightarrow j$ shields the v-structure both in $G'_1$ and $G'_2$. 
\end{proof}

\begin{lemma}\label{lm:equiv.node2}
Let $G_1$ and $G_2$ be two Markov equivalent DAGs such that $\Pa_j(G_1) = \Pa_j(G_2)$ and $i \in \Pa_j(G_1)$.  
Let   $G'_1 = G_1 \setminus \{ i \rightarrow j \}$ and $G'_2  = G_2 \setminus \{i \rightarrow j \}$. 
Then $G'_1, G'_2$ are Markov equivalent. 
\end{lemma}
\begin{proof}
The proof is similar to that of Lemma~\ref{lm:equiv.node1}. we show that $G'_1$ and $G'_2$ share the same v-structures. There are only two cases where deleting the edge $i \rightarrow j$ affects the v-structure. 
 \begin{enumerate}
     \item  For some node $k$, a new v-structure $i \rightarrow k \leftarrow j$ is formed. 
     \item  For some node $k$, an existing v-structure $i \rightarrow j \leftarrow k$ is broken up. 
 \end{enumerate}
Let $i \rightarrow k \leftarrow j$ be a v-structure in $G'_1$. 
Observe that the assumptions $\Pa_j(G_1) = \Pa_j(G_2)$ and that $G_1$ and $G_2$ are Markov equivalent imply  $\Ch_j(G_1) = \Ch_j(G_2)$.  
Since $k \in \Ch_j(G_1)$, $G'_2$ also has the edge $k \leftarrow j$. 
Further, we must have $i \rightarrow k \in G'_2$, since  $G_2$ is acyclic. 
This shows that $i \rightarrow k \leftarrow j$ is also a v-structure in $G'_2$. If $G_1$ has a v-structure $i \rightarrow j \leftarrow k$ for some node $k$, so does $G_2$. Removing $i \rightarrow j$ destroys the v-structure in both $G'_1$ and $G'_2$.
\end{proof}
 

\begin{proof}[Proof of Lemma~\ref{lm:bound.nb}]
Fix an arbitrary $\cE \in \cpdag(\din, \dout)$.
By the definition of $\cpdag(\din, \dout)$,  there exists some $G_0 \in \cE$ such that $G_0 \in \dag(\din, \dout)$. 
Since any Markov equivalent DAGs must have the same skeleton, for any $G \in \cE$, the degree of any node is bounded by $\din + \dout$ and the set of adjacent nodes is fixed. 
Define $\Pa_j(\cE) = \{ \Pa_j(G) \colon G \in \cE  \}$, the collection of all possible parent sets of node $j$. It then follows that $|\Pa_j(\cE)| \leq 2^{\din + \dout}$. 

Consider $\adds(\cE)$. 
Suppose that $\cE'$ is obtained by adding the edge $i \rightarrow j$ to some $G \in \cE$. 
By Lemma~\ref{lm:equiv.node1}, for $G_1, G_2 \in \cE$, $G_1 \cup \{i \rightarrow j\}$ and $G_2 \cup \{i \rightarrow j\}$ are still Markov equivalent if the resulting graphs are DAGs and $\Pa_j(G) = \Pa_j(G')$. 
But $|\Pa_j(\cE)| \leq  2^{\din + \dout}$ implies that adding the edge $i \rightarrow j$ can  yield at most $2^{\din + \dout}$ different equivalence classes. 
Hence, $|\adds(\cE)| \leq p (p - 1) 2^{\din + \dout}$ since there are at most $p(p-1)$ directed edges we can add. 

A similar argument can be applied to $\dels(\cE)$. For any $G \in \cE$, we have $|G| \leq p  \din$, but these edges may be directed in either way. Using Lemma~\ref{lm:equiv.node2}, we then find that 
$|\dels(\cE)| \leq 2 p \din 2^{\din + \dout}$. 
Finally, for any $G\in \cE$,  $|\swaps(G)| \leq p (p - 1)(\din + \dout)$. Hence,  $|\swaps(\cE)| \leq p (p - 1) (\din + \dout)  2^{\din + \dout}$. Combing the results for the three cases, we obtain the asserted upper bound on $|\nbg(\cE)|$. 
\end{proof} 

\subsection{Proof of Lemma~\ref{lm:pigeonhole}} \label{sec:proof.exist}

\begin{proof} 
We prove  by contradiction. 
Assume that for any $j \in [p]$ such that $g_j^\sigma(G) \neq G$, we have $g_j^\sigma(G) \notin \dag^\sigma(\din, \dout)$. 
It follows that there exists no $j$  such that $\Pa_j(G^*_\sigma) \subsetneqq \Pa_j(G)$. 
Otherwise, $g_j^\sigma(G)$ can be obtained from $G$ by removing some incoming edge of node $j$ and thus  $g_j^\sigma(G) \in \dag^\sigma(\din, \dout)$. 

Since  there is no strictly overfitted node,  there must exist some underfitted node. Let 
\begin{align*}
U = \{ u \in [p] \colon \Pa_u(G^*_\sigma) \not\subseteq \Pa_u(G)  \}
\end{align*}
be the set of all underfitted nodes.  
Recall that for any $u \in U$, $g^\sigma_u(G)$ is constructed by adding an incoming edge of node $u$ to $G$. 
Define 
\begin{align*}
A = \{ a \in [p] \colon  g^\sigma_u(  G ) = G \cup \{ a  \rightarrow u \}  \text{ for some } u \in U \}. 
\end{align*}
Fix an arbitrary $a \in A$ and suppose that $g^\sigma_j(G) = G \cup \{a \rightarrow j\}$ for some $j$. 
If $| \Ch_{a}(G) | < \dout$, one can verify that $g_j^\sigma(G) \in \dag^\sigma(\din, \dout)$ 
(if $\Pa_j(G) < \din$, we can simply add the edge $a  \rightarrow j$; if $\Pa_j(G) = \din$,  we perform a swap). 
Thus,  $| \Ch_{a}(G) | = \dout$ for every $a \in A$. 

For any node $u \in U$, there exists some node $a(u) \in A$  such that $a(u) \rightarrow u$   is in $G^*_\sigma$ but not in $G$. 
But any node $a$ can have at most $d^*_\sigma$ outgoing edges in $G^*_\sigma$, which implies that  
\begin{equation}\label{eq:exist1}
|A | \geq \frac{|U|}{d^*_\sigma}. 
\end{equation}
For each $a \in A$, define 
\begin{align*}
c_a = \left|  \{ u \in U \colon  g^\sigma_u(  G ) = G \cup \{ a  \rightarrow u \}   \} \right|, \quad F_a = \Ch_a(G) \setminus \Ch_a(G^*_\sigma). 
\end{align*}
So $c_a$ is the number of nodes in $U$  which we want to connect with the parent node $a$. 
Observe that $c_a \leq | \Ch_a(G^*_\sigma ) \setminus \Ch_a(G)  |$ and $\sum_{a \in A} c_a = |U|$. 
Using $| \Ch_a(G) | = \dout$ and $|\Ch_a(G^*_\sigma)| \leq d^*_\sigma$, we find that  $|F_a| \geq \dout - d^*_\sigma + c_a$. 
Hence, 
\begin{equation}\label{eq:exist2}
 \sum_{a \in A} |F_a|  \geq \sum_{a \in A} \left( \dout - d^*_\sigma + c_a \right)  
  \geq \frac{|U| \dout }{d^*_\sigma}  
\end{equation}
where the last step follows from~\eqref{eq:exist1} and   $\sum_{a \in A} c_a = |U|$. 

For any node $f_a \in F_a$, the edge $a \rightarrow f_a$ is in $G$ but not in $G^*_\sigma$. 
Define
\begin{align*}
E = \{ (a, f) \colon a \in A, \, f \in F_a \}. 
\end{align*}
Clearly, $| E | = \sum_{a \in A} |F_a | $. 
Since the maximum in-degree of $G$ is at most $\din$, for any $f \in [p]$,  we have $| \{ a \in A \colon  (a, f) \in E  \} | \leq \din$. 
Consider the set $\bar{F} = \bigcup_{a \in A} F_a$. By~\eqref{eq:exist2} we have that 
\begin{align*}
| \bar{F}  |  \geq   \frac{ |E| }{\din} \geq   \frac{|U| \dout }{d^*_\sigma \din} > |U|, 
\end{align*}
since we assume $\dout > \din d^*_\sigma$. 
For any node $f \in \bar{F}$, we have $\Pa_f(G) \neq \Pa_f (G^*_\sigma)$. 
Because we have already shown that no node in $G$ can be strictly overfitted, all nodes in $\bar{F}$ must be underfitted; thus, we must have $|\bar{F}| \leq |U|$, which yields the contradiction.
\end{proof}

\subsection{Proof of Corollary~\ref{coro:path.dag}} \label{sec:proof.exist2}
\begin{proof}
Consider part (i) first. 
By Lemma~\ref{lm:pigeonhole}, for any $G \in \dag^\sigma(\din, \dout)$ such that $G \neq G^*_\sigma$,   there exists some $j \in [p]$ such that $G' = g_j^\sigma(G) \in \dag^\sigma(\din, \dout)$ and $G' \neq G$.  
Define $g^\sigma(G) = g_j^\sigma(G)$. Then, by the definition of $g^\sigma_j$, we always have $\HD(g^\sigma(G), G^*_\sigma) < \HD(G, G^*_\sigma)$ and 
\begin{equation}\label{eq:coro.path.dag.1}
\begin{aligned}
\score( g^\sigma(G) ) - \score(G) = \;&  \score( g^\sigma_j (G) ) - \score(G) \\
= \;& \sum_i  \left\{  \score_i ( g^\sigma_j (G)  ) - \score_i(G) \right\}  \\
= \;& \scorej (g^\sigma_j (G) ) - \scorej(G) \\
=\;&   \scorej (g^\sigma_j ( \Pa_j(G)) ) - \scorej(\Pa_j(G)). 
\end{aligned} 
\end{equation}
Since $\HD(G, G^*_\sigma) \leq (d^*_\sigma + \din) p$ for any $G \in \dag^\sigma(\din, \dout)$, the claim then follows.  

For part (ii) of Corollary~\ref{coro:path.dag}, observe that for any $G \neq G^*_\sigma$,  if~\eqref{eq:conj.scorej} holds, then it follows from~\eqref{eq:coro.path.dag.1} that $e^{\score(g^\sigma(G))} / e^{\score(G)} \geq p^t$. 
It is also clear from construction that $g^\sigma(G) \in \nb(G)$. 
Hence, $g^\sigma$ is the canonical transition function that satisfies part (ii) of Condition~\ref{cond:unimodal} with $t_2 = t$. 
It only remains to bound the neighborhood size. 
Clearly, $|  \left(  \adds(G) \cup \dels(G)  \right)  \cap \dag^\sigma(\din, \dout)| \leq p(p - 1) / 2$ since there are only $p(p - 1) / 2$ directed edges compatible with ordering $\sigma$. 
For the swap moves, observe that if $\sigma(j) = k$, then there are at most $k-1$ incoming edges of node $j$ we can add and $\din$ incoming edges of node $j$ we can remove. Hence, $|  \swaps(G)  | \leq  \sum_{k=1}^p (k - 1) \din  \leq \din p (p - 1) / 2$. 
Without loss of generality, we can assume $\din \leq p - 1$, which gives 
\begin{equation}\label{eq:dag.nb.size}
| \nb(G) \cap \dag^\sigma(\din, \dout) | \leq \frac{ (\din + 1)p (p  - 1)}{2} \leq \frac{ \din p^2 }{2}. 
\end{equation}
This shows that part (ii) of Condition~\ref{cond:unimodal} holds with $t_1 = 3$. 
\end{proof}

\subsection{Proof of Theorem~\ref{th:ges.path}} \label{sec:proof.ges.path}

Consider the claim  $G^*_\sigma \in \dag^\sigma(\din, \dout)$ first. 
Without loss of generality, assume $\din \leq p, \dout \leq p$.  If $d^* = 0$, then $G^*$ is the empty DAG and the claim holds trivially. So, we can assume $d^* \geq 1$. But then $\min\{ d^* \din + 1, p \} \leq \dout$ implies that $\dout \geq \din \geq d^*$. Since $d^*$ is the maximum degree of all minimal I-maps, this shows that $G^*_\sigma \in \dag^\sigma(\din, \dout)$ for each $\sigma$. 
 
To prove the existence of function $g$, we first derive an auxiliary lemma. 

\begin{lemma}\label{lm:imap.map}
Assume $d^* \leq \din \leq \dout$.  
Consider $G^*_\sigma$ for some $\sigma \in \bbS^p$ such that $\cE = \EG{G^*_\sigma} \neq \cE^*$. 
There exist $ \cE'\in \cpdag(\din, \dout), G \in \cE$ and $\tau \in \bbS^p$ such that 
(i)   $\cE' \in \dels(\cE)$ and $\CI(\cE) \subset \CI(\cE') \subseteq \CI(\cE^*)$, and 
(ii)  $G \in \dag^\tau(\din, \dout)$ and $\cE' = \EG{  g_j^\tau(G) }$ for some $j \in [p]$. 
\end{lemma}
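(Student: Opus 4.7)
The plan is to invoke Chickering's constructive proof of Meek's conjecture, which guarantees that $G^*_\sigma$ can be transformed into some DAG in $\cE^*$ via a sequence of covered-edge reversals and single-edge deletions, with every intermediate DAG remaining an I-map of $G^*$. Since $\cE \neq \cE^*$ by hypothesis, this sequence must contain at least one deletion step.

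Let $G$ denote the DAG immediately preceding the first such deletion. Then $G$ is obtained from $G^*_\sigma$ by covered-edge reversals alone, so $G \in \cE$. Covered-edge reversals preserve the total degree of every node, so the maximum total degree of $G$ equals that of $G^*_\sigma$, which is at most $d^* \leq \din \leq \dout$; hence, for any topological ordering $\tau$ of $G$, we have $G \in \dag^\tau(\din, \dout)$. Let $i \rightarrow j$ denote the edge deleted next along the Chickering sequence, so that $G \setminus \{i \rightarrow j\}$ is also an I-map of $G^*$.

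The crux is to connect this specific Chickering deletion to the operator $g_j^\tau$. I will establish two claims: (a) node $j$ is strictly overfitted in $G$ relative to $G^*_\tau$, so Definition~\ref{def:gj} case (ii) applies; and (b) the edge $\tilde{\ell} \rightarrow j$ removed by $g_j^\tau$---chosen by local score maximization rather than by Chickering's procedure---still yields an I-map of $G^*$. Both claims reduce to a single upward-closedness property: any I-map of $G^*$ with ordering $\tau$ has parent set $\Pa_j$ containing $\Pa_j(G^*_\tau)$. This follows from the weak-union semi-graphoid axiom combined with the nodewise definition of the minimal I-map: if some $\ell \in \Pa_j(G^*_\tau)$ were missing from the parent set of another I-map, then weak union applied to the I-map CI relation would yield $X_j \indep X_\ell \mid \{\tau(1), \ldots, \tau(\tau^{-1}(j) - 1)\} \setminus \{\ell\}$, contradicting the construction of $G^*_\tau$.

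Applying upward-closedness to both $G$ and $G \setminus \{i \rightarrow j\}$ gives $\Pa_j(G) \supsetneq \Pa_j(G^*_\tau)$, establishing (a); applying it to $g_j^\tau(G)$, which drops one vertex from $\Pa_j(G) \setminus \Pa_j(G^*_\tau)$ and retains all other parents, establishes (b). Setting $\cE' := \EG{g_j^\tau(G)}$ then gives $\cE' \in \dels(\cE)$ by construction, $\CI(\cE') \subseteq \CI(\cE^*)$ by (b), and $\CI(\cE) \subsetneq \CI(\cE')$ because single-edge deletion only enlarges the CI set while changing the skeleton, so the two equivalence classes are distinct by the Verma--Pearl theorem. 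The main obstacle is precisely the discrepancy between the score-optimal edge chosen by $g_j^\tau$ and the specific edge chosen by Chickering; the upward-closedness observation is what reconciles them.
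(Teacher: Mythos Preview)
Your proposal is correct and follows essentially the same route as the paper. Both arguments invoke Chickering's sequence to locate a DAG $G\in\cE$ together with a sub-DAG that is still an I-map of $G^*$, use Markov equivalence with $G^*_\sigma$ to bound the maximum degree of $G$ by $d^*$, and then exploit the fact that any I-map with ordering $\tau$ contains $G^*_\tau$ parent-set-wise (your ``upward-closedness'' observation is exactly what the paper states as $\Pa_i(G^*_\tau)\subseteq\Pa_i(G_{\ell+1})$) to conclude that the score-optimal deletion prescribed by $g_j^\tau$ remains inside the set of I-maps. The only cosmetic difference is that you fix $j$ to be the specific node touched by Chickering's deletion, whereas the paper picks any strictly overfitted $j$ via an edge-count argument; both choices work for the same reason.
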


\begin{proof} 
The first conclusion of the lemma is essentially the same as~\citet[Lemma 10]{chickering2002optimal}. 
To prove it, note that since $\cE = \EG{ G^*_\sigma}$, we have $\CI( \cE) = \CI(G^*_\sigma) \subset \CI(\cE^*)$. 
By Lemma~\ref{lm:chickering}, there exist some finite $m$ and a sequence of DAGs, $(G_0 = G^*, G_1, \dots, G_{m - 1}, G_m = G^*_\sigma)$, such that, for each $k \in [m]$, $\CI(G_k) \subseteq \CI ( G_{k - 1} )$ and $G_k$ is obtained from $G_{k-1}$ by either a covered edge reversal or an edge addition. 
Let $\ell = \max\{ j \leq m \colon |G_j| = |G_\sigma^*| - 1 \}$, which clearly exists. 
Then, $G_\ell \in \dels( G_{\ell + 1} )$, $G_{\ell + 1} \in \cE$ by Lemma~\ref{lm:equiv2},  and $\CI(G_{\ell + 1}) \subset \CI(G_{\ell}) \subseteq \CI(G^*)$. 

Let $\tau$ be a topological ordering of $G_{\ell + 1}$. 
For each $i \in [p]$, we  have $\Pa_i( G^*_\tau) \subseteq \Pa_i(G_{\ell + 1})$, since  $G_{\ell + 1}$ is an I-map of $G^*$ but $G^*_\tau$ is the unique minimal I-map with ordering $\tau$. 
Meanwhile,  $\CI(G_\ell) \subset \CI(G^*)$ and $|G_\ell| < |G_{\ell + 1}|$ imply that there exists some node $j$ such that $\Pa_j( G^*_\tau) \subsetneqq \Pa_j(G_{\ell + 1})$.   
Consider the DAG $G' = g_j^\tau( G_{\ell + 1})$, which by definition satisfies that   $G' = G_{\ell + 1} \setminus \{ i \rightarrow j \}$ for some $i \neq \Pa_j(G^*_\tau)$. 
Hence, $G' \in \dels(G)$, $\cE' = \EG{G'} \in \dels(\cE)$ and $\CI(\cE) = \CI(G_{\ell + 1}) \subset \CI(\cE')\subseteq \CI(\cE^*)$.
To conclude the proof, notice that the maximum degree of $G_{\ell + 1}$ is bounded by $d^*$ since $G_{\ell + 1}$ and $G^*_\sigma$ are Markov equivalent.
It then follows from the assumption $d^* \leq \din \leq \dout$ that  $G_{\ell + 1} \in \dag^\tau(\din, \dout)$ and  thus $\cE' \in \cpdag(\din, \dout)$.  
\end{proof}

\begin{proof}[Proof of Theorem~\ref{th:ges.path}]
We give an explicit construction of  $g$ that satisfies the required conditions.  
For each $\cE \in \cpdag(\din, \dout)$, define the ``distance'' from $\cE$ to $\cE^*$ by 
\begin{equation}\label{eq:def.dist}
        \DS(\cE) = \min  \left\{ \HD(G, G^*_\sigma) + |G^*_\sigma| - |G^*| \colon G \in \cE \cap \dag^\sigma(\din, \dout), \sigma \in \bbS^p   \right\},
\end{equation}
where we recall $\HD$ denotes the Hamming distance.  Note that for any $\sigma$, $| G^*_\sigma| \geq |G^*|$ since $G^*_\sigma$ is an I-map of $G^*$; recall  Lemma~\ref{lm:chickering}. 
Thus, $\DS(\cE) = 0$ if and only if $\cE = \cE^*$.
We will construct $g$ on $(\cpdag(\din, \dout), \nbg)$ such that $\DS( g(\cE) ) < \DS( \cE )$ for any $\cE \neq \cE^*$. 

Let $(\bar{G}(\cE), \bar{\sigma}(\cE))$ be the pair that attains the minimum in the definition of $\DS(\cE)$ (if there are multiple such pairs, fix one of them).   
The pair $(\bar{G}(\cE), \bar{\sigma}(\cE))$ can be seen as a ``canonical'' representation of the equivalence class $\cE$. 
Fix an arbitrary  $\cE \in \cpdag(\din, \dout)$, and we define $g(\cE)$ as follows. 
\begin{enumerate}
    \item Let $(G, \sigma) =  (\bar{G}(\cE), \bar{\sigma}(\cE))$  be its canonical representation. 
    \item If $G \neq G^*_\sigma$, by Lemma~\ref{lm:pigeonhole}, there exists some $j$ such that $g_j^\sigma(G) \in \dag^\sigma(\din, \dout)$ and $g_j^\sigma(G) \neq G$. Let $G' = g_j^\sigma(G)$ and $g( \cE ) = \EG{G'}  \in \cpdag(\din, \dout)$. 
    \item If $G = G^*_\sigma$ but $\cE \neq \cE^*$, by Lemma~\ref{lm:imap.map}, there exist some $G_0 \in \cE, \tau \in \bbS^p, j \in [p]$ such that $G_0 \in \dag^\tau(\din, \dout)$, $G' = g_j^\tau(G_0) \in \dels(G_0)$, and $G'$ is still an I-map of $G^*$. 
    Let $g(\cE) = \EG{G'} \in \cpdag(\din, \dout)$.
    \item If $\cE = \cE^*$, let $g(\cE) = \cE$.
\end{enumerate}
It follows from the definition of $\nbg$ that $g(\cE) \in \nbg(\cE)$ for every $\cE \neq \cE^*$. 
Consider $g_j^\sigma(G)$ found in step (2). By Lemma~\ref{lm:pigeonhole} and the definition of $g_j^\sigma$, we have 
\begin{align*}
\DS( \EG{ g_j^\sigma(G) } )  \leq  \HD(g_j^\sigma(G),  G^*_\sigma) + |G^*_\sigma|  - |G^*| < \HD(G, G^*_\sigma) + |G^*_\sigma|  - |G^*| = \DS(\cE)
\end{align*}
since $g_j^\sigma(G) \in \dag^\sigma(\din, \dout)$ and $(G, \sigma)$ is chosen to be the canonical representation of $\cE$.  
If $g(\cE)$ is defined in step (3) as $g_j^\tau(G_0)$ for some $G_0 \in \cE$ (and $G = G^*_\sigma \in \cE$),  we claim  
\begin{align*}
\DS( \EG{ g_j^\tau(G_0) } )  \leq  \HD(g_j^\tau(G_0),  G^*_\tau) + |G^*_\tau|  - |G^*| 
<  |G^*_\sigma|  - |G^*| = \DS(\cE).
\end{align*}
This is because $g_j^\tau(G_0)$ is an I-map of $G^*$ with ordering $\tau$, and thus $\HD(g_j^\tau(G_0),  G^*_\tau) = | g_j^\tau(G_0) | - |G^*_\tau|$. The above inequality then follows upon noticing that $|G^*_\sigma| = |G_0| > |g_j^\tau(G_0)|$. 

Hence, for any $\cE \neq \cE^*$, we have $\DS(g(\cE)) < \DS(\cE)$. Since $\DS(\cE) = 0$ if and only $\cE = \cE^*$,  $g$ induces a canonical path from $\cE$ to $\cE^*$ for every $\cE \in \cpdag(\din, \dout)$. 
To bound the length of such paths, it suffices to bound $\DS(\cE)$. Observe that 
\begin{align*}
     \DS(\cE)  \leq \;& \max_{G \in \dag^\sigma(\din, \dout), \sigma \in \bbS^p} \HD(G, G^*_\sigma) + \max_{\sigma \in \bbS^p} (|G^*_\sigma| - |G^*|) \\
     \leq \;& \max_{G \in \dag(\din, \dout), \sigma \in \bbS^p} \left( |G| +  |G^*_\sigma| \right)   +  \max_{\sigma \in \bbS^p} |G^*_\sigma| \\
     \leq \;& (d^* + \din) p + d^* p = (2 d^* + \din)p,
\end{align*}
which concludes the proof.  
\end{proof}  

\subsection{Proof of Corollary~\ref{coro:path.cpdag}} \label{sec:proof.coro.cpdag}
\begin{proof}
Fix an arbitrary $\cE \neq \cE^*$, and let $g(\cE) = \EG{ g_j^\sigma(G) }$ for  $j \in [p], \sigma \in \bbS^p$ and $G \in \cE \cap \dag^\sigma(\din, \dout)$ as given in Theorem~\ref{th:ges.path}. 
The proof of Theorem~\ref{th:ges.path} also shows that $ g_j^\sigma(G) \in \nb(G) \cap  \dag^\sigma(\din, \dout)$, and thus $g(\cE) \in \nb(\cE) \cap \cpdag(\din, \dout)$. 
By the score equivalence and~\eqref{eq:coro.path.dag.1}, 
\begin{equation}\label{eq:score.to.scorej}
\score( g(\cE) ) - \score(\cE) =  
  \score( g^\sigma_j (G) ) - \score(G)   
= \scorej (g^\sigma_j ( \Pa_j(G)) ) - \scorej(\Pa_j(G)). 
\end{equation} 
So,   if~\eqref{eq:conj.scorej} holds, $e^{\score( g(\cE))} / e^{\score(\cE)} \geq p^t$, which completes the proof. 
\end{proof}

\newpage 
\section{High-dimensional empirical variable selection}\label{sec:supp.sel}
\setallcounters

By the SEM representation, the DAG selection problem can be seen as a series of variable selection problems. In this supplement, we prove some high-dimensional consistency results for a single variable selection problem using the empirical normal-inverse-gamma prior.    
We still use $\X$ to denote an $n \times p$  data matrix and we use $y$ to denote a response vector. Let 
\begin{equation}\label{eq:def.cS}
\cS(m, d) =  \{  S \subseteq [m] \colon   |S| \leq d   \} 
\end{equation}
denote the set of candidate models for some $m, d \in [p]$; that is, we want to select at most $d$ variables from $\sX_1, \dots, \sX_m$ to explain $y$.  
We allow $m$ to be less than $p$, which is different from standard variable selection setups,  so that the results can be easily applied to DAG selection problems. In particular, without loss of generality, one may assume $y = X_{m + 1}$ if $m < p$. 

\subsection{Model, prior and posterior distributions}\label{sec:supp.var.modl}
Let  $y \in \bbR^n$, $\X  \in \bbR^{n \times p}$ and $m, d \in [p]$ be given, and $\cS(m, d)$ be as defined in~\eqref{eq:def.cS}. 
Consider the following  empirical Bayes model for variable selection~\citep{martin2017empirical},  
\begin{align*}
y  =\;&  \Z_S \beta_S + \varepsilon, \quad \varepsilon \sim  \NM_n( 0,  \,  \omega I ), \\ 
\pi_0( S ) \propto \;&    \left(  c_1p^{c_2} \right)^{-|S|}  \ind_{\cS(m, d)} (S),     \\ 
\pi_0( \omega)  \propto \;&   \omega ^{- \kappa/2 - 1},   \\
\beta_S \mid S, \omega \sim \;&   \NM_{|S|} \left\{  (\Z_S^\top \Z_S)^{-1} \Z_S^\top y , \, \frac{\omega}{\gamma} ( \Z_S^\top \Z_S)^{-1}  \right\}, 
\end{align*}
where    $I$ denotes the identity matrix  and  $c_1 > 0, c_2 \geq 0, \kappa \geq 0, \gamma > 0$ are hyperparameters. 
Using a fractional likelihood with exponent $\alpha \in (0, 1)$, we find that 
\begin{equation}\label{eq:def.post}
\post(  S ) \propto   c_1^{ - |S|} p^{- c_2 |S|}  \left( 1 + \frac{\alpha}{\gamma} \right)^{- |S| / 2} 
(y^\top \oproj_S  y)^{ -(\alpha n + \kappa) / 2 } \ind_{\cS(m, d)}(S) , 
\end{equation}
where $\oproj_S$ (and $\proj_S$)  denotes the projection matrix,  
\begin{equation}\label{eq:def.Phi}
\proj_S = \Z_S (\Z_S^\top \Z_S)^{-1} \Z_S^\top, \quad \oproj_S = I - \proj_S. 
\end{equation}
The exponentiation of the posterior score function $\scorej$ defined in~\eqref{eq:post.modular} has exactly the same form as~\eqref{eq:def.post}. 
Thus, the analysis of the DAG selection and structure learning problem can be reduced to that of all possible nodewise variable selection problems. 

As in Definition~\ref{def:gj}, given a true model $S^* \subseteq [m]$ (which will be defined later),  we define a transition function $g \colon \cS(m, d) \rightarrow \cS(m, d)$ by 
\begin{equation}\label{eq:def.gS}
\begin{aligned}
g(S) = \left\{ \begin{array}{cc}
S, &   \text{ if } S = S^*, \\
 \argmax_{ S' \in \cN_{\mathrm{del}}^*(S)} \post(  S' ), \quad &  \text{ if } S^* \subset S, \\
  \argmax_{ S'\in \cN_{\mathrm{add}}^*(S) } \post(  S'), \quad &  \text{ if }  S^* \not\subseteq S, \, |S| < d \\
 \argmax_{S'\in \cN_{\mathrm{swap}}^*(S) } \post( S'), \quad &  \text{ if } S^* \not\subseteq S, \, |S| = d, 
\end{array}
\right.
\end{aligned}
\end{equation}
where 
\begin{equation*}
\begin{aligned}
& \cN_{\mathrm{add}}^*(S) =  \{   S \cup \{k\} \colon  k \in S^* \setminus S \},  \quad 
 \cN_{\mathrm{del}}^*(S) =  \{  S \setminus \{\ell\} \colon \ell \in S \setminus S^* \}, \\
& \cN_{\mathrm{swap}}^*(S) = \{  (S \cup \{k\}) \setminus \{\ell\} \colon  k \in S^* \setminus S, \;  \ell \in S \setminus S^*\}. 
\end{aligned}
\end{equation*} 
The goal of this supplement is to obtain  a lower bound  on $\post(g(S)) / \post(S)$ for all $S \in \cS(m, d) \setminus S^*$.  If the bound is sufficiently large, it then yields the strong consistency of the variable selection procedure. 
We will consider two sets of conditions. 
In the first scenario (see Section~\ref{sec:supp.var.consist}), we treat $\X_{[m]}$ as fixed and impose conditions on the noise part of $y$, while in the second (see Section~\ref{sec:supp.var.consist.covar}),  we treat both $\X_{[m]}$ and $y$
as random and, assuming $y = X_{m + 1}$, start by estimating the covariance matrix for the joint distribution of $(\sX_1, \dots, \sX_m, \sX_{m+1})$. 

\subsection{High-dimensional consistency: first scenario} \label{sec:supp.var.consist}
Suppose that $y$ can be written as 
\begin{equation}\label{eq:true.model}
y =   \ty +   \epsilon, \quad  \text{  where }    \ty = \Z_{S^*} \beta_{S^*}^*  
\end{equation} 
for some $S^* \subseteq [m]$,  $\beta_{S^*}^* \in \bbR^{|S^*|}$.  The vector $\ty$ represents the signal part of $y$.  
We make the following assumptions. The first one controls the multicollinearity of the data, and the second assumes the prior parameters are properly chosen. The last three assumptions concern  the behavior of the true model, where we treat the predictors (i.e., $\X_{[m]}$) as fixed and require the signal size  (true regression coefficient) be sufficiently large and the errors be ``well behaved''.  

\begin{theorem}\label{th:var.sel}
Consider the distribution $\post$ given in~\eqref{eq:def.post} defined on $\cS(m, d)$ for some $m \leq p$ and $d \geq 1$,  where $\X  \in \bbR^{n \times p}$ and  $y$ is given by~\eqref{eq:true.model}. 
Suppose the following hold. 
\begin{enumerate}[label=(E1.\arabic*), ref=(E1.\arabic*)]
\item There exist $\vmin, \vmax > 0$  such that $y^\top y \leq n \vmax$ and \label{ass:eigen}
\begin{align*} 
 n \vmin \leq \lmin(\Z_S^\top \Z_S)  \leq \lmax(\Z_S^\top \Z_S) \leq n \vmax,
\end{align*}
for any $S \subseteq [m] $ with $|S| \leq 2d + 1$. 
\item The vector $\epsilon$ in~\eqref{eq:true.model} satisfies the following  for some  $ \omega^* \in [\vmin, \vmax]$ and $\rho \geq 2$. 
\label{ass:error}
\begin{gather*} 
   \min_{S \colon |S| \leq d}   \,     \epsilon^\top  \oproj_S  \epsilon      \geq    n \ses / 2,     \\ 
   \max_{S \colon |S| \leq d} \max_{j \notin S}  \,   \epsilon^\top  (\proj_{ S \cup \{j\} } - \proj_S )  \epsilon        \leq   \rho   \ses \log p.  
\end{gather*} 
\item Prior parameters satisfy that $\kappa \leq n$,    $c_1 \sqrt{ 1 + \alpha / \gamma} \in [1, p]$, and $c_2 \geq (\alpha + 1 )\rho + t$ for some constant $t > 0$.    \label{ass:prior}
\item The model space parameter $d$ and true model $S^*$ in~\eqref{eq:true.model} satisfy \label{ass:size} 
  $$ \left\{  \frac{ 4  \vmax^2   (\vmax - \vmin)^2 }{   \vmin^4  }   + 1  \right\} |S^*|  \leq  d.$$ 
\item There exists a  constant $\Cbeta \geq 8t / 3$ such that   \label{ass:beta}
\begin{align*}
\betamin^2 = \min \{ |\beta_j^*|^2 \colon \beta_j^* \neq 0 \} \geq    5 (  \Cbeta + 4 c_2  )     \frac{  \vmax^2 \log p }{\alpha \vmin^2 n}. 
\end{align*}  
\end{enumerate}
Then, for the function $g$ defined in~\eqref{eq:def.gS}, we have 
\begin{align*}
\frac{\post( g(S) )}{\post(S)} \geq p^t, \quad \quad \forall \, S \in \cS(m, d) \setminus \{S^*\}. 
\end{align*}
\end{theorem}
\begin{proof}
It follows from Lemmas~\ref{lm:overfit},~\ref{lm:underfit} and~\ref{lm:saturate} to be proved below, 
 each for one subcase in the definition of $g$ (except the case $S = S^*$). 
\end{proof}


First, consider an overfitted model $S$ such that $S^* \subset S$.  
To show that the posterior probability increases when we remove some covariate in $S \setminus S^*$, we only need Assumption~\ref{ass:error}, which controls the behavior of $\epsilon$,  and Assumption~\ref{ass:prior}, which requires the penalty on model size be sufficiently large.  
 
\begin{lemma}\label{lm:overfit}
Suppose  Assumptions~\ref{ass:error} and~\ref{ass:prior} hold. 
For any $S \in \cS(m, d)$ such that $S^* \subset S$ and any $j \in S \setminus S^*$,  we have 
\begin{align*}
\frac{ \post(S) }{ \post( S \setminus \{j\}) } \leq  p^{-c_2 + (\alpha +  1) \rho}  \leq p^{-t}. 
\end{align*}
\end{lemma}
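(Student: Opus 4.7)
The plan is a direct calculation using the closed-form expression in~\eqref{eq:def.post}. Writing out the ratio $\post(S) / \post(S \setminus \{j\})$, the prior factors combine into
$$c_1^{-1} p^{-c_2} \left( 1 + \frac{\alpha}{\gamma} \right)^{-1/2},$$
which is at most $p^{-c_2}$ by the condition $c_1 \sqrt{1 + \alpha/\gamma} \geq 1$ in Assumption~\ref{ass:prior}. Thus the entire task reduces to bounding the residual-sum-of-squares ratio $y^\top \oproj_{S \setminus \{j\}} y \, / \, y^\top \oproj_S y$ raised to the power $(\alpha n + \kappa)/2$.

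Since $j \in S \setminus S^*$, we have $S^* \subseteq S \setminus \{j\} \subset S$, so the signal $\ty = \Z_{S^*} \beta^*_{S^*}$ lies in the column spaces of both $\Z_S$ and $\Z_{S \setminus \{j\}}$. Hence $\oproj_S \ty = \oproj_{S \setminus \{j\}} \ty = 0$, and both quadratic forms depend only on the noise: $y^\top \oproj_S y = \epsilon^\top \oproj_S \epsilon$ and $y^\top \oproj_{S \setminus \{j\}} y = \epsilon^\top \oproj_S \epsilon + \epsilon^\top (\proj_S - \proj_{S \setminus \{j\}}) \epsilon$. By bound~\eqref{c2} the denominator is at least $n \omega^*/2$, while by bound~\eqref{c3} applied to $S \setminus \{j\}$ and the index $j$ the extra term in the numerator is at most $\rho \omega^* \log p$. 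Therefore the ratio is bounded by $1 + 2\rho (\log p) / n$.

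Finally, using $(1 + x)^a \leq e^{ax}$ and $\kappa \leq n$ (so $(\alpha n + \kappa)/n \leq \alpha + 1$),
$$\left( 1 + \frac{2 \rho \log p}{n} \right)^{(\alpha n + \kappa)/2} \leq \exp\left( \frac{(\alpha n + \kappa) \rho \log p}{n} \right) \leq p^{(\alpha + 1) \rho}.$$
Combining the two factors gives $\post(S)/\post(S \setminus \{j\}) \leq p^{-c_2 + (\alpha+1)\rho}$, and the second inequality $\leq p^{-t}$ follows from $c_2 \geq (\alpha+1)\rho + t$ in Assumption~\ref{ass:prior}.

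There is no real obstacle here; the proof is a bookkeeping exercise built from three ingredients already in place: the explicit posterior formula, the noise bounds in Assumption~\ref{ass:error}, and the calibration $c_2 \geq (\alpha+1)\rho + t$. The only point to watch is recognizing at the start that overfitting kills the signal contribution (so the RSS ratio is a pure noise quantity), which is exactly what makes the crude bound $1 + O(\log p / n)$ suffice.
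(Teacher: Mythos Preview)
Your proof is correct and follows essentially the same approach as the paper: both write out the posterior ratio from~\eqref{eq:def.post}, use $S^* \subseteq S\setminus\{j\}$ to reduce the RSS terms to pure noise, apply the noise bounds~\eqref{c2} and~\eqref{c3}, and finish with $1+x\leq e^x$ and the prior calibration $c_2 \geq (\alpha+1)\rho + t$. If anything, you are more explicit than the paper, which simply says ``a routine calculation'' for the final step where you spell out the use of $\kappa \leq n$.
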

\begin{proof} 
Let $S' = S \setminus \{j\}$  for any $j \in S \setminus S^*$. Then,  it follows from~\eqref{eq:def.post} and the inequality $1 + x \leq e^x$ that 
\begin{align*}
\frac{\post(S)}{\post(S')}  
\leq \;&  c_1^{-1} (1 + \alpha / \gamma)^{-1/2} p^{-c_2} \exp\left\{ \frac{\alpha n + \kappa}{2}  \frac{y^\top  (\proj_S - \proj_{S'})   y}{ y^\top \oproj_S y }    \right\} \\
= \;& c_1^{-1} (1 + \alpha / \gamma)^{-1/2} p^{-c_2} \exp\left\{ \frac{\alpha n + \kappa}{2}  \frac{\epsilon^\top  (\proj_S - \proj_{S'})   \epsilon}{ \epsilon^\top \oproj_S \epsilon }    \right\}. 
\end{align*}
The second step follows from the observations 
$S^* \subseteq S$ and $S^* \subseteq S'$.  
A routine calculation using Assumptions~\ref{ass:error} and~\ref{ass:prior}  then yields the result. 
\end{proof}

For an underfitted model $S \in \cS(m, d)$ (underfitted means $S^* \setminus S \neq \emptyset$), bounding $\post(g(S)) / \post(S)$ is much more difficult due to the collinearity in the design matrix.   We need to use Corollary~\ref{lm:aux} proved in Section~\ref{sec:supp.aux}.  
To show that any underfitted model $S$ has a neighboring model with much larger posterior probability, we  consider two subcases according as $S$ is saturated (we say $S$ is saturated  if $|S| = d$).  Note that an underfitted and saturated model exists only if $d < m$.  
If $S$ is unsaturated, we can add some covariate in $S^* \setminus S$ so that the reduction in residual sum of squares would be significant.  If $S$ is saturated, we perform a swap move: add some covariate in $S^* \setminus S$ and remove another in $S \setminus S^*$. 

\begin{lemma}\label{lm:underfit}
Suppose  Assumptions~\ref{ass:eigen},~\ref{ass:error},~\ref{ass:prior} and~\ref{ass:beta} hold. 
Let $S \in \cS(m, d)$ be an underfitted model.  There exists some $k \in S^* \setminus S$ such that 
\begin{equation*} 
\frac{\post(S)}{   \post(S \cup \{k\})}  
\leq     p^{  - 3 c_2  -  C_\beta / 2 }.   
\end{equation*}
\end{lemma}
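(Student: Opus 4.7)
The plan is to reduce the claim to a lower bound on the residual-sum-of-squares (RSS) reduction obtained by adjoining a well-chosen $k \in S^* \setminus S$, and then to control that reduction using the signal bound in Lemma~\ref{lm:aux}, the noise bound in Assumption~\ref{ass:error}, and the beta-min condition in Assumption~\ref{ass:beta}.

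First, from the explicit posterior formula~\eqref{eq:def.post}, one computes
\begin{equation*}
\frac{\post(S)}{\post(S \cup \{k\})}
= c_1\, p^{c_2}\bigl(1 + \alpha/\gamma\bigr)^{1/2}
\left( \frac{y^\top \oproj_{S \cup \{k\}} y}{y^\top \oproj_S y} \right)^{(\alpha n + \kappa)/2}.
\end{equation*}
Using $c_1\sqrt{1+\alpha/\gamma}\le p$ from Assumption~\ref{ass:prior}, it suffices to prove that there exists $k \in S^* \setminus S$ such that
\begin{equation*}
\frac{y^\top \oproj_{S \cup \{k\}} y}{y^\top \oproj_S y}
\le p^{-(8 c_2 + C_\beta)/(\alpha n + \kappa)}.
\end{equation*}
Writing $R := y^\top(\proj_{S \cup \{k\}} - \proj_S) y$ and $R_S := y^\top \oproj_S y$, and applying $\log(1-x) \le -x$ together with $R_S \le \norm{y}_2^2 \le n \vmax$ (from Assumption~\ref{ass:eigen} applied to the last column of $\tZ$) and $\alpha n + \kappa \ge \alpha n$, one reduces the target bound to
\begin{equation*}
R \;\ge\; \frac{(8 c_2 + C_\beta)\, \vmax \log p}{\alpha}.
\end{equation*}

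Next, I would lower-bound $R$ by splitting signal from noise. Because $\proj_{S\cup\{k\}} - \proj_S$ is a (rank-$1$) orthogonal projection, we have $R = \norm{(\proj_{S \cup \{k\}} - \proj_S) y}_2^2$, and with $y = \ty + \epsilon$ the elementary inequality $(a+b)^2 \ge a^2/2 - b^2$ gives
\begin{equation*}
R \;\ge\; \tfrac{1}{2}\, \norm{(\proj_{S \cup \{k\}} - \proj_S)\,\ty}_2^2 \;-\; \norm{(\proj_{S \cup \{k\}} - \proj_S)\,\epsilon}_2^2.
\end{equation*}
By Lemma~\ref{lm:aux}, there exists $k \in S^* \setminus S$ with
$\norm{(\proj_{S \cup \{k\}} - \proj_S)\ty}_2^2 \ge n \vmin^2 \norm{\beta^*_{S^* \setminus S}}_2^2 / (\vmax |S^* \setminus S|)$. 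Since $\norm{\beta^*_{S^* \setminus S}}_2^2 \ge |S^* \setminus S|\, \betamin^2$, the $|S^* \setminus S|$ factors cancel, and Assumption~\ref{ass:beta} yields
\begin{equation*}
\norm{(\proj_{S \cup \{k\}} - \proj_S)\,\ty}_2^2 \;\ge\; \frac{5(C_\beta + 4 c_2)\, \vmax \log p}{\alpha}.
\end{equation*}
The noise term is controlled by~\eqref{c3} of Assumption~\ref{ass:error}, which gives $\norm{(\proj_{S \cup \{k\}} - \proj_S)\,\epsilon}_2^2 \le \rho\, \omega^* \log p \le \rho\, \vmax \log p$.

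Finally, combining the two bounds,
\begin{equation*}
R \;\ge\; \frac{5(C_\beta + 4 c_2)\, \vmax \log p}{2\alpha} - \rho\, \vmax \log p
\;=\; \frac{(5 C_\beta + 20 c_2 - 2\alpha\rho)\, \vmax \log p}{2\alpha},
\end{equation*}
so the desired inequality $R \ge (8 c_2 + C_\beta)\vmax \log p/\alpha$ reduces to $3 C_\beta + 4 c_2 \ge 2\alpha\rho$, which follows from $C_\beta \ge 0$ and $c_2 \ge (\alpha+1)\rho$ in Assumption~\ref{ass:prior}. The main obstacle is bookkeeping: choosing the right splitting constant in $(a+b)^2$ and tracking the inequalities so that the $(8 c_2 + C_\beta)/\alpha$ target is actually hit with the margins provided by Assumptions~\ref{ass:prior} and~\ref{ass:beta}; nothing beyond this constant-juggling is required, since the structural work is already done by Lemma~\ref{lm:aux} and by the idempotence of $\proj_{S\cup\{k\}}-\proj_S$.
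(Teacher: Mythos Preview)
Your argument is correct and follows essentially the same route as the paper's: reduce the posterior ratio to a lower bound on $R = y^\top(\proj_{S\cup\{k\}}-\proj_S)y$ via $y^\top\oproj_S y \le n\vmax$, then control $R$ from below using Lemma~\ref{lm:aux} for the signal part and~\eqref{c3} for the noise. The only technical difference is the signal--noise split: the paper uses the reverse triangle inequality $\|(\proj_{S'}-\proj_S)y\|_2 \ge \|(\proj_{S'}-\proj_S)\ty\|_2 - \|(\proj_{S'}-\proj_S)\epsilon\|_2$ and then squares, obtaining $R \ge (C_\beta + 9c_2)\vmax\log p/\alpha$, whereas your inequality $(a+b)^2 \ge a^2/2 - b^2$ gives a slightly weaker but still sufficient bound.

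One minor bookkeeping slip: after absorbing $c_1\sqrt{1+\alpha/\gamma}\le p$, your sufficient target should be $R \ge (8c_2+C_\beta+2)\vmax\log p/\alpha$, not $(8c_2+C_\beta)\vmax\log p/\alpha$ (the extra $p$ from the prior factor costs a $+2$ in the exponent). This changes your final reduced inequality to $3C_\beta + 4c_2 \ge 2\alpha\rho + 4$, which still holds since $c_2 \ge (\alpha+1)\rho$ with $\rho\ge 2$ gives $4c_2 - 2\alpha\rho \ge 2\alpha\rho + 4\rho \ge 8 > 4$.
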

\begin{proof}
Let $S' = S \cup \{k\}$  for some $k \in S^* \setminus S$. Then,
\begin{equation}\label{eq:under1}
\frac{\post(S)}{   \post(S')}  
\leq   c_1  (1 + \alpha / \gamma)^{1/2} p^{c_2} \exp\left\{ - \frac{\alpha n + \kappa}{2}  \frac{y^\top  (\proj_{S'} - \proj_S)   y}{ y^\top \oproj_S y }    \right\}. 
\end{equation}
By Assumption~\ref{ass:eigen}, $y^\top \oproj_S y \leq y^\top y \leq n \vmax$. 
By Corollary~\ref{lm:aux} and Assumption~\ref{ass:beta},  $k$ can be chosen such that 
\begin{equation}\label{eq:bd2}
\norm{ (\proj_{S'} - \proj_S)   \ty}_2^2 \geq  \frac{n \vmin^2}{\vmax} \betamin^2 \geq  5  (\Cbeta + 4 c_2) \frac{\vmax \log p}{\alpha}
\geq     (  \Cbeta^{1/2} +  4 c_2^{1/2})^2 \frac{\vmax \log p}{\alpha}. 
\end{equation}
By Assumptions~\ref{ass:error} and~\ref{ass:prior},  
\begin{equation}\label{eq:err.bd}
\norm{ (\proj_{S'} - \proj_S)   \epsilon }_2^2 \leq  \rho \ses \log p <   \frac{ c_2   \ses \log p}{\alpha}
\leq \frac{ c_2   \vmax \log p}{\alpha}.
\end{equation}
The reverse triangle inequality   then yields that 
\begin{align*}
\norm{ (\proj_{S'} - \proj_S)  y}_2^2 \geq  \left\{   \sqrt{  \Cbeta} + 3 \sqrt{c_2} \right\}^2    \frac{  \vmax \log p}{\alpha}  
\geq  (  \Cbeta + 9 c_2) \frac{  \vmax \log p}{\alpha}. 
\end{align*}
Thus, the exponent in~\eqref{eq:under1} can be bounded by 
\begin{align*}
\frac{\alpha n + \kappa}{2}  \frac{  y^\top  (\proj_{S'} - \proj_S)   y}{ y^\top \oproj_S y } 
 \geq       \frac{  \Cbeta + 9 c_2 }{2}  \log p . 
\end{align*}  
By Assumptions~\ref{ass:error} and~\ref{ass:prior}, $c_2 \geq 2$ and thus $9c_2 / 2 \geq 4c_2 + 1$. 
The proof is completed upon  recalling that $c_1  (1 + \alpha / \gamma)^{1/2} p^{c_2}  \leq p$. 
\end{proof}

\begin{lemma}\label{lm:saturate}
Suppose Assumptions~\ref{ass:eigen},~\ref{ass:error},~\ref{ass:prior},~\ref{ass:size} and~\ref{ass:beta}  hold.
Let $S \in \cS(m, d)$  be an underfitted model with $|S| = d$. 
There exist some $k \in S^* \setminus S$ and $j \in S \setminus S^*$ such that 
\begin{equation*} 
\frac{\post(S)}{   \post( (S \cup \{k\}) \setminus \{j\} )}   \leq  p^{ -3  \Cbeta /8 }. 
\end{equation*}
\end{lemma}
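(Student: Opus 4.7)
The plan is to adapt the argument of Lemma~\ref{lm:underfit} to the two-step trajectory $S\to\tilde S:=S\cup\{k\}\to S':=\tilde S\setminus\{j\}$, choosing $k\in S^*\setminus S$ and $j\in\tilde S\setminus S^*$ so that the addition injects signal while the deletion discards almost none. Because $|S|=|S'|=d$, every size-dependent prior factor in~\eqref{eq:def.post} cancels and
\[
\frac{\post(S)}{\post(S')}=\left(\frac{y^\top\oproj_{S'}y}{y^\top\oproj_S y}\right)^{(\alpha n+\kappa)/2}.
\]
Writing $R_S:=y^\top\oproj_S y$, combined with $R_S\le\|y\|_2^2\le n\vmax$ (Assumption~\ref{ass:eigen}) and $\log(R_S/R_{S'})\ge(R_S-R_{S'})/R_S$, it suffices to produce $k,j$ such that $R_S-R_{S'}\gtrsim \Cbeta\vmax(\log p)/\alpha$.

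For the signal, I would apply the first half of Lemma~\ref{lm:aux} to $S$ to obtain $k\in S^*\setminus S$ with
\[
A^2:=\|(\proj_{\tilde S}-\proj_S)\ty\|_2^2\ge \frac{n\vmin^2\|\beta^*_{S^*\setminus S}\|_2^2}{\vmax\,|S^*\setminus S|},
\]
and then apply the second half to $\tilde S$ (underfitted since $|S^*|<d$ by Assumption~\ref{ass:size}) to select $j\in\tilde S\setminus S^*$. Because $k\in S^*$ we automatically have $j\neq k$, so $S'$ has the required swap form, $|\tilde S\setminus S^*|=|S\setminus S^*|$, and $\|\beta^*_{S^*\setminus\tilde S}\|_2\le\|\beta^*_{S^*\setminus S}\|_2$; the deletion bound then reads $\|(\proj_{\tilde S}-\proj_{S'})\ty\|_2^2\le n\vmax(\vmax-\vmin)^2\|\beta^*_{S^*\setminus S}\|_2^2/(\vmin^2|S\setminus S^*|)$. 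The crucial observation is that the saturation hypothesis gives $|S\setminus S^*|=|S^*\setminus S|+(d-|S^*|)\ge(\nu_0+1)|S^*\setminus S|$, and $\nu_0=4\vmax^2(\vmax-\vmin)^2/\vmin^4$ is calibrated so that this upper bound is at most $A^2/4$, i.e.\ $\|(\proj_{\tilde S}-\proj_{S'})\ty\|_2\le A/2$. The beta-min condition (Assumption~\ref{ass:beta}) then forces $A^2\ge 5(\Cbeta+4c_2)\vmax(\log p)/\alpha$.

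For the noise, Assumption~\ref{ass:error} bounds each of $\|(\proj_{\tilde S}-\proj_S)\epsilon\|_2^2$ and $\|(\proj_{\tilde S}-\proj_{S'})\epsilon\|_2^2$ by $N^2:=\rho\vmax\log p$, and Assumption~\ref{ass:prior} implies $N^2\le c_2\vmax(\log p)/\alpha$. The reverse triangle inequality applied to $y=\ty+\epsilon$ gives
\[
R_S-R_{S'}=\|(\proj_{\tilde S}-\proj_S)y\|_2^2-\|(\proj_{\tilde S}-\proj_{S'})y\|_2^2\ge (A-N)^2-(A/2+N)^2=\tfrac{3}{4}A^2-3AN,
\]
and the elementary inequality $4AN\le A^2/2+8N^2$ yields $R_S-R_{S'}\ge \tfrac{3}{8}A^2-6N^2\ge \tfrac{15}{8}\Cbeta\vmax(\log p)/\alpha$. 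Dividing by $R_S\le n\vmax$ and multiplying by $(\alpha n+\kappa)/2$ produces $\log\bigl(\post(S')/\post(S)\bigr)\ge \tfrac{15}{16}\Cbeta\log p$, comfortably beyond the required $\tfrac{3}{8}\Cbeta\log p$.

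The main obstacle I anticipate is securing the $1/4$ contraction between the deletion upper bound and the addition lower bound. The ratio of the two bounds in Lemma~\ref{lm:aux} introduces the factor $\vmax(\vmax-\vmin)^2/\vmin^2$, which Assumption~\ref{ass:size} neutralizes \emph{exactly} through the definition of $\nu_0$; any weaker sparsity condition would leave $\|(\proj_{\tilde S}-\proj_{S'})\ty\|_2\le cA$ with $c$ too close to $1$, and then $\tfrac{3}{4}A^2-3AN$ after the triangle inequality would be negligible against the $O(\log p)$ noise and no swap could dominate. The extra slack in the beta-min multiplier $5(\Cbeta+4c_2)$ is then tightly used both to absorb the $6N^2$ noise term and to leave a net gain of the correct order $\Cbeta\log p$.
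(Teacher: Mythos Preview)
Your proposal is correct and follows essentially the same route as the paper: choose $k$ via the first half of Lemma~\ref{lm:aux}, choose $j$ via its second half applied to $\tilde S=S\cup\{k\}$, use Assumption~\ref{ass:size} (through the definition of $\nu_0$) to force the deletion signal to be at most $A^2/4$, and combine with the noise bound from Assumption~\ref{ass:error}. The only cosmetic difference is in the final algebra: the paper carries the bounds in the form $(\sqrt{\Cbeta}+3\sqrt{c_2})^2$ and $(\sqrt{\Cbeta}/2+3\sqrt{c_2})^2$ and subtracts directly to obtain $\tfrac{3}{4}\Cbeta\vmax(\log p)/\alpha$, whereas you compute $(A-N)^2-(A/2+N)^2=\tfrac{3}{4}A^2-3AN$ and then apply AM--GM, arriving at the slightly sharper $\tfrac{15}{8}\Cbeta\vmax(\log p)/\alpha$; both comfortably exceed what is needed. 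One small remark: when you invoke the second half of Lemma~\ref{lm:aux} on $\tilde S$, note that $|\tilde S|=d+1\notin\cS(d)$, so strictly you are using the lemma beyond its stated range; this is harmless (and the paper does the same) because the proof of Lemma~\ref{lm:aux} only uses Assumption~\ref{ass:eigen} for sets of size at most $2d$, and $|\tilde S\cup S^*|\le d+|S^*|\le 2d$.
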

\begin{proof}
Let $S' = ( S \cup \{k\} ) \setminus \{j\}$ for some $k \in S^* \setminus S$ and $j \in S \setminus S^*$. 
Then, 
\begin{equation*} 
\frac{\post(S)}{   \post(S')}  
\leq    \exp\left\{ - \frac{\alpha n + \kappa}{2}  \frac{y^\top  (\proj_{S'} - \proj_S)   y}{ y^\top \oproj_S y }    \right\}. 
\end{equation*}
By Corollary~\ref{lm:aux}, we can pick $k$ and $j$ such that 
\begin{align*}
\norm{ (\proj_{S \cup \{k\} } - \proj_S)  \ty }_2^2  \geq   \frac{n \vmin^2  \norm{ \beta^*_{S^* \setminus S} }_2^2 }{ \vmax |  S^* \setminus S |  }, 
\quad \norm{ (\proj_{S' \cup \{j\} } - \proj_{S'})  \ty }_2^2  \leq \frac{n \vmax (\vmax - \vmin)^2  \norm{ \beta^*_{S^* \setminus S} }_2^2 }{ \vmin^2 |  S \setminus S^* |  }. 
\end{align*}
By Assumption~\ref{ass:size}, 
\begin{align*}
\frac{\norm{ (\proj_{S \cup \{k\} } - \proj_S)  \ty }_2^2  } { \norm{ (\proj_{S' \cup \{j\} } - \proj_{S'})  \ty }_2^2 } \geq  \frac{ \vmin^4  |  S \setminus S^* | }{ \vmax^2   (\vmax - \vmin)^2  |  S^* \setminus S | }
\geq  \frac{ \vmin^4  (d - |S^*|)}{ \vmax^2   (\vmax - \vmin)^2  |  S^*  | } \geq 4. 
\end{align*}
Then, using~\eqref{eq:bd2},~\eqref{eq:err.bd} and triangle inequalities, we find that 
\begin{align*}
 \norm{  (\proj_{S \cup \{k\} } - \proj_S)   y}_2^2  \geq \;&  \left\{   \sqrt{  \Cbeta} + 3 \sqrt{c_2} \right\}^2    \frac{  \vmax \log p}{\alpha},  \\
\norm{ (\proj_{S' \cup \{j\} } - \proj_{S'})  y }_2^2  \leq \;&  \left\{  \frac{ \sqrt{  \Cbeta} }{2} + 3 \sqrt{c_2} \right\}^2    \frac{  \vmax \log p}{\alpha}. 
\end{align*}
Hence, 
\begin{align*}
\norm{  (\proj_{S'} - \proj_S)   y}_2^2 = \;&  \norm{  (\proj_{S \cup \{k\} } - \proj_S)   y}_2^2 
-  \norm{ (\proj_{S' \cup \{j\} } - \proj_{S'})  y }_2^2  
\geq     \frac{ 3 \Cbeta \vmax \log p}{4\alpha}. 
\end{align*}
The result then follows by a calculation similar to the proof of Lemma~\ref{lm:underfit}. 
\end{proof}

\subsection{High-dimensional consistency: second scenario}\label{sec:supp.var.consist.covar} 
We now re-derive the high-dimensional consistency result for the model described in Section~\ref{sec:supp.var.modl} by treating both $\X$ and $y$ as random and viewing linear regression as orthogonal projection. This result is useful when one wants to use random matrix theory to study DAG selection or structure learning problems. We will use the following notation.  
Given  a  matrix $M$, let $\OPnorm{M} = \sup_{  \norm{b}_2 = 1 } \norm{ Mb  }_2 $ denote its operator norm. 
Given a positive definite matrix $\Sigma$  and sets (or integers) $A, B$, we use $\Sigma_{A, B}$ to denote the submatrix of $\Sigma$ with rows indexed by $A$ and columns indexed by $B$, and, by an abuse of notation, we simply write 
$\Sigma_A = \Sigma_{A, A}$\footnote{This notation is only used for covariance matrices and only used in 
Supplements~\ref{sec:supp.sel} and~\ref{sec:supp.proof.consist}. 
Note that its meaning is different from $\X_S$, which denotes  the submatrix of $\X$ with columns indexed by $S$.}.  
Further, we use
\begin{equation}\label{eq:partial}
\Sigma_{A, B \mid C} =  \Sigma_{A, B} - \Sigma_{A, C} (\Sigma_{C})^{-1} \Sigma_{C, B}
\end{equation}
to denote the ``partial covariance'' and write $\Sigma_{A \mid C} = \Sigma_{A, A \mid C}$.

The following theorem has the same conclusion as Theorem~\ref{th:var.sel}, but we replace Assumption~\ref{ass:error} in Theorem~\ref{th:var.sel} with $\zeta = O( \sqrt{  n^{-1} d \log p  } )$ where $\zeta$ measures the difference between $\Sigma^*$ and the empirical covariance matrix; all the other four assumptions are very similar.    
\begin{theorem}\label{th:var.sel2}
Consider the distribution $\post$ given in~\eqref{eq:def.post} defined on $\cS(m, d)$ for some $m \in [p - 1]$ and $d \geq 1$,  where $\X  \in \bbR^{n \times p}$, $y = X_{m + 1}$ and each row of $\X_{[m + 1]}$ is an i.i.d. copy of a random vector $\sX = (\sX_1, \dots, \sX_{m+1})$ with mean zero and covariance matrix $\Sigma^*$. 
Define 
\begin{equation*}
   \beta^*  = ( \Sigma^*_{[m]} )^{-1} \Sigma^*_{[m], m+1},   \quad S^* = \{ j \colon \beta^*_j \neq 0 \}, \quad \hat{\Sigma} = n^{-1} \X^\top \X. 
\end{equation*} 
Assume the following hold.  
\begin{enumerate}[label=(E2.\arabic*), ref=(E2.\arabic*)]
\item   There exist constants $\vmax \geq 1 \geq \vmin$  such that  $\vmin \leq \lmin(\Sigma^*) \leq \lmax(\Sigma^*)  \leq \vmax$ and,  for any $S \in \cS(m + 1, 2d +1 )$,   $  \vmin  \leq    \lmin( \hat{\Sigma}_S ) \leq   \lmax(  \hat{\Sigma}_S ) \leq  \vmax.$ \label{ad21}
\item There exists a constant $\Cerr > 0$ such that $\zeta^2 \leq \Cerr    n^{-1} d \log p  $ where \label{ad22}
\begin{equation*}
    \zeta = \max_{  S   \in \cS(m + 1, 2d )} \OPnorm{  \hat{\Sigma}_S  - \Sigma^*_S }.
\end{equation*}
\item Prior parameters satisfy that $\alpha \in (0, 1]$,  $\kappa \leq n$,    $c_1 \sqrt{ 1 + \alpha / \gamma} \in [1, p]$, and $$c_2 \geq \Cpen   \frac { d  \vmax^4 }{ \vmin^6 },$$ 
for some constant $\Cpen > 0$.  \label{ad23}
\item The model space parameter $d$ and true model size $|S^*|$ satisfy 
$$ \left\{  \frac{ 20  \vmax^2   (\vmax - \vmin)^2 }{   \vmin^4  }   + 1  \right\} |S^*|  \leq d. $$ \label{ad24}
\item There exists a constant $\Cbeta$ such that  \label{ad25}
\begin{align*}
\betamin^2 = \min \{ |\beta_j^*|^2  \colon \beta_j^* \neq 0 \} \geq   \Cbeta  \frac{ c_2  \vmax^2 \log p }{\alpha \vmin^2 n}. 
\end{align*}  
\end{enumerate}
Given any $t > 0$, we can choose sufficiently large $\Cpen$ and $\Cbeta$, which only depend on $t$ and $\Cerr$, such that  
\begin{align*}
\frac{\post( g(S) )}{\post(S)} \geq p^t, \quad \quad \forall \, S \in \cS(m, d) \setminus \{S^*\}. 
\end{align*}
\end{theorem}

\begin{remark}
There is no loss of generality in assuming $\vmax \geq 1 \geq \vmin$ since one can always scale $\Sigma^*$. 
By the modified Cholesky decomposition, there exist a strictly upper triangular matrix $B^*$ and a diagonal matrix $\Omega^*$ such that $\Sigma^* = (I - (B^*)^\top)^{-1} \Omega^* (I - B^*)^{-1}$.  
By the block matrix inversion formula, one can show that 
$\beta^* =  B^*_{[m], m+1}$. 
Further,   for any $j \in [m]$,  the block matrix inversion formula yields that 
\begin{equation}\label{eq:supp.cov1}
\Sigma^*_{j, m+1} = \Sigma^*_{j, [m]} ( \Sigma^*_{[m]})^{-1} \Sigma_{[m], m + 1}^*
 =  \Sigma^*_{j, [m]} \beta^* = \Sigma^*_{j, S^*} \beta^*_{S^*}, 
\end{equation} 
which will be useful in the proof of Theorem~\ref{th:var.sel2}. 
\end{remark}

Before we prove Theorem~\ref{th:var.sel2}, we derive a lemma for controlling the error propagation via matrix operations, which is the main technical difference between the proofs of Theorem~\ref{th:var.sel} and Theorem~\ref{th:var.sel2}.
 Note that for any matrix $M$, we have $\OPnorm{M} \geq \max_{i, j} |M_{i j}|$, so Lemma~\ref{lm:err.prop} also yields a bound on the entrywise maximum  error. 
\begin{lemma}\label{lm:err.prop}
Let $\Sigma^*, \hat{\Sigma} \in \bbR^{(m+1) \times (m+1)}$ be positive definite matrices  that satisfy Assumption~\ref{ad21} in Theorem~\ref{th:var.sel2}.  
For any $S,  T \in \cS(m + 1, d)$ such that $S \cap T  = \emptyset$, 
\begin{align*}
\OPnorm{  \hat{\Sigma}_{ T \mid S}  - \Sigma^*_{T \mid S}} \leq \;& \zeta \vmax^2 / \vmin^2, \\
\OPnorm{  (\hat{\Sigma}_{ S})^{-1} \hat{\Sigma}_{S, T}  - ( \Sigma^*_{S})^{-1} \Sigma^*_{S, T} } \leq \;& 2 \zeta \vmax / \vmin^2, 
\end{align*}
where $\zeta = \max_{  S   \in \cS(m + 1, 2d )} \OPnorm{  \hat{\Sigma}_S  - \Sigma^*_S }.$
\end{lemma}

\begin{proof} 
Let $\Sigma = \hat{\Sigma}$ or $\Sigma^*$. 
Since $(\Sigma_{T \mid S})^{-1}$ is a principal submatrix of $(\Sigma_{S \cup T})^{-1}$,  
\begin{align*}
  \vmax^{-1}  \leq   \lmin( (\Sigma_{S \cup T})^{-1}  )  \leq  \lmin( (\Sigma_{T \mid S})^{-1} ) \leq \lmax( (\Sigma_{T \mid S})^{-1} ) \leq \lmax( (\Sigma_{S \cup T})^{-1}  ) \leq  \vmin^{-1}. 
\end{align*}
Given invertible matrices $A, B$ of the same dimension,  $A^{-1} - B^{-1} =  A^{-1}(B  - A) B^{-1}$. 
Hence, by the sub-multiplicative property of operator norm, 
\begin{align*} 
\OPnorm{  (\hat{\Sigma}_{ T \mid S})^{-1} -   (\Sigma^*_{T \mid S})^{-1} } \leq \OPnorm{  (\hat{\Sigma}_{S \cup T})^{-1} -   (\Sigma^*_{S \cup T})^{-1} } \leq \zeta / \vmin^2. 
\end{align*}
Applying the same argument again, we get $\OPnorm{  \hat{\Sigma}_{ T \mid S}  - \Sigma^*_{T \mid S}} \leq \zeta \vmax^2 / \vmin^2$.   

To prove the second inequality, we use the identity $AB - A'B'   =  A(B - B') + (A - A') B'$ for matrices $A, A', B, B'$.  Since $\hat{\Sigma}_{S, T} - \Sigma^*_{S,T} $ is a submatrix of $\hat{\Sigma}_{S \cup T} - \Sigma^*_{S \cup T} $, we have $\OPnorm{ \hat{\Sigma}_{S, T} - \Sigma^*_{S,T} } \leq \zeta$.  
It follows that 
\begin{align*}
\OPnorm{  (\hat{\Sigma}_{ S})^{-1} \hat{\Sigma}_{S, T}  - ( \Sigma^*_{S})^{-1} \Sigma^*_{S, T} } \leq \;&  \zeta / \vmin + \zeta \vmax/\vmin^2,  
\end{align*}
which yields the asserted bound since $\vmax \geq  \vmin$. 
\end{proof}

\begin{proof}[Proof of Theorem~\ref{th:var.sel2}]
Since $y = X_{m+1}$, we can write 
\begin{align*}
n^{-1} y^\top \oproj_S y =  \hat{\Sigma}_{m+1 \mid S},  \quad 
n^{-1} y^\top  ( \proj_{S \cup \{j\} } - \proj_S ) y =      ( \hat{\Sigma}_{j \mid S} )^{-1} (\hat{\Sigma}_{j, m+1 \mid S})^2, 
\end{align*}
for any $S \subset [m]$  and $j \in [m] \setminus S$.  As in the proof of Theorem~\ref{th:var.sel}, we split the proof into three cases. 

\paragraph*{Case 1: overfitted}  Suppose $S^* \subset S \in \cS(m, d)$ and let $S'  = S\setminus \{j\}$ for some $j \in S \setminus S^*$.  By Assumption~\ref{ad23}, we have 
\begin{align*} 
\frac{\post(S)}{ \post(S')} \leq p^{-c_2} \exp\left\{  
\frac{  n ( \hat{\Sigma}_{j \mid S'} )^{-1} (\hat{\Sigma}_{j, m+1 \mid S'})^2  }{ \hat{\Sigma}_{m+1 \mid S} } \right\}. 
\end{align*}
Assumption~\ref{ad21} implies that $\hat{\Sigma}_{m+1 \mid S} \geq \vmin$ and $( \hat{\Sigma}_{j \mid S'} )^{-1}  \leq \vmin^{-1}$.   
By~\eqref{eq:supp.cov1}, we have $\Sigma^*_{j, m+1 \mid S'}
 = \Sigma^*_{j, S^* \mid S'} \beta^*_{S^*} = 0$  since $S^* \subseteq S'$.  Apply Lemma~\ref{lm:err.prop} to get   $| \hat{\Sigma}_{j, m+1 \mid S'} | \leq \zeta \vmax^2 / \vmin^2.$ 
It follows by Assumption~\ref{ad22} that 
\begin{align*}
\frac{\post(S)}{ \post(S')} \leq p^{-c_2} \exp\left\{    \frac{n \zeta^2 \vmax^4}{\vmin^6} \right\}
\leq \exp \left\{  \left(- c_2 +   \frac{ \Cerr  d  \vmax^4 }{ \vmin^6 }\right) \log p \right\},  
\end{align*}
Clearly, $d \vmax^4 / \vmin^6 \geq 1$ since $\vmin \leq \vmax$ and we assume $\vmin \leq 1$. 
Hence, it suffices to choose $\Cpen \geq \Cerr + t$ so that $\post(S)/  \post(S') \leq p^{-t}$. 
 
\paragraph*{Case 2: underfitted and unsaturated} 
Let $S \in \cS(m, d - 1)$ be such that $S^* \setminus S \neq \emptyset$. 
By Assumptions~\ref{ad21} and \ref{ad23},  for any $k \in S^* \setminus S$, 
\begin{equation}\label{eq:underfit.random.matrix.tmp1}
\frac{\post(S)}{ \post(S \cup \{k\})} \leq p^{c_2 + 1} \exp\left\{ -\frac{\alpha n}{2} \frac{ (\hat{\Sigma}_{k, m+1 \mid S})^2 }{ \vmax^2 } \right\}. 
\end{equation}   
Define
\begin{equation}\label{eq:def.bs}
\hat{b} =  (\hat{\Sigma}_{S \cup S^*} )^{-1} \hat{\Sigma}_{S \cup S^*, m+1}, \quad  b^* = (\Sigma^*_{S \cup S^*})^{-1} \Sigma^*_{S \cup S^*, m+1}. 
\end{equation}
Applying the first part of Lemma~\ref{lm:lin.cov} to $\hat{\Sigma}_{ S \cup S^* \cup \{m + 1\}}$ with $T = S^* \setminus S, V = S$ and $l = |S \cup S^*|$,  we get 
\begin{align*}
  \sum_{k \in T} (\hat{\Sigma}_{k, m+1 \mid S})^2  \geq \lmin(
   \hat{\Sigma}_{  T \mid S})^2     \norm{ \hat{b}_{T}  }_2^2 \geq    \vmin^2  \norm{ \hat{b}_{T}  }_2^2. 
\end{align*} 
Observe that $b^*_{T} = \beta^*_{T}$; this can be proved by applying the block matrix inversion formula to $(\Sigma^*_{[m]})^{-1}$. 
Then,  by  Lemma~\ref{lm:err.prop},  for any $k \in S^* \setminus S$, we have 
$|\hat{b}_k -  \beta^*_k | \leq \OPnorm{ \hat{b} - b^*} \leq 2\zeta \vmax / \vmin^2$, and thus 
$$(\hat{b}_k)^2 \geq  \left(    |\beta^*_k|  -   2 \zeta \vmax / \vmin^2 \right)^2.$$  
By Assumptions~\ref{ad22},~\ref{ad23} and~\ref{ad25}, 
\begin{align*}
  |\beta^*_k|^2 \geq 
   \Cbeta  \frac{ c_2  \vmax^2 \log p }{\alpha \vmin^2 n} 
   \geq  \Cbeta \Cpen \frac{  d \vmax^6 \log p   }{\alpha \vmin^8 n} 
   \geq \frac{ \Cbeta \Cpen}{\Cerr}   \frac{    \zeta^2 \vmax^6}{ \vmin^8} 
   \geq \frac{ \Cbeta \Cpen}{\Cerr}  \left( \frac{    \zeta \vmax}{ \vmin^2}\right)^2. 
\end{align*}
As long as  $\Cbeta \Cpen / \Cerr$ is sufficiently large, we have $(\hat{b}_k)^2 \geq (\beta^*_k  )^2 / 2 $. It  follows  that 
\begin{equation}\label{eq:underfit.random.matrix.tmp2}
  \sum_{k \in S^* \setminus S} (\hat{\Sigma}_{k, m+1 \mid S})^2  \geq \frac{\vmin^2}{2} \norm{ \beta^*_{S^* \setminus S}}_2^2. 
\end{equation} 
Choosing an optimal $k$, we get $(\hat{\Sigma}_{k, m+1 \mid S})^2 \geq \vmin^2 \betamin^2 / 2$. Using~\eqref{eq:underfit.random.matrix.tmp1}  and Assumption~\ref{ad25}, we find that   
\begin{align*}
\frac{\post(S)}{ \post(S \cup \{k\})} \leq    p^{- ( \Cbeta / 4 - 1) c_2 + 1}. 
\end{align*}   
Consequently, we only need to choose sufficiently large $\Cbeta = \Cbeta(t, \Cerr, \Cpen)$ so that $\post(S)/\post(S') \leq p^{-t}$. 

\paragraph*{Case 3: underfitted and saturated} 
Let $S \subseteq [m]$ be such that $S^* \setminus S \neq \emptyset$ and $|S| = d$.  
For any $k \in S^* \setminus S$ and $j \in S \setminus S^*$, by Assumption~\ref{ad21}, 
\begin{align*}
& \frac{\post(S)}{ \post(S')} \leq  \exp\left\{ -\frac{\alpha n (R_1 - R_2)}{2\vmax}    \right\},  \\
\text{where } \; &R_1 =  (\hat{\Sigma}_{k \mid S} )^{-1} (\hat{\Sigma}_{k, m+1 \mid S})^2, \quad 
R_2 =  (\hat{\Sigma}_{j \mid S'} )^{-1} (\hat{\Sigma}_{j, m+1 \mid S'})^2, 
\end{align*}
and $S' = (S \cup \{k\}) \setminus \{j\}$. By~\eqref{eq:underfit.random.matrix.tmp2}, there exists some $k \in S^* \setminus S$ such that 
\begin{equation}\label{eq:tmp1}
R_1 \geq  \frac{\vmin^2 }{2 \vmax} \frac{\norm{\beta^*_{S^* \setminus S} }_2^2}{|S^* \setminus S |} \geq \frac{\vmin^2 }{2 \vmax} \betamin^2 \geq \frac{ \Cbeta \Cpen}{2 \Cerr}   \frac{    \zeta^2 \vmax^5}{ \vmin^6}. 
\end{equation} 
Let $\hat{b}$ and $b^*$ be as defined in~\eqref{eq:def.bs}.  
To bound $R_2$, we apply the second part of Lemma~\ref{lm:lin.cov} to $\hat{\Sigma}_{S \cup S^* \cup \{m + 1\} }$.  Letting $U = S \cup \{k\}$, $V = U \cap S^*$,  $T = (U \setminus S^*) \cup (S^* \setminus U)$  and $W = U \setminus S^*$, 
we get that  
\begin{equation}\label{eq:tmp2}
\sum_{j \in W} \left\{ (\hat{\Sigma}_{j \mid U \setminus \{j\} } )^{-1} \hat{\Sigma}_{j, m+1 \mid U \setminus \{j\} }\right\}^2 
\leq   \norm{ (\hat{\Sigma}_U)^{-1} \hat{\Sigma}_{U, T}    \hat{b}_T }_2^2. 
\end{equation} 
Since $S^* \subset S \cup S^*$, we have $b^* = \beta^*_{S \cup S^*}$, which implies $b^*_T = \beta^*_T$.  
By  Lemma~\ref{lm:err.prop} and Assumptions~\ref{ad22},~\ref{ad23} and~\ref{ad25},  for any $i \in S^* \setminus U$, we have  $(\hat{b}_i)^2 \leq 2 (\beta^*_i  )^2$ as long as $\Cbeta \Cpen / \Cerr$ is sufficiently large.  
Hence, applying  Lemma~\ref{lm:err.prop}  again, we obtain 
\begin{align*}
    \norm{\hat{b}_{S^* \setminus U} }_2^2 \leq  2 \norm{\beta^*_{S^* \setminus U}}_2^2,  \quad  \quad 
    \norm{\hat{b}_{U \setminus S^*} }_2^2 \leq  2 |U \setminus S^*| \zeta^2 \vmax^2 / \vmin^4.  
\end{align*} 
Since $U \setminus S^*$ and $S^* \setminus U$ are disjoint, it follows that 
\begin{align*}
    \norm{ (\hat{\Sigma}_U)^{-1} \hat{\Sigma}_{U, T}    \hat{b}_T }_2^2 
=\;&  \norm{ (\hat{\Sigma}_U)^{-1} \hat{\Sigma}_{U, U \setminus S^*}    \hat{b}_{U \setminus S^*} +  (\hat{\Sigma}_U)^{-1} \hat{\Sigma}_{U, S^* \setminus U}    \hat{b}_{S^* \setminus U} }_2^2,  \\
\leq \;&  2 \norm{ \hat{b}_{U \setminus S^*} }^2 + 2  \norm{ (\hat{\Sigma}_U)^{-1} \hat{\Sigma}_{U, S^* \setminus U}    \hat{b}_{S^* \setminus U}  }^2 \\
\leq \;&  4 |U \setminus S^*| \zeta^2 \vmax^2 / \vmin^4 + 2 \norm{ (\hat{\Sigma}_U)^{-1} \hat{\Sigma}_{U, S^* \setminus U}  }_{\mathrm{op}}^2 
\norm{ \hat{b}_{S^* \setminus U}  }_2^2 \\
\leq \;&  4 |U \setminus S^*| \frac { \zeta^2 \vmax^2 }{  \vmin^4 }+ 
4  \frac{  (\vmax - \vmin)^2 }{\vmin^2}  \norm{\beta^*_{S^* \setminus U}}_2^2,
\end{align*}
where in the last step we have used Lemma~\ref{lemma.a3} to bound the operator norm. Choosing an optimal $j$ in~\eqref{eq:tmp2}, we get 
\begin{align*}
    R_2 =\;& (\hat{\Sigma}_{j \mid S'} ) \left\{ (\hat{\Sigma}_{j \mid S'} )^{-1} (\hat{\Sigma}_{j, m+1 \mid S'}) \right\}^2  
    \leq   4 \vmax \left(   \frac { \zeta^2 \vmax^2 }{  \vmin^4 } + 
   \frac{  (\vmax - \vmin)^2 }{\vmin^2 |U \setminus S^*|}  \norm{\beta^*_{S^* \setminus U}}_2^2  \right) \\
   \leq \;& 8 \max \left\{  \frac { \zeta^2 \vmax^3 }{  \vmin^4 }, \;   \frac{ \vmax  (\vmax - \vmin)^2 }{\vmin^2 |U \setminus S^*|}  \norm{\beta^*_{S^* \setminus U}}_2^2  \right\}. 
\end{align*}
If the first term in the maximum is larger, by~\eqref{eq:tmp1}, we only need to let $\Cbeta \Cpen / \Cerr$ be sufficiently large so that $R_1 / R_2 \geq 5/4 $.
If the second term is larger (note  $S^* \setminus U = S^* \setminus S$),   one can use~\eqref{eq:tmp1} and Assumption~\ref{ad24} to show that $R_1 / R_2 \geq 5/4 $. 
To summarize, we can choose $k \in S^* \setminus S$ and $j \in S \setminus S^*$ such that
\begin{align*}
    \frac{\post(S)}{ \post( (S \cup \{k\}) \setminus\{j\} )} \leq  \exp\left\{ -\frac{\alpha n R_1 }{8\vmax}    \right\} \leq  \exp\left\{ -\frac{\alpha n \vmin^2 }{16 \vmax^2} \betamin^2    \right\}. 
\end{align*} 
A routine  calculation then completes the proof. 
\end{proof}

\subsection{Auxiliary lemmas}\label{sec:supp.aux}
In this section, we prove some useful  results for bounding the change in  residual sum of squares for  optimal addition or deletion moves.  
We first prove a general linear algebra result in Lemma~\ref{lm:lin.cov} and then use it to obtain Corollary~\ref{lm:aux}, which is similar to Lemma 8 of~\citet{yang2016computational}~\citep[cf.][Lemma 1]{an2008stepwise}.  
Note that Lemma 8 of~\citet{yang2016computational}  requires an irrepresentability assumption and involves  the constant $\max_{S \in \cS(m, d)}  \OPnorm{ (\hat{\Sigma}_S)^{-1} \hat{\Sigma}_{S,  S^* \setminus S }}.$ 
But we directly bound this constant using Lemma~\ref{lemma.a3}.  

\begin{lemma}\label{lm:lin.cov}
Let $\Sigma \in \bbR^{(l+1)  \times (l+1)}$ be positive definite  for some $l \in \bbN$  and  
define $\beta = (\Sigma_{[l]})^{-1} \Sigma_{[l], l+1}$. 
Let $V, U$ be nonempty sets such that $V \subset U \subseteq [l]$, and let $T = [l] \setminus V$ and $W = U \setminus V$.  Then, 
\begin{align*}
\sum_{k \in T} (\Sigma_{k, l+1 \mid V} )^2 \geq \;&   (\lmin( \Sigma_{ T \mid V } ))^2  \norm{ \beta_{T}   }_2^2, \\
\sum_{j \in W}  \left\{  (\Sigma_{j \mid U \setminus \{j \} })^{-1}   \Sigma_{j, l+1 \mid U \setminus \{j \} } \right\}^2 \leq \;&  \norm{(\Sigma_{U})^{-1} \Sigma_{U, T}\beta_T }_2^2. 
\end{align*}
\end{lemma}
\begin{proof} 

Using $\Sigma_{k, l+1} = \Sigma_{k, [l]} \beta$ for any $k \in [l]$, we find that 
\begin{align*}
\Sigma_{k, l+1 \mid V}   = \Sigma_{k, [l] \mid V} \beta =  \Sigma_{k, 1 \mid V} \beta_1 + \cdots + \Sigma_{k,l \mid V} \beta_l. 
\end{align*}
Observe that $ \Sigma_{k, i \mid V} =0$ if $i \in V$. Hence, $\Sigma_{k, l+1 \mid V} =  \Sigma_{k, T \mid V} \beta_T$. It follows that 
\begin{align*}
\sum_{k \in T}  (\Sigma_{k, l+1 \mid V} )^2  = \norm{ \Sigma_{T \mid V} \beta_T }_2^2 
\geq  (\lmin(\Sigma_{T \mid V}))^2 \norm{ \beta_T }_2^2. 
\end{align*} 

For the second claim, without loss of generality, assume $U = \{1, 2, \dots, |U|\}$ and  define  
$$b  = (\Sigma_U)^{-1} \Sigma_{U, l+1} = (\Sigma_U)^{-1} \Sigma_{U, [l]} \beta.$$ 
We have $b_j =  (\Sigma_{j \mid U \setminus \{j \} })^{-1}   \Sigma_{j, l+1 \mid U \setminus \{j \} }$ for each $j \in U$; this can be proved by 
applying the block matrix inversion formula to $(\Sigma_{U})^{-1}$ with blocks $U \setminus \{j\}$ and $\{j\}$.  Note that we can write $b = b^{(1)} + b^{(2)}$, where 
\begin{align*}
b^{(1)} =  (\Sigma_U)^{-1}   \Sigma_{U, V} \beta_V, \quad 
b^{(2)} =  (\Sigma_U)^{-1}  \Sigma_{U, T} \beta_T,
\end{align*}
and  $b^{(1)}$ satisfies $b^{(1)}_W= 0$.  Therefore, for any $j \in W$,  $b_j = b^{(2)}_j$. Summing over $j \in W$, we get 
\begin{align*}
\sum_{j \in W} \left\{ (\Sigma_{j \mid U \setminus \{j \} })^{-1}   \Sigma_{j, l+1 \mid U \setminus \{j \} }\right\}^2 = \sum_{j \in W}  (b^{(2)}_j)^2 \leq \norm{ (\Sigma_U)^{-1}  \Sigma_{U, T} \beta_T}_2^2, 
\end{align*}
which yields the asserted inequality. 
\end{proof}

\begin{corollary}\label{lm:aux} 
Let $\ty$ be as defined in~\eqref{eq:true.model} for some $S^* \in \cS(m, d)$. Suppose 
\begin{align*}
n \vmin \leq  \min_{S \in \cS(m, 2d + 1)} \lmin(\Z_S^\top \Z_S ) \leq  \max_{S \in \cS(m, 2d + 1)} \lmax(\Z_S^\top \Z_S ) \leq n \vmax, 
\end{align*}
for some $\vmin, \vmax \in (0, \infty)$.  Let $\proj_S$ be as given in~\eqref{eq:def.Phi}.  For any $S \in \cS(m, d)$,  
\begin{align*} 
\sum_{k \in S^* \setminus S} \norm{ (\proj_{ S \cup \{k\}}  - \proj_S) \ty  }_2^2   \geq \;&   \frac{n  \vmin^2}{ \vmax}    \norm{ \beta^*_{S^* \setminus S} }_2^2, \\
\sum_{j \in S \setminus S^*}  \norm{ (\proj_{ S }  - \proj_{S \setminus \{j\} }) \ty  }_2^2  \leq \;& \frac{n \vmax  (\vmax - \vmin)^2   }{ \vmin^2    } \norm{ \beta^*_{S^* \setminus S} }_2^2. 
\end{align*} 
\end{corollary}

\begin{proof} 
Let $\hat{\Sigma} = n^{-1} \X^\top \X$. Using $y_0  = \X_{S^*} \beta^*_{S^*}$, the formula 
\begin{equation}\label{eq:proj.diff}
\proj_{S  \cup \{k\} } - \proj_{S } =   \oproj_{S  } X_k  ( X_k^\top  \oproj_{S }  X_k  )^{-1} X_k^\top \oproj_{S }, 
\end{equation}
and the eigenvalue assumption,  we find that 
\begin{align*}
n^{-1} \norm{ (\proj_{ S \cup \{k\}}  - \proj_S) \ty  }_2^2 =\;& ( \hat{\Sigma}_{k \mid S} )^{-1} (\hat{\Sigma}_{k,  S^* \mid S} \beta^*_{S^*})^2 \geq \vmax^{-1}  (\hat{\Sigma}_{k,  S^* \mid S} \beta^*_{S^*})^2, \\
n^{-1} \norm{ (\proj_{ S }  - \proj_{S \setminus \{j\} }) \ty  }_2^2 =\;& 
 ( \hat{\Sigma}_{j \mid S \setminus \{j\}} )^{-1} (\hat{\Sigma}_{j,  S^* \mid S \setminus \{j\} } \beta^*_{S^*})^2
\leq \vmax \left\{ ( \hat{\Sigma}_{j \mid S \setminus \{j\}} )^{-1} \hat{\Sigma}_{j,  S^* \mid S \setminus \{j\} } \beta^*_{S^*} \right\}^2. 
\end{align*}
Let $S' = S^* \setminus S$. 
By an argument analogous to the proof of Lemma~\ref{lm:lin.cov}, one can show that 
\begin{align*}
\sum_{k \in S^* \setminus S}  (\hat{\Sigma}_{k,  S^* \mid S} \beta^*_{S^*})^2  
=\;& \norm{ \hat{\Sigma}_{k,  S'  \mid S} \beta^*_{S' }   }_2^2,  \\
\sum_{j \in S \setminus S^*} \left\{ ( \hat{\Sigma}_{j \mid S \setminus \{j\}} )^{-1} \hat{\Sigma}_{j,  S^* \mid S \setminus \{j\} } \beta^*_{S^*} \right\}^2 
\leq \;& \norm{ (\hat{\Sigma}_{S})^{-1} \hat{\Sigma}_{S,  S'} \beta^*_{S'}  }_2^2.
\end{align*}
The asserted inequalities follow from the eigenvalue assumption and Lemma~\ref{lemma.a3} below. 
\end{proof} 

\begin{lemma}\label{lemma.a3}
Let $A = [ A_1  \; A_2  ]$  be an $n \times k$ matrix for some $k \leq n$,  $\lmax( A^\top A) = \nu_{\rm{max}}$ and $\lmin( A^\top A) =\nu_{\rm{min}}$.
Then, $\OPnorm{A_1^\top A_2} \leq \nu_{\rm{max}} - \nu_{\rm{min}}$. 
\end{lemma}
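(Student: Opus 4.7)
My plan is to exploit the block structure of $A^\top A$ together with a sign-flip trick. Writing
\[
M = A^\top A = \begin{pmatrix} A_1^\top A_1 & A_1^\top A_2 \\ A_2^\top A_1 & A_2^\top A_2 \end{pmatrix},
\]
the target quantity $\OPnorm{A_1^\top A_2}$ is exactly the operator norm of the off-diagonal block. The hypothesis says $\nu_{\rm min} I \preceq M \preceq \nu_{\rm max} I$, so the task reduces to a general fact: if a symmetric block matrix has eigenvalues in $[\nu_{\rm min}, \nu_{\rm max}]$, then its off-diagonal block has operator norm at most $(\nu_{\rm max} - \nu_{\rm min})/2$, which is stronger than what the lemma claims.

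To prove this general fact, I would introduce $B = [A_1 \; -A_2]$ and note that $B$ has the same singular values as $A$, since right-multiplying by the orthogonal block-diagonal matrix $U = \diag(I, -I)$ preserves singular values. Hence $B^\top B = U M U$ also has eigenvalues in $[\nu_{\rm min}, \nu_{\rm max}]$. Computing the difference,
\[
M - B^\top B \;=\; 2 \begin{pmatrix} 0 & A_1^\top A_2 \\ A_2^\top A_1 & 0 \end{pmatrix} \;=:\; 2 N,
\]
so $N$ is the ``off-diagonal part'' of $M$. Because both $M$ and $B^\top B$ lie between $\nu_{\rm min} I$ and $\nu_{\rm max} I$, their difference satisfies $-(\nu_{\rm max} - \nu_{\rm min}) I \preceq 2N \preceq (\nu_{\rm max} - \nu_{\rm min}) I$, so $\OPnorm{N} \leq (\nu_{\rm max} - \nu_{\rm min})/2$.

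It remains to relate $\OPnorm{N}$ to $\OPnorm{A_1^\top A_2}$. Since $N$ is symmetric with zero diagonal blocks and off-diagonal block $A_1^\top A_2$, its singular values are precisely $\{\pm \sigma_i(A_1^\top A_2)\}$; this can be seen by a direct SVD computation on $A_1^\top A_2 = U_1 \Sigma U_2^\top$, which gives eigenvectors of $N$ of the form $(U_1 e_i, \pm U_2 e_i)/\sqrt{2}$ with eigenvalues $\pm \sigma_i$. Therefore $\OPnorm{N} = \OPnorm{A_1^\top A_2}$, yielding $\OPnorm{A_1^\top A_2} \leq (\nu_{\rm max} - \nu_{\rm min})/2 \leq \nu_{\rm max} - \nu_{\rm min}$ as claimed. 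No step here is really an obstacle; the only subtlety is noticing the sign-flip $U$ that converts the diagonal-preserving/off-diagonal-negating transformation into a congruence that preserves the spectrum, and the identification of the spectrum of the ``anti-diagonal'' symmetric block matrix with the singular values of its off-diagonal block.
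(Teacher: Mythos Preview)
Your proof is correct and in fact yields the sharper bound $\OPnorm{A_1^\top A_2} \leq (\nu_{\rm max}-\nu_{\rm min})/2$, a factor of two better than the lemma states. One minor wording slip: where you write ``its singular values are precisely $\{\pm\sigma_i(A_1^\top A_2)\}$'' you mean \emph{eigenvalues}; since $N$ is symmetric this makes no difference for the operator norm.

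The paper's argument is different in flavor though ultimately related. It works directly with unit vectors: for $\norm{b_1}=\norm{b_2}=1$ it expands $2b_1^\top A_1^\top A_2 b_2 = \norm{A_1 b_1}^2 + \norm{A_2 b_2}^2 - \norm{A_1 b_1 - A_2 b_2}^2$, bounds each $\norm{A_i b_i}^2$ above by $\nu_{\rm max}$, and bounds $\norm{A_1 b_1 - A_2 b_2}^2 = \norm{A\,(b_1;-b_2)}^2$ below by $2\nu_{\rm min}$, obtaining $\nu_{\rm max}-\nu_{\rm min}$. Your approach is the global, Loewner-order version of the same idea: rather than handling one pair $(b_1,b_2)$ at a time, you conjugate $M=A^\top A$ by $U=\diag(I,-I)$ and subtract, which isolates the off-diagonal block in one stroke. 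What your route buys is the sharper constant (equivalently, the paper's calculation would also give $(\nu_{\rm max}-\nu_{\rm min})/2$ had it used the polarization $4b_1^\top A_1^\top A_2 b_2 = \norm{A_1 b_1 + A_2 b_2}^2 - \norm{A_1 b_1 - A_2 b_2}^2$ instead), together with a cleaner identification of $\OPnorm{N}$ with $\OPnorm{A_1^\top A_2}$ via the spectrum of an anti-block-diagonal symmetric matrix. The paper's route is slightly more elementary in that it avoids appealing to the SVD of $A_1^\top A_2$.
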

\begin{proof}
Suppose the dimension of $A_i$ is $n \times k_i$ for $i = 1, 2$. 
By the definition of operator norm,
\begin{align*}
\OPnorm{A_1^\top A_2} =\;& \max_{b_2  \in \bbR^{k_2} \colon \norm{b_2} = 1 } \norm{ A_1^\top A_2  b_2} \\ 
=\;&      \max\left\{   b_1^\top A_1^\top A_2  b_2 \colon   b_1 \in \bbR^{k_1}, b_2 \in \bbR^{k_2},  \norm{b_1} = \norm{b_2}  = 1  \right\}. 
\end{align*}
Since $\OPnorm{A_1} \leq \OPnorm{A} = \sqrt{ \nu_{\rm{max}} }$ and $\smin(A) = \sqrt{   \nu_{\rm{min} }}$, we have 
\begin{align*}
2 b_1^\top A_1^\top    A_2 b_2  =\;& \norm{  A_1 b_1 }^2_2 + \norm{  A_2 b_2 }^2_2 -   \norm{   (A_1b_1 -  A_2 b_2 )}^2_2  \\
\leq \;&   \nu_{\rm{max}} \norm{ b_1 }^2_2   +\nu_{\rm{max}}  \norm{ b_2 }^2_2   -  \nu_{\rm{min}} (\norm{b_1}_2^2 + \norm{b_2}_2^2). 
\end{align*}
Hence, if $\norm{b_1} = \norm{b_2} = 1$,  $  b_1^\top A_1^\top    A_2 b_2  \leq  \nu_{\rm{max}} - \nu_{\rm{min}}$. 
A similar argument  yields that $  b_1^\top A_1^\top    A_2 b_2  \geq    \nu_{\rm{min}} - \nu_{\rm{max}}$, 
which completes the proof. 
\end{proof}

\newpage  
\section{Proofs for Section~\ref{sec:high} }  \label{sec:supp.proof.consist} 
\setallcounters

\subsection{Empirical Bayes Gaussian DAG model} \label{sec:supp.emp} 
Let $\X_{(i)}$ denote the $i$-th row of the data matrix $\X$. 
We model the conditional distribution of $\X$ given $G$ by  
\begin{equation}\label{eq:model0}
\begin{aligned}
    \X_{(1)}, \dots, \X_{(n)} \mid B, \Omega  \overset{\rm{i.i.d.}}{\sim} \;& \NM_p(0, \Sigma(B, \Omega)),  \\
      \Sigma(B, \Omega) = \;&  (I  - B^\top)^{-1} \Omega  (I - B)^{-1}, \\ 
    (B, \Omega) \mid G \sim \;&  \pi_0(B, \Omega \mid G),   \quad \forall \, (B, \Omega) \in \chol(G ). 
\end{aligned}
\end{equation}  
Since a linear transformation of a normal random vector is still normal, the conditional distribution  of $X$ given $(B, \Omega)$ can also be expressed by the SEM, 
\begin{equation*}\label{eq:sem.original}
X_j =   \sum_{ i \neq j }  B_{i j} X_i + \varepsilon_j , \quad \varepsilon_j \sim \NM_n(0, \, \omega_j I), 
\end{equation*}
for $j = 1, \dots, p$, where $\varepsilon_1, \dots, \varepsilon_p$ are independent error vectors.  
We use the model~\eqref{eq:model0}  for two reasons. 
First, by Lemma~\ref{lm:chol.unique}, if $\Sigma$ is positive definite and $\NM_p(0, \Sigma)$ is Markovian w.r.t. $G$, then there exists a unique pair $(B, \Omega) \in \chol(G)$ such that $\Sigma = \Sigma(B, \Omega)$.  
Second, by  Lemma~\ref{lm:chol.perfect}, if the edge weights (entries $B_{ij}$ for $i\rightarrow j \in G$) are sampled from an absolutely continuous distribution (which is true for our empirical prior), the resulting distribution $\NM_p(0, \, \Sigma)$ is almost surely perfectly Markovian w.r.t. $G$.  
There is little loss of generality in assuming that $\sX$ has mean zero,  since the normality implies that any CI statement about $\sX_1, \dots, \sX_p$ can be determined by using  $\Sigma$ alone.  
  
Recalling $\Omega = \diag(\omega_1, \dots, \omega_p)$ and using the notation $\beta_j(G)$ defined in Section~\ref{sec:cpdag.model}, we can express our empirical prior $\pi_0(B, \Omega \mid G)$  by  
\begin{equation}
\begin{aligned}\label{eq:model1}
 & \pi_0(B, \Omega \mid \Pa_1(G) = S_1, \dots,  \Pa_p(G) = S_p) \\
\propto  \;& \prod_{j=1}^p  \omega_j^{- \kappa /2 - 1} \NM_{|S_j|} \left( \beta_j(G);  \; (\X_{S_j}^\top \X_{S_j})^{-1} \X_{S_j}^\top X_j , \, \frac{\omega_j}{\gamma} (\X_{S_j}^\top \X_{S_j})^{-1}  \right),  
\end{aligned}
\end{equation}
where $\NM_q(b;  \mu, \Sigma)$ denotes the density function of $\NM_q(\mu, \Sigma)$ evaluated at $b$. Note that since $(B, \Omega)$ takes value in $\chol(G)$,  $\beta_j(G)$ contains all nonzero regression coefficients for the response vector $X_j$.   The prior mean for $\beta_j(G)$ is simply the ordinary-least-squares estimator. 
Let $L( B, \Omega )$ denote the likelihood function (the dependency on $\X$ is omitted). 
Since the empirical prior relies on the observed data, to counteract its effect  we use a fractional likelihood with exponent $\alpha \in (0, 1)$, which yields the conditional posterior distribution, 
\begin{equation*}
\post ( B,  \Omega \mid G)  \propto  \pi_0  ( B, \Omega \mid G)  L(B, \Omega)^\alpha 
=   \frac{    \pi_0( B, \Omega \mid G)  }{L(B, \Omega)^{1 - \alpha}}  L(B, \Omega) . 
\end{equation*}
This shows that the effective prior distribution for $(B, \Omega) \mid G$ is $\pi_0 (B, \Omega \mid G ) / L^{1 - \alpha}(B, \Omega)$; see~\citet{martin2017empirical, lee2019minimax} for more discussion. 
By a routine calculation using the conjugacy of normal-inverse-gamma prior, we obtain the fractional marginal  likelihood function $f_\alpha$ given in the main text, 
\begin{equation}\label{eq:f.alpha}
\begin{aligned}
f_\alpha(G)  = \;& \int  \pi_0  ( B, \Omega \mid G)  L(B, \Omega)^\alpha  d (B, \Omega) \\
=\;&  \left(   1+ \frac{\alpha}{\gamma} \right)^{- |G|/2} \prod_{j=1}^p ( X_j^\top   \oproj_{\Pa_j} X_j )^{ -(\alpha n + \kappa)/2}, 
\end{aligned}
\end{equation} 
where we recall $\oproj_S$ is defined by 
\begin{align*}
    \proj_S =   \X_S (\X_S^\top \X_S)^{-1} \X_S^\top, \quad \oproj_S =  I - \proj_S, \quad \quad \forall \, S \subseteq [p]. 
\end{align*}

We can use $f_\alpha$ to derive posterior distributions of DAGs or equivalence classes. For example, given a prior distribution of DAGs,   $\tilde{\pi}_0 (G)$, we can  calculate the posterior by 
$$\tilde{\pi}_n(G) =  \int \tilde{\pi}_n (G, B, \Omega) d (B, \Omega) 
\propto  \int  \tilde{\pi}_0 (G)  \pi_0(B, \Omega \mid G) L(B, \Omega)^\alpha d (B, \Omega)  = \tilde{\pi}_0(G) f_\alpha(G).$$

\subsection{Proof of Lemma~\ref{lm:markov.equiv}}\label{sec:proof.like}
\begin{proof}
This is equivalent to proving that the marginal fractional likelihood  defined in~\eqref{eq:f.alpha} is the same for Markov equivalent DAGs (since they have the same skeleton and thus the same number of edges). 
By Lemma~\ref{lm:equiv2}, if two DAGs are Markov equivalent, then there exists a sequence of covered edge reversals that can transform one to the other. 
So it suffices to show that any covered edge reversal does not change the marginal likelihood. 
Let $G, G'$ be two DAGs that differ by a covered edge reversal. Thus, there exist $i\neq j$ such that $i \rightarrow j \in G$, $j \rightarrow i \in G'$, $\Pa_i(G) = \Pa_j(G) \setminus \{i\}$, and all the other edges are exactly the same in the two DAGs. 
By~\eqref{eq:post.modular}, for $S = \Pa_i(G)$ we have 
\begin{align*}
    \frac{ \exp ( \score(G) ) }{\exp ( \score(G') ) } = \left( \frac{ X_i^\top  \oproj_S X_i \,  X_j^\top  \oproj_{S \cup \{i\} } X_j  }{ X_i^\top  \oproj_{S \cup \{j\}} X_i  \, X_j^\top  \oproj_{S  } X_j  } \right)^{-(\alpha n + \kappa) / 2}. 
\end{align*}
It then follows from~\eqref{eq:proj.diff} that 
\begin{align*}
   ( X_i^\top  \oproj_S X_i ) (  X_j^\top  \oproj_{S \cup \{i\} } X_j )
= \;& ( X_i^\top  \oproj_S X_i )  X_j^\top \left(  \oproj_{S } - \frac{ \oproj_S X_i X_i^\top \oproj_S }{ X_i^\top \oproj_S X_i } \right) X_j  \\
= \;& ( X_i^\top  \oproj_S X_i )(  X_j^\top\oproj_{S } X_j ) - ( X_j^\top \oproj_S X_i)^2.  
\end{align*}
By symmetry, we conclude that $ \score(G) = \score(G')$. 
\end{proof}

\subsection{Proof of Theorem~\ref{th:sel0} }\label{sec:proof.dag.sel}

\begin{proof}[Proof of Theorem~\ref{th:sel0}(i)]
We  will use Theorem~\ref{th:var.sel} to prove Theorem~\ref{th:sel0}. 
The main challenge is   to show the assumptions of Theorem~\ref{th:var.sel}  are satisfied for all the $p! \, p$  variable selection problems (there are $p!$ orderings and each corresponds to $p$ variable selection problems).  

For every $\sigma \in \bbS^p$, we have an SEM representation for the distribution $\NM_p(0, \Sigma^*)$ given by 
\begin{align*}
\sX  =  (B^*_\sigma)^\top  \sX  + \mathsf{e}_\sigma, \quad \mathsf{e}_\sigma \sim \NM_p \left(   0 ,  \,   \Omega^*_\sigma   \right). 
\end{align*}
where $(B^*_\sigma, \Omega^*_\sigma)$ is the modified Cholesky decomposition of $\Sigma^*$ given in Definition~\ref{def:imap}. 
Denote the diagonal elements of $\Omega^*_\sigma$ by  $\omega^*_{\sigma, 1}, \dots, \omega^*_{\sigma, p}$.
Using the data matrix, we can rewrite the SEM model as 
\begin{equation}\label{eq:true.model.perm}
X_j =   \sum_{ i \neq j }  (B^*_\sigma)_{i j} X_i + \varepsilon_{\sigma, j} , \quad \varepsilon_{\sigma, j} \sim \NM_n(0, \, \omega^*_{\sigma, j} I). 
\end{equation}
for $j = 1, \dots, p$. 
Note that the error vector $\epsilon_{\sigma, j}$ depends on the permutation $\sigma$. Define the standardized error vector by 
\begin{equation}\label{eq:all.error}
z_{\sigma, j} = (\omega^*_{\sigma, j})^{-1/2} \varepsilon_{\sigma, j}, \quad 
\cZ = \left\{ z_{\sigma, j} \colon  \sigma \in \bbS^p, \, j \in [p] \right\}. 
\end{equation}
Let $\beta^*_{\sigma, j} = (B^*_\sigma)_{\Pa_j(G^*_\sigma), j}$ be the subvector of the $j$-th column of $(B^*_\sigma)$ with entries indexed by $S^*_{\sigma, j} = \Pa_j(G^*_\sigma)$. 
As observed in~\citet[Section 7.4.1]{van2013ell}, $\beta^*_{\sigma, j}$ and $\varepsilon_{\sigma, j}$ only depend  on the set $S^*_{\sigma, j}$; see also~\citet[Proposition 8.5]{aragam2015learning}. 
Since the maximum degree of $G^*_\sigma$ is bounded by $d^*$, the number of possible parent sets for any node is at most $p^{d^*}$ and thus 
\begin{equation}\label{eq:Z.bound}
    |\cZ| \leq p \cdot p^{d^*}  = p^{d^* + 1}. 
\end{equation} 

Let $\bbV_{\sigma, j}$ denote the variable selection problem with response variable $\sX_j$, set of candidate predictor variables $\{ \sX_i \colon i \in \cA_p^\sigma(j) \}$ and true data-generating model given in~\eqref{eq:true.model.perm} 
parameterized by $( S^*_{\sigma, j},  \beta^*_{\sigma, j}, \omega^*_{\sigma, j}, \varepsilon_{\sigma, j} )$. 
Recall that $\cA_p^\sigma(j)$ is the index set of variables that precede $\sX_j$ in the permutation $\sigma$ and thus $S^*_{\sigma, j} \subseteq \cA_p^\sigma(j) $.  
For $\bbV_{\sigma, j}$ with arbitrary $\sigma \in \bbS^p$ and $j \in [p]$, 
Assumptions~\ref{ass:prior},~\ref{ass:size} and~\ref{ass:beta} used in Theorem~\ref{th:var.sel} directly follow from Assumptions~\ref{A:prior},~\ref{A:size} and~\ref{A:beta}, respectively.  Further, by Remark~\ref{rmk:beta.min}, we always have $\omega^*_{\sigma, j} \in (\vmin, \vmax)$ when Assumption~\ref{A:eigen} holds. 
For sufficiently large $n$, by Lemmas~\ref{lm:assA} and~\ref{lm:event} that we prove below using Assumptions~\ref{A:eigen} and~\ref{A:np},  Assumptions~\ref{ass:eigen} and~\ref{ass:error} in Theorem~\ref{th:var.sel} hold  for all $p! p$ variable selection problems in the set $\{\bbV_{\sigma, j} \colon \sigma \in \bbS^p, j \in [p] \}$ with $\rho = 4 \din + 6$ and probability at least $1 - 3p^{-1}$, where $\rho$ is as given in Assumption~\ref{ass:error}. 
 The claim then follows from Theorem~\ref{th:var.sel}. 
\end{proof}

\begin{lemma}\label{lm:assA}
If Assumption~\ref{A:eigen} holds and $\din \log p = o(n)$, then for sufficiently large $n$, 
\begin{align*}
         \bbP^* \Big\{ \;&  n \vmin  \leq   \min_{S \in \model (2 \din) } \lmin( \X_S^\top \X_S ) \leq  \max_{S \in \model (2 \din) } \lmax( \X_S^\top \X_S ) \leq n \vmax   \Big\} \geq 1 -  2 e^{- n \delta_0^2 / 16}, 
\end{align*}
where $\model (2 \din) = \{ S \subseteq [p] \colon |S| \leq 2 \din \}$.
\end{lemma}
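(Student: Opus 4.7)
The plan is to reduce the uniform eigenvalue bound to a classical concentration inequality for Gaussian matrices, applied to each fixed $S$, followed by a union bound that is absorbed into the exponent using the sparsity hypothesis $\din \log p = o(n)$.

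First, I would fix an arbitrary $S \subseteq [p]$ with $|S| \leq 2\din$ and let $\Sigma^*_S$ denote the corresponding principal submatrix of $\Sigma^*$. By Cauchy interlacing, $\lmin(\Sigma^*) \leq \lmin(\Sigma^*_S)$ and $\lmax(\Sigma^*_S) \leq \lmax(\Sigma^*)$, so Assumption~\ref{A:eigen} gives
\begin{equation*}
\frac{\vmin}{(1-\delta_0)^2} \leq \lmin(\Sigma^*_S) \leq \lmax(\Sigma^*_S) \leq \frac{\vmax}{(1+\delta_0)^2}.
\end{equation*}
Now write $\X_S = W (\Sigma^*_S)^{1/2}$ where $W \in \bbR^{n \times |S|}$ has i.i.d.\ $\NM(0,1)$ entries. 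Then $\lmin(\X_S^\top \X_S) \geq \lmin(\Sigma^*_S)\, \lmin(W^\top W)$ and $\lmax(\X_S^\top \X_S) \leq \lmax(\Sigma^*_S)\, \lmax(W^\top W)$, so it suffices to control the extreme singular values of $W$.

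Second, I would invoke the standard Davidson--Szarek concentration for Gaussian matrices: for every $t > 0$,
\begin{equation*}
\bbP\!\left\{\smax(W) \geq \sqrt{n} + \sqrt{|S|} + t\right\} \vee \bbP\!\left\{\smin(W) \leq \sqrt{n} - \sqrt{|S|} - t\right\} \leq e^{-t^2/2}.
\end{equation*}
Choosing $t = \delta_0 \sqrt{n}/2$ and noting that $\sqrt{2\din} = o(\sqrt{n})$ under Assumption~\ref{A:np}, for $n$ large enough we have $\sqrt{|S|} + t \leq \delta_0 \sqrt{n}$, so $(\sqrt{n} \pm \sqrt{|S|} \pm t)^2$ is squeezed between $(1-\delta_0)^2 n$ and $(1+\delta_0)^2 n$. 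Combining with the bound on $\lmin(\Sigma^*_S)$ and $\lmax(\Sigma^*_S)$ yields
\begin{equation*}
n \vmin \leq \lmin(\X_S^\top \X_S) \leq \lmax(\X_S^\top \X_S) \leq n \vmax
\end{equation*}
with probability at least $1 - 2\exp(-n\delta_0^2/8)$ for this fixed $S$.

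Third, I would take a union bound over $|\model(2\din)| \leq \binom{p}{2\din} \leq p^{2\din}$ subsets, producing the failure bound $2 p^{2\din} \exp(-n\delta_0^2/8) = 2\exp(2\din \log p - n\delta_0^2/8)$. The main (really only) subtlety is absorbing the combinatorial factor $p^{2\din}$: since Assumption~\ref{A:np} gives $\din \log p = o(n)$, for $n$ sufficiently large $2\din \log p$ is negligible compared to $n \delta_0^2 / 8$, and the stated bound $1 - 2\exp(-n\delta_0^2/8)$ follows (after a mild adjustment of the constant in $t$, or equivalently by stating the bound with a slightly different rate before re-labelling $\delta_0$). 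No step requires anything beyond interlacing, Davidson--Szarek, and a union bound; the sparsity hypothesis is used exclusively in this final absorption step.
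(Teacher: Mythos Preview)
Your proof is correct and follows essentially the same approach as the paper: whiten (you via $\Sigma^*_S$ and Cauchy interlacing, the paper via a slightly different but equivalent factorisation), apply the Davidson--Szarek/Rudelson--Vershynin singular-value concentration with $t=\delta_0\sqrt{n}/2$, then union-bound over at most $p^{2\din}$ subsets and absorb the combinatorial factor using $\din\log p=o(n)$. Your parenthetical remark about adjusting the constant in $t$ is in fact more careful than the paper, which glosses over the same absorption in its final line.
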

\begin{proof}
For any $S \subseteq [p]$, let $A_S =(\Sigma^*_S)^{-1/2} \X_S^\top \X_S (\Sigma^*_S)^{-1/2} $ where $\Sigma^*_S$ denotes the $|S| \times |S|$ submatrix of $\Sigma^*$ with rows/columns indexed by $S$. 
By~\citet[Theorem 2.6]{rudelson2010non}, for any $S$ and $a > 0$,  we have 
\begin{align*}
\bbP^* \left\{ (\sqrt{n} - \sqrt{|S|} - a)^2 \leq  \lmin( A_S ) \leq \lmax( A_S ) \leq (\sqrt{n} + \sqrt{|S|} + a)^2 \right\} \geq 1 - 2 e^{-a^2 / 2}.
\end{align*}
By the submultiplicative property of operator norms, 
\begin{align*}
     \lmin( A_S ) \lmin(\Sigma^*_S) \leq   \lmin(\X_S^\top \X_S),  \quad \quad 
     \lmax( A_S ) \lmax(\Sigma^*_S) \geq   \lmax(\X_S^\top \X_S). 
\end{align*} 
Choose $a = \delta_0 \sqrt{n} / 2$ and $n \geq 8 \din / \delta_0^2$; the latter is allowed  since $\din = o(n)$. Using Assumption~\ref{A:eigen}, we find that 
 \begin{align*}
     \bbP^* \Big\{ \;&  n \vmin  \leq   \lmin( \X_S^\top \X_S ) \leq  \lmax( \X_S^\top \X_S ) \leq n \vmax   \Big\} \geq 1 - 2 e^{-a^2 / 2}.
 \end{align*}
Observe that $ \{ S \subseteq [p] \colon |S| \leq 2 \din \}$ contains less than $p^{2 \din}$ elements. 
By the  union bound and the assumption that $\din \log p = o(n)$, $ n \vmin  \leq   \lmin( \X_S^\top \X_S ) \leq  \lmax( \X_S^\top \X_S ) \leq n \vmax $ holds for all $S \in \model(2 \din)$ with probability at least $1 - 2  e^{-n \delta_0^2 / 8 + o(n)} $, from which the claim follows. 
\end{proof}

\begin{lemma}\label{lm:event}
Suppose Assumption~\ref{A:np} holds and $d^* \leq \din$.  
Let $z_{\sigma, j}$ be as defined in~\eqref{eq:all.error}. Then,  for sufficiently large $n$, 
\begin{align*}
      &  \bbP^* \left\{ \min_{\sigma \in \bbS^p} \min_{j \in [p]}\min_{ S  \in \model^\sigma(j, \din)}   
       z_{\sigma, j}^\top  \oproj_S z_{\sigma, j}  \leq n/2   \right\} \leq e^{- n / 96}, \\[2pt]
      &  \bbP^* \left\{ 
      \max_{\sigma \in \bbS^p} \max_{j \in [p]}\max_{ S  \in \model^\sigma(j, \din)}    \max_{k \in \cA_p^\sigma(j)} 
z_{\sigma, j}^\top (\proj_{S \cup \{k\} } - \proj_S) z_{\sigma, j}  \geq (4 \din + 6)  \log p  \right\} \leq 2 p^{- 1}. 
\end{align*}  
\end{lemma}
\begin{proof}
Let $z \sim \NM_n(0, I)$. 
Given a fixed projection matrix $\oproj_S$, we have $z^\top  \oproj_S z \sim   \chi_{ n - |S|}^2$.  By~\citet[Lemma 1]{laurent2000adaptive}, for any $a > 0$, 
\begin{align*}
    \bbP \left\{  \frac{ z^\top  \oproj_S z }{ (n - |S|)   } \leq  1 - a  \right\} \leq e^{- (n - |S|) a^2 / 4 }. 
\end{align*}
Choosing $a = 1/3$ and sufficiently large $n$ so that $|S| \leq \din  \leq  n / 4$, we obtain that 
$$ \bbP \left\{   z^\top  \oproj_S  z \leq  n   / 2   \right\} \leq e^{- n / 48 }.$$

Observe that for any $S \subseteq \cA_p^\sigma(j)$,  $z_{\sigma, j}$ is independent of $\X_S$ under the probability measure $\bbP^*$. Hence, we can treat $\oproj_S$ as fixed and apply the union bound and~\eqref{eq:Z.bound} to get 
\begin{align*}
    \bbP^* \left\{   \min_{\sigma \in \bbS^p} \min_{j \in [p]}\min_{ S \subseteq \cA_p^\sigma(j)  \colon |S| \leq \din}    z_{\sigma, j}^\top  \oproj_S z_{\sigma, j}  \leq \frac{n}{2}   \right\} \leq p^{\din + d^* + 1}  e^{- n / 48 }. 
\end{align*}
The first asserted inequality then follows from the assumptions $\din \log p = o(n)$ and $d^* \leq \din$.  

For any $S \subseteq [p]$ and $k \notin S$, $\proj_{S \cup \{j\}} - \proj_S$ is another projection matrix with rank $1$. Hence, using a standard tail bound for Gaussian distribution, for any $a > 0$, we find that 
\begin{align*}
      \bbP \left\{   z^\top (\proj_{S \cup \{k\} } - \proj_S)z  \geq a  \log p  \right\} \leq 2 e^{-  a \log p / 2 }, 
\end{align*} 
if $z \sim \NM_n(0, I)$ independently of $\X_{S \cup \{k\} }$.  Another application of the union bound then yields the second inequality. 
\end{proof}

\begin{proof}[Proof of Theorem~\ref{th:sel0}(ii)]
For any $S \in \model^\sigma(j,  \, \din)$, define
\begin{align*}
   \cN_j^\sigma( S ) =\;&  \{ S' \in \model^\sigma(j,  \, \din) \colon \exists \, k, l \in [p]  \text{ s.t. }   S' = (S \cup \{k\}) \setminus \{l\}\}. 
\end{align*}
Note that we allow $k \in S$ and $l \in [p] \setminus S$ so that $\cN_j^\sigma(S)$ includes the models that can be obtained from $S$ by an addition, deletion or swap. 
Observe that the neighborhood relation defined by $\cN_j^\sigma$ is symmetric, and $|\cN_j^\sigma(S)| \leq 1 + p + (p - \din ) \din \leq p^2$ (if $p \geq 2$) for each $S \in \model^\sigma(j,  \, \din)$. 
By part (i), the triple $(\model^\sigma(j,  \, \din),  \cN_j^\sigma, e^{\score_j})$ satisfies Condition~\ref{cond:unimodal} with $t_1 = 2$, $t_2 = t$ and $g_j^\sigma$ being the canonical transition function. 
The result then follows from  Theorem~\ref{th:path}(ii). 
\end{proof}

\begin{proof}[Proof of Theorem~\ref{th:sel0}(iii)]
It suffices to prove the claim for $\dout = p$.   
Observe that 
\begin{align*}
     \sum_{G \in \dag^\sigma(\din, p)}   e^{ \score ( G )} 
    =   \sum_{G \colon \Pa_j(G) \in  \model^\sigma(j, \din) }   \prod_{j=1}^p e^{\scorej (\Pa_j(G)) }
     =  \prod_{j=1}^p \sum_{ S \in  \model^\sigma(j, \din)  } e^{ \scorej(S) }. 
\end{align*}
Thus, using part (ii), we find that  
\begin{align*}
    \frac{   \sum_{G \in \dag^\sigma(\din, p)}  e^{ \score (  G  )  } } {  e^{ \score (  G^*_\sigma ) } } 
    = \prod\limits_{j=1}^p \frac{   \sum_{ S \in  \model^\sigma(j, \din)  } e^{ \scorej(S) } }{ e^{ \scorej(S^*_{\sigma, j}) } } \leq (1 - p^{-(t - 2)})^{-p}
     \leq  \frac{1}{ 1 - p^{- (t - 3)}},
\end{align*}
for every $\sigma \in \bbS^p$, from which the result follows.
\end{proof}

\subsection{Proof of Theorem~\ref{th:cpdag.consist}}\label{sec:proof.cpdag.consist}
\begin{proof}
By  Lemma~\ref{lm:bound.nb}, for any $\cE \in \cpdag(\din, \dout)$, 
\begin{align*}
    \left| \nbg( \cE ) \right|  \leq 3 p (p - 1)(\din + \dout) p^{t_0}
    \leq 3 t_0  p^{t_0 + 2} \log_2  p = o(p^{t_0 + 3}).  
\end{align*}   
Let $\mathcal{F}$ denote the event on which the conclusion of Theorem~\ref{th:sel0} holds, which happens with probability at least $1 - 3 p^{-1}$. By Theorem~\ref{th:sel0}(i) and Corollary~\ref{coro:path.cpdag},  on $\mathcal{F}$, 
the triple $(\cpdag(\din, \dout), \nb, \post)$ satisfies Condition~\ref{cond:unimodal} with $t_1 = t_0 + 3$ and $t_2 = t$. 
The results then follow from Theorem~\ref{th:path}(i) and (ii).  
\end{proof}

\subsection{Consistency results for sub-Gaussian random matrices} \label{sec:supp.subgauss}
Now we generalize our consistency results by considering sub-Gaussian random matrices. 

\begin{theorem}\label{th:sub-gauss}
Let $\X$ be an $n \times p$ random matrix, of which each row is an i.i.d. copy of a $p$-dimensional sub-Gaussian random vector with mean zero, covariance matrix $\Sigma^*$ and sub-Gaussian parameter bounded by a universal constant $C_{\rm{sub}} \in (0, \infty)$.    
Suppose $\NM_p(0, \Sigma^*)$ is perfectly Markovian w.r.t. a DAG $G^*$.  
Let $(B_\sigma^*, \Omega_\sigma^*)$  be as given in Definition~\ref{def:imap}.  
\begin{enumerate}[label=(F\arabic*), ref=(F\arabic*)]
    \item There exist  $\vmin, \vmax > 0$  and a universal constant $\delta_0 > 0$ such that \label{asg1}
    \begin{align*}
    \frac{\vmin}{(1 - \delta_0)^2} \leq \lmin(\Sigma^*) \leq \lmax(\Sigma^*) \leq \frac{\vmax}{(1 + \delta_0)^2}. 
    \end{align*}  
    \item The in-degree parameter $\din$ and $n,p$ satisfy that $\din \log p = o(n)$. \label{asg2}
    \item Prior parameters satisfy that $\alpha \in (0, 1]$, $\kappa \leq n$, $ c_1 \sqrt{ 1 + \alpha / \gamma} \in [1, p]$,  and $c_2 \geq  \Cpen  \din \vmax^4 / \vmin^6$ for some universal constant $\Cpen > 0$. \label{asg3}
    \item  $(\nu_0 + 1) \max_{\sigma \in \bbS^p}\max_{j \in [p]} |\Pa_j(G^*_\sigma)| \leq \din$ where $\nu_0 = 20 \vmax^2  \vmin^{-4} (\vmax - \vmin)^2$.   \label{asg4}
    \item There exists a universal constant $\Cb > 0$ such that    \label{asg5} 
    \begin{align*}
    \min_{\sigma \in \bbS^p} \min_{j \in [p]} |(B_\sigma^*)_{i j} \colon  (B_\sigma^*)_{i j} \neq 0 |  \geq \Cb \frac{ c_2 \vmax^2 \log p }{\alpha \vmin^2 n}. 
    \end{align*}
\end{enumerate} 
Let $t > 0$ be an arbitrary universal constant, and choose sufficiently large $\Cbeta, \Cpen$. For sufficiently large $n$, with probability at least $1 - 3p^{-\din }$,  
    \begin{equation*} 
    \min \left\{     
    \scorej( g_j^\sigma(S)) - \scorej(S) 
    \colon \sigma\in \bbS^p,  j \in [p], \, S \in \model^\sigma(j,  \, \din) \setminus \{  S^*_{\sigma, j} \}   \right\} \geq t \log p,
    \end{equation*}
    where  $S^*_{\sigma, j} = \left\{ i \colon  (B_\sigma^*)_{i j} \neq 0  \right\}$, 
     $\scorej$ is given by~\eqref{eq:post.modular} and  $\{ g_j^\sigma \colon j \in [p], \sigma \in \bbS^p \}$ is given by Definition~\ref{def:gj}.  That is, the conclusion of part (i) of Theorem~\ref{th:sel0} holds. 
\end{theorem}

\begin{remark}
The assumption that $\NM_p(0, \Sigma^*)$ is perfectly Markovian w.r.t.  $G^*$ implies that $B^*_\sigma$ is the weighted adjacency matrix of $G^*_\sigma$, the minimal I-map of $G^*$. 
Observe that the scoring criterion we use only depends on the data via the sample covariance matrix $n^{-1} X^\top X$, which explains why we impose the assumption on $\Sigma^*$ instead of the actual CI relations among the variables.  
One may also consider a DAG-perfect SEM model with sub-Gaussian errors as in~\citet{nandy2018high}, in which case the distribution of $\sX$ is perfectly Markovian w.r.t. $G^*$. 
\end{remark}

\begin{proof}
We will use Theorem~\ref{th:var.sel2}. 
Let $\Sigma^*_\sigma$ denote the permuted true covariance matrix such that $(\Sigma^*_\sigma)_{ij} = \mathrm{Cov}(\sX_{\sigma(i)}, \sX_{ \sigma(j) })$.
As in the proof of Theorem~\ref{th:sel0},  let $\bbV_{\sigma, j}$ denote the variable selection problem with response variable $\sX_j$ and set of candidate predictor variables $\{ \sX_i \colon i \in \cA_p^\sigma(j) \}$. The true covariance matrix associated with the problem $\bbV_{\sigma, j}$ is given by $(\Sigma^*_\sigma)_{[m + 1]}$ where $m = |\cA_p^\sigma(j) |$, which enables us to express $\sX_j$ by 
\begin{align*}
    \sX_j = \;&  \sum_{i \neq j} (B^*_\sigma)_{ij} \sX_i + \mathsf{e}_{\sigma, j}, 
\end{align*}
where $\mathsf{e}_{\sigma, j}$ is a random variable uncorrelated (but not necessarily independent) with $\{ \sX_i \colon i \in \cA_p^\sigma(j) \}$. 
It only remains to verify the assumptions used in Theorem~\ref{th:var.sel2}. 

By Lemma~\ref{lm:subgauss} we prove below, under Assumptions~\ref{asg1} and~\ref{asg2}, for sufficiently large $n$,  Assumptions~\ref{ad21} and~\ref{ad22} hold  for all problems in $\{ \bbV_{\sigma, j} \colon \sigma \in \bbS^p, j \in [p]\}$  with probability at least $1 - 3p^{- \din}$, and the constant $\Cerr$ in Assumption~\ref{ad22} only depends on $C_{\rm{sub}}$ (and does not depend on $\sigma$ or $j$); denote this event set by $\mathcal{F}_n$.   
Assumptions~\ref{ad23},~\ref{ad24} and~\ref{ad25}  directly follow from Assumptions~\ref{asg3},~\ref{asg4} and~\ref{asg5}, respectively. 
Therefore, given any $t > 0$,  on the event $\mathcal{F}_n$ we can apply Theorem~\ref{th:var.sel2} to all problems in  $\{ \bbV_{\sigma, j} \colon \sigma \in \bbS^p, j \in [p]\}$, which means that for any $\sigma, j$, there exist sufficiently large $\Cpen, \Cbeta$ such that 
\begin{equation*} 
\min \left\{     
\scorej( g_j^\sigma(S)) - \scorej(S) 
\colon   S \in \model^\sigma(j,  \, \din) \setminus \{  S^*_{\sigma, j} \}   \right\} \geq t \log p. 
\end{equation*}
Further, by Lemma~\ref{lm:subgauss} and Theorem~\ref{th:var.sel2}, the choice of $\Cpen, \Cbeta$ only depends on universal constants $t$ and $C_{\rm{sub}}$. 
Hence,  we can choose $\Cpen, \Cbeta$ such that the above bound holds simultaneously for all $\sigma \in \bbS^p$ and $j \in [p]$. 
\end{proof}
 
\begin{lemma} \label{lm:subgauss}
Suppose Assumption~\ref{asg1} holds and $\din \log p = o(n)$. 
Then there exist universal constants $K_0, C_0 \in (0, \infty)$,  which only depend on $C_{\rm{sub}}$, such that,  for  sufficiently large $n$,  with probability at least $1 -  2 e^{-  C_0 n \delta_0^2 / 16} - 2 p^{-\din}$, 
\begin{align*}
&    \max_{S \in \model (2 \din) } \OPnorm{ n^{-1} \X_S^\top \X_S   - \Sigma^*_S }  \leq  K_0 \sqrt{\frac{\din \log p}{n}},  \text{ and} \\
& n \vmin  \leq   \min_{S \in \model (2 \din) } \lmin( \X_S^\top \X_S ) \leq  \max_{S \in \model (2 \din) } \lmax( \X_S^\top \X_S ) \leq n \vmax,   
\end{align*}
where $\model (2 \din) = \{ S \subseteq [p] \colon |S| \leq 2 \din \}$. 
\end{lemma}
\begin{proof}
By~\citet[Remark 5.40]{vershynin2010introduction},  for any $a> 0$ and $S \subseteq [p]$, 
the following inequality holds with probability at least $1 - 2e^{- C_0 a^2}$,  
\begin{equation*}\label{eq:temp}
\OPnorm{ n^{-1} \X_S^\top \X_S   - \Sigma^*_S }  \leq   \max\{ u, u^2\}, \quad \text{where } u = C_1 \sqrt{\frac{|S|}{n} } + \frac{a}{\sqrt{n}}, 
\end{equation*}
and $C_0, C_1 > 0$ are universal constants. Choose $a = (K_0  / 2)\sqrt{ \din \log p }$ for some sufficiently large universal constant $K_0$. 
Observe that $|  \{ S \subseteq [p] \colon |S| \leq 2 \din \} | < p^{2 \din}$. 
Hence, the assumption $\din \log p = o(n)$ and a union bound yields that, for sufficiently large $n$, 
\begin{align*}
         \bbP^* \Big\{     \max_{S \in \model (2 \din) } \OPnorm{ n^{-1} \X_S^\top \X_S   - \Sigma^*_S }  \leq  K_0 \sqrt{\frac{\din \log p}{n}}  \Big\} \geq 1 -  2 p^{- \din}, 
\end{align*} 
where we have assumed again $K_0$ is sufficiently large. 

To prove the second part, define $A_S =   (\Sigma^*_S)^{-1/2} \X_S^\top \X_S (\Sigma^*_S)^{-1/2} $. 
By~\citet[Theorem 5.39]{vershynin2010introduction},  for any $a > 0$,  with probability at least $1 - 2 e^{-C_0 a^2}$, 
\begin{align*}
 (\sqrt{n} -  C_1 \sqrt{|S|} - a)^2 \leq  \lmin( A_S ) \leq \lmax( A_S ) \leq (\sqrt{n} + C_1 \sqrt{|S|} + a)^2. 
\end{align*} 
The rest is analogous  to the proof of Lemma~\ref{lm:assA}. 
\end{proof}
 
\newpage 

\section{Proofs and more examples for Sections~\ref{sec:mcmc} and~\ref{sec:disc} } \label{supp:proof.sec.mcmc}
\setallcounters

\subsection{Proof of Theorem~\ref{th:main.rapid}} \label{sec:proof.rwges}
\setcounter{figure}{0}
\setcounter{table}{0}

\begin{proof}[Proof of Theorem~\ref{th:main.rapid}] 
In the proof of Theorem~\ref{th:cpdag.consist}, we have shown that, with probability at least $1 - 3 p^{-1}$, the triple $(\cpdag(\din, \dout), \nb, e^\score)$ satisfies Condition~\ref{cond:unimodal} for 
\begin{align*}
 p^{t_1} =  3 t_0  p^{t_0 + 2} \log_2  p, \quad    t_2 = t. 
\end{align*}
The mixing time bound for \RWGES{} then follows from Theorem~\ref{th:path}(iv) and Theorem~\ref{th:path.better}.  
\end{proof}

\subsection{Proof of Corollary~\ref{lm:pi.min}} \label{supp:pi.min}

\begin{proof} 
Define $c_3 = c_1 \sqrt{1 + \alpha / \gamma}$. 
By~\eqref{eq:post.modular}, for any $G \in \dag^\sigma(\din, \dout)$, we have 
\begin{align*}
    \frac{ e^{ \score(G) } }{  e^{ \score(G^*_\sigma) } }
    = \;& \prod_{j = 1}^p \frac{ e^{ \scorej(\Pa_j(G))} }{ e^{ \scorej(\Pa_j(G^*_\sigma)) } } 
    = \left( c_3 p^{c_2} \right)^{|G^*_\sigma|-|G|} \prod_{j = 1}^p \left( \frac{X_j^\top  \oproj_{\Pa_j(G)} X_j }{ \varepsilon_{\sigma, j}^\top \oproj_{\Pa_j(G^*_\sigma)} \varepsilon_{\sigma, j}  } \right)^{-(\alpha n + \kappa )/2}  \\
    \geq \;& \left( c_3 p^{c_2} \right)^{- p \din } \prod_{j = 1}^p \left( \frac{X_j^\top  X_j }{ \varepsilon_{\sigma, j}^\top \oproj_{\Pa_j(G^*_\sigma)} \varepsilon_{\sigma, j}  } \right)^{-(\alpha n + \kappa )/2}  \\
    \geq \;& \left( c_3 p^{c_2} \right)^{- p \din } \prod_{j = 1}^p \left( \frac{ n \vmax  }{ n \omega^*_{\sigma, j} / 2 } \right)^{-(\alpha n + \kappa )/2} ,  
\end{align*}
where the last step follows from Lemmas~\ref{lm:assA} and~\ref{lm:event}. 
By Remark~\ref{rmk:beta.min}, $\omega^*_{\sigma, j} \geq \vmin$, which yields 
 \begin{equation}\label{eq:pi.min.1}
        \min_{G \in \dag^\sigma(\din, \dout)}  \frac{  \exp ( \score(G) ) }{ \exp (  \score(G^*_\sigma) ) }
        \geq   \left( c_3 p^{c_2}  \right)^{- p \din } \left( \frac{2\vmax}{\vmin} \right)^{ - p (\alpha n + \kappa) / 2}, 
        \quad \forall \, \sigma \in \bbS^p. 
\end{equation}

By Lemma~\ref{lm:imap.map}, there exists a Chickering sequence $(G_0 = G^*_\sigma, G_1, \dots, G_k = G^*)$ for some $k \leq pd^*$ such that, for $i \in [k]$, $G_i$ is obtained from $G_{i-1}$ by covered edge reversals and a single edge deletion. Since by removing any single edge from a DAG in $\dag^\tau$, its posterior score can increase by at most $c_3 p^{c_2}$, we have 
 \begin{equation}\label{eq:pi.min.2}
         \frac{\exp ( \score(  \EG{ G^*_\sigma } )  ) }{ \exp (  \score ( \cE^* ) ) } \geq (c_3 p^{c_2})^{ - p d^*}. 
 \end{equation}
For any $\cE \in \cpdag(\din, \dout)$, there exists a pair $(G, \sigma)$ such that $G \in \dag^\sigma(\din, \dout)$. Thus, 
\begin{equation*}
    \frac{ \post(\cE) }{ \post(\cE^*) }  
    = \frac{  e^{ \score(\EG{ G^*_\sigma } ) } }{  e^{  \score(\cE^*) }  }  \frac{ e^ {\score(G) } }{ e^{  \score (G^*_\sigma ) } },
\end{equation*}
Combining~\eqref{eq:pi.min.1} and~\eqref{eq:pi.min.2}, we obtain  the asserted bound on $\post(\cE)/\post(\cE^*)$. 
Under the setting of Theorem~\ref{th:cpdag.consist}, we have the strong selection consistency, which implies that $\post(\cE^*) \geq 1/2$ for all sufficiently large $n$. 
Hence, $\log \post(\cE)$ can be bounded a polynomial of $n$ and $p$. This completes the rapid mixing proof for \RWGES{}.
\end{proof}

\subsection{Proof of Theorem~\ref{th:dag.rapid}} \label{supp:proof.dag.rapid} 
\begin{proof}  
By Theorem~\ref{th:sel0}(i) and Corollary~\ref{coro:path.dag},  with probability at least $1 - 3p^{-1}$, 
the triple $(\dag^\sigma(\din, \dout), \nb, \post^\sigma)$ satisfies Condition~\ref{cond:unimodal} for $p^{t_1}  =   \din p^2 / 2$ and $t_2 = t$.  (Note that though Corollary~\ref{coro:path.dag} used $t_1 = 3$,  by~\eqref{eq:dag.nb.size} the neighborhood size actually can be bounded by $\din p^2 / 2$.)
The mixing time bound then follows from Theorem~\ref{th:path}(iv) and Theorem~\ref{th:path.better}.  
\end{proof}


\subsection{Proof of Example~\ref{ex:slow1}}\label{sec:proof.slow1}

\begin{figure}[t]
    \begin{center}
    \includegraphics[width=0.98\linewidth]{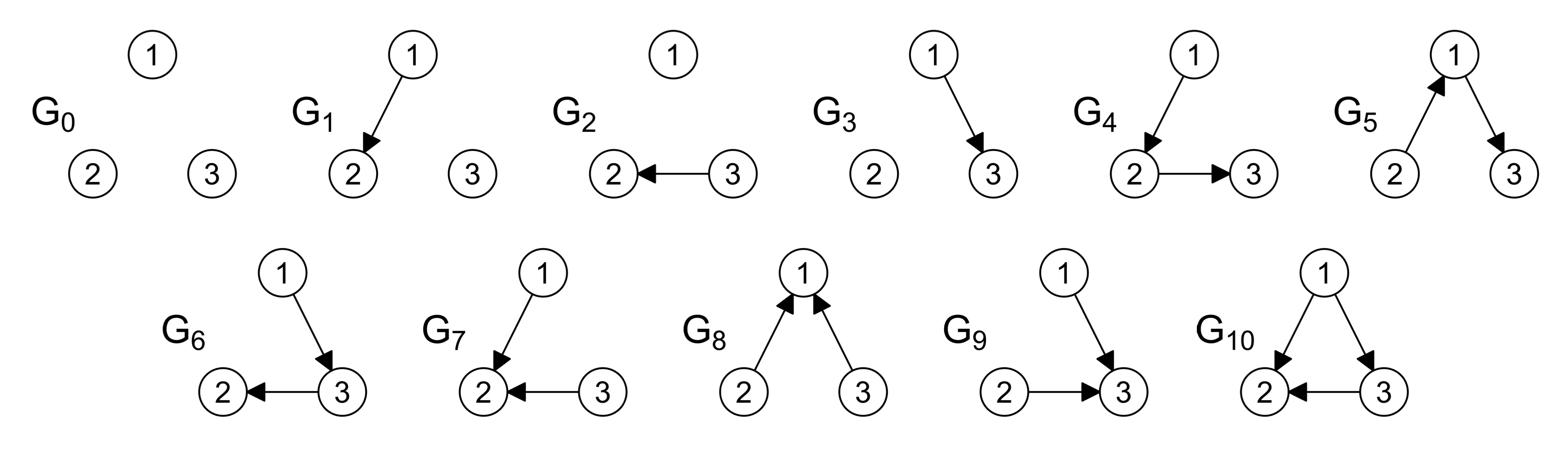}\\ 
    \caption{DAG models with three vertices. Each DAG represents a unique equivalence class. Any other $3$-vertex DAG model is Markov equivalent to one of these DAGs. 
    For Example~\ref{ex:slow1}, the true DAG is $G^* = G_4$ and the local mode is the equivalence class of $\tG = G_7$.
    For Example~\ref{ex:slow2.b}, $G^* = G_9$ and $\tG = G_{10}$. 
    } \label{fig.slow1}
    \end{center}
\end{figure}

\begin{proof}
Consider the space $\dag(2, 2)$ for $p=3$, the collection of all $3$-vertex DAG models. 
By~\citet{andersson1997characterization},  there are $11$ labeled equivalence classes, which we show in Figure~\ref{fig.slow1} (for each equivalence class we plot one DAG member).   
The true DAG is given by $G^* = G_4$ and the local mode is the equivalence class that contains $\tG = G_7$. 
Consider $\sigma=(1, 3, 2)$, which is a topological ordering of $\tG$. 
The corresponding SEM representation can be written as 
\begin{align*}
    X_1 =\;& z_1,  \\
    X_3 =\;& b_1 b_2 X_1 + b_2z_2 +  z_3, \\
    X_2 =\;& \frac{b_1}{b_2^2 + 1} X_1 + \frac{b_2}{b_2^2 + 1} X_3 + \frac{1}{b_2^2 + 1} z_2 - \frac{b_2}{b_2^2 + 1} z_3. 
\end{align*} 
\noindent We prove the slow mixing by verifying the two conditions in Theorem~\ref{th:slow}: 
\begin{enumerate}
     \item $\bP(\EG{G_7}, \EG{G_7}) \geq 1 - e^{- c \sqrt{n} }$ for some universal constant $c$. 
     \item $\post( \EG{G_7} ) \leq 1/2$ for all sufficiently large $n$.  
\end{enumerate}

Consider (i) first. 
Since $\EG{ G_7 } = \{ G_7 \}$, the set $\nbg( {\EG{ G_7 }} )$ is determined by all the neighbors of $G_7$. 
Observe that $G_1, G_2 \in \dels(G_7)$ and $G_{10} \in \adds(G_7)$.
It can be shown that for $K \geq 4$ and sufficiently large $n$, 
\begin{align*}
    \frac{\post( \EG{G_7} )}{ \post( \EG{G_1} )} =\;& \exp \left\{  -c_2 \log p + \frac{\alpha n}{2} \log (1 + b_2^2)
    \right\}    \geq   p^{c_2  / 2}, \\
      \frac{\post( \EG{G_7} )}{ \post( \EG{G_2} )} =\;& \exp \left\{  -c_2 \log p + \frac{\alpha n}{2} \log \frac{(b_1^2 + 1)(b_2^2  + 1)}{b_1^2 b_2^2 + b_2^2 + 1} \right\}    \geq   p^{c_2  / 2},  \\
     \frac{\post( \EG{G_7} )}{ \post( \EG{G_{10}} )} =\;&   \exp \left\{  c_2 \log p - \frac{\alpha n}{2} \log \frac{b_1^2 b_2^2 + b_2^2 + 1}{b_2^2  + 1}    \right\}    \geq   p^{c_2  / 2}.
\end{align*}
To prove the first,  we use  $b_1^2 = b_2^2 =  o (1)$ and  $\log(1 + x) \sim x$ as $x \downarrow 0$   to get $\alpha n \log(1 + b_2^2) \sim K c_2 \log p$. Hence, if $K \geq 4$ and $n$ is sufficiently large, $\alpha n \log(1 + b_2^2) \geq 3 c_2 \log p$. 
The proof of the second inequality is similar. For the last one,  we use $b_1^2 b_2^2 = o(b_1^2) $  to show that $c_2 \log p$ has a larger order than the other term in the exponent. 
Since $c_2 = \sqrt{n}$, by the Metropolis rule, for any $\cE' \neq \EG{G_7}$, 
\begin{align*}
    \bP(\EG{G_7}, \cE') = \bK(\EG{G_7}, \cE') \min \left\{  1,  \, \frac{\post(\cE') \bK(\cE', \EG{G_7}) }{\post( \EG{G_7}) \bK(\EG{G_7}, \cE') }  \right\} \leq \frac{\post(\cE')  }{\post( \EG{G_7})} <  p^{- \sqrt{n} / 2}. 
\end{align*}
It then follows that (i) holds since $|\nbg( \EG{G_7} )|$ is bounded. Note that if $p$ goes to infinity, we can still use $|\nbg( \EG{G_7} )| \leq 3 p^2$ to show (i).
 
To prove (ii), we only need to compare $G_7$ with the true model $G_4$. 
Another routine calculation yields that, for large $n$,  
\begin{align*}
     \frac{ \post( \EG{G_4} )}{\post( \EG{G_7} )} =\;&   \exp \left\{     \frac{\alpha n}{2} \log \frac{b_1^2 b_2^2 + b_2^2 + 1}{b_2^2 + 1}    \right\}  \\
        \geq  \;& \exp\left\{  \frac{\alpha n}{4} b_1^2 b_2^2 \right\} 
     =   \exp\left\{   \frac{K^2 (\log p)^2 }{4 \alpha }  \right\}. 
\end{align*}
Since $p = 3$,  as long as we choose $K$ such that $K^2 \geq 4/\alpha$, we have 
$\post( \EG{G_4} ) / \post( \EG{G_7} ) \geq  3^{\log 3} \approx 3.34$, from which (ii) follows.  
\end{proof}

\subsection{Slow mixing examples for a CPDAG sampler}\label{sec:proof.slow2}

\citet[Definition 9]{he2013reversible} constructed a ``perfect'' set of CPDAG operators: 
insert/delete an undirected edge, insert/delete a direct edge, and  make/remove a v-structure; see Supplement~\ref{sec:supp.cpdag} for details. 
This set of CPDAG operators is said to be reversible and irreducible, which means that the induced neighborhood relation on $\cpdag$ is symmetric and neighborhood graph is connected (i.e., a random walk using this set of operators is irreducible). 
Let $\cNc$ denote the corresponding neighborhood function. 
A random walk MH algorithm can be constructed by proposing a state in $\cNc(\cdot)$ uniformly at random. This algorithm was implemented in~\citet{castelletti2018learning}. 
Hence, compared with \RWGES{}, the only difference is that the neighborhood $\nbg(\cdot)$ is replaced by $\cNc(\cdot)$.  
Unfortunately, this ``reversible'' CPDAG sampler can be slowly mixing even if the strong beta-min condition holds. 

\begin{example}\label{ex:slow2.b}
This is a more explicit construction of Example~\ref{ex:slow2}. 
Let $p = 3$ and $n$ tend to infinity. The extension to the case $p = n$ is  straightforward. 
Let  the true SEM model be given by 
\begin{align*}
    X_1 = z_1, \quad  X_2 =  z_2,  \quad X_3 = a_1 X_1  + a_2 X_2 + z_3, 
\end{align*}
where $z_1, z_2, z_3$ are as given in Example~\ref{ex:slow1} but the coefficients $a_1, a_2 > 0$ are assumed to be fixed.  
Hence, the true DAG model $G^*$ is $1 \rightarrow 3 \leftarrow 2$, which is a CPDAG itself. 
Choose prior parameters $c_1, c_2, \kappa, \alpha, \gamma$ as in Example~\ref{ex:slow1}. 
Let $\tcE$ be the equivalence class that contains all complete DAGs;   $\EGG{\tcE}$ is a complete undirected graph.  
The set $\cNc(\tcE)$ contains three equivalence classes  represented by CPDAGs of the form  $i - j - k$,  but $\cE^* \notin \cNc(\tcE)$. Below we prove that the random walk MH algorithm equipped with the neighborhood function $\cNc$ is slowly mixing since it can get stuck at $\tcE$ for exponentially many steps.  
\end{example}

\begin{proof}
We still use the numbering in Figure~\ref{fig.slow1}. 
The true DAG is $G^* = G_9$, and the local mode we consider is the equivalence class generated by $\tG = G_{10}$. By~\citet[Definition 9]{he2013reversible}, $\cNc( \EG{G_{10}} ) = \{ \EG{G_4}, \EG{G_5}, \EG{G_6} \}$. Since $p, a_1, a_2, \alpha$ are fixed constants and $c_2 = \sqrt{n}$, for sufficiently large $n$, 
we find that 
\begin{align*}
    \frac{\post( \EG{G_{10}} )}{ \post( \EG{G_9} )} =\;& p^{-c_2} = p^{-\sqrt{n} }, \\
    \frac{\post( \EG{G_{10}} )}{ \post( \EG{G_4} )} =\;& \exp \left\{  -c_2 \log p + \frac{\alpha n}{2} \log  (a_1^2 + 1) \right\}    \geq   e^{c n}, \\
      \frac{\post( \EG{G_{10}} )}{ \post( \EG{G_5} )} =\;& \exp \left\{  -c_2 \log p + \frac{\alpha n}{2} \log (a_2^2 + 1)  \right\}    \geq   e^{c n},  \\
     \frac{\post( \EG{G_{10}} )}{ \post( \EG{G_{6}} )} =\;&  \exp \left\{ - c_2 \log p + \frac{\alpha n}{2} \log \frac{(a_1^2 + 1)(a_2^2 + 1)}{a_1^2 + a_2^2 + 1}  \right\}    \geq   e^{c n}, 
\end{align*} 
for some universal constant $c > 0$. Thus, for the random walk MH algorithm using neighborhood relation $\cNc$, we have $\bP(\EG{G_{10}}, \EG{G_{10}}) \geq 1 - 3e^{ - cn}$. 
Since $\EG{G_{10}}$ has negligible posterior probability, the chain is slowly mixing by Theorem~\ref{th:slow}. 
\end{proof}

We give another more complicated example with $5$ nodes. 

\begin{example}\label{ex:slow3}
Let $p=5$ and the true DAG  $G^*$ be as given in Figure~\ref{fig5}. 
Consider the DAG $H = G \cup \{1 \rightarrow 4\}$.  
By (dd2) in Definition 9 of~\citet{he2013reversible}, $\EG{G^*} \notin \cNc(\EG{H})$. 
Note that since $\cNc$ defines a symmetric relation, this is equivalent to claiming that we cannot move from  $\EGG{G^*}$ to $\EGG{H}$, which is easy to prove: if we add $1 \rightarrow 4$ to $\EGG{G^*}$, all consistent extensions of the resulting PDAG cannot have the v-structures $1 \rightarrow 4 \leftarrow 3$ and $1 \rightarrow 4 \leftarrow 2$, which exist in $\EGG{H}$.  
Actually, according to~\citet[Definition 9]{he2013reversible}, there are only 8 possible operations that we may apply to the CPDAG of $H$, which we list in Table~\ref{tb2:neigbhor}. 
Each operation uniquely defines a resulting CPDAG and thus $|\cNc(\EG{H})| = 8$. 
In Figure~\ref{fig6}, we plot a member DAG for each equivalence class in $\cNc(\EG{H})$.   

If $\score$ satisfies Condition~\ref{cond:local.consist} (local consistency), one can verify that $\EG{H}$ has a larger posterior probability than all 8 neighboring equivalence classes.  
First,  since $H$ is an I-map of $G^*$, $H_1, H_2, H_3$ should all have smaller scores.  
For $j = 4, \dots, 8$, observe that $H_j$ is obtained from $H$ by removing an edge between two nodes that are not conditionally independent given any set of other nodes.  Hence, $\EG{H}$ is a local mode on $(\cpdag, \cNc)$. 
\end{example}

On the restricted space $\cpdag(\din, \dout)$, the maximum size of $\cNc(\cdot)$ tends to be much smaller than that of $\nbg(\cdot)$ since the former can only grow polynomially in $\din + \dout$. 
However, for two different DAGs $G, G'$ such that $G' = G \cup \{i \rightarrow j \}$, we may have $\EG{G} \notin \cNc(\EG{G'})$ because removing the edge between $i$ and $j$ from $\EGG{G'}$ does not result in a unique CPDAG (more precisely,  the modified graph has no consistent extension; see Supplement~\ref{sec:supp.cpdag} for details). 
If $G$ happens to be the true DAG model, then $G'$ is likely to be a local mode that can trap the algorithm for an enormous number of iterations, since other modifications of $G'$ either yield a DAG that is not an I-map of $G$ or a DAG with more ``redundant'' edges. 
Nevertheless, as originally proposed in~\citet{he2013reversible}, one can define a random walk on $(\cpdag, \cNc)$ for efficiently sampling CPDAGs from the uniform distribution.

\begin{figure}[!htp]
    \centering
    \includegraphics[width=0.98\linewidth]{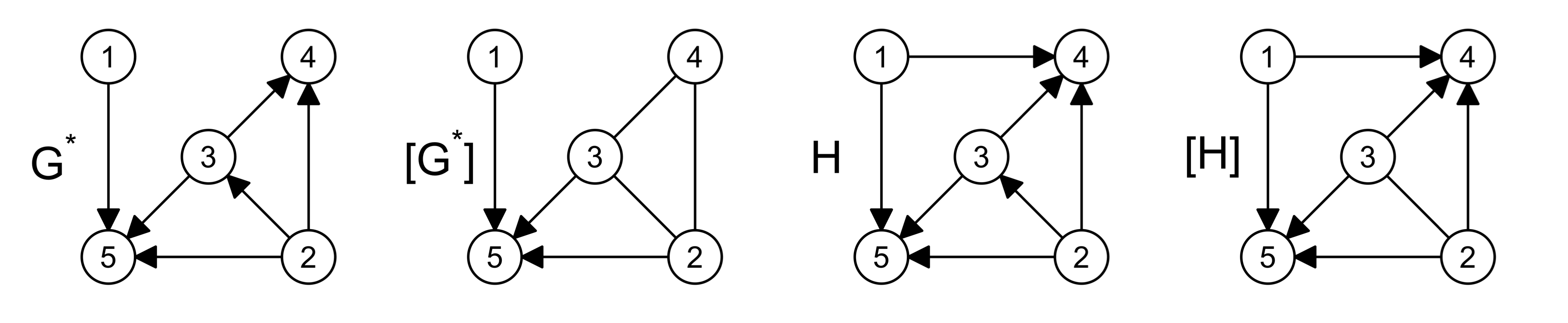}
    \caption{A slow mixing example for the random walk MH algorithm using neighborhood function $\cNc$.
    $G^*$: the true DAG; $\EG{G^*}$: the CPDAG of $G^*$; $H$: a DAG representing a local mode; $\EG{H}$: the CPDAG of $H$.}
    \label{fig5}
\end{figure}

\clearpage 

\begin{table}[!htp]
\centering
\caption{Edge operations that may be applied to the CPDAG of $H$ in Figure~\ref{fig5}. } \label{tb2:neigbhor}
\begin{tabular}{|c|c|}
\hline
\textbf{Operator} & \textbf{Related edge(s)} \\ \hline
InsertU (insert an undirected edge) & $1 - 2$, $1 - 3$ , $4 - 5$ \\ \hline
DeleteU (delete an undirected edge)  & $2 - 3$  \\ \hline
InsertD (insert a directed edge)  & None \\ \hline
DeleteD  (delete a directed edge) &  $3 \rightarrow 4$,   $3 \rightarrow 5$, $2 \rightarrow 4$, $2 \rightarrow 5$ \\ \hline
MakeV  (make a v-structure)  & None        \\ \hline
RemoveV  (remove a v-structure) &    None   \\ \hline
\end{tabular}
\end{table}

\begin{figure}[!htp]
    \centering
    \includegraphics[width=0.98\linewidth]{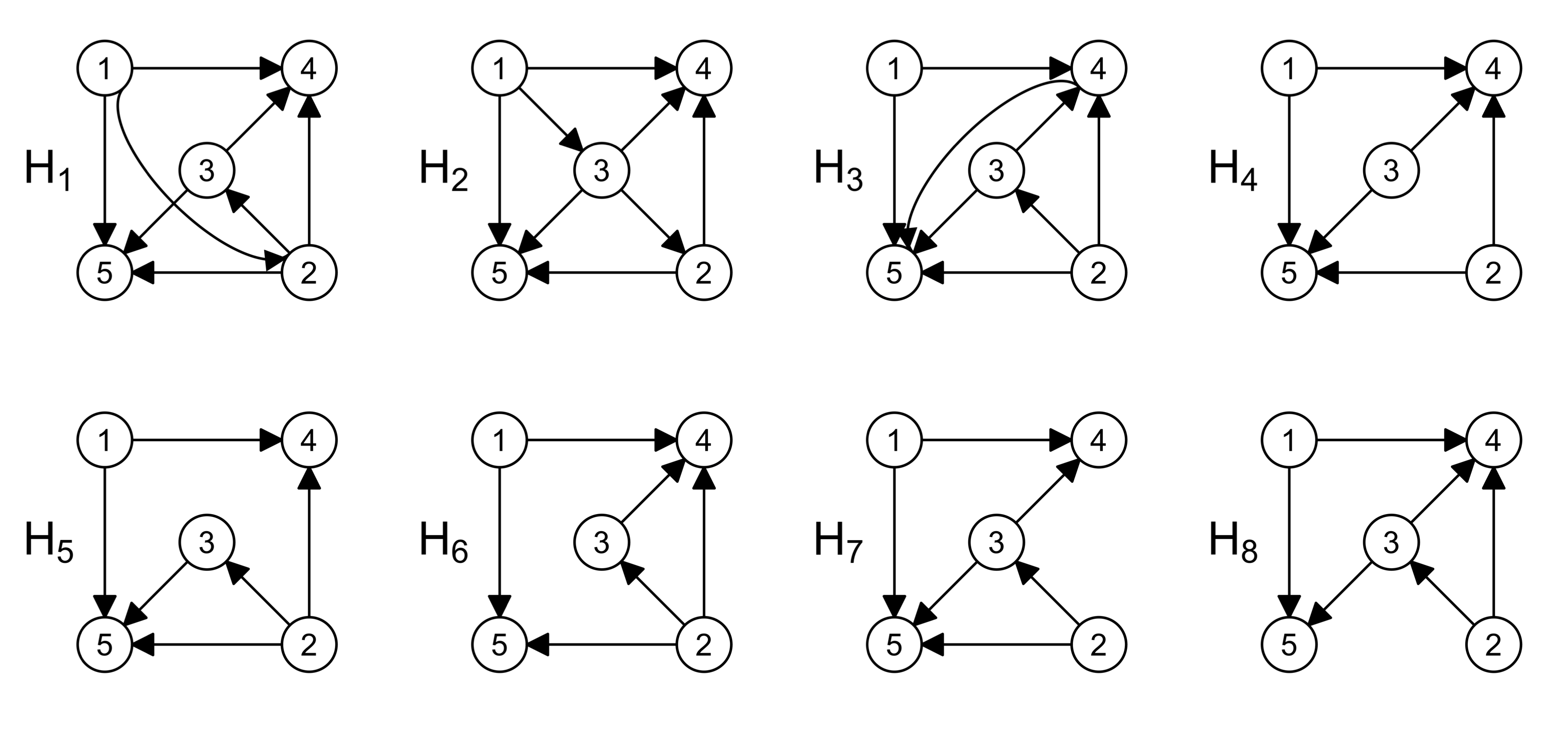}
    \caption{Characterization of $\cNc(\EG{H})$ where $H$ is as given in Figure~\ref{fig5}. 
    For each $\cE \in \cNc(\EG{H})$, we plot a member DAG of $\cE$. 
    The 8 DAGs correspond to the 8 operations listed in Table~\ref{tb2:neigbhor}. 
    $H_1$: insert $1 - 2$; $H_2$: insert $1 - 3$; $H_3$: insert 4-5; $H_4$: delete $2-3$;
    $H_5$: delete $3 \rightarrow 4$;  $H_6$: delete $3 \rightarrow 5$; $H_7$: delete $2 \rightarrow 4$; $H_8$: delete $2 \rightarrow 5$. 
     }
    \label{fig6}
\end{figure}

\begin{figure}[!t]
\begin{center}
\includegraphics[width=0.8\linewidth]{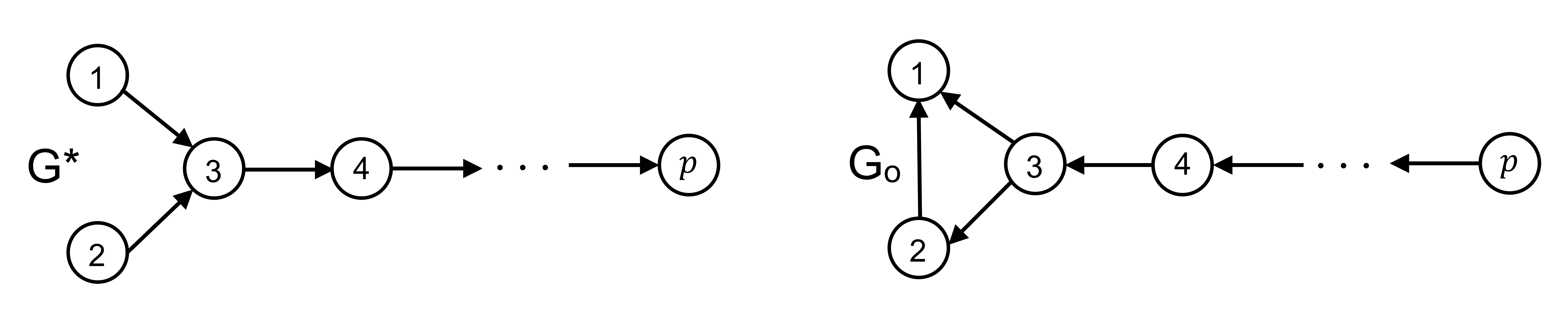} 
\caption{The DAGs considered in Supplement~\ref{supp:slow.structure.mcmc}. $G^*$: the true DAG. $G_0$: a minimal I-map of $G^*$.  }\label{fig:dag}
\end{center}
\end{figure} 

\subsection{An example for comparing \RWGES{} and structure MCMC}\label{supp:slow.structure.mcmc}

Consider the true data-generating DAG $G^*$ and another DAG $G_0$ shown in Figure~\ref{fig:dag} and assume $n = \infty$.  Observe that $G_0$ is a minimal I-map of $G^*$.  
Define $G_1 = G^* \cup \{2 \rightarrow 1\}$, which is Markov equivalent to $G_0$.  
Let  $\cE^* = \EG{G^*}$ and $\cE_0 = \EG{G_0}$. 
For ease of presentation, we assume the following condition on $\score$ holds, which is similar to but stronger than Condition~\ref{cond:local.consist}: for any distinct DAGs $G, G'$ such that $G' = G \cup \{i \rightarrow j\}$,  
\begin{enumerate}[label=(\roman*)]
    \item $\score(G) - \score(G') = \infty$ if $i \indep j \mid \Pa_j(G)$ in $G^*$; 
    \item $\score(G') - \score(G) = \infty$ if $i \notindep j \mid \Pa_j(G)$ in $G^*$. 
\end{enumerate}
If $p$ and the true data-generating model (perfectly Markovian w.r.t. $G$) are fixed and we let $n \rightarrow \infty$, this condition should hold for a large class of scoring criteria~\citep{chickering2002optimal}. We make this assumption so that if  an MH algorithm proposes to add $i \rightarrow j$ to $G$ but $i \indep j \mid \Pa_j(G)$ in $G^*$, the proposal will be rejected with probability one. 
Further,  we assume the search spaces are unrestricted, i.e., $\cpdag$ for \RWGES{} and $\dag$ for structure MCMC, and do not consider swap moves of \RWGES{} (but the same conclusion still holds if swap moves are used). 

\paragraph*{Analysis of \RWGES{}} 
By the definition of minimal I-maps given in Section~\ref{sec:notation}, one can verify that $G_0$ is a minimal I-map of $G^*$ with ordering $(p, p-1, \dots, 2, 1)$. Hence, it is straightforward to check that the only proposal in $\adds(\cE_0) \cup \dels(\cE_0)$ that will be accepted is $\cE^*$. Since the maximum degree of $G_0$ is always $3$ (which does not depend on $p$), analogously to the proof of Lemma~\ref{lm:bound.nb}, one can show that $|\adds(\cE_0) \cup \dels(\cE_0)|  \leq 2^3 p^2 = 8 p^2$.  That is, on average it takes \RWGES{} less than $8p^2$ iterations to propose $\cE^*$, which will always be accepted by our assumption on $\score$.  
 
\paragraph*{Analysis of structure MCMC} 
Structure MCMC is defined on the DAG space which uses single-edge addition, deletion and reversal as the proposal moves. For structure MCMC, to move from $G_0$ to $G^*$, we have to move to $G_1$ first, which requires the sampler to reverse all edges of $G_0$ except the edge $2 \rightarrow 1$. Moreover, these edges have to be reversed from right to left. 

To show this, note that since $G_0$ is a minimal I-map, we cannot add or remove any edge of $G_0$. We can only move from $G_0$ to DAGs in $\revs(G_0)$, the set of all DAGs that can be obtained from $G_0$ by reversing one edge. Reversing $2 \rightarrow 1$ yields a Markov equivalent DAG but has no effect (the resulting DAG is essentially the same as $G_0$ compared to $G^*$), so we will ignore the reversal of this edge henceforth. 
Reversing $3 \rightarrow 1$ results in a cycle and thus is not allowed. Reversing $3 \rightarrow 2$ results in the v-structure $2 \rightarrow 3 \leftarrow 4$, which does not exist in $G^*$; thus,  the resulting DAG is not an I-map of $G^*$ and will be rejected by our assumption on $\score$. Similarly, reversing $4 \rightarrow 3$ will also be rejected since it results in the v-structure $3 \rightarrow 4 \leftarrow 5$ (assuming $p \geq 5$). 
So the only edge we can reverse other than $2 \rightarrow 1$ is the edge $p \rightarrow (p - 1)$. 
Repeating this argument, we see that structure MCMC has to reverse the edges $p \rightarrow (p - 1)$, $(p - 1) \rightarrow (p - 2)$, \dots, $4 \rightarrow 3$ in order; denote the resulting DAG by $G_2$ (see Figure~\ref{fig:dag2}).
Note that after the edge $p \rightarrow (p - 1)$ is reversed, structure MCMC may reverse the edge $(p - 1) \rightarrow p$ again and return to $G_0$. 
Hence, moving from $G_0$ to $G_2$ usually takes much more than $p - 3$ successful edge reversals. Indeed, this can be seen as a reflected symmetric random walk on $\{4, 5,  \dots, p\}$, and by a standard result from random walk theory, the expected number of successful edge reversals needed to move from $G_0$ to $G_2$ is $(p - 3)^2$. 
Since in structure MCMC the neighborhood size of each DAG has order $p^2$, we conclude that on average it takes structure MCMC $O(p^4)$ iterations to move from $G_0$ to $G^*$. 
  
\begin{figure}[!htp]
\begin{center}\bigskip 
\includegraphics[width=0.95\linewidth]{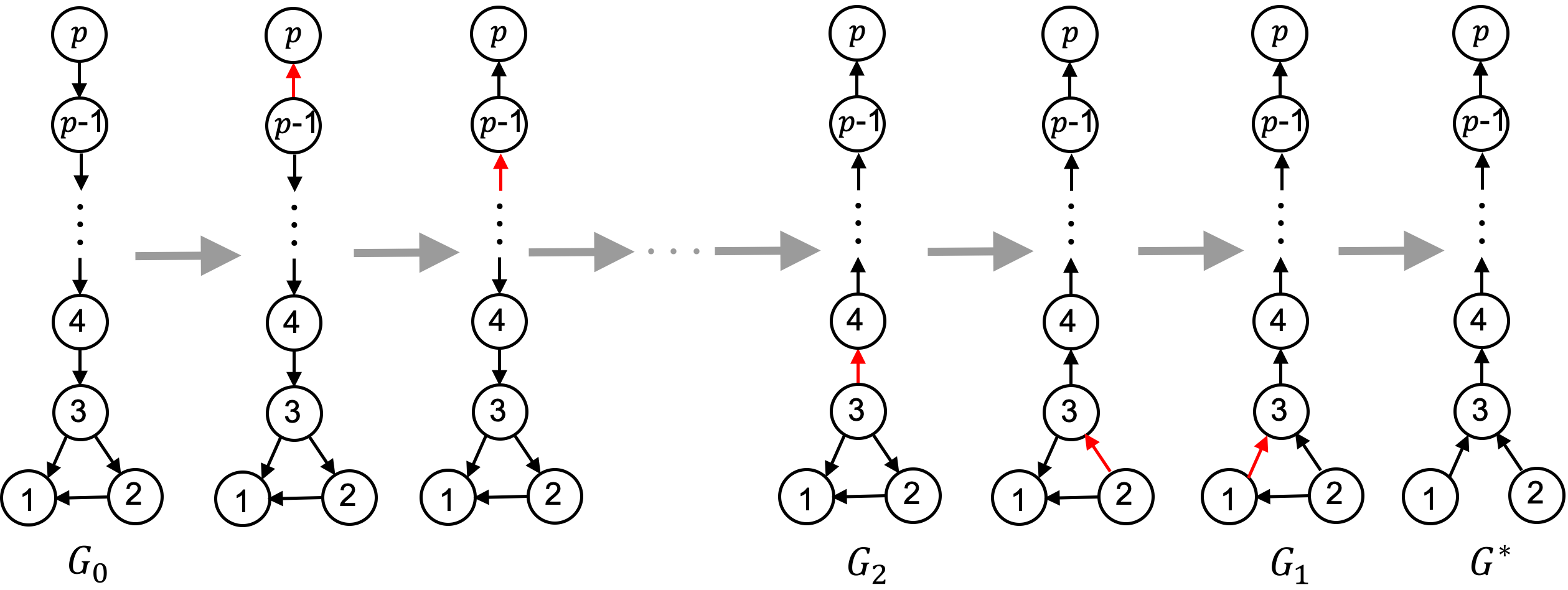} 
\caption{A likely path of structure MCMC from $G_0$ to $G^*$ in the example considered in Supplement~\ref{supp:slow.structure.mcmc}.  
Edges that have just been reversed are shown in red.  }\label{fig:dag2}
\end{center}
\end{figure}

\clearpage 
\newpage
\section{Implementation and simulation studies on \RWGES{} }\label{supp:numerics} 
\setallcounters 
\setcounter{figure}{0}
\setcounter{table}{0}

\subsection{Implementation of \RWGES{} }\label{supp:implementation}
To implement the \RWGES{} sampler, we need to propose new states from $\nbg(\cE)$ for each $\cE$. 
Though $\nbg$ is constructed by applying add-delete-swap moves to all member DAGs in $\cE$, we do not need to enumerate these DAGs to sample from $\nbg(\cE)$. 
Instead, recall the ``Insert'' and ``Delete'' operators of the GES algorithm defined in Supplement~\ref{sec:supp.cpdag}.   They can be used
to generate the sets $\adds(\cE)$ and $\dels(\cE)$, which only involve local modifications of the CPDAG  $\EGG{\cE}$. 
 Define 
\begin{align*}
\cO^{\rm{ins}}_{i j}(\cE)  =\;& \{ Insert(i, j, S) \colon  S \subseteq \und_j(\EGG{\cE}) \setminus \adj_i(\EGG{\cE}) \}, \\
\cO^{\rm{del}}_{i j}(\cE) =\;& \{ Delete(i, j, S) \colon  S \subseteq \und_j(\EGG{\cE}) \cap \adj_i(\EGG{\cE}) \}. 
\end{align*}
By Definition~\ref{def:ges.insert}, $\cO^{\rm{ins}}_{i j}(\cE)$ is the set of all insertion operators that add the edge $i \rightarrow j$ to $\EGG{\cE}$. By Definition~\ref{def:ges.del}, $\cO^{\rm{del}}_{i j}(\cE)$ is the set of all deletion operators that remove the edge $i \rightarrow j$ or $i - j$ from $\EGG{\cE}$. Further,  as explained in Supplement~\ref{sec:supp.cpdag}, if  $O$ is a valid operator in $\cO^{\rm{ins}}_{i j}(\cE)$ that converts $\cE$ to $\cE'$, then it can be matched with an operator $O' \in \cO^{\rm{del}}_{i j}(\cE')$ that can convert $\cE'$ back to $\cE$. 
Now we describe an efficient implementation of \RWGES{}.  

\begin{alg} \label{alg:rwges}
Let $\cE$ denote the current equivalence class and $\EGG{\cE}$ be its CPDAG. Sample an ordered pair $(i, j)$ uniformly from $[p]^2$ such that $ i \neq j$. 
\begin{enumerate}[label=(\roman*)]
\item If $i, j$ are adjacent in $\EGG{\cE}$, sample  $O$ uniformly from   $\cO^{\rm{del}}_{i j}(\cE)$. 
\item If $i, j$ are not adjacent in $\EGG{\cE}$, sample $O$ uniformly from   $\cO^{\rm{ins}}_{i j}(\cE)$. 
\end{enumerate}
Apply the operator $O$ to $\EGG{\cE}$. If the resulting PDAG has no consistent extension, stay at $\cE$.  
If the resulting PDAG has a consistent extension $G'$, propose moving to $\cE' = \EG{G'}$ and accept it with probability 
\begin{align*}
\text{case (i): } \min \left\{ 1,   \frac{ \post(\cE')}{ \post(\cE) } \frac{    |  \cO^{\rm{del}}_{i j}(\cE)  |  }{ |\cO^{\rm{ins}}_{i j}(\cE') | }   \right\}, \quad 
\text{case (ii): } \min \left\{ 1,   \frac{ \post(\cE')}{ \post(\cE) } \frac{    |  \cO^{\rm{ins}}_{i j}(\cE)  |  }{ |\cO^{\rm{del}}_{i j}(\cE') | }   \right\}. 
\end{align*}
\end{alg}
 
To implement the node degree constraints, for simplicity, we  require that the maximum degree $\leq d$ for some positive integer $d$ (i.e., any node is adjacent to at most $d$ nodes). Then, in case (ii) of Algorithm~\ref{alg:rwges}, if the degree of node $j$ is equal to $d$ but the degree of node $i$ is less than $d$, we replace the addition with a swap proposal. Note that there is no need to check the degree constraint in case (i) of Algorithm~\ref{alg:rwges}.  
Under our degree constraint, it is easy to check that  $|\cO^{\rm{ins}}_{i j}(\cE)| \leq 2^{d - 1}$ and $|\cO^{\rm{del}}_{i j}(\cE)| \leq 2^{d - 1}$. Hence, the probability of proposing any insertion or deletion operator is at least $p^{-2} 2^{1 - d}$, which is greater than or equal to $2 p^{- (2 + t_0)}$ if we assume $d \leq  t_0 \log_2 p$.  
We refer readers to~\citet{chickering2002optimal} for how to convert DAGs to CPDAGs and tricks that can further expedite  CPDAG operations.   

If we run the \RWGES{} sampler for $N$ iterations (after burn-in), we get $N$ equivalence classes $\cE^{(1)}, \dots, \cE^{(N)}$. 
For each $\cE^{(k)}$, we convert it to the corresponding CPDAG $H^{(k)} = \EGG{\cE^{(k)}}$, and we check which edges are in $H^{(k)}$; $i \rightarrow j, i \leftarrow j$ and $i - j$ are treated as different edges. 
For each $i < j$, \RWGES{} outputs the frequencies of the edge $i \rightarrow j$, $j \rightarrow i$ and $i - j$ being in $H^{(1)}$, $H^{(2)}$, \dots, $H^{(N)}$; denote them by $\hat{P}(i \rightarrow j), \hat{P}(j \rightarrow i)$ and $\hat{P}(i - j)$, respectively. 

\subsection{Further details about simulation studies}\label{supp:more.sim} 

\paragraph*{First simulation study}
Figure~\ref{fig:true.cpdag} shows the CPDAG representing the equivalence class of the DAG $G^*$ used in the simulation study presented in Section~\ref{sec:first.sim.rapid}.  
The true covariance matrix $\Sigma^*$ has $\lmax(\Sigma^*) = 9.6$ and $\lmin(\Sigma^*) = 0.15$.  
The simulated sample covariance matrix $\hat{\Sigma}_n = n^{-1} \X^\top \X$ has $\lmax(\hSigma_n) = 12.3$ and $\lmin(\hSigma_n) = 0.04$ for $n = 200$ and    $\lmax(\hSigma_n) = 10.7$ and $\lmin(\hSigma_n) = 0.11$ for $n = 800$. 

This simulation study also illustrates a typical way to construct a high-dimensional data set that satisfies the strong beta-min condition.  Recall that this condition is quite restrictive mainly because we need~\eqref{eq:beta.min} to hold for all $\sigma \in \bbS^p$ but $\bbS^p$ grows super-exponentially with $p$. 
However, if the true DAG $G^*$ can be partitioned into disconnected sub-DAGs, then the true covariance matrix $\Sigma^*$ and its inverse are block diagonal (after row/column permutation), and the strong beta-min condition is much easier to satisfy. In particular, if nodes $i, j$ belong to a maximal connected sub-DAG with $k$ nodes,   the mapping $\sigma \mapsto (B^*_{\sigma})_{ij}$ can take at most $2^{k-1}$ distinct values (since the parent set of node $j$ has at most $2^{k-1}$ possibilities).  
For the CPDAG shown in Figure~\ref{fig:true.cpdag}, there is one  maximal connected subDAG with 10 nodes, and the other connected subDAGs have at most $3$ nodes. 
A more general and simpler way to construct examples that satisfy the assumptions of Theorem~\ref{th:main.rapid} is given below. 
  
\begin{example}\label{supp:high.dim.example}
Let $q$ be a fixed positive integer and $\Sigma_0$  be a fixed $q \times q$ positive definite covariance matrix with $\lmin(\Sigma_0) = 2 \vmin$ and $\lmax(\Sigma_0) = \vmax / 2$, where $\vmin, \vmax$ are  nonzero universal constants. 
Let $p = m q$ where $m = \lfloor \exp(n^\xi ) \rfloor $  for some $\xi \in (0, 1)$. 
Let the true covariance matrix   $\Sigma^* = \diag(\Sigma_0, \dots, \Sigma_0)$ (i.e., a block diagonal matrix). 
Then, we have $\lmin(\Sigma^*) = 2 \vmin$ and $\lmax (\Sigma^*) = \vmax / 2$, which satisfies the restricted eigenvalue condition~\ref{A:eigen}. 
Further, the block diagonal structure of $\Sigma^*$ implies that $d^* \leq q$ and 
$$ \beta_{\rm{min}} =   \left\{  | (B^*_\sigma)_{ij}|  \colon  (B^*_\sigma)_{ij}  \neq 0, \, i, j \in [p], \, \sigma \in \bbS^p \right\}$$ is a universal nonzero constant (since it can be determined using $\Sigma_0$.) 
Choose  $\din = \nu_0 q$ where $\nu_0 = 4 \vmax^2 (\vmax - \vmin)^2 / \vmin^4 + 1$  and $\dout = 2 \nu_0 q^2$ so that $d^* \din + 1\leq \dout$ and Assumptions~\ref{A:np} and~\ref{A:size} hold. 
Since $\din, \dout$ are fixed constants, we can assume that $\din + \dout \leq \log_2 p$ for sufficiently large $n$. 
For the prior parameters, we  choose $\alpha = 1/2$ and $c_2 = 6 \din + 9 + t$ with $t > 4$ so that Assumption~\ref{A:prior} holds. 
Finally,  the strong beta-min condition is satisfied for sufficiently large $n$ since $\beta_{\rm{min}}$ is a fixed constant but the lower bound in~\eqref{eq:beta.min}  goes to zero. Note that the true underlying DAG model $G^*$ can have at most $p (q - 1) /2$ edges. 
\end{example}

\begin{figure}[!htp]
\begin{center}
\includegraphics[width=0.5\linewidth]{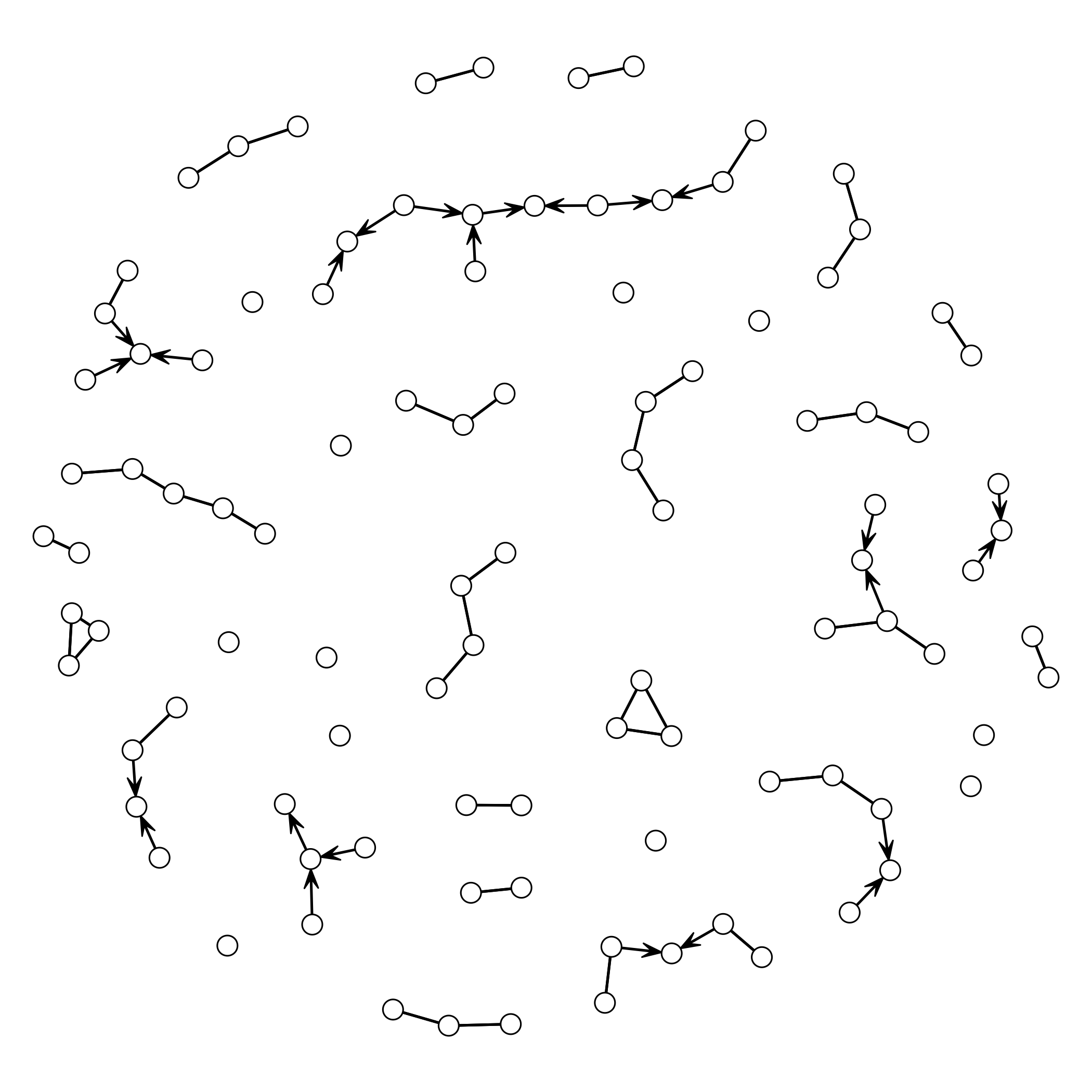} 
\caption{The true CPDAG used to generate the data set used in the first simulation study. }\label{fig:true.cpdag}
\end{center}
\end{figure}

\paragraph*{Second simulation study} 
For the simulation study presented in Section~\ref{sec:second.sim.high.dim},  the median extreme eigenvalues of the true covariance matrix and sample covariance matrix are reported in Table~\ref{table:high.eigen}. 

The TPR and FPR statistics reported in Tables~\ref{table:high_dim} and~\ref{table:c2} are calculated as follows. 
Let $H^* = \EGG{G^*}$ be the CPDAG of the true equivalence class.  
Recall that we say $i, j$ are adjacent in  $H^*$ if they are connected by either a directed or undirected edge. 
Define 
\begin{align*}
\mathrm{TPs} = \;&  \sum_{i < j} \ind\{ i, j \text{ are adjacent in } H^* \} \left\{  \hat{P}(i \rightarrow j) + \hat{P}(j \rightarrow i) + \hat{P}(i - j) \right\}, \\
\mathrm{TP} = \;& \sum_{i < j} \left[ \ind\{ i \rightarrow j \in H^* \}    \hat{P}(i \rightarrow j) + 
\ind\{ j \rightarrow i \in H^* \}  \hat{P}(j \rightarrow i) + 
\ind\{ i - j \in H^* \}  \hat{P}(i - j)   \right], \\ 
\mathrm{FP} =\;& \sum_{i < j} \ind\{ i, j \text{ are not adjacent in } H^* \} \left\{  \hat{P}(i \rightarrow j) + \hat{P}(j \rightarrow i) + \hat{P}(i - j) \right\}.  
\end{align*}
Then,  ``TPR (skeleton)'' is defined as $\mathrm{TPs}/ | G^* |$, where $|G^*|$ is also the number of edges in $H^*$. 
TPR is defined as $\mathrm{TP}/ | G^* |$, and FPR is defined as $\mathrm{FP} / \{  p(p-1)/2 -  |G^*| \}$.

\begin{table}[!htp]
    \centering
    \begin{tabular}{|cc|cccc|}
    \hline  
    p  &  n & $\lmax(\Sigma^*)$ & $\lmin(\Sigma^*)$ & $\lmax(\hSigma_n)$ & $\lmin(\hSigma_n)$ \\
    \hline  
7 & 60 & 7.7 (2.9--15) & 0.17 (0.12--0.3) & 6.8 (3.6--19) & 0.14 (0.09--0.28) \\
14 & 90 & 13 (5.6--35) & 0.15 (0.11--0.23) & 14 (6.8--43) & 0.11 (0.07--0.2) \\
28 & 120 & 30 (16--102) & 0.10 (0.08--0.13) & 33 (16--113) & 0.06 (0.03--0.09) \\
56 & 150 & 75 (43--188) & 0.09 (0.07--0.11) & 78 (41--173) & 0.04 (0.04--0.05) \\
112 & 180 & 149 (73--348) & 0.08 (0.06--0.09) & 152 (75--353) & 0.02 (0.01--0.02) \\
224 & 210 & 252 (146--631) & 0.07 (0.06--0.08) & 239 (148--633) & 0   \\
448 & 240 & 411 (251--844) & 0.06 (0.06--0.07) & 436 (264--814) & 0 \\
  \hline 
  \end{tabular}
   \caption{  Extreme eigenvalues of $\Sigma^*$ and $\hat{\Sigma}_n = n^{-1} \X^\top \X$ 
   for the simulation study presented in Section~\ref{sec:second.sim.high.dim}. Each cell gives the median value out of the 20 replicates with $0.05$ and $0.95$ quantiles  given in parentheses. 
   }
    \label{table:high.eigen}
\end{table}

\paragraph*{Third simulation study} 
For the simulation study conducted in Section~\ref{sec:third.sim.c2}, results for \RWGES{} without swap moves are shown in Table~\ref{table:c2.noswap}. Comparing Table~\ref{table:c2} with Table~\ref{table:c2.noswap}, one can see that using swap moves does improve the performance of \RWGES{}, especially when $d$ is small.

\begin{table}[!htp]
    \centering
    \begin{tabular}{|c|ccc|ccc|}
    \hline 
     & \multicolumn{3}{|c|}{$c_2 = 1.3$} & \multicolumn{3}{|c|}{$c_2 = 1.1 + 0.1 d$} \\
    \hline 
    d  & TPR (skeleton) & TPR & FPR &   TPR (skeleton) & TPR & FPR \\
    \hline 
   4 &   0.484 (0.01) & 0.271 (0.01) & 0.1 (0.004)   & 0.495 (0.01) & 0.299 (0.02) & 0.0961 (0.004)\\
   5 &   0.569 (0.01) & 0.304 (0.02) & 0.118 (0.004)   & 0.575 (0.02) & 0.338 (0.02) & 0.108 (0.005)\\
   6 &  0.649 (0.01) & 0.37 (0.02) & 0.123 (0.005)   & 0.647 (0.01) & 0.385 (0.02) & 0.107 (0.005)\\
  7 &   0.696 (0.01) & 0.404 (0.02) & 0.125 (0.006)   & 0.675 (0.01) & 0.398 (0.02) & 0.104 (0.006)\\
   8 &   0.707 (0.01) & 0.403 (0.02) & 0.136 (0.006)   & 0.691 (0.01) & 0.419 (0.02) & 0.0981 (0.006)\\
   9 &   0.718 (0.01) & 0.407 (0.02) & 0.14 (0.007)  & 0.692 (0.01) & 0.433 (0.021) & 0.0962 (0.006)\\
  \hline 
  \end{tabular}
   \caption{ Simulation study shown in Table~\ref{table:c2} without the swap proposal. 
   }
    \label{table:c2.noswap}
\end{table}

\clearpage 
\newpage 

\section{Discussion on the strong beta-min or faithfulness condition}\label{sec:discussion}
\setallcounters 

In our theoretical results for structure learning, we assume that the true distribution of $\sX$, denoted by $\mu$, is faithful w.r.t. some DAG $G^*$, largely because we want to use the ``Chickering sequence'' argument from the consistency proof for GES~\citep{chickering2002optimal}.  
While for PC algorithms, it is known that the faithfulness assumption can be relaxed~\citep{zhang2008detection, adj.faith}, there seems no existing result on the consistency of GES beyond faithfulness; we can only find some related discussion in~\citet{zhang2017weakening}. 
The reason is that, without faithfulness, even if $n \rightarrow \infty$, GES and \RWGES{} can get trapped at some ``sub-optimal'' minimal I-map of $\mu$; that is, there may exist $\sigma, \tau \in \bbS^p$ such that $|G^\mu_\sigma| > |G^\mu_\tau|$ but any $\cE' \in \dels(\EG{ G^\mu_\sigma } )$ is not an I-map of $\mu$, where $G^\mu_\sigma$ denotes the minimal I-map of $\mu$ with ordering $\sigma$. An example is given in the proof of Theorem 2.4 in~\citet{raskutti2018learning} (though it was constructed for some other purpose).  
From a Bayesian perspective,  faithfulness is probably not a restrictive assumption: if the true distribution of $\sX$ is generated randomly according to the SEM~\eqref{eq:sem} with $G = G^*$ and $\pi_0(B, \Omega \mid G)$ being continuous, then the distribution of $\sX$ is almost surely faithful to $G^*$; see Lemma~\ref{lm:chol.perfect} in the supplement.

In high-dimensional settings, much more of a concern is the strong beta-min condition or the strong faithfulness assumption, which  can often fail to hold in  reality~\citep{uhler2013geometry}. 
Here we describe a more flexible approach  which may help overcome this limitation.  Let $\bmin^2$ be the lower bound given in~\eqref{eq:beta.min} (the constant in the bound can be adjusted if necessary). 
For each $\sigma \in \bbS^p$,  define a DAG $\tilde{G}(\sigma)$ such that 
\begin{equation}\label{eq:def.tilde.G}
    i \rightarrow j \in \tilde{G}(\sigma) \text{ if and only if } (B^*_\sigma)_{ij}^2 \geq \bmin^2. 
\end{equation}
As long as the other nonzero entries of $B^*_\sigma$ are sufficiently small (so that their summed effects for each node have the same order as the noise), $\tilde{G}(\sigma)$ should coincide with the DAG $\hat{G}(\sigma)$ defined in~\eqref{eq:def.hatG}, and we can prove consistency and rapid mixing results for the DAG selection problem with ordering $\sigma$ by treating $\tilde{G}(\sigma)$ as the ``true'' model~\citep[cf.][]{yang2016computational}. 
All we need is just to replace $G^*_\sigma$ with $\tilde{G}(\sigma)$  in the construction of canonical paths. 
In some cases, we can prove the rapid mixing of \RWGES{} using this method. 
One extreme example is when all coefficients in the generating SEM  are sufficiently close to zero and then \RWGES{} always quickly moves to the null model. 
Below is a more interesting example. 

\begin{example}\label{ex:unfaithful}
Let $p = 3$ and consider the following SEM for the random vector $\sX$,   
$$\sX_1 = Z_1, \quad \sX_2 = (1 + \epsilon) \sX_1 + Z_2, \quad \sX_3 =  \sX_1 - \sX_2 + Z_3,$$ 
where $Z_1, Z_2, Z_3$ are standard normal random variables. Denote the distribution of $\sX$ by $\mu$. If $\epsilon = 0$, one can easily show $1 \indep 3$ and $\mu$ is faithful to the DAG $\tilde{G}$ given by $1 \rightarrow 2 \leftarrow 3$. 
Now let $\epsilon$ be nonzero but very small. Then, $\mu$ is faithful to the complete DAG, but given a moderate sample size, the effective ``true'' DAG is still $\tilde{G}$ since the evidence for $1 \notindep 3$ is too weak to be detected (the posterior mass will not concentrate on the equivalence class of complete DAGs either). 
However, our path method can be used to prove the rapid mixing of \RWGES{}, $\tilde{G}(\sigma)$ defined in~\eqref{eq:def.tilde.G} just becomes the minimal I-map of $\tilde{G}$ in this case. 
\end{example}
 
 
Nevertheless, in general, we cannot say much about the local posterior landscape at $\EG{\tilde{G}(\sigma)}$, if $\tilde{G}(\sigma)$ is not a minimal I-map of $G^*$.  
It is likely that for some $\sigma \in \bbS^p$,  $\EG{\tilde{G}(\sigma)}$ becomes a sub-optimal local mode. 
But the simulation study in Section~\ref{sec:second.sim.high.dim} suggests that our theory  still holds ``approximately'' in the sense that the chain can quickly find models that are not too different from the true one (i.e., the chain mixes rapidly in most parts of the space). 
In addition to parallel tempering, another strategy for overcoming multimodality is to choose a larger neighborhood than the one used in \RWGES{} so that when the strong beta-min condition fails, we can 
modify the construction of the canonical path from $\tilde{G}(\sigma)$ to some ``best'' DAG $\tilde{G}$. 
In this regard, the DAG MCMC samplers proposed in~\citet{grzegorczyk2008improving} and~\citet{su2016improving} may be very useful. They are equipped with proposal moves that are much more complex than single-edge modifications, which enable the samplers to jump between DAGs that encode very different CI relations.   
Similar ideas might be used to improve other sampling methods, including those defined on the space of equivalence classes. 
  
\bibliographystyle{plainnat} 
\bibliography{reference}

\end{document}